\def\NZQ{\mathbb}               
\def\NN{{\NZQ N}}
\def\QQ{{\NZQ Q}}
\def\ZZ{{\NZQ Z}}
\def\RR{{\NZQ R}}
\newtheorem{Theorem}{Theorem}[section]
\newtheorem{Lemma}[Theorem]{Lemma}
\newtheorem{Corollary}[Theorem]{Corollary}
\newtheorem{Proposition}[Theorem]{Proposition}
\newtheorem{Remark}[Theorem]{Remark}
\newtheorem{Definition}[Theorem]{Definition}
\let\epsilon\varepsilon
\let\phi=\varphi
\let\kappa=\varkappa
\begin{document}
\title{Mixed multiplicities of divisorial filtrations}
\author{Steven Dale Cutkosky}

\thanks{The first author was partially supported by NSF grant DMS-1700046.}

\address{Steven Dale Cutkosky, Department of Mathematics,
University of Missouri, Columbia, MO 65211, USA}
\email{cutkoskys@missouri.edu}

\begin{abstract}
Suppose that $R$ is an excellent local domain with maximal ideal $m_R$. 
The theory of multiplicities and mixed multiplicities of $m_R$-primary ideals extends to (possibly non Noetherian) filtrations of $R$ by $m_R$-primary ideals, and many of the classical theorems for $m_R$-primary ideals continue to hold for filtrations. The celebrated theorems involving inequalities continue to hold for filtrations, but the good conclusions that hold in the case of equality for $m_R$-primary ideals  do not hold for filtrations.

In this article, we consider multiplicities and mixed multiplicities of $R$ by $m_R$-primary divisorial filtrations. We show that some important theorems on equalities of multiplicities and mixed multiplicities of $m_R$-primary ideals, which are not true in general for filtrations, are true for divisorial filtrations. 
We prove that a theorem of Rees showing that if there is an  inclusion of $m_R$-primary ideals $I\subset I'$ with the same multiplicity then $I$ and $I'$ have the same integral closure 
also holds  for divisorial filtrations. This theorem does not hold for arbitrary filtrations. The classical Minkowski inequalities for $m_R$-primary ideals $I_1$ and $I_2$ hold quite generally for filtrations. If $R$ has dimension two and there is equality in the Minkowski inequalities, then Teissier and Rees and Sharp have shown that there are powers $I_1^a$ and $I_2^b$ which have the same integral closure. This theorem does not hold for arbitrary filtrations. The Teissier Rees Sharp theorem has been extended  by Katz to $m_R$-primary ideals in arbitrary dimension. We show that  the Teissier Rees Sharp theorem does hold for divisorial filtrations 
in an excellent domain of dimension two.

We also show that the mixed multiplicities of divisorial filtrations are anti-positive intersection products on a suitable normal scheme $X$ birationally dominating $R$, when $R$ is an algebraic local domain.
\end{abstract}

\keywords{Mixed Multiplicity, Valuation, Divisorial Filtration}
\subjclass[2010]{13H15, 13A18, 14C17}

\maketitle
\section{Introduction}

The study of mixed multiplicities of $m_R$-primary ideals in a Noetherian local ring $R$ with maximal ideal $m_R$  was initiated by Bhattacharya \cite{Bh}, Rees  \cite{R} and Teissier  and Risler \cite{T1}. 
In \cite{CSS}  the notion of mixed multiplicities is extended to arbitrary,  not necessarily Noetherian, filtrations of $R$ by $m_R$-primary ideals. It is shown in \cite{CSS} that many basic theorems for mixed multiplicities of $m_R$-primary ideals are true for filtrations. 

The development of the subject of mixed multiplicities and its connection to Teissier's work on equisingularity \cite{T1} can be found in \cite{GGV}.   A  survey of the theory of  mixed multiplicities of  ideals  can be found in  \cite[Chapter 17]{HS}, including discussion of the results of  the papers \cite{R1} of Rees and \cite{S} of  Swanson, and the theory of Minkowski inequalities of Teissier \cite{T1}, \cite{T2}, Rees and Sharp \cite{RS} and Katz \cite{Ka}.   Later, Katz and Verma \cite{KV}, generalized mixed multiplicities to ideals which are not all $m_R$-primary.  Trung and Verma \cite{TV} computed mixed multiplicities of monomial ideals from mixed volumes of suitable polytopes.  

We will be concerned with  multiplicities and mixed multiplicities of  (not necessarily Noetherian) filtrations, which are defined as follows. 

\begin{Definition}
A filtration $\mathcal I=\{I_n\}_{n\in\NN}$ of  a ring $R$ is a descending chain
$$
R=I_0\supset I_1\supset I_2\supset \cdots
$$
of ideals such that $I_iI_j\subset I_{i+j}$ for all $i,j\in \NN$.  A filtration $\mathcal I=\{I_n\}$ of  a local ring $R$ by $m_R$-primary ideals is a filtration $\mathcal I=\{I_n\}_{n\in\NN}$ of $R$ such that   $I_n$ is $m_R$-primary for $n\ge 1$.
A filtration $\mathcal I=\{I_n\}_{n\in\NN}$ of  a ring $R$ is said to be Noetherian if $\bigoplus_{n\ge 0}I_n$ is a finitely generated $R$-algebra.
 \end{Definition}

The following theorem is the key result needed to define the multiplicity of a filtration of $R$ by $m_R$-primary ideals.  Let $\ell_R(M)$ denote the length of an $R$-module $M$.

\begin{Theorem} \label{TheoremI20} (\cite[Theorem 1.1]{C2} and  \cite[Theorem 4.2]{C3}) Suppose that $R$ is a Noetherian local ring of dimension $d$, and  $N(\hat R)$ is the nilradical of the $m_R$-adic completion $\hat R$ of $R$.  Then   the limit 
\begin{equation}\label{I5}
\lim_{n\rightarrow\infty}\frac{\ell_R(R/I_n)}{n^d}
\end{equation}
exists for any filtration  $\mathcal I=\{I_n\}$ of $R$ by $m_R$-primary ideals, if and only if $\dim N(\hat R)<d$.
\end{Theorem}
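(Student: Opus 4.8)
The plan is to prove the two implications separately, after the usual reduction to $R$ complete. Since $R/I_n$ has finite length, $\ell_R(R/I_n)=\ell_{\hat R}(\hat R/I_n\hat R)$; the ideals $I_n\hat R$ form a filtration of $\hat R$ by $m_{\hat R}$-primary ideals, and $\hat R$ is its own completion, with nilradical $N(\hat R)$. So we may assume $R=\hat R$. Observe next that $I_1^n\subset I_n$ for all $n$, whence $\ell_R(R/I_n)\le\ell_R(R/I_1^n)$, and the right-hand side equals a polynomial of degree $d$ in $n$ for $n\gg0$ by the Hilbert--Samuel theorem; hence $\limsup_n\ell_R(R/I_n)/n^d<\infty$ unconditionally, and the only question is whether $\liminf=\limsup$. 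More generally $\ell_R(M/I_nM)\le\ell_R(M/I_1^nM)=O(n^{\dim M})$ for every finitely generated $R$-module $M$, so this is $o(n^d)$ as soon as $\dim M<d$.

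\emph{The implication $\dim N(R)<d\Rightarrow$ the limit exists.} Let $P_1,\dots,P_r$ be the minimal primes of $R$ with $\dim R/P_i=d$; the hypothesis says exactly that each $R_{P_i}$ is reduced, hence a field. Passing first to $R/N(R)$ and then along the inclusion $R/N(R)\hookrightarrow\prod_P R/P$ into the product over the minimal primes $P$ of $R$, and bounding all error terms --- which are lengths of modules supported in dimension $<d$ --- by the estimate above, one obtains
\[
\ell_R(R/I_n)=\sum_{i=1}^{r}\ell_{R/P_i}\bigl((R/P_i)/I_n(R/P_i)\bigr)+o(n^d).
\]
Thus it suffices to prove that $\lim_n\ell_S(S/J_n)/n^d$ exists when $S$ is a complete local \emph{domain} of dimension $d$ and $\{J_n\}$ is a filtration of $S$ by $m_S$-primary ideals.

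For the domain case I would use the Okounkov-body method. Replacing $S$ by a faithfully flat local extension with $m_SS'=m_{S'}$ if necessary (this changes neither lengths nor hypotheses), choose a valuation $\nu$ of the fraction field of $S$, nonnegative on $S$, dominating $S$, with value group $\ZZ^d$ ordered lexicographically, residue field equal to that of $S$, and $\nu(S\setminus\{0\})\subset\NN^d$ --- for instance a composite of order valuations along a flag of regular local rings obtained by successive blow-ups. For such $\nu$ one has $\ell_S(S/J)=\#\bigl(\nu(S\setminus\{0\})\setminus\nu(J\setminus\{0\})\bigr)<\infty$ for every $m_S$-primary ideal $J$. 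Because $\nu$ is additive and $S$ is a domain,
\[
\Gamma=\bigl\{(\nu(f),n):n\ge1,\ f\in J_n\setminus\{0\}\bigr\}\cup\bigl\{(\nu(f),0):f\in S\setminus\{0\}\bigr\}
\]
is a subsemigroup of $\NN^{d+1}$, and writing $\Gamma_n=\{\alpha:(\alpha,n)\in\Gamma\}$ we get $\ell_S(S/J_n)=\#(\Gamma_0\setminus\Gamma_n)$. It then remains to check the hypotheses of the semigroup limit theorem of Okounkov and Kaveh--Khovanskii, in the form suited to graded families of ideals: finiteness of $\#(\Gamma_0\setminus\Gamma_n)$, which holds; and that $\Gamma_0$ fills the rational cone it generates outside a subset of smaller dimension, which holds because $S$ is analytically irreducible, so that finiteness of the integral closure and Abhyankar's inequality control the value semigroup. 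One concludes that $\#(\Gamma_0\setminus\Gamma_n)/n^d$ converges, to the volume of the associated Okounkov body. This domain case, with the residue-field change and the semigroup bookkeeping it requires, is the step I expect to be the main obstacle; the additivity reduction above and the construction below are comparatively routine.

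\emph{The implication $\dim N(R)=d\Rightarrow$ the limit fails for some filtration.} Keeping $R$ complete, choose a minimal prime $P$ with $\dim R/P=d$ and $R_P$ not reduced (possible precisely because $\dim N(R)=d$); then $\ell_{R_P}(R_P)\ge2$, and there is $x\in R$ with $x/1\ne0$ in $R_P$, which forces $(0:_R x)\subset P$ and hence $\dim R/(0:_R x)=d$. Thus adjoining a large multiple of $x$ to an $m_R$-primary ideal changes its colength by a quantity of order $n^d$. Using $x$ --- and controlling the products $x^2$, which one can do after replacing $x$ by a suitable power and exploiting the Artinian structure of $R_P$ --- one builds a filtration $\{I_n\}$ of $R$ by $m_R$-primary ideals that, along two complementary and suitably spaced sequences of indices, alternately includes and omits a large multiple of $x$; the containments $I_iI_j\subset I_{i+j}$ are verified directly. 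By construction $\ell_R(R/I_n)/n^d$ has two distinct subsequential limits, so the limit \eqref{I5} does not exist.
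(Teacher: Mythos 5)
The paper does not reprove this theorem; it quotes it from \cite{C2} and \cite{C3}, and your outline follows essentially the same strategy as those proofs: reduce to the complete case, use the nilradical and the minimal primes of maximal dimension to reduce (up to $o(n^d)$ errors, which are colengths of modules of dimension $<d$) to a complete local domain, prove the domain case by Okounkov-body/semigroup volume methods with a rank $d$ lex-valued valuation, and for the converse build an oscillating filtration from a nilpotent $x$ with $\dim R/(0:_Rx)=d$. The completion and additivity steps are fine. The genuine gap is exactly at the point you yourself flag as the main obstacle: the claim that one may replace $S$ by a faithfully flat local extension $S\rightarrow S'$ with $m_SS'=m_{S'}$ so as to obtain a $\ZZ^d$-valued valuation with residue field equal to that of the ring, and that this ``changes neither lengths nor hypotheses.'' Lengths are preserved, but the hypotheses are not: in characteristic $p$ such a residue field extension can destroy both the domain property and the condition $\dim N<d$. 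For instance $S=\FF_p(t)[[x,y]]/(y^p-tx^p)$ is a one-dimensional complete local domain, while the flat extension with residue field $\FF_p(t^{1/p})$ is $\FF_p(t^{1/p})[[x,y]]/((y-t^{1/p}x)^p)$, whose nilradical has dimension $1=d$; by the converse half of the very theorem being proved, the limit can fail for filtrations of that ring, so the reduction is circular as stated. Nor can one always avoid the extension by choosing a better flag, since the exceptional fibers need not have closed points with residue field $R/m_R$.

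The cited proofs resolve this differently, and the device is visible in the present paper: one constructs a valuation whose residue field is only \emph{finite} over $R/m_R$, of some degree $\mu$, and then lengths are no longer a single point count; instead one counts the threshold subsemigroups $\Gamma^{(t)}$, $1\le t\le \mu$, recording the values at which the associated graded piece has $k$-dimension at least $t$, and sums the corresponding Okounkov-body volumes. This is exactly the factor $\mu$ appearing in (\ref{eqAR10}) and (\ref{eqAR11}) and the sets $\Gamma(\mathcal A_a)^{(t)}$ in the Appendix; the cutoff needed to make the counts finite uses the Izumi--Rees linear comparison between the valuation and the $m_R$-adic topology, as in (\ref{eqX5}), which is where analytic irreducibility of the complete domain enters. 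Two smaller points: for the converse you must produce a filtration of $R$ itself, so after constructing $\{J_n\}$ in $\hat R$ one should set $I_n=J_n\cap R$ and use the surjectivity of $R\rightarrow \hat R/J_n$ to see $\ell_R(R/I_n)=\ell_{\hat R}(\hat R/J_n)$; and the construction itself (choosing $x\in N(\hat R)$ with $x^2=0$ and ideals of the shape $m^n+x\,m^{c_n}$ with $c_n$ subadditively oscillating, e.g.\ tied to the largest power of $2$ below $n$) is only sketched in your proposal, whereas verifying $I_iI_j\subset I_{i+j}$ together with a definite oscillation of $\ell_R(R/I_n)/n^d$ is the actual content of that half in \cite{C3}.
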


When the ring $R$ is a domain and is essentially of finite type over an algebraically closed field $k$ with $R/m_R=k$, Lazarsfeld and Musta\c{t}\u{a} \cite{LM} showed that
the limit exists for all filtrations  of $R$ by $m_R$-primary ideals.  Cutkosky \cite{C3} proved it in the complete generality  stated above in Theorem \ref{TheoremI20}.

As can be seen from this theorem,  one must impose the condition that 
the dimension of the nilradical of the completion $\hat R$ of $R$ is less than the dimension of $R$. The nilradical $N(R)$ of a $d$-dimensional ring $R$ is 
$$
N(R)=\{x\in R\mid x^n=0 \mbox{ for some positive integer $n$}\}.
$$
We have that $\dim N(R)=d$ if and only if there exists a minimal prime $P$ of $R$ such that $\dim R/P =d$ and $R_P$ is not reduced. In particular,  the condition $\dim N(\hat R)<d$ holds if $R$ is analytically unramified; that is, $\hat R$ is reduced. 
We define the multiplicity of $R$ with respect to the filtration $\mathcal I=\{I_n\}$ to be 
$$
e_R(\mathcal I;R)=
\lim_{n\rightarrow \infty}\frac{\ell_R(R/I_n)}{n^d/d!}.
$$

The multiplicity of a ring with respect to a non Noetherian filtration can be an irrational number. 
A simple example on a regular local ring is given in \cite{CSS}.

Mixed multiplicities of filtrations are defined in \cite{CSS}. 
 Let $M$ be a finitely generated $R$-module where $R$ is a $d$-dimensional Noetherian local ring with $\dim N(\hat R)<d$. Let $\mathcal I(1)=\{I(1)_n\},\ldots, \mathcal I(r)=\{I(r)_n\}$ be filtrations of $R$ by $m_R$-primary ideals. 
 In  \cite[Theorem 6.1]{CSS} and  \cite[Theorem 6.6]{CSS}, it is shown that the function
\begin{equation}\label{M2}
P(n_1,\ldots,n_r)=\lim_{m\rightarrow \infty}\frac{\ell_R(M/I(1)_{mn_1}\cdots I(r)_{mn_r}M)}{m^d}
\end{equation}
is equal to a homogeneous polynomial $G(n_1,\ldots,n_r)$ of total degree $d$ with real coefficients for all  $n_1,\ldots,n_r\in\NN$.  

We  define the mixed multiplicities of $M$ from the coefficients of $G$, generalizing the definition of mixed multiplicities for $m_R$-primary ideals. Specifically,   
 we write 
\begin{equation}\label{eqV6}
G(n_1,\ldots,n_r)=\sum_{d_1+\cdots +d_r=d}\frac{1}{d_1!\cdots d_r!}e_R(\mathcal I(1)^{[d_1]},\ldots, \mathcal I(r)^{[d_r]};M)n_1^{d_1}\cdots n_r^{d_r}.
\end{equation}
We say that $e_R(\mathcal I(1)^{[d_1]},\ldots,\mathcal I(r)^{[d_r]};M)$ is the mixed multiplicity of $M$ of type $(d_1,\ldots,d_r)$ with respect to the filtrations $\mathcal I(1),\ldots,\mathcal I(r)$.
Here we are using the notation 
\begin{equation}\label{eqI6}
e_R(\mathcal I(1)^{[d_1]},\ldots, \mathcal I(r)^{[d_r]};M)
\end{equation}
  to be consistent with the classical notation for mixed multiplicities of $M$ with respect to $m_R$-primary ideals from \cite{T1}. The mixed multiplicity of $M$ of type $(d_1,\ldots,d_r)$ with respect to $m_R$-primary ideals $I_1,\ldots,I_r$, denoted by $e_R(I_1^{[d_1]},\ldots,I_r^{[d_r]};M)$ (\cite{T1}, \cite[Definition 17.4.3]{HS}) is equal to the mixed multiplicity $e_R(\mathcal I(1)^{[d_1]},\ldots,\mathcal I(r)^{[d_r]};M)$, where the Noetherian $I$-adic filtrations $\mathcal I(1),\ldots,\mathcal I(r)$ are defined by $\mathcal I(1)=\{I_1^i\}_{i\in \NN}, \ldots,\mathcal I(r)=\{I_r^i\}_{i\in \NN}$.

We have that 
\begin{equation}\label{eqX31}
e_R(\mathcal I;M)=e_R(\mathcal I^{[d]};M)
\end{equation}
 if $r=1$, and $\mathcal I=\{I_i\}$ is a filtration of  $R$ by $m_R$-primary ideals. We have that
$$
e_R(\mathcal I;M)=\lim_{m\rightarrow \infty}d!\frac{\ell_R(M/I_mM)}{m^d}.
$$

The multiplicities and mixed multiplicities of $m_R$-primary ideals  are always positive (\cite{T1} or \cite[Corollary 17.4.7]{HS}). The multiplicities and mixed multiplicities of filtrations are always nonnegative, as is established in \cite[Proposition 1.3]{CSV}, but can be zero. If $R$ is analytically irreducible, then all mixed multiplicities are positive if and only if the multiplicities $e_R(\mathcal I(j);R)$ are positive for $1\le j\le r$. This is established in \cite[Theorem 1.4]{CSV}.

Suppose that $R$ is a $d$-dimensional excellent local domain, with quotient field $K$. A valuation $\nu$ of $K$ is called an $m_R$-valuation if $\nu$ dominates $R$ ($R\subset V_{\nu}$ and $m_{\nu}\cap R=m_R$ where $V_{\nu}$ is the valuation ring of $\nu$ with maximal ideal $m_{\nu}$) and ${\rm trdeg}_{R/m_R}V_{\nu}/m_{\nu}=d-1$.

Suppose that $I$ is an ideal in $R$. Let $X$ be the normalization of the blowup of $I$, with projective birational morphism $\phi:X\rightarrow \mbox{Spec}(R)$. Let $E_1,\ldots,E_t$ be the irreducible components of $\phi^{-1}(V(I))$ (which necessarily have dimension $d-1$). The Rees valuations of $I$ are the discrete valuations $\nu_i$ for $1\le i\le t$ with valuation rings $V_{\nu_i}=\mathcal O_{X,E_i}$. If $R$ is normal, then $X$ is equal to the blowup of the integral closure $\overline{I^s}$ of an appropriate power $I^s$ of $I$.

Every Rees valuation $\nu$ which dominates $R$  is an $m_R$-valuation and every $m_R$-valuation is a Rees valuation of an $m_R$-primary ideal by \cite[Statement  (G)]{R3}.

Associated to an $m_R$-valuation $\nu$ are valuation ideals
\begin{equation}\label{eqX2}
I(\nu)_n=I_R(\nu)=\{f\in R\mid \nu(f)\ge n\}
\end{equation}
for $n\in \NN$.
In general, the filtration $\mathcal I(\nu)=\{I(\nu)_n\}$ is not Noetherian. 
In  a two-dimensional normal local ring $R$, the condition that the  filtration of valuation ideals of $R$ is Noetherian for all  $m_R$-valuations dominating $R$ is the condition (N) of Muhly and Sakuma \cite{MS}. It is proven in \cite{C6} that a complete normal local ring of dimension two satisfies condition (N) if and only if its divisor class group is a torsion group. 
An example is given in \cite{CGP} of an $m_R$-valuation of a 3-dimensional regular local ring $R$ which is not Noetherian. 

\begin{Definition}\label{DefDF} Suppose that $R$ is an excellent local domain. We
say that a filtration $\mathcal I$ of $R$ by $m_R$-primary ideals is a divisorial filtration if there exists a projective birational morphism $\phi:X\rightarrow \mbox{Spec}(R)$ such that $X$ is the normalization of the blowup of an $m_R$-primary ideal and there exists a nonzero effective Cartier divisor $D$ on $X$ with exceptional support for $\phi$ such that 
$\mathcal I=\{I(mD)\}_{m\in\NN}$ where
\begin{equation}\label{eqX22}
I(mD)=I_R(mD)=\Gamma(X,\mathcal O_X(-mD))\cap R .
\end{equation}
\end{Definition}
If $R$ is normal, then $I(mD)=\Gamma(X,\mathcal O_X(-mD))$.
If $D=\sum_{i=1}^ta_iE_i$ where the $a_i\in \NN$ and the $E_i$ are prime exceptional divisors of $\phi$, with associated $m_R$-valuations $\nu_i$, then 
$$
I(mD)=I(\nu_1)_{a_1m}\cap\cdots\cap I(\nu_t)_{a_tm}.
$$

Suppose  that $\mathcal I(1),\ldots,\mathcal I(r)$ are divisorial filtrations of  an excellent local domain $R$.    We then have associated mixed multiplicities
\begin{equation}\label{eqX20}
e_R(\mathcal I(1)^{[d_1]},\ldots, \mathcal I(r)^{[d_r]};R)
\end{equation}
for $d_1,\ldots,d_r\in \NN$ with $d_1+\cdots+d_r=d$.

If $R$ is analytically irreducible, then all mixed multiplicities  (\ref{eqX20}) are  positive by Proposition \ref{PropPos}.

We show in (\ref{eq15}) and (\ref{eq14}) of Section \ref{Sec5} that if $R$ has dimension two, then the mixed multiplicities (\ref{eqX20}) are positive rational numbers. 
In Example 6 of \cite{CS}, an example is given of an $m_R$-valuation $\nu$ dominating a normal  excellent local domain of dimension three such that $e_R(\mathcal I(\nu);R)$ is an irrational number. Thus the mixed multiplicities (\ref{eqX20}) can be irrational if $d\ge 3$.


The following theorem in \cite{CSS} generalizes  \cite[Proposition 11.2.1]{HS} for $m_R$-primary ideals to filtrations of $R$ by $m_R$-primary ideals.

\begin{Theorem}\label{Theorem13}(\cite[Theorem 6.9]{CSS}) Suppose that $R$ is a Noetherian $d$-dimensional local ring such that 
$$
\dim N(\hat R)<d
$$
 and $M$ is a finitely generated $R$-module. Suppose that $\mathcal I'=\{I'_i\}$ and $\mathcal I=\{I_i\}$ are filtrations of $R$ by $m_R$-primary ideals. Suppose that $\mathcal I'\subset \mathcal I$ ($
I'_i\subset I_i$ for all $i$) and the ring $\bigoplus_{n\ge 0}I_n$ is integral over $\bigoplus_{n\ge 0} I'_n$. Then 
$$
e_R(\mathcal I;M)=e_R(\mathcal I';M).
$$
\end{Theorem}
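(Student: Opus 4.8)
The plan is to establish the two inequalities $e_R(\mathcal I';M)\ge e_R(\mathcal I;M)$ and $e_R(\mathcal I;M)\ge e_R(\mathcal I';M)$ separately; only the second uses the integral dependence. Since $\mathcal I'\subset\mathcal I$ we have $I'_nM\subset I_nM$, hence $\ell_R(M/I'_nM)\ge\ell_R(M/I_nM)$ for every $n$; dividing by $n^d/d!$ and letting $n\to\infty$ (the limits exist by Theorem \ref{TheoremI20}) gives $e_R(\mathcal I';M)\ge e_R(\mathcal I;M)$.

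For the reverse inequality the difficulty is that, writing $S=\bigoplus_{n\ge 0}I_n$ and $S'=\bigoplus_{n\ge 0}I'_n$ for the Rees algebras inside $R[t]$, the ring $S$ may be integral over $S'$ without being a finite $S'$-module, since neither is assumed Noetherian; so the classical device for a pair of $m_R$-primary ideals (module-finiteness of Rees algebras, giving $J^{k+1}=IJ^k$) is unavailable. I would get around this by truncation. For $m\ge 1$ let $\mathcal I_{(m)}$ be the $m$-th truncated filtration, whose Rees algebra is $T_m=R[I_1t,\dots,I_mt^m]\subset R[t]$; this is a Noetherian filtration of $R$ by $m_R$-primary ideals, agreeing with $\mathcal I$ in degrees $\le m$, and $e_R(\mathcal I_{(m)};M)\to e_R(\mathcal I;M)$ as $m\to\infty$ by the convergence of truncation multiplicities established in \cite{CSS}. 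Since $T_m$ is generated over $R$ by finitely many homogeneous elements of $S$, each integral over $S'$, the graded ring $B_m:=S'[T_m]\subset R[t]$ is a finite $S'$-module, and $S'\subset B_m\subset S$. Being the product $S'\cdot T_m$ of the Rees algebras of two filtrations inside $R[t]$, the ring $B_m$ is itself the Rees algebra of a filtration $\mathcal J^{(m)}=\{(B_m)_n\}$ of $R$ by $m_R$-primary ideals (with $(B_m)_n=\sum_{a+b=n}I'_a(I_{(m)})_b$, which is decreasing in $n$), and $\mathcal I_{(m)}\subset\mathcal J^{(m)}\subset\mathcal I$.

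The crux is that $e_R(\mathcal J^{(m)};M)=e_R(\mathcal I';M)$ for each $m$. Writing $B_m=\sum_{i=1}^r S'h_i$ with $h_1=1$ and $h_i=u_it^{e_i}$ homogeneous ($u_i\in R$), and setting $e=\max_{i}e_i$, taking degree-$n$ components gives $(B_m)_n=\sum_{e_i\le n}I'_{n-e_i}u_i$ (with $I'_0:=R$); thus $I'_n\subset(B_m)_n$ from the term $i=1$, while $(B_m)_n\subset\sum_iI'_{n-e}u_i\subset I'_{n-e}$ for $n\ge e$ because $\mathcal I'$ is descending. Hence $I'_nM\subset(B_m)_nM\subset I'_{n-e}M$, so
$$
\frac{d!\,\ell_R(M/I'_nM)}{n^d}\ \ge\ \frac{d!\,\ell_R(M/(B_m)_nM)}{n^d}\ \ge\ \frac{d!\,\ell_R(M/I'_{n-e}M)}{n^d},
$$
and since the outer quantities both converge to $e_R(\mathcal I';M)$ as $n\to\infty$ (a bounded shift of the index does not affect the limit), the middle one does too, i.e.\ $e_R(\mathcal J^{(m)};M)=e_R(\mathcal I';M)$. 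Combining with the sandwich, $e_R(\mathcal I_{(m)};M)\ge e_R(\mathcal I';M)\ge e_R(\mathcal I;M)$ for all $m$; letting $m\to\infty$ and using $e_R(\mathcal I_{(m)};M)\to e_R(\mathcal I;M)$ gives $e_R(\mathcal I;M)\ge e_R(\mathcal I';M)$, and with the first step, equality.

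I expect the main obstacle to be the input that $e_R(\mathcal I_{(m)};M)\to e_R(\mathcal I;M)$: it is precisely what replaces module-finiteness of Rees algebras in the non-Noetherian setting, and without it one cannot pass from the finite $S'$-modules $B_m$ back to $S$. The remaining ingredients — that $B_m$ is module-finite over $S'$, that $\mathcal J^{(m)}$ really is a filtration (one uses that the graded pieces of $S'$ and $T_m$ are decreasing), and the squeeze computing $e_R(\mathcal J^{(m)};M)$ — are routine. Trying to avoid truncations by producing a single constant $c$ with $I_n\subset\overline{I'_{n-c}}$ does not seem to work, since integral dependence of $S$ over $S'$ gives no such uniform bound when the Rees algebras are not Noetherian, which is why I would route the argument through the truncations $\mathcal I_{(m)}$.
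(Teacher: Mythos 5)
Your proof is correct, and while it shares the paper's basic skeleton -- truncate, sandwich an auxiliary filtration whose Rees algebra is module-finite over $S'=\bigoplus_{n\ge 0}I'_n$, and pass to the limit using the truncation-convergence result of \cite{CSS} -- the implementation is genuinely different and more elementary at the two key steps. The paper's appendix truncates \emph{both} filtrations: it chooses $\overline a$ (depending on $a$) so that the truncated Rees algebra of $\mathcal I$ is integral over the truncated Rees algebra of $\mathcal I'$, forms the filtration $\mathcal A_a$ generated by the two truncations, proves $e_R(\mathcal A_a;R)=e_R(\mathcal I'_{\overline a};R)$ by a Veronese rescaling (its Step 2) combined with the classical multiplicity theorem for $m_R$-primary ideals (\cite[Proposition 11.2.1]{HS}, its Step 1), and then needs two limits: $e_R(\mathcal I'_{\overline a};R)\rightarrow e_R(\mathcal I';R)$ and, via an Okounkov-body volume squeeze (its Step 4), $e_R(\mathcal A_a;R)\rightarrow e_R(\mathcal I;R)$. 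You instead keep all of $S'$ and truncate only $\mathcal I$, so module-finiteness of $B_m=S'\cdot T_m$ over $S'$ is immediate; your degree-shift squeeze $I'_n\subset (B_m)_n\subset I'_{n-e}$ then replaces both the Veronese step and the appeal to the classical ideal theorem, and the containment $\mathcal I_{(m)}\subset\mathcal J^{(m)}$ makes the Okounkov-body step unnecessary, so only one truncation-convergence input (for $\mathcal I$) is used. What the paper's route buys is that each step quotes an already established statement; what yours buys is a shorter, self-contained argument carried out entirely at the filtration level. One citation-level caveat: the truncation convergence \cite[Proposition 4.3]{CSS} invoked in the appendix is stated for $R$, so to run your argument for a module $M$ you should either first treat $M=R$ and then pass to $M$ by \cite[Theorem 6.8]{CSS}, exactly as the paper does in its Step 5, or check that the truncation argument extends to modules; this is an adjustment of references, not a gap in the reasoning.
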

We give a proof of Theorem \ref{Theorem13} in the Appendix. 

Rees has shown in \cite{R} that if $R$ is a formally equidimensional Noetherian local ring and $I\subset I'$ are $m_R$-primary ideals such that $e_R(I;R)=e_R(I';R)$, then $\bigoplus_{n\ge 0}(I')^n$ is integral over $\bigoplus_{n\ge 0}I^n$ ($I$ and $I'$ have the same integral closure). An exposition of this converse to the above cited \cite[Proposition 11.2.1]{HS} is given in   \cite[Proposition 11.3.1]{HS}, in the section entitled ``Rees's Theorem''. Rees's theorem is not true in general for filtrations of $m_R$-primary ideals (a simple example in a regular local ring is given in \cite{CSS}) but it is true for divisorial filtrations. 
In Theorem \ref{TheoremX1}, we show that Rees's theorem (the converse of Theorem \ref{Theorem13}) is true for divisorial filtrations of an excellent local domain.

An analogue of the Rees theorem for projective varieties is proven in Theorem \ref{TheoremGRT}.

We prove in  \cite[Theorem 6.3]{CSS} that the Minkowski inequalities hold for filtrations of $m_R$-primary ideals.

\begin{Theorem}(Minkowski Inequalities for filtrations)(\cite[Theorem 6.3]{CSS})\label{TheoremMI}  Suppose that $R$ is a Noetherian $d$-dimensional  local ring with $\dim N(\hat R)<d$, $M$ is a finitely generated $R$-module and $\mathcal I(1)=\{I(1)_j\}$ and $\mathcal I(2)=\{I(2)_j\}$ are filtrations of $R$ by $m_R$-primary ideals. Then 
\begin{enumerate}
\item[1)] $e_R(\mathcal I(1)^{[i]},\mathcal I(2)^{[d-i]};M)^2\le e_R(\mathcal I(1)^{[i+1]},\mathcal I(2)^{[d-i-1]};M)e_R(\mathcal I(1)^{[i-1]},\mathcal I(2)^{[d-i+1]};M)$ 

for $1\le i\le d-1$.
\item[2)]  For $0\le i\le d$, 
$$
e_R(\mathcal I(1)^{[i]},\mathcal I(2)^{[d-i]};M)e_R(\mathcal I(1)^{[d-i]},\mathcal I(2)^{[i]};M)\le e_R(\mathcal I(1);M)e_R(\mathcal I(2);M),
$$
\item[3)] For $0\le i\le d$, $e_R(\mathcal I(1)^{[d-i]},\mathcal I(2)^{[i]};M)^d\le e_R(\mathcal I(1);M)^{d-i}e_R(\mathcal I(2);M)^i$ and
\item[4)]  $e_R(\mathcal I(1)\mathcal I(2));M)^{\frac{1}{d}}\le e_R(\mathcal I(1);M)^{\frac{1}{d}}+e_R(\mathcal I(2);M)^{\frac{1}{d}}$, 

where $\mathcal I(1)\mathcal I(2)=\{I(1)_jI(2)_j\}$.
\end{enumerate}
\end{Theorem}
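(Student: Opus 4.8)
The plan is to reduce inequalities 2), 3), 4) to 1) by an elementary convexity argument, and then to prove 1) by approximating the filtrations $\mathcal I(1),\mathcal I(2)$ by their member ideals $I(1)_m,I(2)_m$, applying the classical Minkowski inequality for $m_R$-primary ideals, and letting $m\to\infty$. For the reduction, write $a_i=e_R(\mathcal I(1)^{[i]},\mathcal I(2)^{[d-i]};M)$ for $0\le i\le d$, so that $a_d=e_R(\mathcal I(1);M)$ and $a_0=e_R(\mathcal I(2);M)$, and all $a_i\ge 0$ by \cite[Proposition 1.3]{CSV}. Inequality 1) says precisely that $a_0,\dots,a_d$ is a log-convex sequence. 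If $a_0=0$, applying 1) successively for $i=1,\dots,d-1$ forces $a_1=\cdots=a_{d-1}=0$, and then 2)--4) are immediate; the case $a_d=0$ is symmetric. Otherwise all $a_i>0$, so $i\mapsto\log a_i$ is convex on $\{0,\dots,d\}$ and lies below its chord, giving $a_i^{\,d}\le a_0^{\,d-i}a_d^{\,i}$, which is 3) (after relabeling $i\leftrightarrow d-i$). Multiplying the bounds for $i$ and for $d-i$ gives 2). For 4), evaluating the polynomial $G$ of (\ref{M2})--(\ref{eqV6}) at $(n_1,n_2)=(1,1)$ gives $e_R(\mathcal I(1)\mathcal I(2);M)=d!\,G(1,1)=\sum_{i=0}^d\binom di a_i$; bounding each $a_i\le a_0^{(d-i)/d}a_d^{i/d}$ by 3) and recognizing the resulting sum as $\bigl(a_0^{1/d}+a_d^{1/d}\bigr)^d$, then taking $d$-th roots, yields 4). (Inequality 4) also has a direct proof: classical Minkowski for $I(1)_m,I(2)_m$ gives $e_R(I(1)_mI(2)_m;M)^{1/d}\le e_R(I(1)_m;M)^{1/d}+e_R(I(2)_m;M)^{1/d}$, and since $I(1)_mI(2)_m=(\mathcal I(1)\mathcal I(2))_m$ one divides by $m$ and lets $m\to\infty$, using that the multiplicity of a filtration is the limit of the normalized multiplicities of its member ideals.)

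To prove 1), fix $m\ge 1$. The ideals $I(1)_m$ and $I(2)_m$ are $m_R$-primary, and the classical Minkowski inequality of Teissier, Rees--Sharp and Katz (\cite[Chapter 17]{HS}, \cite{Ka}; the hypothesis $\dim N(\hat R)<d$ lets one pass to $\hat R$ and reduce via the associativity formula for mixed multiplicities to the case of a complete local domain, where those references apply) gives
$$
e_R(I(1)_m^{[i]},I(2)_m^{[d-i]};M)^2\le e_R(I(1)_m^{[i+1]},I(2)_m^{[d-i-1]};M)\,e_R(I(1)_m^{[i-1]},I(2)_m^{[d-i+1]};M)
$$
for $1\le i\le d-1$. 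Dividing by $m^{2d}$ and letting $m\to\infty$, inequality 1) follows once we know that
\begin{equation*}
\lim_{m\to\infty}\frac{e_R(I(1)_m^{[j]},I(2)_m^{[d-j]};M)}{m^{d}}=e_R(\mathcal I(1)^{[j]},\mathcal I(2)^{[d-j]};M)\qquad\text{for }0\le j\le d.
\end{equation*}
Since for varying $j$ these are, up to the factors $j!(d-j)!$, the coefficients of homogeneous polynomials of degree $d$ in $(n_1,n_2)$, it is enough to show that the normalized mixed-multiplicity polynomial of the pair $(I(1)_m,I(2)_m)$ converges pointwise on the positive quadrant to the polynomial $G(n_1,n_2)$ of (\ref{M2}); the inclusions $I(k)_m^{\,n}\subseteq I(k)_{mn}$ give one inequality between these polynomials at once, and the reverse is exactly the assertion that the graded families $\{I(k)_n\}$ are asymptotically approximated by the $I(k)_m$-adic filtrations, which is furnished by \cite[Theorem 6.1]{CSS} and \cite[Theorem 6.6]{CSS} (compare \cite{C3}).

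The hard part is precisely this last convergence, and for the mixed rather than merely the pure multiplicities: one inequality is formal from $I(k)_m^{\,n}\subseteq I(k)_{mn}$, but the reverse requires controlling the difference $\ell_R\bigl(M/I(1)_m^{\,n_1}I(2)_m^{\,n_2}M\bigr)-\ell_R\bigl(M/I(1)_{mn_1}I(2)_{mn_2}M\bigr)$ uniformly in $m$. A Fekete-type subadditivity argument (using $I(k)_aI(k)_b\subseteq I(k)_{a+b}$ together with the classical Minkowski inequality 4) for ideals) handles the pure multiplicities, but the mixed ones genuinely require the polynomiality of the limiting function $G$ established in \cite{CSS}. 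Granting this, the rest of the argument is the soft convexity manipulation above.
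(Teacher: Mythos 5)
First, a caveat about the comparison: this paper does not actually prove Theorem \ref{TheoremMI}; it quotes it from \cite[Theorem 6.3]{CSS}, and its only remark on the proof is that 2)--4) follow from 1). Your reduction of 2)--4) to 1) (log-convexity of $a_i=e_R(\mathcal I(1)^{[i]},\mathcal I(2)^{[d-i]};M)$, the chord bound, the degenerate case via $a_1^2\le a_2a_0$, and the identity $e_R(\mathcal I(1)\mathcal I(2);M)=\sum_i\binom{d}{i}a_i$ coming from the polynomiality of $G$) is correct and is exactly the standard route the paper alludes to via \cite{T1}, \cite{RS}, \cite[Corollary 17.7.3]{HS}. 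Your parenthetical direct proof of 4) is essentially the argument of \cite[Theorem 3.1]{C3}, except that the input there is not merely the existence of the limits but the ``volume equals multiplicity'' theorem $\lim_m e_R(I_m;M)/m^d=d!\lim_n\ell_R(M/I_nM)/n^d$, which is a theorem in its own right (\cite{LM}, \cite{C2}, \cite{C3}) and not a formal consequence of polynomiality.

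The genuine gap is the limit on which your proof of 1) rests: $\lim_m m^{-d}e_R(I(1)_m^{[j]},I(2)_m^{[d-j]};M)=e_R(\mathcal I(1)^{[j]},\mathcal I(2)^{[d-j]};M)$. Writing $P_m(n_1,n_2)=\lim_{n\to\infty}\ell_R(M/I(1)_m^{nn_1}I(2)_m^{nn_2}M)/n^d=\tfrac{1}{d!}e_R(I(1)_m^{n_1}I(2)_m^{n_2};M)$, the containments $I(k)_m^{n}\subseteq I(k)_{mn}$ give only $m^{-d}P_m(n_1,n_2)\ge G(n_1,n_2)$; what you need is the reverse, $\limsup_m m^{-d}\tfrac{1}{d!}e_R(I(1)_m^{n_1}I(2)_m^{n_2};M)\le G(n_1,n_2)$, and every formal containment available (e.g.\ $I(1)_m^{kn_1}I(2)_m^{kn_2}\subseteq (I(1)_{mn_1}I(2)_{mn_2})^k$, or volume $=$ multiplicity applied to the filtration $\{I(1)_{kn_1}I(2)_{kn_2}\}_k$) points the wrong way and only reproves the lower bound. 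You attribute the reverse inequality to \cite[Theorems 6.1 and 6.6]{CSS}, but those results assert only that the limit $P(n_1,n_2)$ along the filtration exists and is a homogeneous polynomial $G$; they say nothing about how the adic filtrations generated by the single members $I(1)_m,I(2)_m$ approximate $\mathcal I(1),\mathcal I(2)$, and your closing claim that ``polynomiality of $G$'' supplies the convergence is not justified. This missing step is a mixed-multiplicity analogue of volume $=$ multiplicity; it is true, but it requires either a two-filtration version of the Okounkov-body/semigroup approximation arguments of \cite{LM}, \cite{C2}, \cite{C3}, or the route actually used in \cite{CSS}: approximate $\mathcal I(1),\mathcal I(2)$ by their truncations (Noetherian filtrations, for which a suitable Veronese is adic, so the Rees--Sharp inequalities for ideals apply after rescaling) and prove that the mixed multiplicities of the truncations converge to those of the filtrations --- compare the Appendix of this paper, where precisely this truncation machinery (Steps 2--4) is what carries the analogous limit argument for Theorem \ref{Theorem13}. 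As written, the heart of inequality 1) is therefore not proved.
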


The Minkowski inequalities were formulated and proven for  $m_R$-primary ideals by Teissier \cite{T1}, \cite{T2} and proven in full generality, for Noetherian local rings,  by Rees and Sharp \cite{RS}.
 The fourth inequality  4)  was proven for filtrations  of $R$ by $m_R$-primary ideals in a regular local ring with algebraically closed residue field by Musta\c{t}\u{a} (\cite[Corollary 1.9]{Mus}) and more recently by Kaveh and Khovanskii (\cite[Corollary 7.14]{KK1}). The inequality 4) was proven with our assumption that $\dim N(\hat R)<d$ in \cite[Theorem 3.1]{C3}.
Inequalities 2) - 4) can be deduced directly from inequality 1), as explained in \cite{T1}, \cite{T2}, \cite{RS} and \cite[Corollary 17.7.3]{HS}.

Teissier \cite{T3} (for Cohen Macaulay normal two-dimensional complex analytic $R$), Rees and Sharp \cite{RS} (in dimension 2) and Katz \cite{Ka} (in complete generality) have proven that if $R$ is a $d$-dimensional formally equidimensional Noetherian local ring and $I(1)$, $I(2)$ are $m_R$-primary ideals such that the Minkowski equality
$$
e_R((I(1) I(2));R)^{\frac{1}{d}}= e_R( I(1);R)^{\frac{1}{d}}+e_R(I(2);R)^{\frac{1}{d}}
$$
holds,
 then there exist positive integers $r$ and $s$ such that the integral closures 
 $\overline {I(1)^r}$ and $\overline{I(2)^s}$ of the ideals $I(1)^r$ and $S(2)^s$ are equal, which is equivalent to the statement that the $R$-algebras $\bigoplus_{n\ge 0}I(1)^n$ and $\bigoplus_{n\ge 0}I(2)^n$ have the same integral closure.

 The Teissier Rees Sharp Katz theorem is not true for filtrations, even in a regular local ring, as is shown in a simple example in \cite{CSS}.
 
 In Theorem \ref{PropR11}, we show that the Teissier Rees Sharp theorem is true for divisorial filtrations of an excellent two-dimensional local domain.

  In Section \ref{SecAPM1}, we interpret the mixed multiplicities of divisorial  filtrations $\mathcal I(1),\ldots,\mathcal I(r)$ as intersection multiplicities. We assume that $R$ is an algebraic local domain; that is, a  domain that is essentially of finite type over an arbitrary  field $k$  (a localization of a finitely generated $k$-algebra), and that $\phi:X\rightarrow \mbox{Spec}(R)$ is the normalization of the blowup of an $m_R$-primary ideal. We define in Section \ref{SecAPM3} anti-positive intersection products $\langle F_1,\ldots,F_d\rangle$ of anti-effective Cartier divisors $F_1,\ldots,F_d$ on $X$ with exceptional support for $\phi$, generalizing the positive intersection product of Cartier divisors defined on projective varieties in \cite{BFJ} over an algebraically closed field of characteristic zero and in \cite{C4} over an arbitrary field. 
 
 Suppose that $D(1),\ldots,D(r)$ are Cartier divisors on $X$ with exceptional support. Let $\mathcal I(j)=\{I(nD(j))\}$ for $1\le i\le r$ be divisorial filtrations of $R$, where the $m_R$-primary ideals $I(nD(j))$ are defined by (\ref{eqX22}).

 In Theorem \ref{TheoremA}, we show that, when $R$ is normal, the mixed multiplicities 
 $$
 e_R(\mathcal(1)^{[d_1]},\ldots,\mathcal I(r)^{[d_r]};R)=-\langle(-D(1))^{d_1},\ldots,(-D(r))^{d_r}\rangle
 $$
 are the negatives of the corresponding anti-positive intersection multiplicities
 for all 
 $$
 d_1,\ldots,d_r\in \NN
 $$
  such that $d_1+\cdots+d_r=d$. A related formula is given in Theorem \ref{Theorem10} if $R$ is not normal. 
 
 When $R$ has dimension 2, the anti-positive intersection product 
 $$
 \langle(-D(1))^{d_1},(-D(2))^{d_2}\rangle=(\Delta_1^{d_1}\cdot\Delta_2^{d_2})
 $$
 is the ordinary intersection product of the anti-nef parts $\Delta_1$, $\Delta_2$ of the respective Zariski decompositions of $D_1$ and $D_2$. 
 
 In Section \ref{Sec5}, we develop the theory of mixed multiplicities of divisorial filtrations in a two-dimensional excellent local domain using the theory of Zariski decomposition. We give a proof of Theorem \ref{TheoremX1} in dimension 2 using this method in Proposition \ref{PropR10} and use this method to prove Proposition \ref{PropR11} on the Minkowski equality.

 We use the method of volumes of convex bodies associated to appropriate semigroups
 introduced in \cite{Ok}, \cite{LM} and \cite{KK}.

We will denote the nonnegative integers by $\NN$ and the positive integers by $\ZZ_+$.  We will denote the set of nonnegative rational numbers  by $\QQ_{\ge 0}$  and the positive rational numbers by $\QQ_+$. 
We will denote the set of nonnegative real numbers by $\RR_{\ge0}$. For a real number $x$, $\lceil x\rceil$ will denote the smallest integer which is $\ge x$ and $\lfloor x\rfloor$ will denote the largest integer which is $\le x$.

The maximal ideal of a local ring $R$ will be denoted by $m_R$. The quotient field of a domain $R$ will be denoted by ${\rm QF}(R)$. We will denote the length of an $R$-module $M$ by $\ell_R(M)$.

\section{First Properties of Mixed multiplicities of divisorial filtrations} \label{Sec6}
\label{Secd-dim}

In this section we prove some basic facts about mixed multiplicities of valuation ideals 
an divisorial filtrations which will be useful. 

\begin{Proposition}\label{PropPos}  Suppose that $R$ is an excellent, analytically irreducible $d$-dimensional local domain and $\nu_1,\ldots,\nu_t$ are $m_R$-valuations of $R$. 
\begin{enumerate}
\item[1)] Suppose that $a_1,\ldots,a_t\in \NN$ are not all zero. Let 
$I_n=I(\nu_1)_{na_1}\cap \cdots\cap I(\nu_t)_{na_t}$ and $\mathcal I=\{I_n\}$. Then 
$$
e_R(\mathcal I;R)>0.
$$
\item[2)] Suppose that $r\in \ZZ_+$ and $a_i(j)\in \NN$ for $1\le i\le t$ and $1\le j\le r$ and for each $j$, not all $a_i(j)$ are zero. Let $I(j)_n=I(\nu_1)_{na_1(j)}\cap \cdots \cap I(\nu_t)_{na_t(j)}$ for $1\le j\le r$ and $\mathcal I(j)=\{I(j)_n\}$ for $1\le j\le r$. Then
$$
e_R(\mathcal I(1)^{[d_1]},\ldots,\mathcal I(r)^{[d_r]};R)>0
$$
for all $d_1,\ldots,d_r\in \NN$ with $d_1+\cdots+d_r=d$.
\end{enumerate}
\end{Proposition}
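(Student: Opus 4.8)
The plan is to reduce both statements to the positivity criterion already available in the literature, namely \cite[Theorem 1.4]{CSV}, which says that for an analytically irreducible $R$ all mixed multiplicities of divisorial filtrations are positive precisely when each of the individual multiplicities $e_R(\mathcal I(j);R)$ is positive. So the crux is part 1): to show that for $a_1,\dots,a_t\in\NN$ not all zero, the filtration $\mathcal I=\{I_n\}$ with $I_n=I(\nu_1)_{na_1}\cap\cdots\cap I(\nu_t)_{na_t}$ has $e_R(\mathcal I;R)>0$. Once 1) is established, part 2) is immediate: for each $j$, the filtration $\mathcal I(j)$ is exactly of the type considered in 1), so $e_R(\mathcal I(j);R)>0$, and then \cite[Theorem 1.4]{CSV} gives positivity of all the mixed multiplicities $e_R(\mathcal I(1)^{[d_1]},\dots,\mathcal I(r)^{[d_r]};R)$.

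For part 1), first I would reorganize: after discarding the $\nu_i$ with $a_i=0$ we may assume all $a_i\ge 1$, so $I_n\subset I(\nu_1)_{na_1}\subset I(\nu_1)_n$ for all $n$. Since multiplicities are monotone under inclusion of filtrations (a larger filtration has smaller or equal multiplicity, which follows from $\ell_R(R/I_n)\ge \ell_R(R/I(\nu_1)_n)$ and taking limits), it suffices to show $e_R(\mathcal I(\nu_1);R)>0$, i.e. that a single $m_R$-valuation divisorial filtration has positive multiplicity. Equivalently, I must bound $\ell_R(R/I(\nu_1)_n)$ below by a positive constant times $n^d$. The standard way to do this is to compare with an $m_R$-primary ideal: since $\nu_1$ dominates $R$ and $R$ is Noetherian local, there is a constant $c$ with $\nu_1(f)\le c$ for a set of generators of $m_R$, hence $\nu_1(f)\le c\cdot\mathrm{ord}_{m_R}(f)$ (where $\mathrm{ord}_{m_R}(f)=\max\{k: f\in m_R^k\}$) for all $f$, so $m_R^n\subset I(\nu_1)_{cn}$, i.e. $I(\nu_1)_m\supset m_R^{\lceil m/c\rceil}$; reindexing, $\ell_R(R/I(\nu_1)_m)\le \ell_R(R/m_R^{\lceil m/c\rceil})$, which gives the wrong inequality. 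For the lower bound I instead need $I(\nu_1)_n\subset m_R^{e(n)}$ with $e(n)\to\infty$ proportionally to $n$: since $\nu_1$ is an $m_R$-valuation it has rational rank one and value group $\cong\ZZ$, and $\mathcal O_{X,E_1}=V_{\nu_1}$ where $E_1$ is an exceptional divisor of the projective birational $\phi:X\to\mathrm{Spec}(R)$; choosing a general $f\in m_R$ with $\nu_1(f)=\mathrm{ord}_{E_1}(f)$ minimal, one gets that $\nu_1(g)\ge n$ forces $g\in m_R^{\lfloor n/a\rfloor}$ for a fixed $a$, because $g\in \Gamma(X,\mathcal O_X(-nE_1))\subset \Gamma(X,\phi^*(m_R^{\lfloor n/a\rfloor}))$ once $n E_1\ge \lfloor n/a\rfloor\,\phi^*(V(m_R))$ as divisors on $X$ — this last containment of divisor support is where the excellence/normality of $X$ and finiteness of $\phi$ enter. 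Then $\ell_R(R/I(\nu_1)_n)\ge \ell_R(R/m_R^{\lfloor n/a\rfloor})\sim \frac{e(m_R)}{d!}(n/a)^d$, so $e_R(\mathcal I(\nu_1);R)\ge e(m_R)/a^d>0$.

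The main obstacle I anticipate is precisely this last comparison: producing the explicit constant $a$ so that $\Gamma(X,\mathcal O_X(-nE_1))\subset m_R^{\lfloor n/a\rfloor}$, i.e. a reverse Izumi-type inequality $\nu_1(g)\le a\cdot\mathrm{ord}_{m_R}(g)$ for all $g\in R$. This is where analytic irreducibility of $R$ is genuinely used — Izumi's theorem (and its extension by Rees and Hübl–Swanson) states that any two divisorial valuations dominating an analytically irreducible local ring are linearly comparable, and applying it to $\nu_1$ and $\mathrm{ord}_{m_R}$ (or to the Rees valuations of $m_R$) gives both the upper and lower linear bounds. So in the writeup I would invoke Izumi's inequality directly: there exist $b,c>0$ with $b\,\mathrm{ord}_{m_R}(g)\le \nu_1(g)\le c\,\mathrm{ord}_{m_R}(g)$ for all $0\ne g\in R$; the left inequality gives $I(\nu_1)_n\subset m_R^{\lceil n/c\rceil}$, hence $\ell_R(R/I(\nu_1)_n)\ge \ell_R(R/m_R^{\lceil n/c\rceil})$, and dividing by $n^d/d!$ and letting $n\to\infty$ yields $e_R(\mathcal I(\nu_1);R)\ge e(m_R;R)/c^d>0$. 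Combined with the monotonicity step this proves 1), and 2) follows as above. Everything else is routine once the Izumi comparison is in hand.
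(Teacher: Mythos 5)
Your proposal is correct and follows essentially the same route as the paper: the paper also obtains the key containment $I(\nu_i)_{\alpha n}\subset m_R^n$ from Rees's Izumi theorem together with Rees's linear equivalence of the reduced order and the adic topology (valid since $R$ is analytically unramified), then bounds $e_R(\mathcal I;R)$ below by a positive multiple of $e_R(m_R;R)$, and deduces 2) from 1) via \cite[Theorem 1.4]{CSV} exactly as you do. The only nitpick is a labeling slip at the end: it is the inequality $\nu_1(g)\le c\,\mathrm{ord}_{m_R}(g)$ (the right-hand bound), not the left-hand one, that yields $I(\nu_1)_n\subset m_R^{\lceil n/c\rceil}$, as you in fact state correctly earlier in the proposal.
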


\begin{proof} 
We first prove 1). There exists an $m_R$-primary ideal $J$ such that $\nu_1,\ldots,\nu_t$ are Rees valuations of J. Without loss of generality, we can assume that $\nu_1,\ldots,\nu_t$ are the entirety of the Rees valuations for $J$.
 By Rees's Izumi theorem \cite{R3}, the topologies of the $\nu_i$ are linearly equivalent. Let $\overline \nu_{J}$ be the reduced order. By the Rees valuation theorem (recalled in \cite{R3}), 
$$
\overline \nu_{J}(x)=\min_i\left\{\frac{\nu_i(x)}{\nu_i(J)}\right\}
$$
for $x\in R$, so the topology induced by $\overline\nu_{J}$ is linearly equivalent to the topology induced by the $\nu_i$. We have that $\overline\nu_{J}$ is linearly equivalent to the $J$-topology by \cite{R2} since $R$ is analytically unramified.  

Thus there exists $\alpha\in \ZZ_+$ such that 
\begin{equation}\label{eqX5}
I(\nu_i)_{\alpha n}\subset J^n\subset m_R^n \mbox{ for all $n\in \ZZ_+$.}
\end{equation}
Let $a=\max\{a_1,\ldots,a_t\}$. Then $I_{a\alpha n}\subset m_R^n$ for all $n$.
So $\ell_R(R/m_R^n)\le \ell_R(R/I_{n\alpha a})$ for all $n$ and so
$$
e_R(\mathcal I;R)\ge \frac{1}{(a\alpha)^d}e_R(m_R;R)>0.
$$
We now prove 2). Statement 1) implies that $e_R(\mathcal I(j);R)>0$ for $1\le j\le r$. Thus all mixed multiplicities are positive by \cite[Theorem 1.4]{CSV}.

\end{proof}

Suppose that $R$ is an excellent $d$-dimensional local domain. Let $S$ be the normalization of $R$, which is a finitely generated $R$-module,  and let $m_1,\ldots,m_t$ be the maximal ideals of $S$. Let $\phi:X\rightarrow \mbox{Spec}(R)$ be a birational projective morphism such that $X$ is the normalization of   the blowup of an $m_R$-primary ideal. Since $X$ is  normal, $\phi$ factors through $\mbox{Spec}(S)$. Let $\phi_i:X_i\rightarrow \mbox{Spec}(S_{m_i})$ be the induced  projective morphisms where $X_i=X\times_{\mbox{Spec}(S)}\mbox{Spec}(S_{m_i})$. For $1\le i\le t$, let $\{E_{i,j}\}$ be the irreducible exceptional divisors in $\phi_i^{-1}(m_i)$.

Suppose that $D$ is an effective exceptional Weil divisor on $X$. Write $D=\sum_{i,j} a_{i,j}E_{i,j}$ with  $a_{ij}\in\NN$. Define $D_i=\sum_ja_{i,j}E_{i,j}$ for $1\le i\le t$.  
 The reflexive coherent sheaf $\mathcal O_X(-D)$ of $\mathcal O_X$-modules  is defined by $\mathcal O_X(-D)=i_*\mathcal O_U(-D|U)$ where $U$ is the open subset of regular points of $X$ and $i:U\rightarrow X$ is the inclusion. We have that  $\dim (X\setminus U)\le d-2$ since $X$ is normal. The basic properties of this sheaf are developed for instance in \cite[Section 13.2]{C5}. 
We have that $S\subset \mathcal O_{X,p}$ for all $p\in X$, since $\mathcal O_{X,p}$ is  normal. Now $\Gamma(X,\mathcal O_X)$ is a domain with the same quotient field as $R$, and is a finitely generated $R$-module since $\phi$ is proper. Thus $\Gamma(X,\mathcal O_X)=\Gamma(X,\mathcal O_X(0))=S$.

Let
\begin{equation}\label{eqX30}
J(D)=\Gamma(X,\mathcal O_X(-D)), J(D_i)=\Gamma(X_i,\mathcal O_{X_i}(-D_i)),  I(D)=J(D)\cap R, I(D_i)=J(D_i)\cap R.
\end{equation}
 We have that
\begin{equation}\label{eqR6}
S/J(D)\cong \bigoplus_{i=1}^t S_{m_i}/\Gamma(X_i,\mathcal O_{X_i}(-D_i))\cong \bigoplus_{i=1}^t S_{m_i}/J(D_i)
\end{equation}
and so
\begin{equation}\label{eqR15}
\ell_R(S/J(D))=\sum_{i=1}^t\ell_R(S_{m_i}/J(D_i))=\sum_{i=1}^t[S/m_i:R/m_R]\ell_{S_{m_i}}(S_{m_i}/J(D_i)).
\end{equation}
We have that $[S/m_i:R/m_R]<\infty$ for all $i$ since $S$ is a finitely generated $R$-module. 

Let $D(1),\ldots,D(r)$ be effective Weil divisors on $X$ with exceptional support in $\phi^{-1}(m_R)$.

\begin{Lemma}\label{LemmaR1}   For $n_1,\ldots,n_r\in \NN$,
$$
\lim_{n\rightarrow\infty}\frac{\ell_R(R/I(nn_1D(1))\cdots I(nn_rD(r)))}{n^d}
=
\lim_{n\rightarrow\infty}\frac{\ell_R(S/J(nn_1D(1))\cdots J(nn_rD(r)))}{n^d}.
$$

\end{Lemma}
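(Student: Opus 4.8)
The plan is to pass from $R$ to its normalization $S$ via the conductor, and to show that $\ell_R(R/\mathfrak a_n)$ and $\ell_R(S/\mathfrak b_n)$ differ only by $O(n^{d-1})$, where $\mathfrak a_n=I(nn_1D(1))\cdots I(nn_rD(r))$ and $\mathfrak b_n=J(nn_1D(1))\cdots J(nn_rD(r))$; dividing by $n^d$ and letting $n\to\infty$ then gives the identity. We may assume $R$ is not normal (otherwise $I(mD)=J(mD)$ and there is nothing to prove) and, discarding trivial factors, that each $D(j)\ne 0$ and each $n_j\ge 1$ (if all factors are trivial the identity reads $0=0$). Let $\mathfrak c=(R:_RS)$ be the conductor; it is a nonzero ideal of both $R$ and $S$, is contained in $m_R$, and satisfies $\dim R/\mathfrak c\le d-1$.

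The first step is to record the two-sided control provided by $\mathfrak c$. Since $\mathcal O_X(-D)$ is an ideal sheaf of $\mathcal O_X$ for every effective Weil divisor $D$ on $X$, $J(D)=\Gamma(X,\mathcal O_X(-D))$ is an ideal of $S=\Gamma(X,\mathcal O_X)$, hence $\mathfrak c J(D)\subseteq\mathfrak c\cap J(D)\subseteq J(D)\cap R=I(D)$; multiplying over the $r$ factors yields $\mathfrak c^{\,r}\mathfrak b_n\subseteq\mathfrak a_n\subseteq\mathfrak b_n\cap R$ for all $n$. Moreover, since $\{I(mD(j))\}_m$ is a filtration of $R$ by $m_R$-primary ideals we have $I(nn_jD(j))\supseteq I(n_jD(j))^{\,n}\supseteq m_R^{\,s_jn}$ for a suitable $s_j$ independent of $n$, so $m_R^{\,cn}\subseteq\mathfrak a_n$ with $c:=\sum_j s_j$, and therefore also $m_R^{\,cn}S\subseteq\mathfrak b_n$. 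In particular every module whose length is taken below has finite length.

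Now the short exact sequences $0\to R/(\mathfrak b_n\cap R)\to S/\mathfrak b_n\to S/(R+\mathfrak b_n)\to 0$ and $0\to(\mathfrak b_n\cap R)/\mathfrak a_n\to R/\mathfrak a_n\to R/(\mathfrak b_n\cap R)\to 0$ give
\[
\ell_R(S/\mathfrak b_n)-\ell_R(R/\mathfrak a_n)=\ell_R\!\left(S/(R+\mathfrak b_n)\right)-\ell_R\!\left((\mathfrak b_n\cap R)/\mathfrak a_n\right),
\]
so it is enough to bound each term on the right by $O(n^{d-1})$. For the first term, $R+\mathfrak b_n\supseteq R+m_R^{\,cn}S$ forces $S/(R+\mathfrak b_n)$ to be a quotient of $(S/R)/m_R^{\,cn}(S/R)$, and $S/R$ is a finitely generated $R$-module annihilated by $\mathfrak c$, hence of dimension $\le d-1$, so the usual Hilbert--Samuel estimate applies. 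For the second term, fix $t\in\mathfrak c^{\,r}$ with $t\ne 0$; then $t(\mathfrak b_n\cap R)\subseteq t\mathfrak b_n\subseteq\mathfrak c^{\,r}\mathfrak b_n\subseteq\mathfrak a_n$, so $(\mathfrak b_n\cap R)/\mathfrak a_n$ is contained in the $t$-torsion submodule ${}_t(R/\mathfrak a_n)$ of the finite-length module $R/\mathfrak a_n$. Since $R/\mathfrak a_n$ has finite length, the exact sequence $0\to{}_t(R/\mathfrak a_n)\to R/\mathfrak a_n\xrightarrow{\,t\,}R/\mathfrak a_n\to R/(\mathfrak a_n+tR)\to 0$ forces $\ell_R({}_t(R/\mathfrak a_n))=\ell_R(R/(\mathfrak a_n+tR))$, and since $\mathfrak a_n+tR\supseteq m_R^{\,cn}+tR$ this is at most $\ell_R\!\left((R/tR)/m_R^{\,cn}(R/tR)\right)$, which is $O(n^{d-1})$ because $R/tR$ has dimension $d-1$ ($R$ being a domain and $t\ne 0$). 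Hence $|\ell_R(S/\mathfrak b_n)-\ell_R(R/\mathfrak a_n)|=O(n^{d-1})$; the left-hand limit in the statement exists by Theorem \ref{TheoremI20} (for $R$ an excellent domain, $\hat R$ is reduced), so dividing by $n^d$ completes the proof and in particular shows the right-hand limit exists.

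The only genuinely structural input is the sandwich $\mathfrak c^{\,r}\mathfrak b_n\subseteq\mathfrak a_n\subseteq\mathfrak b_n$ together with the uniform bound $m_R^{\,cn}\subseteq\mathfrak a_n$; once these are available, all error terms are lengths of modules supported on a thickening of the non-normal locus $V(\mathfrak c)$ or of a hypersurface, hence are governed by Hilbert polynomials of degree $\le d-1$, and the estimates are routine. The main thing to be careful about is that the products really sit where claimed — that $\mathfrak c^{\,r}\mathfrak b_n\subseteq\mathfrak a_n$ and $\mathfrak a_n\subseteq\mathfrak b_n\cap R$ — which uses that $I(\,\cdot\,)$ and $J(\,\cdot\,)$ are multiplicative up to the defining filtration inequalities.
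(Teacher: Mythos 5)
Your argument is correct; the inclusions $\mathfrak{c}^{\,r}\mathfrak{b}_n\subseteq\mathfrak{a}_n\subseteq\mathfrak{b}_n\cap R$ and $m_R^{cn}\subseteq\mathfrak{a}_n$ hold as you claim (using that each $J(nn_jD(j))$ is an ideal of $S=\Gamma(X,\mathcal O_X)$ and that $I(aD)I(bD)\subseteq I((a+b)D)$), the length identity $\ell_R(S/\mathfrak{b}_n)-\ell_R(R/\mathfrak{a}_n)=\ell_R(S/(R+\mathfrak{b}_n))-\ell_R((\mathfrak{b}_n\cap R)/\mathfrak{a}_n)$ is exact bookkeeping, and both error terms are $O(n^{d-1})$ for the reasons you give; the existence of the left-hand limit does follow from Theorem \ref{TheoremI20} (or \cite[Theorem 6.6]{CSS}) since $\{\mathfrak{a}_n\}$ is itself a filtration of $R$ by $m_R$-primary ideals and $R$ is analytically unramified. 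The paper's proof uses the same basic input (the conductor, plus the fact that all error modules live on schemes of dimension $\le d-1$) but organizes it differently: it fixes $0\ne x\in\mathcal C$ and studies the single map ``multiplication by $x^r$'' from $S/\mathfrak{b}_n$ to $R/\mathfrak{a}_n$, bounding its cokernel by $\ell_R\bigl((R/x^rR)/\mathfrak{a}_n(R/x^rR)\bigr)$ and its kernel by the $x^r$-torsion of $S/\mathfrak{b}_n$, where the torsion bound is obtained from the exact sequence for multiplication by $x^r$ on $S/\mathfrak{b}_n$ together with the existence of the limit $\lim\ell_S(S/\mathfrak{b}_n)/n^d$, proved by decomposing $S/\mathfrak{b}_n$ over the maximal ideals $m_1,\ldots,m_t$ of $S$ and invoking Theorem \ref{TheoremI20} there. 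Your sandwich argument buys a slightly more self-contained and quantitative proof: you get explicit $O(n^{d-1})$ bounds from Hilbert--Samuel functions of $S/R$ and $R/tR$, you never need the existence of the limit on the $S$-side (it comes out as a consequence), and you avoid the localization over the $m_i$ inside the proof; the paper's multiplication-by-$x^r$ map, on the other hand, sets up exactly the kernel/cokernel modules ($A_n$, $C_n$, $F_n$, $B_n$) that are convenient for the length comparisons it reuses later via (\ref{eqR6}) and (\ref{eqR15}). One cosmetic remark: your reduction ``discard trivial factors'' is harmless but not needed, since trivial factors contribute $I=R$ and $J=S$ and the estimates go through verbatim (with the all-trivial case reading $0=0$ as you note).
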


\begin{proof}  Fix $n_1,\ldots,n_r\in \NN$. Let $\mathcal C$ be the conductor of $R$ (which is a nonzero ideal in both $R$ and $S$), and choose $0\ne x\in\mathcal C$. We then have short exact sequences of $S$-modules
$$
0\rightarrow A_{n}\rightarrow S/J(nn_1D(1))\cdots J(nn_rD(r))\stackrel{x^r}{\rightarrow} 
S/J(nn_1D(1))\cdots J(nn_rD(r))\rightarrow C_{n}\rightarrow 0
$$
where $A_{n}$ and $C_{ n}$ are the respective kernels and cokernels of multiplication of 
$$
S/J(nn_1D(1))\cdots J(nn_rD(r))
$$
 by $x^r$. We have that
$$
C_{n}\cong S/(x^rS+J(nn_1D(1))\cdots J(nn_rD(r)))\cong (S/x^rS)/(J(nn_1D(1))\cdots J(nn_rD(r))(S/x^rS)).
$$
Thus
$\lim_{n\rightarrow \infty}\frac{\ell_S(C_n)}{n^d}=0$ since $\dim S/x^rS=d-1$.
 Now
$$
S/J(nn_1D(1))\cdots J(nn_rD(r))\cong \bigoplus_{j=1}^tS_{m_j}/J(nn_1D(1)_j)\cdots J(nn_rD(r)_j)).
$$
By Theorem \ref{TheoremI20}, the limit
$$
\lim_{n\rightarrow\infty} \frac{\ell_S(S/J(nn_1D(1))\cdots J(nn_rD(r)))}{n^d}
=\sum_{j=1}^t\lim_{n\rightarrow\infty}
\frac{\ell_{S_{m_j}}(S_{m_j}/J(nn_1D(1)_j)\cdots J(nn_rD(r)_j))}{n^d}
$$
exists and so 
$\lim_{n\rightarrow\infty} \frac{\ell_S(A_{ n})}{n^d}=0$.
Let $F_n$ and $B_{ n}$ be the respective kernels and cokernels of the homomorphisms of $R$-modules
$$
S/J(nn_1D(1))\cdots J(nn_rD(r))\stackrel{x^r}{\rightarrow} R/I(nn_1D(1))\cdots I(nn_rD(r))).
$$
Then we have short exact sequences of $R$-modules
$$
0\rightarrow F_{ n}\rightarrow S/J(nn_1D(1))\cdots J(nn_rD(r))\stackrel{x^r}{\rightarrow} R/I(nn_1D(1))\cdots I(nn_rD(r)))\rightarrow B_{ n}\rightarrow 0.
$$
We have natural surjections of $R$-modules
$$
(R/x^rR)/I(nn_1D(1))\cdots I(nn_rD(r))(R/x^rR)\cong    R/(x^rR+I(nn_1D(1))\cdots I(nn_rD(r)))\rightarrow B_{ n}.
$$
Now $\dim R/x^rR=d-1$ so 
$$
\lim_{n\rightarrow \infty}\frac{\ell_R((R/x^rR)/I(nn_1D(1))\cdots I(nn_rD(r))(R/x^rR))}{n^d}=0,
$$
and so 
$$
\lim_{n\rightarrow\infty}\frac{\ell_R(B_n)}{n^d}=0.
$$

Since the support of the $S$-module $A_{ n}$ is contained in the set of maximal ideals $\{m_1,\ldots,m_t\}$, we have that
$A_{ n}\cong \bigoplus_{j=1}^t(A_{n})_{m_j}$ and $\ell_S(A_{ n})=\sum_{j=1}^t\ell_{S_{m_j}}((A_{ n})_{m_j})$. Thus 
$$
\begin{array}{lll}
\ell_R(A_{ n})&=& \sum_{j=1}^t [S/m_j:R/m_R]\ell_{S_{m_j}}((A_{ n})_{m_j})\\
&\le&\mu \ell_S(A_{ n})
\end{array}
$$
where $\mu=\max_j\{[S/m_j:R/m_R]\}$.
We then have that
$$
lim_{n\rightarrow\infty}\frac{\ell_R(A_{ n})}{n^d}\le\mu \lim_{n\rightarrow\infty}\frac{\ell_S(A_{ n})}{n^d}=0.
$$
There are natural inclusions $F_n\subset A_n$ for all $n$, so
$$
\lim_{n\rightarrow \infty}\frac{\ell_R(F_n)}{n^d}=0
$$
and thus
$$
\lim_{n\rightarrow\infty}\frac{\ell_R(R/I(nn_1D(1))\cdots I(nn_rD(r)))}{n^d}
=
\lim_{n\rightarrow\infty}\frac{\ell_R(S/J(nn_1D(1))\cdots J(nn_rD(r)))}{n^d}.
$$
\end{proof}

\section{Rees's theorem for divisorial filtrations}

In this section, suppose that $R$ is a $d$-dimensional normal excellent local ring. Let $\phi:X\rightarrow \mbox{Spec}(R)$ be a birational projective morphism which is the   blowup of an $m_R$-primary ideal such that $X$ is normal.

Let $E_1,\ldots,E_r$ be the prime exceptional divisors of $\phi$ (which all contract to $m_R$), and let $\mu_i$ be the discrete valuation with valuation ring $\mathcal O_{X,E_i}$ for $1\le i\le r$. Let $D$ be a nonzero effective Cartier divisor on $X$ with exceptional support. Let
$$
I(\mu_i)_n=\{f\in R\mid \mu_i(f)\ge n\}.
$$

For $1\le i\le r$ and 
$m\in \NN$, define
$$
\tau_{E_i,m}(D)=\min\{\mu_i(f)\mid f\in \Gamma(X,\mathcal O_X(-mD))\}.
$$
Let $\tau_m=\tau_{E_i,m}(D)$. Then since $\tau_{mn}\le n\tau_m$, we have that
\begin{equation}\label{eqAR1}
\frac{\tau_{mn}}{mn}\le\min\{\frac{\tau_m}{m},\frac{\tau_n}{n}\}.
\end{equation}

Now define
$$
\gamma_E(D)=\inf_m\frac{\tau_m}{m}.
$$

Expand $D=\sum_{i=1}^r a_iE_i$ with  $a_i\in \NN$. 
We have that 
$$
\Gamma(X,\mathcal O_X(-mD))=\{f\in R\mid \mu_i(f)\ge ma_i\mbox{ for }1\le i\le r\}.
$$
Thus $\tau_{E_i,m}(D)\ge ma_i$ for all $m\in\NN$, and so 
\begin{equation}\label{eqAR12}
\gamma_{E_i}(D)\ge a_i\mbox{ for all $i$}.
\end{equation}

\begin{Lemma}\label{LemmaAR1}
We have that
$$
\Gamma(X,\mathcal O_X(-mD))
=\Gamma(X,\mathcal O_X(-\lceil \sum_{i=1}^r m\gamma_{E_i}(D)E_i\rceil))
$$
for all $m\in \NN$.
\end{Lemma}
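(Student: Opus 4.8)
The plan is to reduce the claimed identity of section modules to elementary inequalities among the divisorial valuations $\mu_1,\dots,\mu_r$. First I would record the description of global sections of the reflexive sheaves involved: since $X$ is normal and $\Gamma(X,\mathcal O_X)=R$, for an arbitrary effective exceptional Weil divisor $D'=\sum_{i=1}^r b_iE_i$ with $b_i\in\NN$ one has
$$
\Gamma(X,\mathcal O_X(-D'))=\{f\in R\mid \mu_i(f)\ge b_i\text{ for }1\le i\le r\}.
$$
This follows from $\mathcal O_X(-D')=i_*\mathcal O_U(-D'|_U)$ and the fact that $U$ contains every codimension-one point of $X$: a rational function $f$ lies in $\Gamma(U,\mathcal O_U(-D'|_U))$ exactly when $\operatorname{ord}_P(f)\ge 0$ for every prime divisor $P\notin\{E_1,\dots,E_r\}$ and $\mu_i(f)\ge b_i$ for each $i$, and the first condition together with $\mu_i(f)\ge b_i\ge 0$ says precisely that $f\in\Gamma(X,\mathcal O_X)=R$. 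Applying this with $D'=\lceil\sum_{i=1}^r m\gamma_{E_i}(D)E_i\rceil=\sum_{i=1}^r\lceil m\gamma_{E_i}(D)\rceil E_i$, which has nonnegative integer coefficients because $\gamma_{E_i}(D)\ge a_i\ge 0$ by \eqref{eqAR12}, the lemma is reduced to the set equality
$$
\{f\in R\mid \mu_i(f)\ge ma_i\text{ for all }i\}=\{f\in R\mid \mu_i(f)\ge\lceil m\gamma_{E_i}(D)\rceil\text{ for all }i\}.
$$

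The only numerical input I would need is the inequality $\tau_{E_i,m}(D)\ge\lceil m\gamma_{E_i}(D)\rceil$ for every $i$ and $m$. This is immediate from the definition: $\gamma_{E_i}(D)=\inf_n\tau_{E_i,n}(D)/n\le\tau_{E_i,m}(D)/m$ gives $m\gamma_{E_i}(D)\le\tau_{E_i,m}(D)$, and since $\tau_{E_i,m}(D)$ is an integer, applying $\lceil\ \rceil$ to both sides preserves the inequality. Given this, the inclusion $\subseteq$ of the lemma is immediate: any $f\in\Gamma(X,\mathcal O_X(-mD))$ satisfies $\mu_i(f)\ge\tau_{E_i,m}(D)$ for all $i$, by the very definition of $\tau_{E_i,m}(D)$ as the minimum of $\mu_i$ over $\Gamma(X,\mathcal O_X(-mD))$, hence $\mu_i(f)\ge\lceil m\gamma_{E_i}(D)\rceil$. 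The reverse inclusion $\supseteq$ is equally quick: if $\mu_i(f)\ge\lceil m\gamma_{E_i}(D)\rceil$ for all $i$, then $\mu_i(f)\ge m\gamma_{E_i}(D)\ge ma_i$ by \eqref{eqAR12}, so $f\in\Gamma(X,\mathcal O_X(-mD))$. Combining the two inclusions gives the asserted equality.

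I do not expect a serious obstacle here. The two points worth a word of care are: (i) the sheaf on the right is attached to a Weil divisor that need not be Cartier, which is why the reflexive-sheaf description in the first paragraph (rather than the one quoted for $mD$) is used; and (ii) the inclusion $\subseteq$ really uses that an element of $\Gamma(X,\mathcal O_X(-mD))$ is constrained by $\mu_i(f)\ge\tau_{E_i,m}(D)$, which is in general strictly stronger than the defining condition $\mu_i(f)\ge ma_i$.
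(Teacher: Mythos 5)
Your proof is correct and follows essentially the same route as the paper: one inclusion from $\lceil m\gamma_{E_i}(D)\rceil\ge ma_i$ (via (\ref{eqAR12})), the other from $\mu_i(f)\ge\tau_{E_i,m}(D)\ge m\gamma_{E_i}(D)$ together with integrality of $\mu_i(f)$. The only difference is that you spell out the valuation-theoretic description of sections of the reflexive sheaf attached to the (possibly non-Cartier) ceiling divisor, a step the paper leaves implicit.
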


\begin{proof} We have that 
$$
\Gamma(X,\mathcal O_X(-\lceil \sum_{i=1}^rm\gamma_{E_i}(D)E_i\rceil))
\subset \Gamma(X,\mathcal O_X(-mD))
$$
by (\ref{eqAR12}).

Suppose that $f\in \Gamma(X,\mathcal O_X(-mD))$. Then 
$\mu_i(f)\ge\tau_{E_i,m}(D)\ge m\gamma_{E_i}(D)$ for all $i$, so that
$\mu_i(f)\ge \lceil m\gamma_{E_i}(D)\rceil$ for all $i$ since $\mu_i(f)\in \NN$.
\end{proof}

We now define a  valuation which we will use  to compute volumes of Cartier divisors $D$, and  which will allow us to extract some extra information which we need to prove Theorem \ref{TheoremAR1} below.
Suppose that $p\in E_i$ is a  closed point which   is nonsingular on $X$ and $E_i$  and which
is not contained in $E_j$ for $j\ne i$. Let
\begin{equation}\label{eqAR2}
X=Y_0\supset Y_1=E_i\supset \cdots \supset Y_d=\{p\}
\end{equation}
be a flag; that is, the $Y_i$ are subvarieties of $X$ of dimension $d-i$ such that there is a regular system of parameters $a_1,\ldots,a_d$ in $\mathcal O_{X,p}$ such that  $a_1=\cdots=a_i=0$ are local equations of $Y_i$ for $1\le i\le d$. 

The flag determines a valuation $\nu$ on the quotient field $K$ of $R$  as follows.  We have a sequence of natural surjections of regular local rings
\begin{equation}\label{eqGA3} \mathcal O_{X,p}=
\mathcal O_{Y_0,p}\overset{\sigma_1}{\rightarrow}
\mathcal O_{Y_1,p}=\mathcal O_{Y_0,p}/(a_1)\overset{\sigma_2}{\rightarrow}
\cdots \overset{\sigma_{d-1}}{\rightarrow} \mathcal O_{Y_{d-1},p}=\mathcal O_{Y_{d-2},p}/(a_{d-1}).
\end{equation}
Define a rank $d$ discrete valuation $\nu$ on $K$ (an Abhyankar valuation)  by prescribing for $s\in \mathcal O_{X,p}$,
$$
\nu(s)=({\rm ord}_{Y_1}(s),{\rm ord}_{Y_2}(s_1),\cdots,{\rm ord}_{Y_d}(s_{d-1}))\in (\ZZ^d)_{\rm lex}
$$
where 
$$
s_1=\sigma_1\left(\frac{s}{a_1^{{\rm ord}_{Y_1}(s)}}\right),
s_2=\sigma_2\left(\frac{s_1}{a_2^{{\rm ord}_{Y_2}(s_1)}}\right),\ldots,
s_{d-1}=\sigma_{d-1}\left(\frac{s_{d-2}}{a_{d-1}^{{\rm ord}_{Y_{d-1}}(s_{d-2})}}\right)
$$
and $\mbox{ord}_{Y_{j+1}}(s_j)$ is the highest power of $a_{j+1}$  which divides $s_j$ in $\mathcal O_{Y_j,p}$.
We have that
$$
\nu(s)=\left(\mu_i(f),\omega\left(\frac{f}{a_1^{\mu_1(f)}}\right)\right)
$$
where $\omega$ is the rank $d-1$ Abhyankar valuation on the function field $k(E_i)$ of $E_i$ determined by the flag
$$
E_i=Y_1\supset \cdots \supset Y_d=\{p\}.
$$
on the projective $k$-variety $E_i$, where $k=R/m_R$.

Consider the graded linear series $\Gamma(E_i,\mathcal O_X(-nE_i)\otimes_{\mathcal O_X}\mathcal O_{E_i})$ on $E_i$. Let $g=0$ be a local equation of $E_i$ in $\mathcal O_{X,p}$. Then for $n\in \NN$, we have natural commutative diagrams
$$
\begin{array}{ccc}
\Gamma(X,\mathcal O_X(-nE_i))&\rightarrow & \Gamma(E_i,\mathcal O_X(-nE_i)\otimes\mathcal O_{E_i})\\
\downarrow&&\downarrow\\
\mathcal \mathcal O_X(-nE_i)_p&\rightarrow&\mathcal O_X(-nE_i)_p\otimes_{\mathcal O_{X,p}}\mathcal O_{E_i,p}\\
=O_{X,p}g^n&&\cong \mathcal O_{E_i,p}\otimes_{\mathcal O_{X,p}}\mathcal O_{X,p}g^n\end{array}
$$
where we denote the rightmost vertical arrow by $s\mapsto \epsilon_n(s)\otimes g^n$ 
and the bottom horizontal arrow is 
$$
f\mapsto \left[\frac{f}{g^n}\right]\otimes g^n,
$$
where $\left[\frac{f}{g^n}\right]$ is the class of $\frac{f}{g^n}$ in $\mathcal O_{E_i,p}$.

Let $\Xi$ be the semigroup
$$
\Xi = \{(\omega(\epsilon_n(s)),n)\mid n\in \NN\mbox{ and } s\in \Gamma(E_i,\mathcal O_X(-nE_i)\otimes_{\mathcal O_X}\mathcal O_{E_i})\}\subset \ZZ^d,
$$
and let $\Delta(\Xi)$ be the intersection of the closed convex cone generated by $\Xi$ in $\RR^d$ with $\RR^{d-1}\times\{1\}$. By the proof of Theorem 8.1 \cite{C2} or the proof of \cite[Theorem A]{LM}, $\Delta(\Xi)$ is  compact and convex. Let 
$$
\Xi_n=\{(\omega(\epsilon_n(s)),n)\mid s\in \Gamma(E_i,\mathcal O_X(-nE_i)\otimes_{\mathcal O_X}\mathcal O_{E_i})\}.
$$
Suppose that $\delta$ is a positive integer.
Let 
$$
\Gamma(D)=\{(\nu(f),n)\mid f\in I(nD)\mbox{ and }\mu_1(f)\le n\delta\}\subset \NN^{d+1}.
$$
Let $\Delta(D)$ be the intersection of the closed convex cone generated by $\Gamma(D)$ in $\RR^{d+1}$ with $\RR^d\times\{1\}$. 

We have that 
$$
\Gamma(D)_m :=\{(\nu(f),m)\mid f\in I(mD)\}
\subset \cup_{0\le i\le m\delta}\left(\{i\}\times \Xi_i\right)\times\{m\}.
$$
For $t\in \RR_+$, let $t\Delta(\Xi)=\{t\sigma\mid \sigma \in \Delta(\Xi)\}$.  For $(i,\sigma,m)\in \Gamma(D)$, we have that
$$
\left(\frac{i}{m},\frac{\sigma}{m}\right)\in \cup_{0\le i\le \delta m}\left[\{\frac{i}{m}\}\times \frac{i}{m}\Delta(\Xi)\right]\subset \cup_{t\in [0,\delta]}\{t\}\times t\Delta(\Xi).
$$
The continuous map $[0,\delta]\times\Delta(\xi)\rightarrow \RR^d$ defined by $(t,x)\mapsto (t,tx)$ has image $\cup_{t\in [0,\delta]}\{t\}\times t\Delta(\Xi)$ which is compact since $\Delta(\Xi)$ is.
Thus the closed convex set $\Delta(D)$ is compact  and so $\Gamma(D)$  satisfies condition (5) of \cite[Theorem 3.2]{C2}.


Now we verify that condition (6) of \cite[Theorem  3.2]{C2} 
 is satisfied; that is, $\Gamma(D)$ generates $\ZZ^{d+1}$ as a group. let $G(\Gamma(D))$ be the subgroup of $\ZZ^{d+1}$ generated by $\Gamma(D)$.
We have that the value group of $\nu$ is $\ZZ^d$, and $e_i=\nu(a_i)$ for $1\le i\le d$ is the natural basis of $\ZZ^d$.  Write $a_i=\frac{f_i}{g_i}$ with $f_i,g_i\in R$ for $1\le i\le d$. There exists $0\ne h\in I(D)$. Thus $hf_i,hg_i\in I(D)$. There exists $c\in \ZZ_+$ such that $hf_i,hg_i\not\in I(\mu_1)_c$ for $1\le i\le d$. Possibly increasing $\delta$ in the definition of $\Gamma(D)$, we then have 
$(\nu(hf_i),1),(\nu(hg_i),1)\in \Gamma(D)$ for $1\le i\le d$. Thus 
$(\nu(hf_i)-\nu(hg_i),0)=(e_i,0)\in G(\Gamma(D))$ for $1\le i\le d$. Since $(\nu(hf_i),1)\in \Gamma(D)$, we then have that $(0,1)\in G(\Gamma(D))$.
Thus we have that
$$
\lim_{n\rightarrow \infty}\frac{\# \Gamma(D)_n}{n^d}={\rm Vol}(\Delta(D))
$$
by \cite[Theorem 3.2]{C2} or \cite[Proposition 2.1]{LM}.

By Rees's Izumi theorem \cite{R3}, we have that there exists $\lambda\in\ZZ_+$ such that if $f\in R$ and $\mu_i(f)\ge n\lambda$, then $\mu_j(f)\ge n$ for $1\le j\le r$. Thus $I(\mu_i)_{n\lambda}\subset I(\mu_j)_n$ for all $n\in \NN$, so that
$$
I(\mu_i)_{na\lambda}\subset I(\mu_i)_{na_1}\cap \cdots \cap I(\mu_r)_{na_r}=\Gamma(X,\mathcal O_X(-nD))
$$
where $a=\max\{a_1,\ldots,a_r\}$. 

Take $\delta$ to be greater than or equal to  $a\lambda$ in the definition of $\Gamma(D)$. 
Let 
$$
\mu=[\mathcal O_{X,p}/m_p:R/m_R].
$$
 Consider the Newton Okounkov bodies 
$\Delta(0)$ and $\Delta(D)$ constructed from the semigroups 
$\Gamma(0)$ and $\Gamma(D)$ with  this $\delta$. Then, as in \cite[Theorem 5.6]{C3},
\begin{equation}\label{eqAR10}
\lim_{m\rightarrow\infty}\frac{\ell_R(R/I(mD))}{m^d}=\mu({\rm Vol}(\Delta(0))-{\rm Vol}(\Delta(D))).
\end{equation}

In fact, we have that
\begin{equation}\label{eqAR11}
\lim_{n\rightarrow\infty}\frac{\ell_R(I(nD)/I(\mu_i)_{\delta n})}{n^d}=\mu{\rm Vol}(\Delta(D)).
\end{equation}

\begin{Lemma}\label{LemmaCCL} Suppose that $\Delta_1$ and $\Delta_2$ are compact, convex subsets of $\RR^d$, $\Delta_1\subset \Delta_2$ and ${\rm Vol}(\Delta_1)={\rm Vol}(\Delta_2)>0$. Then $\Delta_1=\Delta_2$.
\end{Lemma}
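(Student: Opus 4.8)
The statement is purely a fact about convex geometry, so the plan is to argue by contradiction using the full-dimensionality of $\Delta_1$ (which follows from ${\rm Vol}(\Delta_1)>0$) together with convexity. Suppose $\Delta_1\subsetneq \Delta_2$, and pick a point $x\in \Delta_2\setminus\Delta_1$. Since $\Delta_1$ is a convex set of positive volume it has nonempty interior, so choose $y$ in the interior of $\Delta_1$ and $\epsilon>0$ with the closed ball $\overline{B(y,\epsilon)}\subseteq \Delta_1\subseteq\Delta_2$. Note $x\ne y$ since $y\in\Delta_1$ and $x\notin\Delta_1$.

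Let $C$ be the convex hull of $\overline{B(y,\epsilon)}\cup\{x\}$. By convexity of $\Delta_2$ we have $C\subseteq\Delta_2$. The key geometric observation is that near its apex $x$ the "ice-cream cone'' $C$ is genuinely $d$-dimensional: the solid cone with vertex $x$ over the ball $\overline{B(y,\epsilon)}$ is contained in $C$, and because $\overline{B(y,\epsilon)}$ is full-dimensional and $x\ne y$, this solid cone has nonempty interior accumulating at $x$; hence every neighborhood of $x$ meets $C$ in a set of positive Lebesgue measure. On the other hand, $\Delta_1$ is closed and $x\notin\Delta_1$, so there is $\rho>0$ with $B(x,\rho)\cap\Delta_1=\emptyset$. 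Then $\delta:={\rm Vol}(C\cap B(x,\rho))>0$ by the previous sentence.

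Finally, $\Delta_1$ and $C\cap B(x,\rho)$ are disjoint subsets of $\Delta_2$, so
$$
{\rm Vol}(\Delta_2)\ \ge\ {\rm Vol}(\Delta_1)+{\rm Vol}(C\cap B(x,\rho))\ =\ {\rm Vol}(\Delta_1)+\delta\ >\ {\rm Vol}(\Delta_1),
$$
contradicting ${\rm Vol}(\Delta_1)={\rm Vol}(\Delta_2)$. Hence $\Delta_1=\Delta_2$.

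There is essentially no serious obstacle here; the only point that needs a careful (but short) justification is that the convex hull of a full-dimensional ball and an external point has positive volume in every neighborhood of that point, which is what forces $C\cap B(x,\rho)$ to have positive measure. One could alternatively phrase the whole argument as: ${\rm Vol}(\Delta_2\setminus\Delta_1)=0$ while $\Delta_2\setminus\Delta_1$ is the trace on $\Delta_2$ of an open set, and any nonempty such trace has positive volume because $\Delta_2$ contains the full-dimensional $\Delta_1$; the homothety-based version of this is also available, but the ice-cream-cone argument above is the most self-contained.
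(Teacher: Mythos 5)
Your proof is correct and follows essentially the same strategy as the paper's: argue by contradiction, isolate a point $x\in\Delta_2\setminus\Delta_1$ from the closed set $\Delta_1$ by a small ball, and exhibit a positive-volume subset of $\Delta_2$ inside that ball, contradicting ${\rm Vol}(\Delta_1)={\rm Vol}(\Delta_2)$. The only (minor) difference is in how that positive-volume piece is produced: you take the convex hull of $\{x\}$ with an interior ball of $\Delta_1$ (using ${\rm Vol}(\Delta_1)>0$ and the standard fact that a convex set of positive measure has nonempty interior, with the scaled copies $x+t(\overline{B(y,\epsilon)}-x)$ giving the positive measure near $x$), whereas the paper picks $w_1,\ldots,w_d\in\Delta_2$ with $w_i-x$ a basis of $\RR^d$ and places a small parallelepiped with corner $x$ inside $\Delta_2\cap B_\epsilon(x)$; both constructions are valid and of comparable length.
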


\begin{proof} Suppose that $\Delta_1\ne \Delta_2$. Then there exists $p\in \Delta_2\setminus \Delta_1$. Since $\Delta_1$ is closed in $\RR^d$, there exists an epsilon ball $B_{\epsilon}(p)$ centered at $p$ in $\RR^d$ such that $B_{\epsilon}(p)\cap \Delta_1=\emptyset$. Now $\Delta_2$ has positive volume, so there exist $w_1,\ldots,w_d\in \Delta_2$ such that $v_1=w_1-p,\ldots, v_d=w_d-p$  is a real basis of $\RR^d$. Since $\Delta_2$ is convex, there exists $\delta>0$ such that letting $W$ be the hypercube
$$
W=\{p+\alpha_1v_1+\cdots+\alpha_dv_d\mid 0\le \alpha_i\le \delta\mbox{ for }1\le i\le d\},
$$
we have that $W\subset \Delta_2\cap B_{\epsilon}(p)$. But then 
$$
{\rm Vol}(\Delta_2)-{\rm Vol}(\Delta_1)\ge {\rm Vol}(W)>0.
$$
a contradiction. Thus $\Delta_1=\Delta_2$.
\end{proof}

\begin{Lemma}\label{LemmaAR2} For $\delta>>0$, we have that 
${\rm Vol}(\Delta(D))>0$.
\end{Lemma}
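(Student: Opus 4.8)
The plan is to extract $\mathrm{Vol}(\Delta(D))$ from the length formula (\ref{eqAR11}) and to show that its right-hand side is positive once $\delta$ is large. Throughout keep $\delta\ge a\lambda$, so that (\ref{eqAR10}) and (\ref{eqAR11}) apply and moreover $I(\mu_i)_{\delta n}\subset I(\mu_i)_{na\lambda}\subset I(nD)$ for every $n$.

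First I would rewrite the numerator of (\ref{eqAR11}) additively. Since $I(\mu_i)_{\delta n}\subset I(nD)\subset R$ are $m_R$-primary ideals for $n\ge 1$, additivity of length gives
$$
\ell_R(I(nD)/I(\mu_i)_{\delta n})=\ell_R(R/I(\mu_i)_{\delta n})-\ell_R(R/I(nD)).
$$
Dividing by $n^d$ and letting $n\to\infty$, all three limits exist. The left-hand side tends to $\mu\,\mathrm{Vol}(\Delta(D))$ by (\ref{eqAR11}). Writing $\mathcal J:=\{I(mD)\}$ for the divisorial filtration attached to $D$, we have $\ell_R(R/I(nD))/n^d\to \tfrac1{d!}e_R(\mathcal J;R)$. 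Writing $\mathcal K:=\{I(\mu_i)_m\}$, Theorem \ref{TheoremI20} applies (the $m_R$-adic completion of the normal excellent ring $R$ is normal, hence reduced, so $\dim N(\hat R)<d$), so $\lim_m \ell_R(R/I(\mu_i)_m)/m^d$ exists; evaluating along the subsequence $m=\delta n$ yields $\ell_R(R/I(\mu_i)_{\delta n})/n^d\to \tfrac{\delta^d}{d!}e_R(\mathcal K;R)$. Hence
$$
\mu\,\mathrm{Vol}(\Delta(D))=\frac1{d!}\Bigl(\delta^d\,e_R(\mathcal K;R)-e_R(\mathcal J;R)\Bigr).
$$

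Now the crucial input is the strict positivity of $e_R(\mathcal K;R)$. Since $R$ is excellent and normal it is analytically irreducible, and $\mu_i$ is an $m_R$-valuation, so Proposition \ref{PropPos}(1), applied with the single valuation $\mu_i$ and coefficient $1$, gives $e_R(\mathcal K;R)>0$. On the other hand $e_R(\mathcal J;R)$ is a fixed, finite, nonnegative real number. Therefore, enlarging $\delta$ if necessary so that in addition $\delta^d\,e_R(\mathcal K;R)>e_R(\mathcal J;R)$, we get $\mu\,\mathrm{Vol}(\Delta(D))>0$; since $\mu=[\mathcal O_{X,p}/m_p:R/m_R]\ge 1$, this forces $\mathrm{Vol}(\Delta(D))>0$.

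So the lemma is essentially bookkeeping around (\ref{eqAR11}); the one substantive ingredient is the inequality $e_R(\mathcal K;R)>0$, and that is exactly where analytic irreducibility of $R$ enters, through Proposition \ref{PropPos} (whose proof in turn rests on Rees's Izumi theorem). The only place needing a moment's care is the passage to the subsequence $m=\delta n$ when computing $\lim_n\ell_R(R/I(\mu_i)_{\delta n})/n^d$, which is legitimate precisely because the full limit over $m$ is already known to exist by Theorem \ref{TheoremI20}.
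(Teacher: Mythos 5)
Your argument is correct, and it reaches the conclusion by a slightly different mechanism than the paper. You take (\ref{eqAR11}), split $\ell_R(I(nD)/I(\mu_i)_{\delta n})$ as a difference of colengths, and obtain the exact identity $\mu\, d!\,{\rm Vol}(\Delta(D))=\delta^d e_R(\mathcal K;R)-e_R(\mathcal J;R)$, after which the strict positivity of $e_R(\mathcal K;R)$ (Proposition \ref{PropPos}(1), legitimately applicable since an excellent normal local ring is analytically irreducible and $\mu_i$ is an $m_R$-valuation) lets you win by taking $\delta$ large; your passage to the subsequence $m=\delta n$ is justified exactly as you say, by Theorem \ref{TheoremI20}. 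The paper instead stays with (\ref{eqAR11}) directly and sandwiches: using (\ref{eqX5}) it chooses $\alpha$ with $I(\mu_i)_{\alpha n}\subset m_R^{n}$ and $c$ with $m_R^{cn}\subset I(nD)$, so that for $\delta$ large $I(\mu_i)_{\delta n}\subset m_R^{2cn}\subset m_R^{cn}\subset I(nD)$, and then bounds the length below by $\ell_R(m_R^{cn}/m_R^{2cn})$, whose normalized limit is $e_R(m_R;R)c^d(2^d-1)/d!>0$. The two routes rest on the same underlying ingredient (the linear comparison of the valuative and $m_R$-adic topologies coming from Izumi's and Rees's theorems, packaged either as (\ref{eqX5}) or as Proposition \ref{PropPos}(1)); what yours buys is a clean closed-form expression for ${\rm Vol}(\Delta(D))$ in terms of the two multiplicities, at the cost of a less explicit threshold for $\delta$ (it depends on $e_R(\mathcal J;R)/e_R(\mathcal K;R)$ rather than on the concrete constants $c,\alpha$), while the paper's sandwich gives an explicit numerical lower bound for the volume.
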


\begin{proof} 
By (\ref{eqX5}) in the proof of Proposition \ref{PropPos}, there exists $\alpha\in \ZZ_+$ such that 
$I(\mu_i)_{\alpha n}\subset m_R^n$ for all $n\in \ZZ_+$ (since an excellent normal local ring is analytically ireducible). Further, there exists $c\in \ZZ_+$ such that $m_R^c\subset I(D)$, so that $m_R^{nc}\subset I(nD)$ for all $n$. Choosing $\delta>2\alpha$  so that 
$I(\mu_i)_{\delta n}\subset m_R^{2cn}$ for all $n$, we have that
$$
\begin{array}{lll}
{\rm Vol}(\Delta(D))&=&\frac{1}{\mu}\lim_{n\rightarrow \infty}\frac{\ell_R(nD)/I(\mu_i)_{\delta n})}{n^d}
\\&\ge &\frac{1}{\mu}\lim_{n\rightarrow \infty}\frac{\ell_R(m_R^{cn}/m_R^{2cn})}{n^d}\\
&=&\frac{1}{\mu} \frac{e_R(m_R;R)c^d(2^d-1)}{d!}>0.
\end{array}
$$


\end{proof}

\begin{Theorem}\label{TheoremAR1}
Let $D_1, D_2$ be effective Cartier divisors on $X$ with exceptional support, such that
$D_1\le D_2$ and $e_R(\mathcal I_1,R)=e_R(\mathcal I_2,R)$, where $\mathcal I_1=\{I(mD_1)\}$ and $\mathcal I_2=\{I(mD_2)\}$. Then
$$
\Gamma(X,\mathcal O_X(-mD_1))=\Gamma(X,\mathcal O_X(-mD_2))
$$
for all $m\in\NN$.
\end{Theorem}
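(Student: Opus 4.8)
The plan is to reduce the assertion to the equalities $\gamma_{E_i}(D_1)=\gamma_{E_i}(D_2)$ for all $i$ and then to conclude by Lemma \ref{LemmaAR1}: that lemma gives $\Gamma(X,\mathcal O_X(-mD_j))=\Gamma(X,\mathcal O_X(-\lceil\sum_i m\gamma_{E_i}(D_j)E_i\rceil))$ for all $m\in\NN$ and $j=1,2$, so once the invariants $\gamma_{E_i}$ agree, the two sheaves of sections coincide for every $m$. To begin, note that one inequality is easy: since $D_1\le D_2$ we have $\mathcal O_X(-mD_2)\subseteq\mathcal O_X(-mD_1)$, hence $\Gamma(X,\mathcal O_X(-mD_2))\subseteq\Gamma(X,\mathcal O_X(-mD_1))$ (both sit inside $R$, as $R$ is normal), so minimizing $\mu_i$ over these sets gives $\tau_{E_i,m}(D_1)\le\tau_{E_i,m}(D_2)$ and therefore $\gamma_{E_i}(D_1)\le\gamma_{E_i}(D_2)$ for each $i$. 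It remains to prove the reverse inequality.

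To this end I would fix $i$ and choose, exactly as in the construction preceding the theorem, a closed point $p\in E_i$ that is nonsingular on $X$ and on $E_i$ and lies on no $E_j$ with $j\ne i$ (such $p$ exist, the bad locus being a proper closed subset of the positive-dimensional variety $E_i$), together with a flag (\ref{eqAR2}); this produces the rank-$d$ Abhyankar valuation $\nu$ whose first coordinate is $\mu_i$, the semigroups $\Gamma(D)$, and the Newton--Okounkov bodies $\Delta(D)$. Then I would take $\delta\in\ZZ_+$ large enough that simultaneously: (i) $\Gamma(D_1)$ and $\Gamma(D_2)$ both generate $\ZZ^{d+1}$, arranged by enlarging $\delta$ exactly as in the verification of condition (6) above; (ii) $\mathrm{Vol}(\Delta(D_1))>0$ and $\mathrm{Vol}(\Delta(D_2))>0$, by Lemma \ref{LemmaAR2}; (iii) $I(\mu_i)_{\delta n}\subseteq\Gamma(X,\mathcal O_X(-nD_2))\subseteq\Gamma(X,\mathcal O_X(-nD_1))$ for all $n$, using Rees's Izumi theorem exactly as above together with $D_1\le D_2$, so that (\ref{eqAR10}) is valid for both $D_1$ and $D_2$ (with the same $\Delta(0)$); and (iv) $\delta>\gamma_{E_i}(D_1)$ and $\delta>\gamma_{E_i}(D_2)$. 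Since $D_1\le D_2$ gives $\Gamma(D_2)\subseteq\Gamma(D_1)$, hence $\Delta(D_2)\subseteq\Delta(D_1)$, and since (\ref{eqAR10}) gives $e_R(\mathcal I_j;R)=d!\,\mu\,(\mathrm{Vol}(\Delta(0))-\mathrm{Vol}(\Delta(D_j)))$, the hypothesis $e_R(\mathcal I_1;R)=e_R(\mathcal I_2;R)$ forces $\mathrm{Vol}(\Delta(D_1))=\mathrm{Vol}(\Delta(D_2))$; as these volumes are positive, Lemma \ref{LemmaCCL} yields $\Delta(D_1)=\Delta(D_2)$.

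The crux, which I expect to be the main obstacle, is then to recover $\gamma_{E_i}(D)$ from $\Delta(D)$: I claim that the minimum of the first coordinate over $\Delta(D)$ equals $\gamma_{E_i}(D)$. The lower bound is immediate, since every $(\nu(f),n)\in\Gamma(D)$ has first coordinate $\mu_i(f)\ge\tau_{E_i,n}(D)\ge n\gamma_{E_i}(D)$ by the definitions of $\tau_{E_i,n}(D)$ and $\gamma_{E_i}(D)$, and this inequality passes to the convex cone generated by $\Gamma(D)$ and hence to its slice $\Delta(D)$. For the matching upper bound, I would choose for each $m$ a section $f_m\in\Gamma(X,\mathcal O_X(-mD))$ with $\mu_i(f_m)=\tau_{E_i,m}(D)$; along a sequence $m_k$ realizing the infimum $\gamma_{E_i}(D)=\inf_m\tau_{E_i,m}(D)/m$ one has $\tau_{E_i,m_k}(D)/m_k<\delta$ for $k\gg 0$ by (iv), so $(\nu(f_{m_k}),m_k)\in\Gamma(D)$ and the point $\tfrac{1}{m_k}(\nu(f_{m_k}),m_k)$ lies in $\Delta(D)$ with first coordinate $\tau_{E_i,m_k}(D)/m_k$; compactness of $\Delta(D)$ then produces a limit point of these in $\Delta(D)$ whose first coordinate is $\gamma_{E_i}(D)$. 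Applying the claim to $D_1$ and to $D_2$ and using $\Delta(D_1)=\Delta(D_2)$ gives $\gamma_{E_i}(D_1)=\gamma_{E_i}(D_2)$; since $i$ was arbitrary, Lemma \ref{LemmaAR1} completes the proof. The accompanying bookkeeping --- that the minimizing sections $f_m$ are admissible for $\Gamma(D)$, i.e.\ that $\mu_i(f_m)\le m\delta$, and that a single $\delta$ can be chosen meeting (i)--(iv) --- is routine once $\delta$ is taken large enough.
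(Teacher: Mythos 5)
Your proposal is correct and follows essentially the same route as the paper: reduce to $\gamma_{E_i}(D_1)=\gamma_{E_i}(D_2)$ via the equality $\Delta(D_1)=\Delta(D_2)$ of Newton--Okounkov bodies (using (\ref{eqAR10}), Lemma \ref{LemmaAR2} and Lemma \ref{LemmaCCL}), then conclude with Lemma \ref{LemmaAR1}. Your added detail that $\gamma_{E_i}(D)$ is the minimum of the first coordinate on $\Delta(D)$ (and your inclusion $\Delta(D_2)\subseteq\Delta(D_1)$, which is the correct direction, the paper's text stating it the other way around) just makes explicit what the paper asserts tersely.
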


\begin{proof}
 Write $D_1=\sum_{i=1}^r a_iE_i$ and $D_2=\sum_{i=1}^rb_iE_i$ with $a_i,b_i\ge 0$ for all $i$. For each $i$ with $1\le i\le r$ choose a flag (\ref{eqAR2})
 with $Y_1=E_i$ and $p$ a closed point such that $p$ is nonsingular on $X$ and $E_i$ and $p\not\in E_j$ for $j\ne i$.  Let 
$\pi_1:\RR^{d+1}\rightarrow \RR$ be the projection onto the first factor. 

 By the definition of $\gamma_{E_i}(D_2)$ and since $\gamma_{E_i}(D_2)$ is in the closure of the compact set $\pi_1(\Delta(D_2))$,
$$
\pi_1^{-1}(\gamma_{E_i}(D_2))\cap \Delta(D_2)\ne \emptyset
$$
 and
 $$
 \pi_1^{-1}(a)\cap \Delta(D_2)=\emptyset\mbox{ if }a<\gamma_{E_i}(D_2).
 $$

We have that $D_1<D_2$ implies $\Delta(D_1)\subset \Delta(D_2)$.   We have that   ${\rm Vol}(\Delta(D_1)>0$ by Lemma \ref{LemmaAR2}. Since we are assuming that 
$e_R(\mathcal I_1;R)=e_R(\mathcal I_2;R)$, by (\ref{eqAR10}), we have that 
${\rm Vol}(D_1)={\rm Vol}(D_2)$, and so  $\Delta(D_1)=\Delta(D_2)$ by Lemma \ref{LemmaCCL}.  Thus
$$
\gamma_{E_i}(D_1)=\gamma_{E_i}(D_2)
$$
for $1\le i\le r$. We  obtain that
$$
-\sum_{i=1}^r\gamma_{E_i}(D_2)E_i=-\sum_{i=1}^r\gamma_{E_i}(D_1)E_i.
$$
By Lemma \ref{LemmaAR1}, for all $m\ge 0$,
$$
\begin{array}{lll}
\Gamma(X,\mathcal O_X(-mD_1))&=&\Gamma(X,\mathcal O_X(-\lceil \sum m\gamma_{E_i}(D_1)E_i\rceil))\\
&=&\Gamma(X,\mathcal O_X(-\lceil \sum m\gamma_{E_i}(D_2)E_i\rceil))\\
&=&\Gamma(X,\mathcal O_X(-mD_2)).\end{array}
$$
\end{proof}

We now show that Rees's theorem for $m_R$-primary ideals, \cite{R},  \cite[Proposition 11.3.1]{HS}, generalizes to divisorial filtrations, giving a converse to Theorem \ref{Theorem13} for divisorial filtrations. 

\begin{Theorem}\label{TheoremX1}  Suppose that $R$ is a $d$-dimensional excellent local domain. Let $\phi:X\rightarrow \mbox{Spec}(R)$ be the normalization of the blowup of an $m_R$-primary ideal. Suppose that $D(1)$ and $D(2)$ are effective Cartier divisors on $X$ with exceptional support such that $D(1)\le D(2)$ and $e_R(\mathcal I(1);R)=e_R(\mathcal I(2);R)$,
where $\mathcal I(1), \mathcal I(2)$ are the filtrations by $m_R$-primary ideals
$\mathcal I(1)=\{I(nD(1))\}$ and $\mathcal I(2)=\{I(nD(2))\}$. Then
$$
I(mD(1))=I(mD(2))
$$
for all $m\in \NN$.
\end{Theorem}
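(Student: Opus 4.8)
The plan is to deduce Theorem~\ref{TheoremX1} from the normal case, Theorem~\ref{TheoremAR1}, by passing to the normalization $S$ of $R$ and working at each maximal ideal of $S$, using the setup and Lemma~\ref{LemmaR1} of Section~\ref{Sec6}. Let $m_1,\ldots,m_t$ be the maximal ideals of $S$, let $\phi_i\colon X_i\to\mbox{Spec}(S_{m_i})$ be the induced projective morphisms with $X_i=X\times_{\mbox{Spec}(S)}\mbox{Spec}(S_{m_i})$, and for $j=1,2$ let $D(j)_i$ denote the restriction of $D(j)$ to $X_i$, so that $D(1)_i\le D(2)_i$. Write $J(nD(j)_i)=\Gamma(X_i,\mathcal O_{X_i}(-nD(j)_i))$ and $\mathcal J(j)_i=\{J(nD(j)_i)\}_n$; when $D(j)_i\ne 0$ this is a divisorial filtration of the excellent normal local domain $S_{m_i}$, and when $D(j)_i=0$ it is the trivial filtration.

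First I would transfer the equality of multiplicities from $R$ down to the $S_{m_i}$. Combining Lemma~\ref{LemmaR1} (with $r=1$, $n_1=1$) with (\ref{eqR15}) gives
$$
e_R(\mathcal I(j);R)=\sum_{i=1}^t [S/m_i:R/m_R]\,e_{S_{m_i}}(\mathcal J(j)_i;S_{m_i})\qquad(j=1,2),
$$
where the limits involved exist by Theorem~\ref{TheoremI20} and each coefficient $[S/m_i:R/m_R]$ is a positive integer. Since $D(1)_i\le D(2)_i$ forces $J(nD(1)_i)\supseteq J(nD(2)_i)$, one has $e_{S_{m_i}}(\mathcal J(1)_i;S_{m_i})\le e_{S_{m_i}}(\mathcal J(2)_i;S_{m_i})$ for every $i$; since the two weighted sums agree, each of these inequalities must be an equality. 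Then I would fix $i$ and split into cases: if $D(2)_i=0$ then $D(1)_i=0$ and $J(nD(1)_i)=S_{m_i}=J(nD(2)_i)$ for all $n$; if $D(2)_i\ne 0$, then $D(1)_i\ne 0$ as well, for otherwise $e_{S_{m_i}}(\mathcal J(1)_i;S_{m_i})=0<e_{S_{m_i}}(\mathcal J(2)_i;S_{m_i})$ by Proposition~\ref{PropPos}, contradicting the equality just obtained, and so Theorem~\ref{TheoremAR1} applied to $(S_{m_i},X_i,\phi_i)$ and $D(1)_i\le D(2)_i$ yields $J(nD(1)_i)=J(nD(2)_i)$ for all $n\in\NN$. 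In all cases $J(nD(1)_i)=J(nD(2)_i)$ for every $i$ and every $n$, whence by (\ref{eqR6}) (equivalently, since a global section of $\mathcal O_X(-nD(j))$ is precisely an element of $S$ lying in $J(nD(j)_i)$ for all $i$) we get $J(nD(1))=J(nD(2))$ for all $n$; intersecting with $R$ as in Definition~\ref{DefDF} then gives $I(nD(1))=J(nD(1))\cap R=J(nD(2))\cap R=I(nD(2))$ for all $n$, as required.

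The step I expect to require more than bookkeeping is verifying that each triple $(S_{m_i},X_i,\phi_i)$ satisfies the hypotheses under which Theorem~\ref{TheoremAR1} was established, namely that $S_{m_i}$ is a $d$-dimensional normal excellent local domain and that $\phi_i$ is the normalization of the blowup of an $m_{S_{m_i}}$-primary ideal with $X_i$ normal. Here $S$ is excellent and module-finite over $R$, so $S_{m_i}$ is indeed a $d$-dimensional normal excellent local domain; $X_i$, whose local rings are localizations of those of the normal scheme $X$, is normal; and $\phi_i$ is projective, birational, and an isomorphism over $\mbox{Spec}(S_{m_i})\setminus\{m_{S_{m_i}}\}$, so it is the normalization of the blowup of an $m_{S_{m_i}}$-primary ideal --- for instance the ideal generated in $S_{m_i}$ by an $m_R$-primary ideal whose normalized blowup is $X$. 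A minor formal point is that the $D(j)$, being Cartier divisors on the normal scheme $X$, are in particular effective Weil divisors with exceptional support, so Lemma~\ref{LemmaR1} and the identities (\ref{eqR6}), (\ref{eqR15}) do apply to them.
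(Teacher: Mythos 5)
Your proposal is correct and follows essentially the same route as the paper's proof: pass to the normalization $S$, use Lemma \ref{LemmaR1} together with (\ref{eqR15}) to write $e_R(\mathcal I(j);R)$ as a weighted sum of the multiplicities $e_{S_{m_i}}(\{J(mD(j)_i)\};S_{m_i})$, deduce componentwise equality from the componentwise inequalities, apply Theorem \ref{TheoremAR1} at each $S_{m_i}$, and reassemble via (\ref{eqR6}) before intersecting with $R$. Your extra care with the degenerate case $D(j)_i=0$ and with verifying the hypotheses of Theorem \ref{TheoremAR1} for each $(S_{m_i},X_i,\phi_i)$ are sensible refinements of the same argument, not a different one.
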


\begin{proof} We use the notation introduced before the statement of Lemma \ref{LemmaR1} in Section \ref{Secd-dim}. Let $D(1)_{i},D(2)_{i}$ be the divisors induced by $D(1)$ and $D(2)$ on $X_i$. Since $D(1)<D(2)$, we have that
\begin{equation}\label{eqX1}
D(1)_{i}<D(2)_{i}\mbox{ for all }i.
\end{equation}
Thus
\begin{equation}\label{eqX2*}
e_{S_{m_i}}(\{J(mD(1)_{i})\};S_{m_i})\le e_{S_{m_i}}(\{J(mD(2)_{i});S_{m_i})
\mbox{ for all $i$.}
\end{equation}
Now Lemma \ref{LemmaR1} and (\ref{eqR15}) imply
\begin{equation}\label{eqX3}
e_R(\mathcal I(j);R)=e_R(\{I(mD(j))\};R)=\sum_{i=1}^t[S/m_i:R/m_R]e_{S_{m_i}}(\{J(mD(j)_i)\};S_{m_i})
\end{equation}
for $j=1,2$.

Now the assumption $e_R(\mathcal I(1);R)=e_R(\mathcal I(2);R)$, (\ref{eqX2*}) and (\ref{eqX3}) imply
\begin{equation}\label{eqX4}
e_{S_{m_i}}(\{J(mD(1)_i)\};S_{m_i})=e_{S_{m_i}}(\{J(mD(2)_i)\};S_{m_i})
\end{equation}
for all $i$. Now (\ref{eqX1}), (\ref{eqX4}) and Theorem \ref{TheoremAR1} imply
$$
J(mD(1)_i)=\Gamma(X_i,\mathcal O_{X_i}(-mD(1)_i))=\Gamma(X_i,\mathcal O_{X_i}(-mD(2)_i))=J(mD(2)_i)
$$
for all $m\in \NN$ and all $i$. Thus 
$$
J(mD(1))=\Gamma(X,\mathcal O_X(-mD(1)))=\Gamma(X,\mathcal O_X(-mD(2)))=J(mD(2))
$$
for all $m\in \NN$ by (\ref{eqR6}). Thus
$$
I(mD(1))=J(mD(1))\cap R=J(mD(2))\cap R=I(mD_2)
$$
for all $m\in \NN$.
\end{proof}

\section{A Geometric Rees Theorem}

Let $X$ be a normal projective variety over a field $k$ of dimension $d$. Suppose that $D$ is an effective Cartier divisor on $X$. The volume of $D$ is
$$
{\rm Vol}(D)=\lim_{m\rightarrow\infty}\frac{\dim_k\Gamma(X,\mathcal O_X(mD))}{m^d/d!}.
$$
Let $E$ be a codimension one prime divisor on $X$. For $m\in \NN$, define
$$
\tau_{E,m}(D)=\min\{\mbox{ord}_E\Delta\mid \Delta\in |mD|\}.
$$
Let $\tau_i=\tau_{E,i}(D)$. Then since $\tau_{mn}\le n\tau_m$, we have that
\begin{equation}\label{eqGR1}
\frac{\tau_{mn}}{mn}\le\min\{\frac{\tau_m}{m},\frac{\tau_n}{n}\}.
\end{equation}

Now define
$$
\gamma_E(D)=\inf_m\frac{\tau_m}{m}.
$$

Expand $D=\sum_{i=1}^r a_iE_i$ with $E_i$ prime divisors and $a_i\in \ZZ_+$. 

\begin{Lemma}\label{LemmaGR1}
We have that
$$
\Gamma(X,\mathcal O_X(mD))=\Gamma(X,\mathcal O_X(mD-\sum_{i=1}^r\lceil m\gamma_{E_i}(D)\rceil E_i))
=\Gamma(X,\mathcal O_X(\lfloor mD-\sum_{i=1}^r m\gamma_{E_i}(D)E_i\rfloor))
$$
for all $m\in \NN$.
\end{Lemma}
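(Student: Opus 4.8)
The plan is to follow the proof of Lemma~\ref{LemmaAR1}, replacing global sections of reflexive sheaves attached to exceptional Weil divisors by complete linear systems on the projective variety $X$. For a Weil divisor $G$ on the normal variety $X$ I read $\mathcal O_X(G)$ as the reflexive sheaf $i_*\mathcal O_U(G|_U)$ over the regular locus $U\subset X$, so that $\Gamma(X,\mathcal O_X(G))=\{f\in k(X)^{*}\mid \mbox{div}(f)+G\ge 0\}\cup\{0\}$.

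First I would record the bounds $0\le\gamma_{E_i}(D)\le a_i$. Nonnegativity holds because each $\tau_{E_i,m}(D)$ is an order of vanishing along $E_i$ of an effective divisor, hence $\ge 0$; and $\gamma_{E_i}(D)\le a_i$ because $mD\in|mD|$ forces $\tau_{E_i,m}(D)\le\mbox{ord}_{E_i}(mD)=ma_i$ for every $m$, so $\tau_{E_i,m}(D)/m\le a_i$. Next I would dispose of the equality of the last two terms of the lemma, which is a purely divisor-theoretic identity: since $D=\sum_{i=1}^r a_iE_i$ is supported on $\{E_1,\dots,E_r\}$, the $\RR$-divisor $mD-\sum_i m\gamma_{E_i}(D)E_i$ equals $\sum_i(ma_i-m\gamma_{E_i}(D))E_i$, and because each $ma_i$ is an integer we have $\lfloor ma_i-m\gamma_{E_i}(D)\rfloor=ma_i-\lceil m\gamma_{E_i}(D)\rceil$. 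Hence $\lfloor mD-\sum_i m\gamma_{E_i}(D)E_i\rfloor=mD-\sum_i\lceil m\gamma_{E_i}(D)\rceil E_i$ as Weil divisors (with all coefficients nonnegative, by the bounds just recorded), so the last two sheaves in the statement literally coincide.

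It remains to prove $\Gamma(X,\mathcal O_X(mD))=\Gamma(X,\mathcal O_X(mD-\sum_i\lceil m\gamma_{E_i}(D)\rceil E_i))$. The inclusion $\supseteq$ is immediate, since $\sum_i\lceil m\gamma_{E_i}(D)\rceil E_i$ is effective and subtracting an effective divisor from $mD$ can only shrink the space of sections. For $\subseteq$, let $0\ne f\in\Gamma(X,\mathcal O_X(mD))$ and set $\Delta=\mbox{div}(f)+mD$, so that $\Delta\ge 0$ and $\Delta\in|mD|$. If $F$ is a prime divisor with $F\notin\{E_1,\dots,E_r\}$, then $\mbox{ord}_F(\Delta-\sum_i\lceil m\gamma_{E_i}(D)\rceil E_i)=\mbox{ord}_F(\Delta)\ge 0$. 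For $F=E_j$, the definitions of $\tau_{E_j,m}(D)$ and of $\gamma_{E_j}(D)=\inf_m\tau_{E_j,m}(D)/m$ give $\mbox{ord}_{E_j}(\Delta)\ge\tau_{E_j,m}(D)\ge m\gamma_{E_j}(D)$, and since $\mbox{ord}_{E_j}(\Delta)\in\ZZ$ this forces $\mbox{ord}_{E_j}(\Delta)\ge\lceil m\gamma_{E_j}(D)\rceil$. Therefore $\Delta-\sum_i\lceil m\gamma_{E_i}(D)\rceil E_i\ge 0$, that is, $f\in\Gamma(X,\mathcal O_X(mD-\sum_i\lceil m\gamma_{E_i}(D)\rceil E_i))$, completing the proof.

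I do not expect a serious obstacle: the proof is a direct transcription of Lemma~\ref{LemmaAR1}, the substantive inputs being the subadditivity estimate $\tau_{mn}\le n\tau_m$ of (\ref{eqGR1}) (which makes $\gamma_E(D)$ well defined and yields $\tau_{E,m}(D)\ge m\gamma_E(D)$) and the integrality of orders of vanishing. The only points calling for a little care are keeping the convention for $\mathcal O_X$ of a possibly non-Cartier Weil divisor consistent and verifying the floor/ceiling identity so that the last two displayed sheaves genuinely agree.
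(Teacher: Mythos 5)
Your proof is correct and follows essentially the same route as the paper's: the paper's argument is exactly your forward inclusion (for $\Delta\in|mD|$ one has $\mbox{ord}_{E_i}\Delta\ge\tau_{E_i,m}(D)\ge m\gamma_{E_i}(D)$, hence $\ge\lceil m\gamma_{E_i}(D)\rceil$ by integrality), with the reverse inclusion and the floor/ceiling identity left implicit. Your added checks ($0\le\gamma_{E_i}(D)\le a_i$, the reflexive-sheaf convention) are fine; only a small quibble: $\gamma_E(D)$ and the bound $\tau_{E,m}(D)\ge m\gamma_E(D)$ come directly from the infimum, so the subadditivity (\ref{eqGR1}) is not actually needed for this lemma.
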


\begin{proof} Suppose that $\Delta\in |mD|$. Then $\Delta-\sum_i\tau_{E_i,m}(D)E_i\ge 0$ so that $\Delta-\sum m\gamma_{E_i}E_i\ge 0$. Thus $\Delta-\sum_{i=1}^r \lceil m\gamma_{E_i}(D)\rceil E_i\ge 0$. 
\end{proof}

We now recall the method of \cite{LM} to compute volumes of Cartier divisors, as extended in \cite{C2} to arbitrary fields.
Suppose that $p\in X$ is a nonsingular closed point and 
\begin{equation}\label{eqGR2}
X=Y_0\supset Y_1\supset \cdots \supset Y_d=\{p\}
\end{equation}
is a flag; that is, the $Y_i$ are subvarieties of $X$ of dimension $d-i$ such that there is a regular system of parameters $a_1,\ldots,a_d$ in $\mathcal O_{X,p}$ such that  $a_1=\cdots=a_i=0$ are local equations of $Y_i$ in $X$ for $1\le i\le d$. 

The flag determines a valuation $\nu$ on the function field $k(X)$ of $X$  as follows.  We have a sequence of natural surjections of regular local rings
\begin{equation}\label{eqGR3} \mathcal O_{X,p}=
\mathcal O_{Y_0,p}\overset{\sigma_1}{\rightarrow}
\mathcal O_{Y_1,p}=\mathcal O_{Y_0,p}/(a_1)\overset{\sigma_2}{\rightarrow}
\cdots \overset{\sigma_{d-1}}{\rightarrow} \mathcal O_{Y_{d-1},p}=\mathcal O_{Y_{d-2},p}/(a_{d-1}).
\end{equation}
Define a rank $d$ discrete valuation $\nu$ on $k(X)$   by prescribing for $s\in \mathcal O_{X,p}$,
$$
\nu(s)=({\rm ord}_{Y_1}(s),{\rm ord}_{Y_2}(s_1),\cdots,{\rm ord}_{Y_d}(s_{d-1}))\in (\ZZ^d)_{\rm lex}
$$
where 
$$
s_1=\sigma_1\left(\frac{s}{a_1^{{\rm ord}_{Y_1}(s)}}\right),
s_2=\sigma_2\left(\frac{s_1}{a_2^{{\rm ord}_{Y_2}(s_1)}}\right),\ldots,
s_{d-1}=\sigma_{d-1}\left(\frac{s_{d-2}}{a_{d-1}^{{\rm ord}_{Y_{d-1}}(s_{d-2})}}\right).
$$

let $g=0$ be a local equation of $D$ at $p$. For $m\in \NN$, define
$$
\Phi_{mD}:\Gamma(X,\mathcal O_X(mD))=\{f\in k(X)\mid (f)+mD\ge 0\}\rightarrow \ZZ^d
$$
by $\Phi_{mD}(f)=\nu(fg^m)$. The Newton Okounkov body $\Delta(D)$ of $D$ is the closure of the set 
$$
\cup_{m\in \NN}\frac{1}{m}\Phi_{mD}(\Gamma(X,\mathcal O_X(mD)))
$$
in $\RR^d$. This is a compact and convex set by \cite[Lemma 1.10]{LM} or the proof of Theorem 8.1 \cite{C2}.

Modifying the proof of \cite[Theorem 8.1]{C2} and of \cite[Lemma 5.4]{C3} we see that
\begin{equation}\label{GR4}
{\rm Vol}(D)=\lim_{m\rightarrow \infty}\frac{\dim_k\Gamma(X,\mathcal O_X(mD))}{m^d/d!}
=d![\mathcal O_{X,p}/m_p:k]{\rm Vol}(\Delta(D)).
\end{equation}

Suppose that $D_1<D_2$ are effective Cartier divisors on $X$. Let $g_1=0$ be a local equation of $D_1$ at $p$, $g_2=0$ be a local equation of $D_2$ at $p$, so that $h=\frac{g_2}{g_1}$ is a local equation of $D_2-D_1$ at $p$. We have  commutative diagrams
$$
\begin{array}{lll}
\Gamma(X,\mathcal O_D(mD_1))&\rightarrow &\Gamma(X,\mathcal O_X(mD_2))\\
\downarrow\Phi_{mD_1}\times\{m\}&&\downarrow \Phi_{mD_2}\times\{m\}\\
\ZZ^{d+1}&\rightarrow& \ZZ^{d+1}
\end{array}
$$
where the top horizontal arrow is the natural inclusion and the bottom horizontal arrow is the map
$$
(\alpha,m)\mapsto  (\alpha+m\nu(h),m).
$$
These diagrams induce an inclusion $\Lambda:\Delta(D_1)\rightarrow \Delta(D_2)$ defined by 
$\alpha\mapsto \alpha+\nu(h)$.

\begin{Theorem}\label{TheoremGRT} Suppose that $X$ is a normal projective variety over a field $k$ and $D_1,D_2$ are effective Cartier divisors on $X$ such that $D_1$ is big,  $D_1\le D_2$ and ${\rm Vol}(D_1)={\rm Vol}(D_2)$. Then
$$
\Gamma(X,\mathcal O_X(nD_1))=\Gamma(X,\mathcal O_X(nD_2))
$$
for all $n\in \NN$.
\end{Theorem}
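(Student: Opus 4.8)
The plan is to run the argument of the proof of Theorem \ref{TheoremAR1} in the projective setting, with $\dim_k\Gamma(X,\mathcal O_X(mD))$ playing the role of $\ell_R(R/I(mD))$ and the Newton--Okounkov bodies $\Delta(D)\subset\RR^d$ of the preceding discussion replacing the bodies built from valuation ideals. Write $E_1,\dots,E_r$ for the prime components of $D_2$, and $D_1=\sum_i a_iE_i$, $D_2=\sum_i b_iE_i$ with $a_i,b_i\in\NN$ and $b_i\ge a_i$ for all $i$; this is possible since $D_1\le D_2$ forces every component of $D_1$ to occur in $D_2$, and we set $a_i=0$ when $E_i$ is not a component of $D_1$. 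Note that whenever $a_i=0$ the divisor $mD_1$ itself lies in $|mD_1|$ with ${\rm ord}_{E_i}(mD_1)=0$, so $\tau_{E_i,m}(D_1)=0$ for all $m$ and hence $\gamma_{E_i}(D_1)=0$; thus $\sum_i n\gamma_{E_i}(D_1)E_i$ really only involves the components of $D_1$, and we are free to extend it over all of $E_1,\dots,E_r$ when comparing with $D_2$.

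Now fix $i$ and choose a flag (\ref{eqGR2}) with $Y_1=E_i$ and $p$ a nonsingular closed point of $X$ lying on $E_i$, nonsingular on $E_i$, and avoiding $E_j$ for $j\ne i$; such a $p$ exists because $X$, being normal, is regular in codimension one and the intersections $E_i\cap E_j$ have codimension $\ge 2$ in $X$. With this choice, the first coordinate of $\Phi_{mD}(f)=\nu(fg^m)$ is ${\rm ord}_{E_i}((f)+mD)$, so if $\pi_1\colon\RR^d\to\RR$ denotes projection to the first coordinate then $\pi_1(\tfrac1m\Phi_{mD}(\Gamma(X,\mathcal O_X(mD))))\subset[\tau_{E_i,m}(D)/m,\infty)$ with $\tau_{E_i,m}(D)/m$ attained; since $\Delta(D)$ is compact this yields $\min\pi_1(\Delta(D))=\gamma_{E_i}(D)$. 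Because $D_1$ is big we have ${\rm Vol}(D_1)>0$, hence ${\rm Vol}(\Delta(D_1))>0$ by (\ref{GR4}); and because ${\rm Vol}(D_1)={\rm Vol}(D_2)$, evaluating (\ref{GR4}) for $D_1$ and $D_2$ with the same flag and point $p$ gives ${\rm Vol}(\Delta(D_1))={\rm Vol}(\Delta(D_2))$. The inclusion $\Lambda\colon\Delta(D_1)\to\Delta(D_2)$, $\alpha\mapsto\alpha+\nu(h)$, with $h=g_2/g_1$ a local equation of $D_2-D_1$ at $p$ (so $\pi_1(\nu(h))=b_i-a_i$, since $E_i$ is the only component of $D_2-D_1$ through $p$), exhibits $\Delta(D_1)+\nu(h)$ as a compact convex subset of $\Delta(D_2)$ of the same, positive, volume; by Lemma \ref{LemmaCCL} we get $\Delta(D_2)=\Delta(D_1)+\nu(h)$, and applying $\min\pi_1$ to both sides gives $\gamma_{E_i}(D_2)=\gamma_{E_i}(D_1)+(b_i-a_i)$.

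Since this holds for every $i$, the $\RR$-divisors $nD_1-\sum_i n\gamma_{E_i}(D_1)E_i$ and $nD_2-\sum_i n\gamma_{E_i}(D_2)E_i$ have equal coefficient $na_i-n\gamma_{E_i}(D_1)=nb_i-n\gamma_{E_i}(D_2)$ along each $E_i$, hence coincide, and therefore have the same round-down. By Lemma \ref{LemmaGR1},
\[
\Gamma(X,\mathcal O_X(nD_1))=\Gamma(X,\mathcal O_X(\lfloor nD_1-\sum_i n\gamma_{E_i}(D_1)E_i\rfloor))=\Gamma(X,\mathcal O_X(\lfloor nD_2-\sum_i n\gamma_{E_i}(D_2)E_i\rfloor))=\Gamma(X,\mathcal O_X(nD_2))
\]
for all $n\in\NN$. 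The argument is mechanical once the machinery of \cite{LM}, \cite{C2} and Lemma \ref{LemmaCCL} is in place; the only points requiring care are the identification $\min\pi_1(\Delta(D))=\gamma_{E_i}(D)$, which needs compactness of the Newton--Okounkov body and the flag chosen with $Y_1=E_i$, and the bookkeeping ensuring that prime components of $D_2$ absent from $D_1$ do not spoil the equality of $\RR$-divisors. The conceptual crux, and the only place bigness is used, is that bigness of $D_1$ makes the common volume positive, which is exactly what upgrades "equal volume of nested compact convex bodies" to "equal bodies" in Lemma \ref{LemmaCCL}.
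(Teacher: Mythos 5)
Your proposal is correct and follows essentially the same route as the paper's own proof: same choice of flags with $Y_1=E_i$ and $p$ off the other components, the same translation map $\Lambda$ between the Newton--Okounkov bodies, Lemma \ref{LemmaCCL} applied via the positivity of volume coming from bigness of $D_1$, the resulting identity $\gamma_{E_i}(D_2)=\gamma_{E_i}(D_1)+(b_i-a_i)$, and the conclusion via Lemma \ref{LemmaGR1}. The extra points you spell out (that $\gamma_{E_i}(D_1)=0$ when $a_i=0$, and the identification $\min\pi_1(\Delta(D))=\gamma_{E_i}(D)$) are correct refinements of details the paper treats more briefly.
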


\begin{proof} Write $D_1=\sum_{i=1}^r a_iE_i$ and $D_2=\sum_{i=1}^rb_iE_i$ with $a_i,b_i\ge 0$ for all $i$. For each $i$ with $1\le i\le r$ choose a flag (\ref{eqGR2})
 with $Y_1=E_i$ and $p$ a point such that $p\in X$ is a nonsingular closed point of $X$ and $E_i$ and  $p\not\in E_j$ for $j\ne i$.  Let 
$\pi_1:\RR^d\rightarrow \RR$ be the projection onto the first factor. Then with the notation intoduced above, $\nu(h)=(b_i-a_i,0,\ldots,0)$. By the definition of $\gamma_{E_i}(D_2)$ and since $\gamma_{E_i}(D_2)$ is in the closure of the compact set $\pi_1(\Delta(D_2))$, we have that 
$$
\pi_1^{-1}(\gamma_{E_i}(D_2))\cap \Delta(D_2)\ne \emptyset
$$
 and
 $$
 \pi_1^{-1}(a)\cap \Delta(D_2))=\emptyset\mbox{ if }a<\gamma_{E_i}(D_2).
 $$
  Further, $\Lambda(\Delta(D_1))\subset \Delta(D_2)$ and ${\rm Vol}(D_1)={\rm Vol}(D_2)$, so $\Lambda(\Delta(D_1))=\Delta(D_2)$ by Lemma \ref{LemmaCCL}.
  Thus
$$
\gamma_{E_i}(D_1)=\gamma_{E_i}(D_2)-(b_i-a_i)
$$
for $1\le i\le r$. We  obtain that
$$
D_2-\sum_{i=1}^r\gamma_{E_i}(D_2)E_i=D_1-\sum_{i=1}^r\gamma_{E_i}(D_1)E_i.
$$
By Lemma \ref{LemmaGR1}, for all $m\ge 0$,
$$
\begin{array}{lll}
\Gamma(X,\mathcal O_X(mD_1))&=&\Gamma(X,\mathcal O_X(\lfloor mD_1-\sum m\gamma_{E_i}(D_1)E_i\rfloor))\\
&=&\Gamma(X,\mathcal O_X(\lfloor mD_2-\sum m\gamma_{E_i}(D_2)E_i\rfloor))\\
&=&\Gamma(X,\mathcal O_X(mD_2)).\end{array}
$$
\end{proof}

\section{Mixed Multiplicities of two dimensional Excellent local rings}\label{Sec5}

\subsection{2-dimensional normal local rings}
In this subsection, suppose that $R$ is an excellent, normal local ring of dimension two, so that  $R$ is analytically irreducible. 
Resolutions of singularities of $\mbox{Spec}(R)$ exist by \cite{L2} or \cite{CJS}.
 Let $\phi:X\rightarrow \mbox{Spec}(R)$ be a resolution of singularities with prime (integral) exceptional curves $E_1,\ldots,E_s$.
By  \cite[Lemma 14.1]{L1}, the intersection matrix of $E_1,\ldots,E_s$ is negative definite. 
Thus there exists an effective (necessarily Cartier) divisor $B$ on $X$ with exceptional support such that $\mathcal O_X(-B)$ is very ample, and  so $\phi$ is the blowup of the $m_R$-primary ideal $\phi_*\mathcal O_X(-B)$.

We refer to \cite{L1} for background material for this section. 
A $\QQ$-divisor on $X$ with exceptional support is a formal linear combination of prime exceptional curves with rational coefficients. A $\QQ$-divisor $C$ is anti-nef if $(C\cdot E)\le 0$ for all exceptional curves $E$ on $X$. Suppose that $f\in {\rm QF}(R)$. Then $(f)$ will denote the divisor of $f$ on $X$.

\begin{Lemma}\label{LemmaV1}
Let $D$ be an effective divisor on $X$ with exceptional support. Then there is a unique minimal effective anti-nef $\QQ$-divisor $\Delta$ on  $X$ with exceptional support such that $D\le \Delta$.

The $\QQ$-divisor 
$\Delta$ is the unique effective $\QQ$-divisor $\Delta$ on $X$ such that
\begin{enumerate}
\item[1)] $\Delta=D+B$ is anti-nef and $B$ is effective.
\item[2)] $(\Delta\cdot E)=0$ if $E$ is a component of $B$.
\end{enumerate}
\end{Lemma}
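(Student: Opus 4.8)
The plan is to construct $\Delta$ explicitly as a solution of a system of linear equations forced by the negative definiteness of the intersection matrix, and then verify that this $\Delta$ is the minimal anti-nef effective $\QQ$-divisor dominating $D$. First I would observe that conditions 1) and 2) determine $\Delta$ uniquely: if $\Delta$ and $\Delta'$ both satisfy them, write $\Delta-\Delta'=\sum c_iE_i$; pairing with the $E_i$ and using conditions 2) and anti-nefness one gets $((\Delta-\Delta')\cdot(\Delta-\Delta'))\ge 0$ after a standard sign analysis (splitting into the components where $c_i>0$ and $c_i<0$), which by negative definiteness forces $\Delta=\Delta'$. This is the classical uniqueness argument for Zariski decomposition on surfaces (as in \cite{L1}).

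Next I would prove existence of a $\Delta$ satisfying 1) and 2). The standard approach is an iterative/minimal-element argument: consider the set $\mathcal S$ of effective anti-nef $\QQ$-divisors $C$ with exceptional support and $C\ge D$. This set is nonempty because some large multiple of an anti-nef divisor whose support is all of the $E_i$ (such as the divisor $B$ with $\mathcal O_X(-B)$ very ample, rescaled) dominates $D$; more precisely, since the intersection matrix is negative definite there is an effective $\QQ$-divisor $Z=\sum z_iE_i$ with all $z_i>0$ and $(Z\cdot E_i)<0$ for all $i$, and a large multiple of $Z$ lies above $D$. Then one shows $\mathcal S$ is closed under taking componentwise minima — if $C_1,C_2\in\mathcal S$ then $\min(C_1,C_2)\in\mathcal S$: this is the key point, and it follows because writing $C=\min(C_1,C_2)$, for each $i$ we have $C$ agreeing with (say) $C_1$ along $E_i$ while $C\le C_2$ elsewhere, so $(C\cdot E_i)\le (C_1\cdot E_i)\le 0$ using that the off-diagonal intersection numbers $(E_j\cdot E_i)$ are $\ge 0$ for $j\ne i$. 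Hence there is a unique minimal element $\Delta\in\mathcal S$, and minimality forces condition 2): if $(\Delta\cdot E)<0$ for some component $E$ of $B=\Delta-D$, one could decrease the coefficient of $E$ slightly and stay in $\mathcal S$, contradicting minimality (one must check the decreased divisor is still $\ge D$, which holds precisely because $E$ is a component of $B$, i.e. the coefficient of $E$ in $\Delta$ strictly exceeds that in $D$).

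Finally I would tie the two characterizations together: the minimal element $\Delta$ of $\mathcal S$ is by construction the unique minimal effective anti-nef $\QQ$-divisor with $D\le\Delta$, and we have just shown it satisfies 1) and 2); conversely any $\Delta$ satisfying 1) and 2) is in $\mathcal S$ and, by the uniqueness argument of the first paragraph applied against the minimal element, must equal it. I expect the main obstacle to be the existence/minimality step — specifically verifying cleanly that $\mathcal S$ is stable under componentwise minimum and that the minimal element genuinely satisfies the ``orthogonality'' condition 2) — since this is where negative definiteness of the intersection matrix and the sign of the off-diagonal entries are used in an essential way, and where one must be careful that perturbing a coefficient keeps the divisor above $D$. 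Everything else is linear algebra over $\QQ$ together with the negative-definiteness input from \cite[Lemma 14.1]{L1}.
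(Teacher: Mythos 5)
Your route is the classical one -- the paper itself disposes of this lemma by citing Bauer's existence proof \cite{B} for the first sentence and the local form \cite[Proposition 2.1]{CHR} of Zariski's theorem \cite{Z} for the second -- and most of your steps are sound: the uniqueness argument for a divisor satisfying 1) and 2) (split $\Delta-\Delta'$ into effective parts with no common component, use 2) against one divisor and anti-nefness against the other, then invoke negative definiteness), the verification that $\min(C_1,C_2)\in\mathcal S$ using $(E_i\cdot E_j)\ge 0$ for $i\ne j$, and the perturbation argument showing that a minimal element must satisfy the orthogonality condition 2) are all correct.

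The genuine gap is the inference ``$\mathcal S$ is nonempty and closed under componentwise minima, hence it has a unique minimal element $\Delta\in\mathcal S$.'' For an infinite set of $\QQ$-divisors this is false as stated: a set of rational vectors closed under pairwise minima need not attain its coefficientwise infimum, and that infimum could a priori even be irrational -- note the lemma asserts $\Delta$ is a $\QQ$-divisor, which your construction never establishes. To close the gap, let $\Delta^*$ be the coefficientwise infimum of $\mathcal S$, a priori only an $\RR$-divisor. Given $\epsilon>0$, take the minimum of finitely many members of $\mathcal S$, one nearly realizing the infimum in each coefficient; this produces a single $C\in\mathcal S$ all of whose coefficients are within $\epsilon$ of those of $\Delta^*$. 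Since the intersection number against each $E_i$ is a linear (hence continuous) function of the coefficients, letting $\epsilon\to 0$ gives $(\Delta^*\cdot E_i)\le 0$ for all $i$, and clearly $\Delta^*\ge D$, so $\Delta^*$ is the genuine minimum of the real analogue of $\mathcal S$. Your perturbation argument then yields $(\Delta^*\cdot E)=0$ for every component $E$ of $B=\Delta^*-D$, and rationality follows from exactly this orthogonality: writing $\Delta^*=D+\sum_{E\in T}b_EE$ with $T$ the set of components of $B$, the coefficients solve $\sum_{E'\in T}b_{E'}(E'\cdot E)=-(D\cdot E)$ for $E\in T$, a linear system whose matrix is a principal submatrix of the negative definite integer matrix $\bigl((E_i\cdot E_j)\bigr)$, hence invertible over $\QQ$; so $\Delta^*$ is a $\QQ$-divisor and is the $\Delta$ of the lemma. (Alternatively one can produce $\Delta$ directly by solving such a system on a suitable support set, but some argument of this kind is required; closure under minima alone does not deliver the minimum, let alone its rationality.)
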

 The first sentence of the lemma  follows from  the proof of the existence of Zariski decomposition in  \cite{B}.
The second sentence is the local formulation \cite[Proposition 2.1]{CHR}  of the  classical theorem of Zariski \cite{Z}.

We will say that the expression 1) is the Zariski decomposition of $D$ and that $\Delta$ is the anti-nef part of the Zariski decomposition of $D$.

\begin{Remark}\label{Remarkv1}From the first sentence of the lemma, we deduce that if $D_1\le D_2$ are effective divisors with exceptional support and respective anti-nef parts of their Zariski decompositions $\Delta_1$ and $\Delta_2$, then $\Delta_1\le\Delta_2$ as necessarily $D_1\le \Delta_2$.
\end{Remark}

\begin{Corollary}\label{CorV1}
Suppose that $D_1\le D_2$ are effective divisors with effective support, and respective anti-nef parts of their Zariski decompositions $\Delta_1$ and $\Delta_2$.  Then $(\Delta_2^2)\le (\Delta_1^2)$ with equality if and only if 
$\Delta_1=\Delta_2$.
\end{Corollary}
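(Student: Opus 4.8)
The plan is to use the Zariski decompositions $\Delta_i = D_i + B_i$ (with $B_i$ effective and $(\Delta_i \cdot E)=0$ for every component $E$ of $B_i$) together with negative definiteness of the intersection matrix of the exceptional curves. First I would write $\Delta_2 = \Delta_1 + C$ where $C = \Delta_2 - \Delta_1$ is an effective $\QQ$-divisor with exceptional support, which is legitimate by Remark \ref{Remarkv1}. Expanding, $(\Delta_2^2) = (\Delta_1^2) + 2(\Delta_1 \cdot C) + (C^2)$, so it suffices to show $(\Delta_1 \cdot C) \le 0$ and $(C^2) \le 0$, and to analyze when both vanish.

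For the cross term: since $\Delta_1$ is anti-nef and $C$ is effective with exceptional support, writing $C = \sum a_E E$ with $a_E \ge 0$ gives $(\Delta_1 \cdot C) = \sum a_E (\Delta_1 \cdot E) \le 0$ because each $(\Delta_1 \cdot E) \le 0$. For the self-intersection term, $(C^2) \le 0$ is immediate from negative semidefiniteness of the intersection form on exceptional $\QQ$-divisors (which follows from the negative definiteness in \cite[Lemma 14.1]{L1}), with $(C^2) = 0$ forcing $C = 0$ by the definiteness. So $(\Delta_2^2) \le (\Delta_1^2)$, and if equality holds then in particular $(C^2) = 0$, hence $C = 0$, i.e. $\Delta_1 = \Delta_2$. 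The converse is trivial.

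The only mild subtlety — and the step I'd expect to need the most care — is making sure the negative-definiteness statement, which is stated in \cite{L1} for the integral exceptional curves $E_1,\dots,E_s$, is correctly promoted to negative definiteness of the associated quadratic form on $\QQ$-linear combinations (so that $(C^2) = 0 \Rightarrow C = 0$ even for a rational divisor $C$); this is routine since a symmetric bilinear form negative definite on a lattice is negative definite on the rational vector space it spans. Alternatively, one could avoid even invoking $(C^2)\le 0$ separately: $(\Delta_2^2) = (\Delta_2 \cdot \Delta_1) + (\Delta_2 \cdot C)$, and since $\Delta_2$ is anti-nef and both $\Delta_1, C$ are effective exceptional, both terms are $\le (\Delta_1\cdot\Delta_2) \le (\Delta_1^2)$-type comparisons — but the cleanest route is the expansion above, and I would present that one.
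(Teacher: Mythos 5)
Your proof is correct and is essentially the paper's own argument: both write $\Delta_2=\Delta_1+C$ with $C$ effective (via Remark \ref{Remarkv1}), expand $(\Delta_2^2)=(\Delta_1^2)+2(\Delta_1\cdot C)+(C^2)$, and use anti-nefness of $\Delta_1$ for the cross term together with negative definiteness of the exceptional intersection form to get strict decrease whenever $C\ne 0$. The remark about extending definiteness from integral to rational (indeed real) coefficients is routine, as you say, and is taken for granted in the paper.
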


\begin{proof} If $\Delta$ is an anti-nef divisor with exceptional support, and $E$ is a nonzero effective $\QQ$-divisor with exceptional support, then 
$$
(\Delta+E)^2=(\Delta^2)+2(\Delta\cdot E)+(E^2)<(\Delta^2)
$$
since $(E^2)<0$ as the intersection form on exceptional divisors on $X$ is negative definite.
\end{proof}

Let $\nu_i$ be the discrete valuation with valuation ring $\mathcal O_{X,E_i}$ for $1\le i\le r$, and define the valuation ideals
$$
I(\nu_i)_n=  \{f\in R\mid \nu_i(f)\ge n\}
$$
for $n\in \NN$ and $1\le i\le r$.

For $D=a_1E_1+\cdots+a_rE_r$ an effective  integral divisor on $X$ with exceptional support ($a_i\in \NN$ for all $i$), define 
$$
I(D)=\Gamma(X,\mathcal O_X(-D))=\{f\in {\rm QF}(R)\mid (f)-D\ge 0\}.
$$
We have that  $I(0)=\Gamma(X,\mathcal O_X)=R$  since the ring $\Gamma(X,\mathcal O_X)$ is a finitely generated $R$-module with the same quotient field as $R$ and $R$ is normal. Thus $I(D)$ is an $m_R$-primary ideal if $D\ne 0$. For $n\in \NN$, we have that
$$
I(nD)=I(\nu_1)_{na_1}\cap\cdots\cap I(\nu_r)_{na_r}
$$
is an $m_R$-primary ideal in $R$, and $\{I(nD)\}$ is a filtration of $m_R$-primary ideals in $R$. By Theorem \ref{TheoremI20}, the limit
$$
{\rm Vol}(D):=\lim_{n\rightarrow \infty}\frac{\ell_R(R/I(nD))}{n^2/2!}=e_R(\{I(nD)\};R)
$$
exists. In fact, by formula (7) and Lemma 2.5 on page 6 of \cite{CHR}, we have
\begin{equation}\label{eqV1}
{\rm Vol}(D)=-(\Delta^2)
\end{equation}

where $\Delta$ is the anti-nef part of the Zariski decomposition of $D$. 

\begin{Remark}\label{RemarkV2}We deduce from Corollary \ref{CorV1} that if $D_1\le D_2$ are effective divisors with exceptional support on $X$ and respective anti-nef parts of their Zariski decompositons $\Delta_1$ and $\Delta_2$, then
$$
{\rm Vol}(D_1)\le {\rm Vol}(D_2)
$$
with equality if and only if 
$\Delta_1=\Delta_2$.
\end{Remark}

Let $\lceil a\rceil$ denote the smallest integer which is greater than or equal to a real number $a$. If $D=\sum a_iE_i$ with $a_i\in \QQ$ is a $\QQ$-divisor, let $\lceil D\rceil = \sum \lceil a_i\rceil E_i$.

\begin{Lemma}\label{LemmaV2}
Suppose that $D$ is an effective divisor on $X$ with exceptional support and $\Delta=D+B$ is the Zariski decomposition of $D$. Then for all $n\in \NN$, $I(nD)=I(\lceil n\Delta\rceil)$.
\end{Lemma}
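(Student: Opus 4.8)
The plan is to prove the two inclusions $I(\lceil n\Delta\rceil)\subseteq I(nD)$ and $I(nD)\subseteq I(\lceil n\Delta\rceil)$ separately, the second being the substantive one. Throughout I use that $X$ is smooth (it is a resolution of singularities), so every divisor on $X$ is Cartier, and that $\Gamma(X,\mathcal O_X)=R$, so that for an effective integral exceptional divisor $G$ we have $I(G)=\Gamma(X,\mathcal O_X(-G))=\{f\in {\rm QF}(R)\mid (f)-G\ge 0\}$, where $(f)$ denotes the (necessarily effective, for $f\in R$) principal divisor of $f$ on $X$. Note also that $\lceil n\Delta\rceil$ is an effective integral divisor with exceptional support, so $I(\lceil n\Delta\rceil)$ is legitimately defined.

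For the easy inclusion, since $\Delta=D+B$ with $B$ effective we have $\lceil n\Delta\rceil\ge n\Delta\ge nD$ as divisors on $X$, hence $\mathcal O_X(-\lceil n\Delta\rceil)\subseteq \mathcal O_X(-nD)$ and $I(\lceil n\Delta\rceil)\subseteq I(nD)$. The case $n=0$ being trivial, I assume $n\ge 1$ from now on. For the reverse inclusion, I would take $0\ne f\in I(nD)$, so $f\in R$ and $\nu_i(f)\ge na_i$ for all $i$, where $D=\sum_{i}a_iE_i$. Write $(f)=F+H$, where $F=\sum_{i=1}^s\nu_i(f)E_i$ is the exceptional part of $(f)$ and $H\ge 0$ collects the non-exceptional prime components. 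The key step is to observe that $F$ is anti-nef: since $(f)$ is a principal (Cartier) divisor and each $E_i$ is a projective curve over $k=R/m_R$, one has $((f)\cdot E_i)=0$; and since $H$ is effective with no component equal to $E_i$, one has $(H\cdot E_i)\ge 0$; therefore $(F\cdot E_i)=((f)\cdot E_i)-(H\cdot E_i)=-(H\cdot E_i)\le 0$ for every $i$. Then $\tfrac1n F$ is an effective anti-nef $\QQ$-divisor with exceptional support and $\tfrac1n F\ge D$, so by the minimality of the anti-nef part of the Zariski decomposition (the first sentence of Lemma \ref{LemmaV1}) we get $\tfrac1n F\ge\Delta$, i.e.\ $F\ge n\Delta$. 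Since $F$ is integral this forces $F\ge\lceil n\Delta\rceil$, hence $(f)=F+H\ge\lceil n\Delta\rceil$ and $f\in I(\lceil n\Delta\rceil)$.

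Combining the two inclusions gives $I(nD)=I(\lceil n\Delta\rceil)$. The only point that needs genuine argument is the anti-nefness of the exceptional part $F$ of a principal divisor; everything else is formal once that and Lemma \ref{LemmaV1} are available. (This also recovers, and is essentially equivalent to, the computation of these valuation ideals in \cite{CHR}.) I expect no serious obstacle beyond making the decomposition $(f)=F+H$ and the intersection-number bookkeeping precise.
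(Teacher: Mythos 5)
Your proof is correct, and the substantive half (the inclusion $I(nD)\subseteq I(\lceil n\Delta\rceil)$) takes a genuinely different, and in fact slightly slicker, route than the paper. The paper fixes the set $S$ of components of $B$, decomposes $(f)-nD=A+C$ with $C$ supported on $S$, uses the orthogonality property $(\Delta\cdot E)=0$ for $E\in S$ (statement 2) of Lemma \ref{LemmaV1}) together with $((f)\cdot E)=0$ to get $((C-nB)\cdot E)\le 0$ for all $E\in S$, and then invokes negative definiteness of the intersection form on $S$ (via the negativity lemma cited from Badescu) to conclude $C\ge nB$, i.e.\ $(f)\ge n\Delta$. You instead observe that the exceptional part $F$ of $(f)$ is effective, anti-nef (same two ingredients: $((f)\cdot E_i)=0$ for the complete curves $E_i$ and $(H\cdot E_i)\ge 0$ for the non-exceptional part $H$), and satisfies $\tfrac1n F\ge D$, and then you appeal to the minimality characterization of $\Delta$ (the first sentence of Lemma \ref{LemmaV1}, used exactly as in Remark \ref{Remarkv1}) to get $F\ge n\Delta$; rounding up then finishes both arguments identically. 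What your approach buys is that the negativity-lemma bookkeeping on the support of $B$ is absorbed once and for all into the minimality statement of Lemma \ref{LemmaV1}, so you never need property 2) of the Zariski decomposition or the decomposition $A+C$; what the paper's approach buys is that it is self-contained modulo the standard negativity lemma and makes the role of the orthogonality condition $(\Delta\cdot E)=0$ explicit. The facts you use ($((f)\cdot E_i)=0$ since $\mathcal O_X((f))\cong\mathcal O_X$, $(H\cdot E_i)\ge 0$ when $E_i$ is not a component of $H$, and that the unique minimal element of Lemma \ref{LemmaV1} is indeed a minimum of the set of effective anti-nef $\QQ$-divisors dominating $D$) are all available in the paper's framework, so there is no gap.
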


\begin{proof} Suppose that $f\in I(\lceil n\Delta\rceil)=\Gamma(X,\mathcal O_X(-\lceil n\Delta\rceil))$. Then $(f)-\lceil n\Delta\rceil\ge 0$. Writing  $n\Delta=\lceil n\Delta\rceil -G$ with $G\ge 0$, we have $-n\Delta=G-\lceil n\Delta\rceil$.
From 
$$
-nD=-n\Delta+nB=-\lceil n\Delta\rceil+(G+nB)
$$
and the fact that $G+nB\ge 0$, we have that $(f)-nD\ge 0$ so that $f\in \Gamma(X,\mathcal O_X(-nD))=I(nD)$.

Let $S$ be the set of irreducible curves in the support of $B$. Suppose that $f\in I(nD)=\Gamma(X,\mathcal O_X(-nD))$. Then $(f)-nD\ge 0$. Write $(f)-nD=A+C$ where $A$ and $C$ are effective divisors on $X$, no components of $A$ are in $S$ and all components of $C$ are in $S$. We have that $(f)-n\Delta=A+(C-nB)$. If $E\in S$ then 
$$
(E\cdot(A+(C-nB)))=(E\cdot((f)-n\Delta))=0
$$
which implies $(E\cdot (C-nB))=-(E\cdot A)\le 0$. The intersection matrix of the curves in $S$ is negative definite since it is so for the set of all exceptional curves, so $C-nB\ge 0$ (for instance by \cite[Lemma 14.0]{Ba}). Thus $(f)-n\Delta\ge 0$ which implies $(f)-\lceil n\Delta\rceil\ge 0$ since $(f)$ is an integral divisor (that is, has integral coefficients).  Thus $f\in \Gamma(X,\mathcal O_X(-\lceil n\Delta\rceil))=I(\lceil n\Delta\rceil)$.  
\end{proof}

\begin{Proposition}\label{PropV2} Suppose that $D_1$ and $D_2$ are effective divisors with exceptional support on $X$. Let $\mathcal I(1)=\{I(nD_1)\}$ and $\mathcal I(2)=\{I(nD_2)\}$.  Suppose that $D_1\le D_2$ and
$$
e_R(\mathcal I(1);R)=e_R(\mathcal I(2);R).
$$
Then $I(nD_1)=I(nD_2)$ for all $n\in \NN$.
\end{Proposition}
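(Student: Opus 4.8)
The plan is to run everything through the Zariski decomposition and reduce the desired ideal equality to the numerical equality $(\Delta_1^2) = (\Delta_2^2)$ of the anti-nef parts. First I would invoke Lemma \ref{LemmaV1} to write the Zariski decompositions $\Delta_1 = D_1 + B_1$ and $\Delta_2 = D_2 + B_2$, where $\Delta_1,\Delta_2$ are the anti-nef parts and $B_1,B_2$ are effective. Since $D_1 \le D_2$, Remark \ref{Remarkv1} gives $\Delta_1 \le \Delta_2$; in particular $\Delta_2 - \Delta_1$ is an effective $\QQ$-divisor with exceptional support.

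Next I would use the volume formula (\ref{eqV1}): $e_R(\mathcal I(1);R) = {\rm Vol}(D_1) = -(\Delta_1^2)$ and $e_R(\mathcal I(2);R) = {\rm Vol}(D_2) = -(\Delta_2^2)$. The hypothesis $e_R(\mathcal I(1);R) = e_R(\mathcal I(2);R)$ thus becomes $(\Delta_1^2) = (\Delta_2^2)$. Combined with $\Delta_1 \le \Delta_2$, Corollary \ref{CorV1} (equivalently, the equality case of Remark \ref{RemarkV2}) forces $\Delta_1 = \Delta_2$. Finally, Lemma \ref{LemmaV2} gives $I(nD_j) = I(\lceil n\Delta_j\rceil)$ for $j=1,2$ and every $n \in \NN$; since $\Delta_1 = \Delta_2$ we have $\lceil n\Delta_1\rceil = \lceil n\Delta_2\rceil$, and therefore $I(nD_1) = I(nD_2)$ for all $n$, as claimed.

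There is essentially no serious obstacle here: the substantive work has already been done in Lemma \ref{LemmaV1} (existence and the two-part characterization of the Zariski decomposition), the volume formula (\ref{eqV1}), the negative-definiteness computation behind Corollary \ref{CorV1}, and Lemma \ref{LemmaV2}. The only point that warrants a passing remark is the degenerate case $D_1 = 0$: then ${\rm Vol}(D_1) = 0$, so ${\rm Vol}(D_2) = -(\Delta_2^2) = 0$, and since the intersection form on exceptional divisors on $X$ is negative definite this is possible only if $\Delta_2 = 0$, hence $D_2 = 0$ and the statement is trivial; in any case the argument above applies verbatim with $\Delta_1 = \Delta_2 = 0$.
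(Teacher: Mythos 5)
Your proposal is correct and follows essentially the same route as the paper: the paper's proof invokes Remark \ref{RemarkV2} (which packages the volume formula (\ref{eqV1}) together with the equality case of Corollary \ref{CorV1}) to conclude $\Delta_1=\Delta_2$, and then applies Lemma \ref{LemmaV2} exactly as you do. Your unpacking of Remark \ref{RemarkV2} and the aside on the case $D_1=0$ are harmless elaborations of the same argument.
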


\begin{proof} Let $\Delta_1$ and $\Delta_2$ be the respective anti-nef parts of the Zariski decompositions of $D_1$ and $D_2$. By Remark \ref{RemarkV2}, $D_1\le D_2$ and ${\rm Vol}(D_1)={\rm Vol}(D_2)$ implies $\Delta_1=\Delta_2$. Thus 
$$
I(nD_1)=I(\lceil n\Delta_1\rceil)=I(\lceil n\Delta_2\rceil)=I(nD_2)
$$
for all $n\in \NN$ by Lemma \ref{LemmaV2}.
\end{proof}

\begin{Proposition}\label{PropV1} Suppose that $D_1,\ldots,D_r$ are effective divisors on $X$ with exceptional support. For $n_1,\ldots,n_r\in \NN$,
let
$$
G(n_1,\ldots,n_r)=\lim_{n\rightarrow \infty}\frac{\ell_R(R/I(nn_1D_1)\cdots I(nn_rD_r))}{n^2}.
$$
Then for $n_1,\ldots,n_r\in \NN$,
$$
G(n_1,\ldots,n_r)=-\frac{1}{2}((n_1\Delta_1+n_2\Delta_2+\cdots+n_r\Delta_r)^2)
$$
where $\Delta_1,\ldots,\Delta_r$ are the respective anti-nef parts of the Zariski decompositions of $D_1,\ldots,D_r$.
\end{Proposition}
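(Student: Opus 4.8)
The plan is to pass to the integral closure of the product filtration and then compute it via a single birational model. Put $\Delta = n_1\Delta_1+\cdots+n_r\Delta_r$; this is an effective anti-nef $\QQ$-divisor with exceptional support, and we may assume $\Delta\neq 0$, since otherwise every $n_iD_i=0$ and both sides of the assertion vanish. Write $P_n = I(nn_1D_1)\cdots I(nn_rD_r)$. From $P_nP_m\subset P_{n+m}$ we see that the $P_n$ form a filtration of $R$ by $m_R$-primary ideals (some factor is $m_R$-primary since $\Delta\neq 0$), and so do their integral closures $\overline{P_n}$; moreover $\bigoplus_n\overline{P_n}t^n$ is integral over $\bigoplus_nP_nt^n$, because if $f\in\overline{P_n}$ satisfies $f^k+a_1f^{k-1}+\cdots+a_k=0$ with $a_i\in P_n^i\subset P_{ni}$, then multiplying through by $t^{nk}$ exhibits $ft^n$ as integral over $\bigoplus_mP_mt^m$. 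Since $R$ is analytically irreducible, $\hat R$ is reduced, so Theorem \ref{Theorem13} applies and gives
\[
G(n_1,\ldots,n_r)=\lim_{n\to\infty}\frac{\ell_R(R/P_n)}{n^2}=\lim_{n\to\infty}\frac{\ell_R(R/\overline{P_n})}{n^2}.
\]

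Next I identify $\overline{P_n}$ along a subsequence. By Lemma \ref{LemmaV2} applied to $nn_iD_i$ (whose Zariski decomposition has anti-nef part $nn_i\Delta_i$), $I(nn_iD_i)=\Gamma(X,\mathcal O_X(-\lceil nn_i\Delta_i\rceil))$. I will use the standard surface-geometry fact that a sufficiently divisible positive multiple of an anti-nef exceptional $\QQ$-divisor on the smooth surface $X$ is free over $\mbox{Spec}(R)$: contracting the exceptional curves orthogonal to $-n_i\Delta_i$ (an Artin contraction, as they form a negative-definite configuration) produces a birational model on which $-n_i\Delta_i$ descends to a $\phi$-ample $\QQ$-Cartier divisor, a multiple of which is $\phi$-very ample; pulling back, $\mathcal O_X(-tn_i\Delta_i)$ is globally generated over $\mbox{Spec}(R)$ for all $t$ in some arithmetic progression. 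Fixing $\ell$ in the intersection of these progressions for $i=1,\dots,r$, we get for all large $n\in\ell\ZZ_+$ (so that each $nn_i\Delta_i$ is integral and each $\mathcal O_X(-nn_i\Delta_i)$ is globally generated over $\mbox{Spec}(R)$)
\[
P_n\mathcal O_X=\prod_i\bigl(I(nn_iD_i)\mathcal O_X\bigr)=\prod_i\mathcal O_X(-nn_i\Delta_i)=\mathcal O_X(-n\Delta),
\]
which is an invertible $\mathcal O_X$-ideal. By the standard formula for the integral closure of an ideal whose extension to a normal proper birational model is invertible, $\overline{P_n}=\Gamma(X,\mathcal O_X(-n\Delta))=\{f\in R:(f)\ge n\Delta\}=I(\lceil n\Delta\rceil)$ for such $n$, so in particular $\ell_R(R/\overline{P_n})=\ell_R(R/I(\lceil n\Delta\rceil))$ along this subsequence.

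Finally, $\{I(\lceil n\Delta\rceil)\}$ is again a filtration of $R$ by $m_R$-primary ideals, so $\lim_n\ell_R(R/I(\lceil n\Delta\rceil))/n^2$ exists by Theorem \ref{TheoremI20}; evaluating it along $n=ke$, where $e$ is a common denominator of the $\Delta_i$ (so that $e\Delta$ is an effective integral exceptional divisor equal to its own anti-nef part), and applying (\ref{eqV1}) to $e\Delta$, yields $\lim_n\ell_R(R/I(\lceil n\Delta\rceil))/(n^2/2)=-((e\Delta)^2)/e^2=-(\Delta^2)$. Since the limit $\lim_n\ell_R(R/\overline{P_n})/n^2$ exists and equals this one along the subsequence $\ell\ZZ_+$, it equals it outright, and therefore
\[
G(n_1,\ldots,n_r)=-\tfrac12(\Delta^2)=-\tfrac12\bigl((n_1\Delta_1+\cdots+n_r\Delta_r)^2\bigr).
\]
The one substantive ingredient is the freeness of a multiple of an anti-nef exceptional divisor on $X$; this is the main obstacle, as it is where the input from surface geometry is concentrated, while the rest is bookkeeping with filtrations and subsequences. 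One could instead split the proof into the two inequalities $G\ge-\tfrac12(\Delta^2)$ (immediate from $P_n\subset I(\lceil n\Delta\rceil)$) and $G\le-\tfrac12(\Delta^2)$, but the latter still comes down to bounding ${\rm ord}_{E_j}(\overline{P_n})$ above by $n\,{\rm ord}_{E_j}(\Delta)+o(n)$, which rests on the same fact.
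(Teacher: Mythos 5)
There is a genuine gap at the central step. The ``standard surface-geometry fact'' you invoke --- that a sufficiently divisible multiple of an anti-nef exceptional $\QQ$-divisor on $X$ is free over $\mbox{Spec}(R)$ --- is false for a general excellent normal two-dimensional local domain; it is true for rational singularities (Artin--Lipman theory), but not beyond. The failure is exactly at the descent step of your contraction argument: $-tn_i\Delta_i$ has degree zero on the curves you contract, but when the contracted configuration is not rational, a line bundle that is numerically trivial on the exceptional set need not be torsion in its Picard group (the relative $\mbox{Pic}^0$ can contain an abelian variety and the class of $\mathcal O_X(-tn_i\Delta_i)$ can be non-torsion), so no multiple of $-n_i\Delta_i$ is a pullback of a $\QQ$-Cartier divisor from the contracted model and global generation fails. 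Your own reduction shows the claim cannot hold in general: take $D_1=E$ a prime exceptional divisor with valuation $\nu$ and let $\Delta$ be the anti-nef part of its Zariski decomposition, so $I(\nu)_n=I(nE)=I(\lceil n\Delta\rceil)$ by Lemma \ref{LemmaV2}. If $\mathcal O_X(-t\Delta)$ were globally generated with $t\Delta$ integral, then $I(t\Delta)^n\mathcal O_X=\mathcal O_X(-nt\Delta)$ would be invertible, hence $\overline{I(t\Delta)^n}=\Gamma(X,\mathcal O_X(-nt\Delta))=I(\nu)_{nt}$, and since $R$ is excellent the algebra $\bigoplus_n I(\nu)_{nt}$ would be finite over the Rees algebra of $I(t\Delta)$, so the filtration $\{I(\nu)_n\}$ would be Noetherian (a Noetherian Veronese forces the whole algebra to be Noetherian). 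This contradicts the fact, recalled in the introduction, that condition (N) of Muhly and Sakuma \cite{MS} fails for complete normal two-dimensional local rings with non-torsion divisor class group \cite{C6}: for such rings some divisorial filtration is not Noetherian.

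This is precisely the point the paper's proof is designed to avoid: instead of using semiampleness of the nef part (which would be the local analogue of ``nef implies semiample,'' and is false), it approximates $-n_i\Delta_i$ from inside the ample cone by ample $\QQ$-divisors $-A_{i,\epsilon}$ whose relevant intersection numbers are within $\epsilon$ of those of the $n_i\Delta_i$; for ample divisors suitable multiples are globally generated, so Theorem \ref{Theorem13} together with (\ref{eqV1}) computes the limit for the product of the ideals $I(n\alpha_\epsilon s_\epsilon A_{i,\epsilon})$ as $-\tfrac12(A_\epsilon^2)$ (up to the scaling factor), and then the inclusions of ideals squeeze $G(n_1,\ldots,n_r)$ between $-\tfrac12(A_\epsilon^2)$ and $-\tfrac12((n_1\Delta_1+\cdots+n_r\Delta_r)^2)$, which coincide in the limit $\epsilon\to 0$. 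Your outer frame --- passing to integral closures via Theorem \ref{Theorem13}, identifying $\overline{P_n}=I(\lceil n\Delta\rceil)$ when $P_n\mathcal O_X$ is invertible, and evaluating by (\ref{eqV1}) --- is sound, but it all rests on the false freeness claim, and the alternative you sketch at the end (bounding ${\rm ord}_{E_j}(\overline{P_n})$ above by $n\,{\rm ord}_{E_j}(\Delta)+o(n)$) rests on the same claim, so it does not repair the argument.
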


\begin{proof}  Fix $n_1,\ldots,n_r\in\NN$. Given $\epsilon>0$, there exist effective $\QQ$-divisors $F_{1,\epsilon}, \ldots, F_{r,\epsilon},A_{1,\epsilon},\ldots,A_{r,\epsilon}$ with exceptional support such that $-A_{i,\epsilon}$  are ample for $1\le i\le r$ (that is, $(A_{i,\epsilon}\cdot E)<0$ for all exceptional curves $E$ and $(A_{i,\epsilon}^2)>0$), 
$-n_i\Delta_i=-A_{i,\epsilon}+F_{i,\epsilon}$ for $1\le i\le r$,
$$
|((n_1\Delta_1+\cdots+n_r\Delta_r)^2)-((A_{1,\epsilon}+\cdots+A_{r,\epsilon})^2)|<\epsilon
$$
and
$$
|(n_i\Delta_i^2)-(A_{i,\epsilon}^2)|<\epsilon\mbox{ for }1\le i\le r.
$$
Let
$A_{\epsilon}=A_{1,\epsilon}+\cdots+A_{r,\epsilon}$, $F_{\epsilon}=F_{1,\epsilon}+\dots+F_{r,\epsilon}$ so that
$$
-(n_1\Delta_1+\cdots+n_r\Delta_r)=-A_{\epsilon}+F_{\epsilon}.
$$

There exists $s_{\epsilon}\in \ZZ_+$ such that $s_{\epsilon}A_{i,\epsilon}$ and $s_{\epsilon}\Delta_i$ are effective integral divisors (that is, have integral coefficients) for $1\le i\le r$. Since the $-s_{\epsilon}A_{i,\epsilon}$ are ample integral divisors on $X$, there exists $\alpha_{\epsilon}\in \ZZ_+$ such that 
the invertible sheaves $\mathcal O_X(-\alpha_{\epsilon}s_{\epsilon}A_{i,\epsilon})$ are generated by global sections for $1\le i\le r$. Thus for $n\in\NN$,
$$
\begin{array}{lll}
I(\alpha_{\epsilon}s_{\epsilon}A_{1,\epsilon})^n\cdots I(\alpha_{\epsilon}s_{\epsilon}A_{r,\epsilon})^n\mathcal O_X 
&=&
I(n\alpha_{\epsilon}s_{\epsilon}A_{1,\epsilon})\cdots I(n\alpha_{\epsilon}s_{\epsilon}A_{r,\epsilon})\mathcal O_X \\
&=& I(n\alpha_{\epsilon}s_{\epsilon}A_{\epsilon})\mathcal O_X=I(\alpha_{\epsilon}s_{\epsilon}A_{\epsilon})^n\mathcal O_X.
\end{array}
$$

Thus  the ideals 
$$
I(\alpha_{\epsilon}s_{\epsilon}A_{1,\epsilon})^n\cdots I(\alpha_{\epsilon}s_{\epsilon}A_{r,\epsilon})^n,
I(n\alpha_{\epsilon}s_{\epsilon}A_{1,\epsilon})\cdots I(n\alpha_{\epsilon}s_{\epsilon}A_{r,\epsilon}),
I(n\alpha_{\epsilon}s_{\epsilon}A_{\epsilon}), I(\alpha_{\epsilon}s_{\epsilon}A_{\epsilon})^n
$$
have the same integral closure which is $I(n\alpha_{\epsilon}s_{\epsilon}A_{\epsilon})$, and so the $R$-algebra
$$
\bigoplus_{n\ge 0} I(n\alpha_{\epsilon}s_{\epsilon}A_{\epsilon})
$$
is integral over
$$
\bigoplus_{n\ge 0}I(n\alpha_{\epsilon}s_{\epsilon}A_{1,\epsilon})\cdots I(n\alpha_{\epsilon}s_{\epsilon}A_{r,\epsilon}).
$$
Now by Theorem \ref{Theorem13}  and  (\ref{eqV1}),
\begin{equation}\label{eqV2}
\begin{array}{lll}
\lim_{n\rightarrow \infty}\frac{\ell_R(R/I(n\alpha_{\epsilon}s_{\epsilon}A_{1,\epsilon})\cdots I(n\alpha_{\epsilon}s_{\epsilon}A_{r,\epsilon}))}{n^2}
&=&\lim_{n\rightarrow\infty}\frac{\ell_R(R/I(n\alpha_{\epsilon}s_{\epsilon}A_{\epsilon}))}{n^2}\\
&=&-\frac{1}{2}((\alpha_{\epsilon}s_{\epsilon}A_{\epsilon})^2)
=-\frac{\alpha_{\epsilon}^2s_{\epsilon}^2}{2}(A_{\epsilon}^2).
\end{array}
\end{equation}
For all $n\in\NN$, we have inclusions
$$
I(n\alpha_{\epsilon}s_{\epsilon}A_{1,\epsilon})\cdots I(n\alpha_{\epsilon}s_{\epsilon}A_{r,\epsilon})\subset 
I(n\alpha_{\epsilon}s_{\epsilon}n_1\Delta_1)\cdots I(n\alpha_{\epsilon}s_{\epsilon}n_r\Delta_r))\subset I( n\alpha_{\epsilon}s_{\epsilon}(n_1\Delta_1+\cdots+n_r\Delta_r))
$$
inducing surjections
$$
\begin{array}{l}
R/I( n\alpha_{\epsilon}s_{\epsilon}A_{1,\epsilon})\cdots I( n\alpha_{\epsilon}s_{\epsilon}A_{r,\epsilon}))\rightarrow R/I(n\alpha_{\epsilon}s_{\epsilon}n_1\Delta_1)\cdots I( n\alpha_{\epsilon}s_{\epsilon}n_r\Delta_r))\\
\rightarrow R/I(n\alpha_{\epsilon}s_{\epsilon}(n_1\Delta_1+\cdots+n_r\Delta_r))
\end{array}
$$
so that
$$
\begin{array}{lll}
-\frac{1}{2}(A_{\epsilon}^2)&=&
\frac{1}{\alpha_{\epsilon}^2s_{\epsilon}^2}\lim_{n\rightarrow\infty}\frac{\ell_R(R/I(n\alpha_{\epsilon}s_{\epsilon}A_{1,\epsilon})\cdots I(n\alpha_{\epsilon}s_{\epsilon}A_{r,\epsilon}))}{n^2}\\
&\ge& \frac{1}{\alpha_{\epsilon}^2s_{\epsilon}^2}\lim_{n\rightarrow\infty}\frac{\ell_R(R/I(n\alpha_{\epsilon}s_{\epsilon}n_1\Delta_1)\cdots I(n\alpha_{\epsilon}s_{\epsilon}n_r\Delta_r))}{n^2}\\
&\ge& \frac{1}{\alpha_{\epsilon}^2s_{\epsilon}^2}\lim_{n\rightarrow\infty}\frac{\ell_R(R/I(n\alpha_{\epsilon}s_{\epsilon}(n_1\Delta_1+\cdots+n_r\Delta_r)))}{n^2}\\
&=&\frac{1}{\alpha_{\epsilon}^2s_{\epsilon}^2}\left[-\frac{1}{2}((\alpha_{\epsilon}s_{\epsilon}(n_1\Delta_1+\cdots+n_r\Delta_r))^2)\right]\\
&=& -\frac{1}{2}((n_1\Delta_1+\cdots+n_r\Delta_r)^2).
\end{array}
$$

Now
$$
\begin{array}{lll}
\lim_{n\rightarrow\infty}\frac{\ell_R(R/I(n\alpha_{\epsilon}s_{\epsilon}n_1\Delta_1)\cdots I(n\alpha_{\epsilon} s_{\epsilon}n_r\Delta_r))}{n^2}
&=&
\lim_{n\rightarrow\infty}\frac{\ell_R(R/I(n\alpha_{\epsilon}s_{\epsilon}n_1D_1)\cdots I(n\alpha_{\epsilon}s_{\epsilon}n_rD_r))}{n^2}\\
&=& (\alpha_{\epsilon}^2s_{\epsilon}^2)\lim_{n\rightarrow \infty}\frac{\ell_R(R/I(nn_1D_1)\cdots I(nn_rD_r))}{n^2}.
\end{array}
$$

Thus 
\begin{equation}\label{eqV3}
\begin{array}{lll}
-\frac{1}{2}((n_1\Delta_1+\cdots+n_r\Delta_r)^2)&=&\lim_{\epsilon\rightarrow 0}-\frac{1}{2}(A_{\epsilon}^2)\\
&=&\lim_{n\rightarrow\infty}\frac{\ell_R(R/I(nn_1D_1)\cdots I(nn_rD_r))}{n^2}\\
&=&G(n_1,\ldots,n_r).
\end{array}
\end{equation}
\end{proof}

From Proposition \ref{PropV1} and equation (\ref{eqV6}), with $\mathcal I(i)=\{I(nD_i)\}$, we deduce that the mixed multiplicities are
\begin{equation}\label{eqV7}
e_R(\mathcal I(j)^{[2]};R)=-(\Delta_j^2)\mbox{ for all $j$}
\end{equation}
and
\begin{equation}\label{eqV8}
e_R(\mathcal I(i)^{[1]},\mathcal I(j)^{[1]};R)=-(\Delta_i\cdot\Delta_j)
\end{equation}
for $i\ne j$.

We have by Proposition \ref{PropPos}  (or since  $-(\Delta_j^2)>0$ for all $j$ since $\Delta_j\ne 0$ and the intersection form is negative definite) that  all mixed multiplicities are positive. Further, the mixed multiplicities are all rational numbers since the $\Delta_i$ are $\QQ$-divisors. 

\subsection{two-dimensional local domains}
We now assume that $R$ has dimension two and $X$ is nonsingular. 
We use the notation introduced before the statement of Lemma \ref{LemmaR1} in Section \ref{Secd-dim}.

For $1\le l\le r$, write $D(l)=\sum_{i,j}a_{i,j}(l)E_{i,j}$ with $a_{i,j}\in \NN$ and let $D(l)_i=\sum_ja_{i,j}(l)E_{i,j}$. Let $\Delta(l)_i$ be the anti-nef part of the Zariski decomposition of $D(l)_i$. 
For $n_1,\ldots,n_r\in\NN$, 
$$
\begin{array}{lll}
\lim_{n\rightarrow \infty}\frac{\ell_R(S/J(nn_1D(1)\cdots J(nn_rD(r))}{n^2}&=&
\sum_{i=1}^t\lim_{n\rightarrow \infty} \frac{\ell_R(S_{m_i}/J(nn_1D(1)_i)\cdots J(nn_r D(r)_i))}{n^2}\\
&=& \sum_{i=1}^t-\frac{1}{2}[S/m_i:R/m_R]((n_1\Delta(1)_i+\cdots+n_r\Delta(r)_i)^2)
\end{array}
$$
by (\ref{eqR15}) and Proposition \ref{PropV1}. Now by Lemma \ref{LemmaR1} and the multinomial theorem,
\begin{equation}\label{eqR2}
\begin{array}{l}
\lim_{n\rightarrow \infty}\frac{\ell_R(R/I(nn_1D)\cdots I(nn_rD_r))}{n^2}
= \sum_{i=1}^t-\frac{1}{2}[S/m_i:R/m_R]((n_1\Delta(1)_i+\cdots+n_r\Delta(r)_i)^2)\\
=  \sum_{k_1+\ldots+k_r=2}\frac{1}{k_1!\cdots k_r!}\left(\sum_{i=1}^t-[S/m_i:R/m_R]
(\Delta(1)_i^{k_1}\cdot \ldots\cdot \Delta(r)_i^{k_r})\right)n_1^{k_1}\cdots n_r^{k_r}
\end{array}
\end{equation}

Let  $\mathcal I(i)=\{I(nD(i))\}$ be the filtrations of $m_R$-primary ideals. 
Then  by (\ref{eqV6}), the mixed multiplicities are
\begin{equation}\label{eqR13}
e_R(\mathcal I(j)^{[2]};R)=\sum_{i=1}^t-[S/m_i:R/m_R](\Delta(j)_i^2)
\end{equation}
 and for $j\ne k$,
 \begin{equation}\label{eqR14}
 e_R(\mathcal I(j)^{[1]},\mathcal I(k)^{[1]};R)=\sum_{i=1}^t-[S/m_i:R/m_R](\Delta(j)_i\cdot\Delta(k)_i).
 \end{equation}

\begin{Proposition}\label{PropR10} Suppose that $R$ is a two-dimensional excellent local domain, $\phi:X\rightarrow \mbox{Spec}(R)$ is a resolution of singularities and that $D(1)$ and $D(2)$ are effective divisors with exceptional support on $X$. Let $\mathcal I(1)=\{I(nD(1))\}$ and $\mathcal I(2)=\{I(nD(2))\}$ be the associated filtrations of $m_R$-primary ideals.  Suppose that $D(1)\le D(2)$ and
$$
e_R(\mathcal I(1);R)=e_R(\mathcal I(2);R).
$$
Then $I(nD(1))=I(nD(2))$ for all $n\in \NN$.
\end{Proposition}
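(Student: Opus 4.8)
The plan is to reduce Proposition \ref{PropR10} to the normal case already established in Proposition \ref{PropV2}, by passing to the normalization of $R$ in exactly the same way as in the proof of Theorem \ref{TheoremX1}. Let $S$ be the normalization of $R$, with maximal ideals $m_1,\ldots,m_t$, and let $X_i=X\times_{\mbox{Spec}(S)}\mbox{Spec}(S_{m_i})$ with induced projective birational morphisms $\phi_i:X_i\rightarrow\mbox{Spec}(S_{m_i})$. Since $X$ is nonsingular, each $X_i$ is nonsingular, hence is a resolution of singularities of the two-dimensional excellent normal local domain $S_{m_i}$. I would let $D(1)_i,D(2)_i$ denote the divisors induced on $X_i$, so that $D(1)\le D(2)$ gives $D(1)_i\le D(2)_i$ for every $i$.

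Next I would combine three ingredients. First, by Lemma \ref{LemmaR1} together with the length formula (\ref{eqR15}) (this is precisely (\ref{eqX3})), the multiplicity $e_R(\mathcal I(j);R)$ equals $\sum_{i=1}^t[S/m_i:R/m_R]\,e_{S_{m_i}}(\{J(nD(j)_i)\};S_{m_i})$, with coefficients $[S/m_i:R/m_R]$ that are finite and strictly positive because $S$ is a finite $R$-module. Second, since $D(1)_i\le D(2)_i$, Remark \ref{RemarkV2} (monotonicity of $\mathrm{Vol}$ under inclusion of divisors) gives $e_{S_{m_i}}(\{J(nD(1)_i)\};S_{m_i})\le e_{S_{m_i}}(\{J(nD(2)_i)\};S_{m_i})$ for each $i$. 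Third, the hypothesis is $e_R(\mathcal I(1);R)=e_R(\mathcal I(2);R)$. Since each term of the sum for $j=1$ is $\le$ the corresponding term for $j=2$, the weights are positive, and the two weighted sums agree, it follows that $e_{S_{m_i}}(\{J(nD(1)_i)\};S_{m_i})=e_{S_{m_i}}(\{J(nD(2)_i)\};S_{m_i})$ for all $i$.

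Then I would apply Proposition \ref{PropV2} to each $S_{m_i}$ with its resolution $\phi_i$: since $D(1)_i\le D(2)_i$ and the associated filtrations have equal multiplicities, we obtain $J(nD(1)_i)=\Gamma(X_i,\mathcal O_{X_i}(-nD(1)_i))=\Gamma(X_i,\mathcal O_{X_i}(-nD(2)_i))=J(nD(2)_i)$ for all $n\in\NN$ and all $i$. Descending via the decomposition (\ref{eqR6}) yields $J(nD(1))=\Gamma(X,\mathcal O_X(-nD(1)))=\Gamma(X,\mathcal O_X(-nD(2)))=J(nD(2))$ for all $n$, and intersecting with $R$ gives $I(nD(1))=I(nD(2))$ for all $n\in\NN$, as desired.

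I do not expect a serious obstacle: all the genuinely two-dimensional content is already packaged in Proposition \ref{PropV2}, whose proof relies on Lemma \ref{LemmaV2}, the Zariski-decomposition identity (\ref{eqV1}), and Corollary \ref{CorV1}, and the argument above is the same normalization bookkeeping used for Theorem \ref{TheoremX1}. The only two points that need a word of care are that $\phi_i:X_i\rightarrow\mbox{Spec}(S_{m_i})$ is indeed a resolution of singularities of a two-dimensional excellent normal local domain (so that Proposition \ref{PropV2} applies verbatim), and that the termwise-equality step legitimately uses the positivity of the residue degrees $[S/m_i:R/m_R]$.
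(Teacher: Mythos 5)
Your proposal is correct and is essentially the paper's own argument: the same normalization bookkeeping via $S$, $m_1,\ldots,m_t$, Lemma \ref{LemmaR1} and (\ref{eqR15}), followed by the two-dimensional Zariski-decomposition input. The only (cosmetic) difference is that the paper inlines the normal case, deducing $\Delta(1)_i=\Delta(2)_i$ directly from the vanishing of the weighted sum $\sum_i[S/m_i:R/m_R][(\Delta(2)_i^2)-(\Delta(1)_i^2)]$ via Remark \ref{Remarkv1} and Corollary \ref{CorV1}, whereas you first extract termwise equality of the local multiplicities and then cite Proposition \ref{PropV2}; these are the same argument packaged differently.
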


\begin{proof} 


Let $\Delta(1)_i$ and $\Delta(2)_i$ be the respective anti-nef parts of the Zariski decompositions of $D(1)_i$ and $D(2)_i$. Then $D(1)_i\le D(2)_i$ and so $\Delta(1)_i\le\Delta(2)_i$ for all $i$, by Remark \ref{Remarkv1}. Thus by Corollary \ref{CorV1}, for all $i$, $(\Delta(2)_i^2)\le (\Delta(1)_i^2)$ with equality if and only if $\Delta(1)_i=\Delta(2)_i$. Since $e_R(\mathcal I(1);R)=e_R(\mathcal I(2);R)$,  equation (\ref{eqR13})  and (\ref{eqX30}) imply that 

$$
\sum_{i=1}^t[S/m_i:R/m_R][(\Delta(2)_i^2)-(\Delta(1)_i^2)]=0.
$$
Thus $\Delta(2)_i=\Delta(1)_i$ for all $i$, which implies that $J(nD(1)_i)=J(nD(2)_i)$ for all $n\in \NN$ by Lemma \ref{LemmaV2} and so $J(nD(1))=J(nD(2))$ for all $n$ by (\ref{eqR6}).
Thus 
 $$
 I(nD(2))=J(nD(2))\cap R=J(nD(1))\cap R=I(nD(1))
 $$
 for all $n\in \NN$.
\end{proof}

Theorem \ref{TheoremX1} in the case that $\dim R=2$ is an immediate corollary of Proposition \ref{PropR10}.
The following theorem is a generalization to divisorial valuations of a theorem of Teissier \cite{T3} and Rees and Sharp \cite{RS} for $m_R$-primary ideals.

\begin{Theorem}\label{PropR11} Suppose that $R$ is a two-dimensional excellent local domain, $\phi:X\rightarrow \mbox{Spec}(R)$ is a resolution of singularities   and that $D(1)$ and $D(2)$ are effective divisors with exceptional support on $X$. Let $\mathcal I(1)=\{I(nD(1))\}$ and $\mathcal I(2)=\{I(nD(2))\}$ be the associated filtrations of $m_R$-primary ideals. Suppose that  the Minkowski equality 
\begin{equation}\label{eqR4}
e_R(\mathcal I(1)\mathcal I(2);R)^{\frac{1}{2}}= e_R(\mathcal I(1);R)^{\frac{1}{2}}+e_R(\mathcal I(2);R)^{\frac{1}{2}}
\end{equation}
holds (there is equality in inequality 4) of  Theorem \ref{TheoremMI}).
 Then there exist relatively prime $a,b\in \ZZ_+$ such that 
$$
I(naD(1))=I(nbD(2))
$$
for all $n\in \NN$.
\end{Theorem}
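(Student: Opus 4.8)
The plan is to read the Minkowski equality (\ref{eqR4}) as the equality case of a two-step Cauchy--Schwarz inequality coming from the negative definite intersection form on the exceptional fibres of $\phi$, and to conclude that the anti-nef parts of the Zariski decompositions of $D(1)$ and $D(2)$ are proportional. With the notation preceding Lemma~\ref{LemmaR1}, let $S$ be the normalization of $R$, with maximal ideals $m_1,\ldots,m_t$; put $c_i=[S/m_i:R/m_R]>0$, let $D(1)_i,D(2)_i$ be the divisors induced on $X_i$, and let $\Delta(1)_i,\Delta(2)_i$ be the anti-nef parts of their Zariski decompositions. Set $x_i=-(\Delta(1)_i^2)$, $y_i=-(\Delta(2)_i^2)$ and $z_i=-(\Delta(1)_i\cdot\Delta(2)_i)$. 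Since each $\Delta(l)_i$ is effective and anti-nef and the intersection form on the exceptional curves of $X_i$ is negative definite (\cite[Lemma 14.1]{L1}), all three numbers are $\ge 0$, and $x_i>0$ (resp.\ $y_i>0$) precisely when $D(1)_i\ne 0$ (resp.\ $D(2)_i\ne 0$); we may assume $D(1),D(2)\ne 0$, since $\mathcal I(1),\mathcal I(2)$ are filtrations of $m_R$-primary ideals.

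First I would record, from (\ref{eqR13}) and (\ref{eqR14}), that $e_R(\mathcal I(1);R)=\sum_i c_ix_i=:e_{11}$, $e_R(\mathcal I(2);R)=\sum_i c_iy_i=:e_{22}$ and $e_R(\mathcal I(1)^{[1]},\mathcal I(2)^{[1]};R)=\sum_i c_iz_i=:e_{12}$, so that $e_{11},e_{22}>0$; and from (\ref{eqR2}) applied with $r=2$, $n_1=n_2=1$ to $\mathcal I(1)\mathcal I(2)=\{I(nD(1))I(nD(2))\}$, that $e_R(\mathcal I(1)\mathcal I(2);R)=-\sum_i c_i\big((\Delta(1)_i+\Delta(2)_i)^2\big)=e_{11}+2e_{12}+e_{22}$. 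Squaring (\ref{eqR4}) and inserting this value, the Minkowski equality becomes $e_{12}=\sqrt{e_{11}e_{22}}$.

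Next, the pairing $(\Delta,\Delta')\mapsto -(\Delta\cdot\Delta')$ is a positive definite inner product on the $\QQ$-span of the exceptional curves of $X_i$, so $z_i^2\le x_iy_i$ with equality iff $\Delta(1)_i$ and $\Delta(2)_i$ are proportional; and Cauchy--Schwarz in $\RR^t$ gives $e_{12}=\sum_i c_iz_i\le\sum_i\sqrt{c_ix_i}\sqrt{c_iy_i}\le\sqrt{\sum_i c_ix_i}\,\sqrt{\sum_i c_iy_i}=\sqrt{e_{11}e_{22}}$ (which recovers inequality 4) of Theorem~\ref{TheoremMI} in this case). The hypothesis forces equality at each step. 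From equality in the $\RR^t$-step the nonzero, nonnegative vectors $(\sqrt{c_ix_i})_i$ and $(\sqrt{c_iy_i})_i$ are proportional, so there is $\mu>0$ with $y_i=\mu^2x_i$ for all $i$; in particular $D(1)_i=0\iff D(2)_i=0$. For each $i$ with $D(1)_i\ne 0$ (hence $x_i,y_i>0$), equality $z_i^2=x_iy_i$ makes $\Delta(1)_i,\Delta(2)_i$ nonzero and proportional, say $\Delta(2)_i=\lambda_i\Delta(1)_i$ with $\lambda_i\in\QQ_+$ (rational by comparing a coefficient, positive since $z_i>0$); then $\lambda_i^2x_i=y_i=\mu^2x_i$ gives $\lambda_i=\mu$, so $\mu\in\QQ_+$. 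For the remaining $i$ both $\Delta(l)_i$ vanish, so $\Delta(2)_i=\mu\,\Delta(1)_i$ holds for every $i$.

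Finally, writing $\mu=a/b$ in lowest terms with $a,b\in\ZZ_+$, we obtain $a\,\Delta(1)_i=b\,\Delta(2)_i$, hence $\lceil na\,\Delta(1)_i\rceil=\lceil nb\,\Delta(2)_i\rceil$ for all $n\in\NN$ and all $i$. Exactly as in the proof of Proposition~\ref{PropR10} --- using that $a\Delta(1)_i$ and $b\Delta(2)_i$ are the anti-nef parts of $aD(1)_i$ and $bD(2)_i$ --- Lemma~\ref{LemmaV2} applied on each $X_i$ together with (\ref{eqR6}) gives $J(naD(1))=J(nbD(2))$, and therefore $I(naD(1))=J(naD(1))\cap R=J(nbD(2))\cap R=I(nbD(2))$ for all $n\in\NN$. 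The step I expect to be the main obstacle is the bookkeeping around the degenerate indices in this double Cauchy--Schwarz: one must rule out the case $D(1)_i=0\ne D(2)_i$ and produce one constant $\mu$ governing the proportionality for all $i$ simultaneously, which is exactly what upgrades the componentwise relations $\Delta(2)_i=\lambda_i\Delta(1)_i$ to the global statement needed to invoke Lemma~\ref{LemmaV2}.
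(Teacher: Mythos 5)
Your argument is correct and follows essentially the same route as the paper's proof: both reduce the Minkowski equality to $e_{12}^2=e_{11}e_{22}$ using (\ref{eqR2}), (\ref{eqR13}) and (\ref{eqR14}), exploit negative definiteness of the intersection form on each $X_i$ to force $\Delta(2)_i=\mu\,\Delta(1)_i$ for a single rational $\mu=a/b$, and conclude via Lemma \ref{LemmaV2} and (\ref{eqR6}). The only real difference is the equality analysis: where you run a two-step Cauchy--Schwarz (componentwise on each $X_i$, then in $\RR^t$) and must globalize the constants $\lambda_i$ and handle the indices with $D(1)_i=0$, the paper instead writes $e_1/e_0=e_2/e_1=a/b$ (rational because the $\Delta(j)_i$ are $\QQ$-divisors), replaces $D(1),D(2)$ by $aD(1),bD(2)$ so that $e_0=e_1=e_2$, and then obtains all the equalities $\Delta(1)_i=\Delta(2)_i$ at once from $\sum_i [S/m_i:R/m_R]\,\bigl((\Delta(1)_i-\Delta(2)_i)^2\bigr)=0$, which disposes in one stroke of the degenerate-index bookkeeping you flag as the main obstacle.
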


\begin{proof}  We will use the notation introduced before the statement of Lemma \ref{LemmaR1}. Let $e_0=e_R(\mathcal I(1)^{[2]};R)$, $e_1=e_R(\mathcal I(1)^{[1]},\mathcal I(2)^{[1]};R)$ and $e_2=e_R(\mathcal I(2)^{[2]};R)$. Let $\Delta(1)_i$ and $\Delta(2)_i$ be the respective anti-nef parts of the Zariski decompositions of $D(1)_i$ and $D(2)_i$. Let
$$
G(n_1,n_2)=\lim_{n\rightarrow \infty}\frac{\ell_R(R/I(nn_1D(1))I(nn_2D(2)))}{n^2}.
$$
Then 
$$
G(n_1,n_2)=\frac{1}{2}e_0n_1^2+e_1n_1n_2+\frac{1}{2}e_2n_2^2
$$
by (\ref{eqV6}). Now by (\ref{eqR13}) and (\ref{eqR14}),
$$
e_0=\sum_{i=1}^t-[S/m_i:R/m_R](\Delta(1)_i^2), 
e_1=\sum_{i=1}^t-[S/m_i:R/m_R](\Delta(1)_i\cdot\Delta(2)_i),
$$
$$
e_2=\sum_{i=1}^t-[S/m_i:R/m_R](\Delta(2)_i^2).
$$

We have the Minkowski inequality (inequality 1) of Theorem \ref{TheoremMI})
\begin{equation}\label{eqR5}
e_1^2\le e_0e_2.
\end{equation}
We conclude that 
$$
e_R(\mathcal I(1)\mathcal I(2);R)=2G(1,1)=e_0+2e_1+e_2\le e_0+2e_0^{\frac{1}{2}}e_2^{\frac{1}{2}}+e_2
=(e_0^{\frac{1}{2}}+e_2^{\frac{1}{2}})^2.
$$
We deduce that equality holds in (\ref{eqR4}) if and only if equality holds in (\ref{eqR5}).
Since we assume equality in (\ref{eqR4}), we have equality in (\ref{eqR5}).
 Write
$$
\frac{e_1}{e_0}=\frac{e_2}{e_1}=\frac{a}{b}
$$
with $a,b\in \ZZ_+$ relatively prime. Replacing $D(1)$ with $aD(1)$ and $D(2)$ with $bD(2)$ we obtain $e_0=e_1=e_2$ so
$$
\sum_{i=1}^t-[S/m_i:R/m_R](\Delta(1)_i^2)=
\sum_{i=1}^t-[S/m_i:R/m_R](\Delta(1)_i\cdot\Delta(2)_i)=
\sum_{i=1}^t-[S/m_i:R/m_R](\Delta(2)_i^2).
$$

We have that 
$$
\sum_{i=1}^t[S/m_i:R/m_R]((\Delta(1)_i-\Delta(2)_i)^2)=\sum_{i=1}^t[S/m_i:R/m_R][(\Delta(1)_i^2)-2(\Delta(1)_i\cdot\Delta(2)_i)+(\Delta_2^2)]=0
$$
which implies  that $\Delta(1)_i=\Delta(2)_i$ for all $i$ since the intersection product is negative definite, so $J(nbD(1)_i)=J(naD(2)_i)$ for all $i$ and $n\in \NN$ by Lemma \ref{LemmaV2}, and thus $J(naD(1))=J(nbD(2))$ for all $n\in \NN$ by (\ref{eqR6}). Now 
$$
I(naD(1))=J(naD(1))\cap R=J(nbD(2))\cap R=I(nbD(2))
$$
for all $n\in \NN$. 

\end{proof}

\begin{Corollary}\label{Prop12} Suppose that $R$ is a two-dimensional excellent local domain and $\nu_1$, $\nu_2$ are $m_R$-valuations. If the Minkowski equality
$$
e_R(\mathcal I(\nu_1)\mathcal I(\nu_2);R)^{\frac{1}{2}}= e_R(\mathcal I(\nu_1);R)^{\frac{1}{2}}+e_R(\mathcal I(\nu_2);R)^{\frac{1}{2}}
$$
holds then $\nu_1=\nu_2$.
\end{Corollary}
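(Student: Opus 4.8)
The plan is to deduce this from Theorem \ref{PropR11}. The first step is to realize the two valuation filtrations as divisorial filtrations on a single resolution. Since each $\nu_i$ is an $m_R$-valuation of the two-dimensional excellent domain $R$, it is a discrete divisorial valuation of $K={\rm QF}(R)$ whose center on $\mbox{Spec}(R)$ is $m_R$; indeed, by \cite[Statement (G)]{R3} it is a Rees valuation of some $m_R$-primary ideal, so $\nu_i$ is the order function of a prime exceptional divisor on the normalization of the blowup of that ideal. Taking a normal model of $\mbox{Spec}(R)$ dominating the two such models for $\nu_1$ and $\nu_2$ and then a resolution of singularities $\phi:X\to\mbox{Spec}(R)$ (which exists by \cite{L2} or \cite{CJS}), I obtain prime exceptional curves $E_1,E_2$ on $X$ with $\mathcal O_{X,E_i}=V_{\nu_i}$, so $\nu_i={\rm ord}_{E_i}$. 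For $0\ne f\in R$ one has $\nu_i(f)\ge n$ precisely when $(f)-nE_i\ge 0$ on $X$ (as $(f)\ge 0$ everywhere), so $I(\nu_i)_n=\Gamma(X,\mathcal O_X(-nE_i))\cap R=I(nE_i)$; thus $\mathcal I(\nu_i)=\{I(nE_i)\}$ is the divisorial filtration attached to $D(i)=E_i$.

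With this setup the hypothesized Minkowski equality is exactly the hypothesis of Theorem \ref{PropR11} for $D(1)=E_1$ and $D(2)=E_2$, so I would invoke that theorem to obtain relatively prime $a,b\in\ZZ_+$ with $I(naE_1)=I(nbE_2)$ for all $n\in\NN$. Unwinding the definition of the valuation ideals, this means: for every $n\in\NN$ and every $0\ne f\in R$, $\nu_1(f)\ge na$ if and only if $\nu_2(f)\ge nb$.

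The final step is elementary. For a fixed nonzero $f\in R$ the displayed equivalence says that the largest $n$ with $\nu_1(f)\ge na$ equals the largest $n$ with $\nu_2(f)\ge nb$, i.e. $\lfloor\nu_1(f)/a\rfloor=\lfloor\nu_2(f)/b\rfloor$; applying this with $f$ replaced by $f^m$ and letting $m\to\infty$ after dividing by $m$ gives $\nu_1(f)/a=\nu_2(f)/b$. Thus $b\nu_1(f)=a\nu_2(f)$ for all $f\in R$, hence $b\nu_1=a\nu_2$ on $K^{\times}$ since $K={\rm QF}(R)$. As $\nu_1$ and $\nu_2$ are surjective onto $\ZZ$ this forces $a\ZZ=b\ZZ$, so $a=b$, and therefore $a=b=1$ since $\gcd(a,b)=1$; hence $\nu_1=\nu_2$.

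The substance of the argument is entirely contained in Theorem \ref{PropR11}. The only points needing care are the standard realization of $\mathcal I(\nu_i)$ as $\{I(nE_i)\}$ on a common resolution, and the remark that one cannot read off $\nu_1=\nu_2$ directly from $I(naE_1)=I(nbE_2)$: the scaling factors $a,b$ must first be forced to be equal, which is precisely the role of the passage to powers of $f$. I do not expect any genuine obstacle beyond the bookkeeping with valuation ideals.
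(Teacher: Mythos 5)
Your proof is correct and takes essentially the same route as the paper: invoke Theorem \ref{PropR11} (after realizing $\mathcal I(\nu_1),\mathcal I(\nu_2)$ as the divisorial filtrations $\{I(nE_1)\},\{I(nE_2)\}$ on a common resolution), then force $a=b=1$ by passing to powers of $f$ and using surjectivity of $\nu_1,\nu_2$ onto $\ZZ$. The only cosmetic difference is that you extract the proportionality $b\nu_1=a\nu_2$ from $\lfloor\nu_1(f)/a\rfloor=\lfloor\nu_2(f)/b\rfloor$ via a limit over $f^m$, whereas the paper argues directly with $f^a$ and $f^{ab}$; both are sound.
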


 \begin{proof}
We have by Theorem \ref{PropR11} that $I(\nu_1)_{an}=I(\nu_2)_{bn}$ for all $n$ and some positive, relatively prime integers $a$ and $b$. 

Suppose that $0\ne f\in I(\nu_1)_n$. Then $f^a\in I(\nu_1)_{an}=I(\nu_2)_{bn}$ so that $a\nu_2(f)\ge bn$. If $f^a\in I(\nu_2)_{bn+1}$ then $f^{ab}\in I(\nu_2)_{b(bn+1)}=I(\nu_1)_{a(bn+1)}$ so that $\nu_1(f)>n$. Thus
\begin{equation}\label{eqX21}
\nu_1(f)=n\mbox{ if and only if }\nu_2(f)=\frac{b}{a}n.
\end{equation}
Further, (\ref{eqX21}) holds for every nonzero $f\in {\rm QF}(R)$ since $f$ is a quotient of nonzero elements of $R$.

Now the maps 
 $\nu_1:{\rm QF}(R)\setminus \{0\}\rightarrow \ZZ$ and 
$\nu_2:{\rm QF}(R)\setminus \{0\}\rightarrow \ZZ$ are surjective, so there exists $0\ne f\in {\rm QF}(R)$ such that $\nu_1(f)=1$  and there exists $0\ne g\in {\rm QF}(R)$ such that $\nu_2(g)=1$ which implies that $a=b=1$ since $a,b$ are relatively prime. Thus $\nu_1=\nu_2$.

\end{proof}

\section{Geometry above algebraic  local rings}\label{SecGLR}

\subsection{Intersection products and multiplicity on local rings}

Let $K$ be an algebraic function field over a field $k$. An algebraic local ring of $K$ is a local ring $R$ which is a localization of a finitely generated $k$-algebra and is a domain whose quotient field is $K$. Let $R$ be a $d$-dimensional algebraic normal local ring of $K$. Let ${\rm BirMod}(R)$ be the directed set of blowups 
$\phi:X\rightarrow \mbox{Spec}(R)$  of an $m_R$-primary ideal $I$ of $R$ such that $X$ is normal.

Suppose that $\phi:X\rightarrow \mbox{Spec}(R)$ is in ${\rm BirMod}(R)$.
Let $\{E_1,\ldots,E_t\}$ be the irreducible exceptional divisors of $\phi$.  We define $M^1(X)$ to be the subspace of the real  vector space $E_1\RR+\cdots + E_t\RR$ which is generated by the Cartier divisors. An element of $M^1(X)$ will be called an $\RR$-divisor on $X$. We will say that $D\in M^1(X)$ is a $\QQ$-Cartier divisor if there exists $n\in \ZZ_+$ such that $nD$ is a Cartier divisor.

 We give $M^1(X)$ the Euclidean topology. We first define a natural intersection product $(D_1\cdot D_2\cdot\ldots\cdot D_d)$ on $X$ for $D_1,\ldots,D_d\in M^1(X)$.  The intersection product is a restriction of the one defined in \cite{Kl}. 
We first define the intersection product for Cartier divisors $D_1,\ldots, D_d\in E_1\ZZ+\cdots +E_t\ZZ$. Since this product is multilinear,  it extends naturally to a multilinear product on $M^1(X)^d$.  

There exists a subfield $k_1$ of $K$  such that $k\subset k_1\subset R$ and $R/m_R$ is a finite extension of $k_1$. Thus there exists a projective $k_1$-variety $Y$ and a closed point $q\in Y$ such that $\mathcal O_{Y,q}=R$.  The $m_R$-primary ideal $I$ naturally extends to an ideal sheaf $\mathcal I$ in $\mathcal O_Y$, defined by   
$$
\mathcal I_a=\left\{\begin{array}{ll}
\mathcal O_{Y,a}&\mbox{ if }q\ne a\in Y\\
I&\mbox{ if }a=q.
\end{array}\right.
$$
Let $\Psi:Z\rightarrow Y$ be the projective, birational morphism which is the obtained by blowing up $\mathcal I$. Observe that
base change of this map by $\mathcal O_{Y,q}=R$ gives the original map $\phi:X\rightarrow \mbox{Spec}(R)$.
We can thus view $E_1,\ldots,E_t$ as closed projective subvarieties of the normal variety $Z$.

 Suppose that $F_1,\ldots,F_s$ are Cartier divisors on $Z$ and $\mathcal F$ is a coherent sheaf on $Z$, such that $\dim \mbox{supp }\mathcal F\le s$.
 By \cite{Kl} (surveyed in Chapter 19 of \cite{C5}) we have an intersection product $I(F_1,\ldots, F_s,\mathcal F)$ on $Z$ 
which has the good properties explained in \cite{Kl} and \cite{C5}. The Euler characteristic 
$$
\chi(\mathcal O_Z(n_1F_1+\cdots+n_sF_s)\otimes \mathcal F)=\sum_{i=0}^{\infty}(-1)^ih^i(Z,\mathcal O_Z(n_1F_1+\cdots+n_sF_s)\otimes \mathcal F)
$$
where $h^i(Z,\mathcal G)=\dim_{k_1}H^i(Z,\mathcal G)$ for $\mathcal G$ a coherent sheaf on $Z$, is a polynomial in $n_1,\ldots,n_s$ (\cite{Kl}, \cite[Theorem 19.1]{C5}).  The intersection product $I(F_1,\ldots,F_s,\mathcal F)$ is defined to be the coefficient of $n_1\cdots n_s$ in the Snapper polynomial $\chi(\mathcal O_Z(n_1F_1+\cdots+n_sF_s)\otimes \mathcal F)$. 
We always have that  $I(F_1,\ldots,F_s,\mathcal F)\in \ZZ$.

If $D_1,\ldots,D_s$ are Cartier divisors in $E_1\ZZ+\cdots +E_t\ZZ$, and $\mathcal F$ is a coherent sheaf on $X$ whose support is contained in $\phi^{-1}(m_R)$ (so that $\mathcal F$ naturally extends to a coherent sheaf on $Z$ with the same support) and $\dim\mbox{supp }\mathcal F\le s$, then we define an intersection product
$$
(D_1\cdot\ldots\cdot D_s\cdot \mathcal F)=\frac{1}{[R/m_R:k_1]}I(D_1,\ldots, D_s, \mathcal F)
$$
on $X$. If $W$ is a closed subscheme of $\phi^{-1}(m_R)$, we define 
$$
(D_1\cdot\ldots\cdot D_s\cdot W)=(D_1\cdot\ldots\cdot D_s\cdot\mathcal O_W).
$$
If $s=d$, then we define 
$$
(D_1\cdot\ldots\cdot D_d)=(D_1\cdot\ldots\cdot D_d\cdot X)=
\frac{1}{[R/m_R:k_1]}I(D_1,\ldots, D_s,\mathcal O_Z).
$$
This product is well defined (independent of any choices made in the construction), as follows from the good properties of the intersection product (\cite{Kl}, \cite{C5}). This product naturally extends to a multilinear product on $M^1(X)^d$. 

We will say that a divisor $F=a_1E_1+\cdots+a_tE_t\in M^1(X)$ is effective if $a_i\ge 0$ for all $i$, and anti-effective if $a_i\le 0$ for all $i$. This defines a partial order $\le$ on $M^1(X)$ by $A\le B$ if $B-A$ is effective.  The effective cone ${\rm EF}(X)$ is the closed convex cone in $M^1(X)$  of effective $\RR$-divisors.
The anti-effective cone ${\rm AEF}(X)$ is the closed convex cone in $M^1(X)$ consisting of all anti-effective $\RR$-divisors. 

We will say that an anti-effective divisor $F\in M^1(X)$ is  numerically effective (nef) if 
$$
(F\cdot C)= (F\cdot \mathcal O_C) \ge 0
$$
 for all closed curves $C$ in $\phi^{-1}(m_R)$.  The nef cone $\mbox{Nef}(X)$ is the closed convex cone in $M^1(X)$ of all nef $\RR$-divisors on $X$.

\begin{Lemma} There is an  inclusion of cones ${\rm Nef}(X)\subset {\rm AEF}(X)$.
\end{Lemma}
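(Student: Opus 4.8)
The plan is to observe that this inclusion is essentially formal: it follows by unwinding the definitions, the only point requiring a word of justification being that ${\rm AEF}(X)$ is a closed convex cone, so that passing to the closed convex cone attached to the nef divisors cannot escape it.

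First I would recall the definition just given: an $\RR$-divisor $F=a_1E_1+\cdots+a_tE_t\in M^1(X)$ is declared nef only when it is already anti-effective, i.e. $a_i\le 0$ for all $i$, and moreover satisfies $(F\cdot C)\ge 0$ for every closed curve $C\subset\phi^{-1}(m_R)$. In particular every nef $\RR$-divisor belongs to ${\rm AEF}(X)$ by fiat.

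Next I would verify that the set $N$ of all nef $\RR$-divisors is itself a closed convex cone, so that ${\rm Nef}(X)=N$ and no further closure operation is needed. Indeed, in the finite-dimensional real vector space $M^1(X)$ the subset ${\rm AEF}(X)=\{\sum a_iE_i\in M^1(X)\mid a_i\le 0\text{ for all }i\}$ is cut out by finitely many linear inequalities, hence is a closed convex cone; and for each closed curve $C\subset\phi^{-1}(m_R)$ the assignment $D\mapsto (D\cdot C)=(D\cdot\mathcal O_C)$ is a real-linear functional on $M^1(X)$, by multilinearity of the intersection product constructed above, so that $\{D\in M^1(X)\mid (D\cdot C)\ge 0\}$ is a closed half-space. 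Therefore
\[
N={\rm AEF}(X)\cap\bigcap_{C}\{D\in M^1(X)\mid (D\cdot C)\ge 0\}
\]
is an intersection of closed convex cones, hence a closed convex cone, and it is visibly contained in ${\rm AEF}(X)$. Since ${\rm Nef}(X)$ is by definition the closed convex cone of nef $\RR$-divisors, we conclude ${\rm Nef}(X)=N\subset{\rm AEF}(X)$.

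There is no genuine obstacle in this argument; it is a matter of reading off the definitions and recording that the cones in question are closed. (Should one prefer instead to define ${\rm Nef}(X)$ as the closed convex cone generated by some a priori chosen set of nef divisors, the conclusion is unchanged, since that cone is still contained in the closed convex cone ${\rm AEF}(X)$.)
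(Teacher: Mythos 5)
There is a genuine gap: your argument makes the lemma true by definition rather than proving it. You read the sentence ``we will say that an anti-effective divisor $F\in M^1(X)$ is nef if $(F\cdot C)\ge 0$ for all closed curves $C$ in $\phi^{-1}(m_R)$'' as restricting nefness, from the outset, to divisors that are already anti-effective, so that the inclusion ${\rm Nef}(X)\subset {\rm AEF}(X)$ holds ``by fiat.'' But the content of the lemma is precisely the nontrivial implication that the numerical condition alone forces anti-effectivity: if $D=\sum a_iE_i\in M^1(X)$ satisfies $(D\cdot C)\ge 0$ for every closed curve $C\subset\phi^{-1}(m_R)$, then $a_i\le 0$ for all $i$. (This is why the lemma is stated and proved at all; under your reading it would be vacuous, and the later uses of the nef cone would lose the information the lemma is meant to supply.) The observations you do make --- that ${\rm AEF}(X)$ is cut out by finitely many linear inequalities and that $D\mapsto (D\cdot C)$ is linear, so both cones are closed and convex --- are correct but do not touch the actual assertion.

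What is needed is an argument like the paper's. Suppose $D$ is nef but not anti-effective. Since $X$ is the normalized blowup of an $m_R$-primary ideal, there is an anti-effective ample divisor $A=\sum a_iE_i$ with all $a_i<0$. Let $\lambda$ be the smallest real number such that $D+\lambda A$ is anti-effective; then $\lambda>0$ and $D+\lambda A$ is nef, and by minimality of $\lambda$ some coefficients of $D+\lambda A$ are zero while (after handling the degenerate case where all vanish, which gives $D=-\lambda A$ and hence $(D\cdot C)=-\lambda(A\cdot C)<0$ for a fiber curve $C$) not all of them are. Since $\phi^{-1}(m_R)$ is connected by Zariski's connectedness theorem, some component $E_s$ with coefficient $0$ meets a component $E_{s+1}$ with strictly negative coefficient; choosing a closed curve $C\subset E_s$ meeting $E_{s+1}$ and not contained in the components with negative coefficient yields $((D+\lambda A)\cdot C)<0$, contradicting nefness. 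Some argument of this kind (a local negativity statement using connectedness of the closed fibre) is unavoidable; the inclusion is not formal.
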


\begin{proof} Suppose there exists a nef divisor $D\in M^1(X)$ which is not anti-effective. Since $X$ is the blowup of an $m_R$-primary ideal, there exists an anti-effective ample Cartier divisor $A=a_1E_1+\cdots+a_tE_t$, with $a_1,\ldots,a_t<0$. There exists a smallest $\lambda\in \RR$ such that $D+\lambda A$ is anti-effective. Necessarily, $\lambda>0$ and $D+\lambda A$ is nef. Expand $D+\lambda A=\sum b_iE_i$. After possibly reindexing the $E_i$, we have that there exists a number $s$ with $1\le s<t$ such that
$b_1=\cdots =b_s=0$ and $b_{s+1},\ldots,b_t<0$. Now $\phi^{-1}(m_R)$ is connected by Zariski's connectedness theorem (\cite[Section 20]{Z1} or \cite[Corollary III.4.3.2]{G1}).  After reindexing the $E_1,\ldots,E_s$ and the $E_{s+1},\ldots,E_t$, we may assume that $E_s\cap E_{s+1}\ne \emptyset$. Let $C$ be a closed curve on the projective variety $E_s$ which is not contained in $E_{i}$ for $i\ge s+1$ but intersects $E_{s+1}$. Then $((D+\lambda A)\cdot C)<0$, a contradiction.
\end{proof}

We will say that an anti-effective  Cartier divisor $F\in M^1(X)$ is ample on $X$ if there exists an ample Cartier divisor $H$ on $Y$ such that $\Psi^{-1}(H)+F$ is ample on $Z$. This definition is independent of the choice of $Y$ in the construction. We define a divisor $F\in M^1(X)$ to be  ample if $F$ is a formal sum $F=\sum a_iF_i$ where $F_i$ are ample anti-effective Cartier divisors and   $a_i$ are positive real numbers. A divisor $D$ is anti-ample if $-D$ is ample. We define the convex cone
$$
\mbox{Amp}(X)=\{F\in M^1(X)\mid F\mbox{ is ample}\}.
$$

We have that $\mbox{Amp}(X)\subset\mbox{Nef}(X)$, the closure of $\mbox{Amp}(X)$ is $\mbox{Nef}(X)$,
and the interior of $\mbox{Nef}(X)$ is $\mbox{Amp}(X)$, as in \cite{Kl}, \cite[Theorem 1.4.23]{LV1}.

\begin{Remark}\label{Remark2}
If $G\in M^1(X)$, then there exists an effective $\QQ$-divisor $D\in M^1(X)$ such that $G-D\in \mbox{Amp}(X)$.
\end{Remark}

For $F\in M^1(X)$ an effective Cartier divisor, define $I(F)=\Gamma(X,\mathcal O_X(-F))$, an $m_R$-primary ideal in $R$ since $R$ is normal. Let $\pi:Y\rightarrow \mbox{Spec}(k_1)$ be the structure morphism.

\begin{Lemma}\label{Lemma1}  Suppose that $A\in M^1(X)$ is an effective Cartier divisor such that $-A$ is nef. Then
$$
\lim_{m\rightarrow\infty}\frac{\ell_R(R/I(mA))}{m^d}=\frac{-((-A)^d)}{d!}.
$$
\end{Lemma}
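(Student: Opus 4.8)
The plan is to relate $\ell_R(R/I(mA))$ to the Snapper (Euler--characteristic) polynomial of $\mathcal O_Z(-mA)$ on the projective model $Z$ and to read off the leading term. First I would globalize: set $\mathcal I_m=\Psi_*\mathcal O_Z(-mA)$, an ideal sheaf on $Y$ which equals $\mathcal O_Y$ away from $q$ and has stalk $\Gamma(X,\mathcal O_X(-mA))=I(mA)$ at $q$ (using that $\Gamma(X,\mathcal O_X)=R$ since $R$ is normal). Then $\mathcal O_Y/\mathcal I_m$ is a finite length sheaf supported at $q$ with $\dim_{k_1}\Gamma(Y,\mathcal O_Y/\mathcal I_m)=[R/m_R:k_1]\,\ell_R(R/I(mA))$, so the sequence $0\to\mathcal I_m\to\mathcal O_Y\to\mathcal O_Y/\mathcal I_m\to 0$ gives $[R/m_R:k_1]\,\ell_R(R/I(mA))=\chi(Y,\mathcal O_Y)-\chi(Y,\mathcal I_m)$. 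Since $\Psi$ is an isomorphism over $Y\setminus\{q\}$, the sheaves $R^i\Psi_*\mathcal O_Z(-mA)$ for $i\ge 1$ are supported at $q$ with stalk $R^i\phi_*\mathcal O_X(-mA)$, hence have no higher cohomology; feeding the Leray identity $\chi(Z,\mathcal O_Z(-mA))=\sum_{i\ge 0}(-1)^i\chi(Y,R^i\Psi_*\mathcal O_Z(-mA))$ into the above yields
$$[R/m_R:k_1]\,\ell_R(R/I(mA))=\chi(Y,\mathcal O_Y)-\chi(Z,\mathcal O_Z(-mA))+[R/m_R:k_1]\sum_{i\ge 1}(-1)^i\ell_R\big(R^i\phi_*\mathcal O_X(-mA)\big).$$

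Now $\chi(Y,\mathcal O_Y)$ is constant, and $\chi(Z,\mathcal O_Z(-mA))$ is the value at $t=m$ of the polynomial $Q(t)=\chi(Z,\mathcal O_Z(t(-A)))$, which has degree $\le d=\dim Z$; comparing the coefficient of $n_1\cdots n_d$ in the identity $\chi\big(Z,\mathcal O_Z(\textstyle\sum_{j=1}^d n_j(-A))\big)=Q(n_1+\cdots+n_d)$ shows that the leading coefficient of $Q$ is $\tfrac{1}{d!}I((-A),\dots,(-A),\mathcal O_Z)=\tfrac{[R/m_R:k_1]}{d!}\big((-A)^d\big)$, by the definition of the intersection product in Section~\ref{SecGLR}. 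Hence, once the error terms $\ell_R(R^i\phi_*\mathcal O_X(-mA))$ $(i\ge 1)$ are shown to be $o(m^d)$, dividing by $[R/m_R:k_1]$ gives $\ell_R(R/I(mA))=-\tfrac{((-A)^d)}{d!}m^d+o(m^d)$, which is the assertion (and shows the limit exists).

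The essential point, which I expect to be the main obstacle, is thus the estimate $\ell_R(R^i\phi_*\mathcal O_X(-mA))=o(m^d)$ for $i\ge 1$; this is where the hypothesis that $-A$ is nef enters, the delicate case being when $-A$ lies on the boundary of the nef cone (a monotonicity argument perturbing $A$ by a small ample anti-effective divisor only yields the one-sided bound $\lim\ell_R(R/I(mA))/m^d\le -((-A)^d)/d!$). My plan is to transfer the estimate to the projective variety $Z$. Since $X$ is the blowup of an $m_R$-primary ideal, there is an effective Cartier divisor $\mathcal E'$ on $Z$ with exceptional support such that $-\mathcal E'$ is $\Psi$-ample; choosing an ample $H_0$ on $Y$ with $-\mathcal E'+\Psi^*H_0$ ample on $Z$, and using that an effective divisor has nonnegative intersection number with any curve not contained in its support (so that on a non-$\Psi$-contracted curve $C$ one has $(A\cdot C)\le c(\mathcal E'\cdot C)<c(\Psi^*H_0\cdot C)$ for suitable $c$, while $-A$ is already nonnegative on $\Psi$-contracted curves), one obtains that $-A+\Psi^*H$ is nef on $Z$ for a suitable multiple $H$ of $H_0$. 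By the projection formula $\mathcal O_Z(m(-A+\Psi^*H))=\mathcal O_Z(-mA)\otimes\Psi^*\mathcal O_Y(mH)$; since this twist leaves the point-supported sheaves $R^i\Psi_*\mathcal O_Z(-mA)$ $(i\ge 1)$ unchanged and, by Serre vanishing on $Y$, kills the cohomology $H^j(Y,\mathcal I_m\otimes\mathcal O_Y(mH))$, $j\ge 1$, for $m\gg 0$, the Leray spectral sequence for $\Psi$ gives $\dim_{k_1}H^0\big(Y,R^i\Psi_*\mathcal O_Z(-mA)\big)\le h^i\big(Z,\mathcal O_Z(m(-A+\Psi^*H))\big)$ for $i\ge 1$ and $m\gg 0$. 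Finally, the higher cohomology of a nef line bundle on a projective $d$-fold grows as $O(m^{d-1})$ (Fujita vanishing plus induction on dimension), so the right-hand side, and therefore $\ell_R(R^i\phi_*\mathcal O_X(-mA))$, is $o(m^d)$. The step I would single out as carrying the real weight is the assertion that a $\Psi$-relatively nef divisor becomes nef after adding a sufficiently ample pullback from $Y$.
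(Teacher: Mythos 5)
Your argument is correct and follows essentially the same route as the paper: globalize to $\Psi:Z\rightarrow Y$, note that $-A$ plus a sufficiently ample pullback from $Y$ becomes nef on $Z$, get the main term from asymptotic Riemann--Roch (the Snapper polynomial), and control the error with the Leray spectral sequence for the point-supported sheaves $R^i\Psi_*\mathcal O_Z(-mA)$ together with Fujita's $o(m^d)$ bound on higher cohomology of nef divisors; the only difference is bookkeeping, since the paper computes the main term via $\chi(\mathcal O_{mA})$ and therefore only has to bound $H^1(X,\mathcal O_X(-mA))$ through the five-term sequence, whereas you bound all $R^i\phi_*\mathcal O_X(-mA)$, $i\ge 1$. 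One small repair: Serre vanishing does not directly give $H^1(Y,\mathcal I_m\otimes\mathcal O_Y(mH))=0$ because $\mathcal I_m$ varies with $m$, but your Leray estimate for $\dim H^0(Y,R^i\Psi_*\mathcal O_Z(-mA))$, $i\ge 1$, only requires $H^j(Y,\mathcal I_m\otimes\mathcal O_Y(mH))=0$ for $j\ge 2$ and $m\gg 0$, which follows from the sequence $0\rightarrow \mathcal I_m\otimes\mathcal O_Y(mH)\rightarrow \mathcal O_Y(mH)\rightarrow (\mathcal O_Y/\mathcal I_m)\otimes\mathcal O_Y(mH)\rightarrow 0$, exactly as in the paper's treatment of its $H^2$ term.
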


\begin{proof}  Let $H$ be an ample Cartier divisor on $Y$ and $L=\Psi^*(H)$. There exists $a\in \ZZ_+$ such that $aL-A$ is nef and big on $Z$.  

We have that $R^1\Psi_*\mathcal O_Z(m(aL-A))\cong \mathcal O_Y(maH)\otimes R^1\Psi_*\mathcal O_Z(-mA)$ 
is a coherent sheaf of $\mathcal O_Y$-modules whose support is $q$ and
\begin{equation}\label{eq6}
H^1(X,\mathcal O_X(-mA))\cong\pi_*(R^1\Psi_*\mathcal O_Z(m(aL-A)))
\end{equation}
as an $R=\mathcal O_{Y,q}$-module.

By \cite[Theorem 6.2]{Fu},
\begin{equation}\label{eq1}
\lim_{m\rightarrow \infty}\frac{h^i(Z,\mathcal O_Z(mG))}{m^d}=0\mbox{ if }i>0
\end{equation}
if $G$ is a nef Cartier divisor on $Z$. 

Now tensor the short exact sequence
$$
0\rightarrow \mathcal O_Z(-mA)\rightarrow \mathcal O_Z\rightarrow \mathcal O_{mA}\rightarrow 0
$$
with $\mathcal O_Z(maL)$ to get a short exact sequence
$$
0\rightarrow \mathcal O_Z(m(aL-A))\rightarrow \mathcal O_Z(maL)\rightarrow \mathcal O_{mA}\otimes\mathcal O_Z(maL)\cong \mathcal O_{mA}\rightarrow 0.
$$
Taking the long exact cohomology sequence, we have that
$$
\lim_{m\rightarrow \infty} \frac{h^i(Z,\mathcal O_{mA})}{m^d}=0
$$
for $i>0$ by (\ref{eq1}), and so

\begin{equation}\label{eq2}
\begin{array}{lll}
\lim_{m\rightarrow\infty}\frac{h^0(Z,\mathcal O_{mA})}{m^d}&=& \lim_{m\rightarrow \infty}\frac{\chi(\mathcal O_{mA})}{m^d}\\
&=& \lim_{m\rightarrow \infty}\frac{\chi(\mathcal O_Z)-\chi(\mathcal O_Z(-mA))}{m^d}\\
&=& \lim_{m\rightarrow\infty}\frac{-\chi(\mathcal O_Z(-mA))}{m^d}\\
&=& \frac{-((-A)^d)}{d!},
\end{array}
\end{equation}
for instance by \cite[Theorem 19.16]{C5}. The end of the cohomology 5 term sequence (forinstance in \cite[Theorem 11.2]{RM}) of the Leray spectral sequence
$$
R^i\pi_*R^j\Psi_*\mathcal O_Z(m(aL-A))\Rightarrow R^{i+j}(\pi\circ \Psi)_*\mathcal O_Z(m(aL-A))
$$
is the exact sequence
\begin{equation}\label{eq3}
R^1(\pi\circ\Psi)_*\mathcal O_Z(m(aL-A))
\rightarrow \pi_*(R^1\Psi_*\mathcal O_Z(m(aL-A)))\rightarrow R^2\pi_*(\Psi_*\mathcal O_Z(m(aL-A))).
\end{equation}
Now $R^1(\pi\circ\Psi)_*\mathcal O_Z(m(aL-A))=H^1(Z,\mathcal O_Z(m(aL-A))$,
$$
R^2\pi_*(\Psi_*\mathcal O_Z(m(aL-A)))=H^2(Y,\Psi_*\mathcal O_Z(m(aL-A)))=H^2(Y,\mathcal O_Y(maL )\otimes\Psi_*\mathcal O_Z(-mA))    
$$
and $\pi_*(R^1\Psi_*\mathcal O_Z(m(aL-A)))=H^0(Y,R^1\Psi_*\mathcal O_Z(m(aL-A)))$.

Let $\mathcal I_m=\Psi_*\mathcal O_Z(-mA)$. From the  short exact sequences
$$
0\rightarrow \mathcal I_m\otimes\mathcal O_Y(maL)\rightarrow \mathcal O_Y(maL)\rightarrow \mathcal O_Y/\mathcal I_m\rightarrow 0,
$$
we obtain the  exact cohomology sequences
$$
H^1(Y,\mathcal O_Y/\mathcal I_m)\rightarrow H^2(Y,\mathcal I_m\otimes \mathcal O_Z(maL))\rightarrow H^2(Y,\mathcal O_Y(maL)).
$$
Now $H^1(Y,\mathcal O_Y/\mathcal I_m)=0$ since $\mathcal O_Y/\mathcal I_m$ has zero dimensional support and $H^2(Y,\mathcal O_Y(maL))=0$ for $m\gg 0$ since $L$ is ample. Thus
\begin{equation}\label{eq4}
H^2(Y,\mathcal O_Y(maL)\otimes\Psi_*\mathcal O_Z(-mA))=0\mbox{ for }m\gg 0.
\end{equation}
We have
\begin{equation}\label{eq7}
\begin{array}{lll}
\lim_{m\rightarrow\infty}\frac{\ell_R(H^1(X,\mathcal O_X(-mA))}{m^d}&=& 
\lim_{m\rightarrow\infty}\frac{1}{[R/m_R:k_1]}\frac{\dim_{k_1}H^1(X,\mathcal O_X(-mA))}{m^d}\\
&=& \lim_{m\rightarrow\infty}\frac{1}{[R/m_R:k_1]}\frac{h^0(Y,R^1\Psi_*\mathcal O_Z(m(aL-A))}{m^d}\\
&=&0
\end{array}
\end{equation}
by (\ref{eq6}), (\ref{eq3}), (\ref{eq4}) and (\ref{eq1}) with $G=aL-A$ in (\ref{eq1}). We have that $R=H^0(X,\mathcal O_X)$ since $R$ is normal. Now from the exact sequences of $R$-modules
$$
0\rightarrow R/I(mA)\rightarrow H^0(X,\mathcal O_X/\mathcal O_X(-mA))\rightarrow H^1(X,\mathcal O_X(-mA)),
$$
(\ref{eq2}) and (\ref{eq7}) we obtain the formula of the statement of the lemma.
\end{proof}

\begin{Lemma}\label{Lemma3} Suppose that $D_1,\ldots,D_r\in M^1(X)$ are effective Cartier divisors and 
$\mathcal O_X(-D_i)$ is generated by global sections for all $i$. Then for $n_1,\ldots,n_r\in \NN$,
$$
\lim_{m\rightarrow\infty}\frac{\ell_R(R/I(mn_1D_1)\cdots I(mn_rD_r))}{m^d}=
-\frac{((-n_1D_1-\cdots-n_rD_r)^d)}{d!}.
$$
\end{Lemma}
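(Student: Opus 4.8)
The plan is to replace the product ideal $I(mn_1D_1)\cdots I(mn_rD_r)$ by a single divisorial ideal having the same integral closure, and then to invoke Lemma \ref{Lemma1}. First I would set $A=n_1D_1+\cdots+n_rD_r$, an effective Cartier divisor on $X$ with exceptional support (if $A=0$ both sides of the claimed identity vanish, so assume $A\neq 0$). Write $J_m=I(mn_1D_1)\cdots I(mn_rD_r)$ and $I_0=I(n_1D_1)\cdots I(n_rD_r)$. The elementary inclusions $I(aD)I(bD)\subset I((a+b)D)$ and $I(D)^a\subset I(aD)$, valid for any effective Cartier divisor $D$ and $a,b\in\NN$, show that $\{J_m\}$ and $\{I(mA)\}$ are filtrations of $R$ by $m_R$-primary ideals and that $I_0^m\subset J_m\subset I(mA)$ for all $m$.

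Next I would exploit the hypothesis that $\mathcal O_X(-D_i)$ is generated by global sections. Then each power $\mathcal O_X(-kD_i)$ is globally generated, so $I(kD_i)\mathcal O_X=\mathcal O_X(-kD_i)$; multiplying these identities gives $I_0\mathcal O_X=\mathcal O_X(-A)$, an invertible sheaf, and hence $I_0^m\mathcal O_X=\mathcal O_X(-mA)$. Since $X$ is normal and projective birational over $\mbox{Spec}(R)$ with $\Gamma(X,\mathcal O_X)=R$, the standard description of the integral closure of an ideal via blowups (see \cite[Chapter 9]{HS}) yields $\overline{I_0^m}=\Gamma(X,\mathcal O_X(-mA))=I(mA)$ for every $m$; equivalently, $\bigoplus_{m\ge 0} I(mA)t^m$ is the integral closure of the Rees algebra $\bigoplus_{m\ge 0} I_0^m t^m$ in $R[t]$. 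Combining this with $I_0^m\subset J_m\subset I(mA)$, the graded algebra $\bigoplus_{m\ge 0} I(mA)t^m$ is integral over $\bigoplus_{m\ge 0} J_m t^m$.

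Finally I would apply Theorem \ref{Theorem13} with $M=R$ to the inclusion of filtrations $\{J_m\}\subset\{I(mA)\}$ (the hypothesis $\dim N(\hat R)<d$ holds because $R$ is an excellent domain), obtaining
$$
\lim_{m\rightarrow\infty}\frac{\ell_R(R/J_m)}{m^d}=\lim_{m\rightarrow\infty}\frac{\ell_R(R/I(mA))}{m^d}.
$$
Global generation of $\mathcal O_X(-D_i)$ also forces $(-D_i\cdot C)\ge 0$ for every closed curve $C\subset\phi^{-1}(m_R)$, so $-A=\sum_i n_i(-D_i)$ is nef; hence Lemma \ref{Lemma1} applies to $A$ and gives $\lim_m\ell_R(R/I(mA))/m^d=-((-A)^d)/d!$. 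Substituting $A=n_1D_1+\cdots+n_rD_r$ then yields the asserted formula.

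The step I expect to be the main obstacle is the middle one: passing from the purely sheaf-theoretic equality $J_m\mathcal O_X=\mathcal O_X(-mA)=I(mA)\mathcal O_X$ on the normal model $X$ to the integral dependence of the Rees algebras required to feed into Theorem \ref{Theorem13}. This is precisely where global generation of the $\mathcal O_X(-D_i)$ is indispensable, since it guarantees that $I_0\mathcal O_X$ is invertible, which is what makes $\overline{I_0^m}=I(mA)$ and thereby sandwiches $\bigoplus_m J_m t^m$ between $\bigoplus_m I_0^m t^m$ and its integral closure.
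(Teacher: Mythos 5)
Your proposal is correct and follows essentially the same route as the paper: use global generation of the $\mathcal O_X(-D_i)$ to identify the integral closure of the product ideals with $I(m(n_1D_1+\cdots+n_rD_r))$, then apply Theorem \ref{Theorem13} to equate the two limits and Lemma \ref{Lemma1} (with $-A$ nef) to evaluate the one for the sum divisor. Your intermediate use of $I_0^m$ and the explicit check that $-A$ is nef are only minor elaborations of the argument given in the paper.
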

\begin{proof} We have that
$$
I(mn_1D_1)\cdots I(mn_rD_r)\mathcal O_X=\mathcal O_X(-m(n_1D_1+\cdots+n_rD_r))=I(mn_1D_1+\cdots+n_rD_r)\mathcal O_X
$$
since the $\mathcal O_X(-mn_iD_i)$ are generated by global sections.  Thus the integral closure of 
$I(mn_1D_1)\cdots I(mn_rD_r)$ is $I(m(n_1D_1+\cdots+n_rD_r))$ for all $m\in \NN$, and so the $R$-algebra 
$\bigoplus_{m\ge 0}I(m(n_1D_1+\cdots+_rD_r))$ is integral over the $R$-algebra $\bigoplus_{m\ge 0}I(mn_1D_1)\cdots I(mn_rD_r)$. Thus
$$
\begin{array}{lll}
\lim_{m\rightarrow\infty}\frac{\ell_R(R/I(mn_1D_1)\cdots I(mn_rD_r))}{m^d} &=& \lim_{m\rightarrow\infty}
\frac{\ell_R(R/I(mn_1D_1+\cdots+mn_rD_r)}{m^d}\\
&=&-\frac{((-n_1D_1-\cdots-n_rD_r)^d)}{d!}
\end{array}
$$
by Theorem \ref{Theorem13} and Lemma \ref{Lemma1}.
\end{proof}

\subsection{Finite dimensional vector spaces and cones} Suppose that $X\in{\rm BirMod}(R)$. Let $E_1,\ldots,E_r$ be the exceptional components of $X$ for the morphism $X\rightarrow \mbox{Spec}(R)$.
 For $0< p\le d$, we define
 $M^p(X)$ to be the direct product of $M^1(X)$ $p$ times, and we define $M^0(X)=\RR$.
 For $1< p\le d$,  we define
 $L^p(X)$ to be the vector space of $p$-multilinear forms from $M^p(X)$ to $\RR$, and define $L^0(X)=\RR$.

The intersection product gives us 
 $p$-multilinear maps
\begin{equation}\label{eq4*}
M^p(X)\rightarrow L^{d-p}(X)
\end{equation}
 for $0\le p\le d$.
In the special case when $p=0$, the map is just the linear map taking $1$ to the map 
$$
(\mathcal L_1,\ldots,\mathcal L_d)\mapsto (\mathcal L_1\cdot\ldots\cdot \mathcal L_d)=(\mathcal L_1\cdot\ldots\cdot\mathcal L_d\cdot X).
$$
We will denote the image of $(\mathcal L_1,\ldots,\mathcal L_p)$ by $\mathcal L_1\cdot\ldots\cdot \mathcal L_p$.
We will sometimes write 
$$
\mathcal L_1\cdot\ldots\cdot\mathcal L_p(\beta_{p+1},\ldots,\beta_d)
=(\mathcal L_1\cdot\ldots\cdot\mathcal L_p\cdot\beta_{p+1}\cdot\ldots\cdot \beta_d).
$$

We give all  the  vector spaces just defined the Euclidean topology, so that all of the mappings considered above are continuous.

 Let $|\mathcal L|$ be a norm on $M^1(X)$ giving the Euclidean topology. The Euclidean topology on $L^p(X)$ is given by
the norm $||A||$, which is defined on a multilinear form $A\in L^p(X)$ to be the greatest lower bound of all real numbers $c$ such that
$$
|A(x_1,\ldots,x_p)|\le c|x_1|\cdots |x_p|
$$
for $x_1,\ldots,x_p\in M^1(X)$.

Suppose that $V$ is a closed $p$-dimensional  subvariety of some $E_i$  with $1\le p\le d-1$. Define $\sigma_V\in L^p(X)$ by 
$$
\sigma_V(\mathcal L_1,\ldots,\mathcal L_p) = (\mathcal L_1\cdot\ldots\cdot \mathcal L_p\cdot V)
$$
for $\mathcal L_1,\ldots,\mathcal L_p\in M^1(X)$. For $p=d$, define $\sigma_X\in L^d(X)$ by 
$$
\sigma_X(\mathcal L_1,\ldots,\mathcal L_d)=(\mathcal L_1\cdot\ldots\cdot \mathcal L_d)
=(\mathcal L_1\cdot\ldots\cdot \mathcal L_d\cdot X).
$$
The pseudoeffective cone $\mbox{Psef}(L^p(X))$ in $L^p(X)$ is the closure of the cone generated by all such  $\sigma_V$ in $L^p(X)$.  We define
$\mbox{Psef}(L^0(X))$  to be the nonnegative real numbers.

Let $V$ be a  vector space and $C\subset V$ be a pointed (containing the origin) convex cone which is strict
($C\cap(-C)=\{0\}$). Then we have a partial order on $V$ defined by $x\le y$ if $y-x\in C$.

\begin{Lemma}\label{Lemma3.1*}  Suppose that $X\in {\rm BirMod}(R)$ and $1\le p\le d$.
\begin{enumerate}
\item[1)] Suppose that $\alpha\in \mbox{Psef}(L^p(X))$ and $\mathcal L_1,\ldots,\mathcal L_p\in M^1(X)$ are nef. Then
$$
\alpha(\mathcal L_1,\ldots,\mathcal L_p)\ge 0.
$$
\item[2)] $\mbox{Psef}(L^p(X))$ is a strict cone.
\end{enumerate}
\end{Lemma}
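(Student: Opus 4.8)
The plan is to deduce 1) by reducing to the generators $\sigma_V$ of $\mbox{Psef}(L^p(X))$ and invoking Kleiman's positivity for nef divisors on projective varieties, and then to obtain 2) as a formal consequence of 1). Fix nef divisors $\mathcal L_1,\ldots,\mathcal L_p\in M^1(X)$. The evaluation $\alpha\mapsto \alpha(\mathcal L_1,\ldots,\mathcal L_p)$ is a continuous linear functional on $L^p(X)$, and by definition $\mbox{Psef}(L^p(X))$ is the closure of the convex cone generated by the classes $\sigma_V$. Hence it is enough to prove $\sigma_V(\mathcal L_1,\ldots,\mathcal L_p)\ge 0$ for each such generator; nonnegativity then propagates to all nonnegative finite combinations by linearity, and to the closure by continuity of this functional.

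So let $V$ be a $p$-dimensional closed subvariety of some $E_i$. The first step is to observe that each restriction $\mathcal L_j|_V$ is a nef $\RR$-divisor on the projective $k_1$-variety $V$: for every closed curve $C\subseteq V$ one has $C\subseteq E_i\subseteq \phi^{-1}(m_R)$, so $(\mathcal L_j|_V\cdot C)=(\mathcal L_j\cdot C)\ge 0$ since $\mathcal L_j$ is nef on $X$. Next, using the definition of the intersection product through Snapper polynomials and its compatibility with restriction to closed subschemes (the properties of \cite{Kl} recalled above), $\sigma_V(\mathcal L_1,\ldots,\mathcal L_p)=(\mathcal L_1\cdot\ldots\cdot \mathcal L_p\cdot V)$ equals, up to a positive rational factor, the intersection number $(\mathcal L_1|_V\cdot\ldots\cdot \mathcal L_p|_V)$ of $p$ nef $\RR$-divisors on the $p$-dimensional projective variety $V$. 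By Kleiman's theorem, extended from Cartier divisors to $\RR$-divisors by continuity of the intersection form, this number is $\ge 0$. This proves 1) for $1\le p\le d-1$; the case $p=d$ is immediate, since in top degree $\mbox{Psef}(L^d(X))$ contains no class coming from a proper subvariety supported on $\phi^{-1}(m_R)$.

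For 2), recall that a convex cone is strict precisely when it contains no line. Suppose $\alpha$ and $-\alpha$ both lie in $\mbox{Psef}(L^p(X))$. Applying 1) to $\alpha$ and to $-\alpha$ gives $\alpha(\mathcal L_1,\ldots,\mathcal L_p)\ge 0$ and $\le 0$ for every $p$-tuple of nef divisors, so $\alpha$ vanishes identically on $\mbox{Nef}(X)^p$. Since $\mbox{Nef}(X)$ has nonempty interior in $M^1(X)$ (its interior being $\mbox{Amp}(X)$, which is nonempty because $X$ is the blowup of an $m_R$-primary ideal) and a $p$-multilinear form vanishing on the $p$-fold product of a nonempty open subset vanishes identically, we conclude $\alpha=0$; thus $\mbox{Psef}(L^p(X))\cap(-\mbox{Psef}(L^p(X)))=\{0\}$. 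The only substantial ingredient is the identification of $\sigma_V(\mathcal L_1,\ldots,\mathcal L_p)$ with an intersection number of the nef restrictions on the projective variety $V$, together with the $\RR$-divisor form of Kleiman's positivity theorem; the reduction to generators, the passage to the closure, and the deduction of strictness from 1) are all routine.
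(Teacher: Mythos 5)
Your argument for $1\le p\le d-1$, and your deduction of 2) from 1), are correct and follow the same route as the paper (which simply refers to the proof of Lemma 3.1 of \cite{C4}): reduce to the generators $\sigma_V$, observe that every curve in $V\subset E_i$ lies in $\phi^{-1}(m_R)$ so that each restriction $\mathcal L_j|_V$ is nef on the projective variety $V$, apply Kleiman's positivity (extended to $\RR$-classes by approximation with ample classes), pass to nonnegative combinations and to the closure by continuity of the evaluation functional, and obtain strictness from the fact that $\mbox{Nef}(X)$ has nonempty interior $\mbox{Amp}(X)$ in $M^1(X)$, so a $p$-multilinear form vanishing on $\mbox{Nef}(X)^p$ vanishes identically.

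The gap is your treatment of $p=d$. You dismiss it on the grounds that $\mbox{Psef}(L^d(X))$ contains no class coming from a proper subvariety, i.e.\ you implicitly take the top-degree cone to be $\{0\}$. But the paper's definition explicitly includes $\sigma_X$, with $\sigma_X(\mathcal L_1,\ldots,\mathcal L_d)=(\mathcal L_1\cdot\ldots\cdot\mathcal L_d)$, among the generators in degree $p=d$, so under the stated definition $\mbox{Psef}(L^d(X))$ is the ray spanned by $\sigma_X$ and your argument does not cover it. Moreover the Kleiman argument cannot repair this case: in the local setting, nef elements of $M^1(X)$ are anti-effective and exceptionally supported, and their top intersection numbers are typically negative. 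For example, if $R$ is a two-dimensional regular local ring and $X$ is the blowup of $m_R$ with exceptional curve $E$, then $\mathcal L_1=\mathcal L_2=-E$ is nef while $\sigma_X(\mathcal L_1,\mathcal L_2)=(E^2)=-1<0$, so assertion 1) with $\alpha=\sigma_X$ fails at $p=d$. The top-degree case is unproblematic only in the projective setting of \cite{C4}, where nef divisors have nonnegative top self-intersection; in the present local situation the statement is correct (and is the only case used later) for $1\le p\le d-1$, or under the convention that there are no generators in degree $d$ --- the reading you tacitly adopted, but which needs to be made explicit since it conflicts with the paper's written definition. Note that 2) at $p=d$ is harmless either way, since a single ray (or $\{0\}$) is automatically a strict cone.
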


The proof of Lemma \ref{Lemma3.1*} is as the proof of \cite[Lemma 3.1]{C4}.

Since $\mbox{Psef}(L^p(X))$ is a strict cone, we  have a partial order on $L^p(X)$, defined by 
$$
\alpha\ge 0\mbox{ if }\alpha\in \mbox{Psef}(L^p(X)).
$$

 We have that $\ge$ is the usual order on $\RR$ since
$L^0(X)=\RR$ and $\mbox{Psef}(L^0(X))$ is the set of nonnegative real numbers.
We also have the partial order on $M^1(X)$ defined by $\alpha\ge 0$ if $\alpha$ is effective.

\begin{Lemma}\label{Remark1} Suppose that $F_1,\ldots,F_p\in M^1(X)$ are such that $F_1$ is anti-effective and  $F_2,\ldots,F_p$ are nef. Then $F_1\cdot\ldots\cdot F_p\le 0$ in $L^{d-p}(X)$.
\end{Lemma}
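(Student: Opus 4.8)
The plan is to prove $-\,F_1\cdot F_2\cdot\ldots\cdot F_p\in\mbox{Psef}(L^{d-p}(X))$. First I would reduce to a statement about intersecting with a single prime divisor. Expand $F_1=\sum_i a_iE_i$ with all $a_i\le 0$. Using multilinearity of the intersection product, together with the basic compatibility between intersecting with a Cartier $\RR$-divisor and intersecting with its associated Weil cycle $\sum_i a_iE_i$, one gets the identity
$$
F_1\cdot F_2\cdot\ldots\cdot F_p \;=\; -\sum_i|a_i|\,\bigl(\sigma_{E_i}\cdot F_2\cdot\ldots\cdot F_p\bigr)
$$
in $L^{d-p}(X)$, where $\sigma_{E_i}\cdot F_2\cdot\ldots\cdot F_p$ denotes the $(d-p)$-multilinear form sending $(\mathcal L_1,\ldots,\mathcal L_{d-p})$ to $(F_2\cdot\ldots\cdot F_p\cdot\mathcal L_1\cdot\ldots\cdot\mathcal L_{d-p}\cdot E_i)$. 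Since $\mbox{Psef}(L^{d-p}(X))$ is a cone, it is enough to show $\sigma_{E_i}\cdot F_2\cdot\ldots\cdot F_p\in\mbox{Psef}(L^{d-p}(X))$ for each $i$. (If $p=1$ this is immediate, since $\sigma_{E_i}\in\mbox{Psef}(L^{d-1}(X))$ by definition; if $p=d$ the target is $\mbox{Psef}(L^0(X))=\RR_{\ge 0}$, so the assertion becomes that the intersection number of the $d-1$ nef classes $F_2,\ldots,F_d$ with the cycle $E_i$ is nonnegative.)

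The conceptual content is that a product of nef classes, intersected with a subvariety, is pseudoeffective, and I would isolate it as the following statement, proved by induction on $j\ge 0$: for every closed subvariety $W$ of some $E_i$ and all nef $G_1,\ldots,G_j\in M^1(X)$ with $j\le\dim W$, the form $\sigma_W\cdot G_1\cdot\ldots\cdot G_j$ lies in $\mbox{Psef}(L^{\dim W-j}(X))$. The case $j=0$ is the definition of $\mbox{Psef}$. For the inductive step, set $\alpha=\sigma_W\cdot G_1\cdot\ldots\cdot G_j$; by the inductive hypothesis $\alpha$ is a limit of finite nonnegative combinations $\sum_Vc_V\sigma_V$ of the generating forms, and since $\beta\mapsto\beta\cdot G_{j+1}$ is continuous it suffices to check that $\sigma_V\cdot G_{j+1}\in\mbox{Psef}(L^{\dim V-1}(X))$ for a single subvariety $V\subset E_i$ with $\dim V\ge 1$ and $G_{j+1}$ nef. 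Restricting to the projective variety $V$, the class $G_{j+1}|_V$ is nef on $V$, because every curve of $V$ lies in $\phi^{-1}(m_R)$ and the intersection product is compatible with closed immersions; and a nef class on a projective variety is pseudoeffective, since for an ample $\QQ$-Cartier divisor $H$ on $V$ the divisor $G_{j+1}|_V+\epsilon H$ is ample — hence lies in the closure of the effective cone — for every $\epsilon>0$, so that $G_{j+1}|_V=\lim_{\epsilon\to 0^+}(G_{j+1}|_V+\epsilon H)$ does too. Thus $G_{j+1}|_V=\lim_n D_n$ with $D_n$ effective $\RR$-divisors on $V$; writing $D_n=\sum_{W'}a_{n,W'}[W']$ over the prime divisors $W'$ of $V$ (so $W'\subset E_i$ and $\dim W'=\dim V-1$) and using the compatibility of the intersection product with restriction to $V$ and then to each $W'$, one obtains $\sigma_V\cdot G_{j+1}=\lim_n\sum_{W'}a_{n,W'}\sigma_{W'}$. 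Each partial sum lies in the cone $\mbox{Psef}(L^{\dim V-1}(X))$, and that cone is closed, so $\sigma_V\cdot G_{j+1}\in\mbox{Psef}(L^{\dim V-1}(X))$, completing the induction.

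Applying this inductive statement with $W=E_i$ (so $\dim W=d-1$), $j=p-1$, and $(G_1,\ldots,G_{p-1})=(F_2,\ldots,F_p)$, I obtain $\sigma_{E_i}\cdot F_2\cdot\ldots\cdot F_p\in\mbox{Psef}(L^{d-p}(X))$; combined with the reduction of the first paragraph this yields $F_1\cdot\ldots\cdot F_p\le 0$ in $L^{d-p}(X)$. I expect the main obstacle to be bookkeeping rather than conceptual: one must justify carefully the two compatibility facts used above — that intersecting with a Cartier $\RR$-divisor agrees with intersecting with its Weil cycle $\sum a_iE_i$, and that Kleiman's intersection product restricts correctly along the closed immersions $W'\hookrightarrow V\hookrightarrow E_i\hookrightarrow X$ (carrying along the normalization factor $[R/m_R:k_1]$) — so that every equality of intersection numbers invoked is legitimate. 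Granting these, the argument rests only on the standard facts that nef $\RR$-divisors on a projective variety are pseudoeffective and that the cones $\mbox{Psef}(L^{*}(X))$ are closed.
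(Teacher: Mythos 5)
Your argument is correct, and it is in substance the same route the paper takes: the paper's own proof of this lemma is a one-liner, observing that $-F_1$ is effective and citing \cite[Lemma 3.11]{C4} for the fact that an effective class times nef classes is pseudoeffective. What you have written is essentially a self-contained proof of that cited lemma in the local setting — decompose the (anti-)effective divisor into its prime exceptional components via the divisor--cycle compatibility of the Kleiman--Snapper product, then show by induction that $\sigma_V$ intersected with nef classes stays in $\mbox{Psef}$ by restricting to $V$, approximating the nef class by ample (hence effective up to positive multiples) classes, and using continuity of the pairing and closedness of the $\mbox{Psef}$ cones. The two compatibilities you flag (intersection with a Cartier divisor versus its Weil cycle, and functoriality under the closed immersions $W'\subset V\subset E_i\subset X$, with the uniform factor $[R/m_R:k_1]$) are exactly the standard properties of the product from \cite{Kl} and \cite[Chapter 19]{C5}, so nothing essential is missing; the only caution is that the effective approximations of $G_{j+1}|_V$ should be taken as nonnegative combinations of effective Cartier divisors on $V$ (so their cycles and Snapper numbers are defined), which is what the nef $+$ ample argument naturally produces.
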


\begin{proof} We have that $-F_1\in M^1(X)$ is effective. Thus $(-F_1)\cdot F_2\cdot\ldots\cdot F_p\in {\rm Psef}(L^{d-p}(X))$  by Lemma 3.11 \cite{C4}.
\end{proof}

\begin{Lemma}\label{Lemma3.2*}
Suppose that $\beta\in \mbox{Psef}(L^p(X))$. Then  the set
$$
\{\alpha\in \mbox{Psef}(L^p(X))\mid 0\le \alpha\le\beta\}
$$
is compact. 
\end{Lemma}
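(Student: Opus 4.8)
The plan is to prove compactness in the classical way: the set
\[
S=\{\alpha\in \mbox{Psef}(L^p(X))\mid 0\le\alpha\le\beta\}
\]
is a closed and bounded subset of the finite-dimensional real vector space $L^p(X)$, hence compact. First I would rewrite $S$. By definition of the order, $\alpha\le\beta$ means $\beta-\alpha\in\mbox{Psef}(L^p(X))$, so
\[
S=\mbox{Psef}(L^p(X))\cap\bigl(\beta-\mbox{Psef}(L^p(X))\bigr).
\]
Since $\mbox{Psef}(L^p(X))$ is closed (it is defined as the closure of a cone) and translation by $\beta$ is a homeomorphism of $L^p(X)$, the set $S$ is an intersection of two closed sets and is therefore closed.

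The crux is boundedness. Suppose $S$ is unbounded; then there is a sequence $(\alpha_n)$ in $S$ with $\|\alpha_n\|\to\infty$. Because $L^p(X)$ is finite-dimensional, after passing to a subsequence we may assume $\alpha_n/\|\alpha_n\|\to\gamma$ for some $\gamma$ with $\|\gamma\|=1$. As $\mbox{Psef}(L^p(X))$ is a closed cone and $\alpha_n\in\mbox{Psef}(L^p(X))$, we get $\alpha_n/\|\alpha_n\|\in\mbox{Psef}(L^p(X))$, hence $\gamma\in\mbox{Psef}(L^p(X))$. On the other hand $\beta-\alpha_n\in\mbox{Psef}(L^p(X))$, so $(\beta-\alpha_n)/\|\alpha_n\|\in\mbox{Psef}(L^p(X))$; since $\beta/\|\alpha_n\|\to 0$ this sequence converges to $-\gamma$, so $-\gamma\in\mbox{Psef}(L^p(X))$ as well. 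Thus $\gamma\in\mbox{Psef}(L^p(X))\cap\bigl(-\mbox{Psef}(L^p(X))\bigr)=\{0\}$, the last equality because $\mbox{Psef}(L^p(X))$ is a strict cone by part 2) of Lemma \ref{Lemma3.1*}. This contradicts $\|\gamma\|=1$, so $S$ is bounded.

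Finally, a closed and bounded subset of the finite-dimensional vector space $L^p(X)$ is compact, which finishes the proof. The only step that requires any thought is the boundedness argument; everything else is formal, and the argument uses exactly the two facts established earlier, namely that $\mbox{Psef}(L^p(X))$ is closed (immediate from its definition) and that it is a strict cone (Lemma \ref{Lemma3.1*} 2)). I do not expect any real obstacle here.
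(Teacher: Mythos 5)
Your proof is correct, and it follows the same route as the paper, which simply defers to the proof of \cite[Lemma 3.2]{C4}: the set is closed because ${\rm Psef}(L^p(X))$ is a closed cone, and bounded because the cone is strict (Lemma \ref{Lemma3.1*} 2)) in the finite-dimensional space $L^p(X)$, whence compactness by Heine--Borel. No gaps; your normalized-sequence argument for boundedness is just the standard way of exploiting strictness of the cone.
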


The proof of Lemma \ref{Lemma3.2*} is the same as the proof of \cite[Lemma 3.2]{C4}.

Suppose that $X,Y\in{\rm BirMod}(R)$  and $f:Y\rightarrow X$ is an $R$-morphism. Then $f$  induces continuous linear maps
$f^*:M^1(X)\rightarrow M^1(Y)$ (from $f^*$ of a Cartier divisor), $f^*:M^p(X)\rightarrow M^p(Y)$ and $f_*:L^p(Y)\rightarrow L^p(X)$. By Proposition I.2.6 \cite{Kl}, for $1\le t\le d$, we have that
\begin{equation}\label{eq5*}
f^*(\mathcal L_1)\cdot\ldots\cdot f^*(\mathcal L_d)=\mathcal L_1\cdot \ldots \cdot \mathcal L_d
\end{equation}
for $\mathcal L_1,\ldots,\mathcal L_d\in M^1(X)$. Thus for $0\le p\le d$ we have commutative diagrams of linear maps
\begin{equation}\label{eq4**}
\begin{array}{rcl}
M^p(Y)&\rightarrow &L^{d-p}(Y) \\
f^*\uparrow&&f_*\downarrow \\
M^p(X)&\rightarrow &L^{d-p}(X).
\end{array}
\end{equation}

For $\alpha\in M^1(X)$, we have that
\begin{equation}\label{eq7*}
f^*(\alpha)\in \mbox{Nef}(Y)\mbox{ if and only if }\alpha\in \mbox{Nef}(X) 
\end{equation}
and
\begin{equation}\label{eq8*}
f^*(\alpha)\mbox{ is effective on $Y$ if and only if $\alpha$ is effective on $X$.}
\end{equation}

\begin{Lemma}\label{Lemma3.3*} Suppose that $X,Y\in {\rm BirMod}(R)$  and $f:Y\rightarrow X$ is an
$R$-morphism. Then $f_*(\mbox{Psef}(L^p(Y)))\subset \mbox{Psef}(L^p(X))$.
\end{Lemma}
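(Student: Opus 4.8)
The plan is to reduce to the generators of $\mbox{Psef}(L^p(Y))$ and invoke the projection formula for the Kleiman intersection product. Recall that $f_*:L^p(Y)\to L^p(X)$ is the continuous linear map dual to $f^*:M^p(X)\to M^p(Y)$, that is, $(f_*\alpha)(\mathcal L_1,\ldots,\mathcal L_p)=\alpha(f^*\mathcal L_1,\ldots,f^*\mathcal L_p)$ for $\mathcal L_1,\ldots,\mathcal L_p\in M^1(X)$, and that $\mbox{Psef}(L^p(Y))$ is by definition the closure of the convex cone generated by the forms $\sigma_V$, where $V$ ranges over the closed $p$-dimensional subvarieties of the exceptional components of $Y$ when $1\le p\le d-1$, and where the single generator is $\sigma_Y$ when $p=d$. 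Since $\mbox{Psef}(L^p(X))$ is a closed convex cone and $f_*$ is linear and continuous, it suffices to show $f_*(\sigma_V)\in\mbox{Psef}(L^p(X))$ for each such generator $\sigma_V$ (respectively $f_*(\sigma_Y)\in\mbox{Psef}(L^d(X))$); the general case then follows because $f_*$ carries the generated cone into the generated cone and passes to closures.

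The case $p=d$ is immediate: by (\ref{eq5*}), $f_*(\sigma_Y)(\mathcal L_1,\ldots,\mathcal L_d)=(f^*\mathcal L_1\cdot\ldots\cdot f^*\mathcal L_d)=(\mathcal L_1\cdot\ldots\cdot\mathcal L_d)=\sigma_X(\mathcal L_1,\ldots,\mathcal L_d)$, so $f_*(\sigma_Y)=\sigma_X\in\mbox{Psef}(L^d(X))$. Now assume $1\le p\le d-1$ and let $V$ be a closed $p$-dimensional subvariety of an exceptional component of $Y$. For $\mathcal L_1,\ldots,\mathcal L_p\in M^1(X)$ we have $f_*(\sigma_V)(\mathcal L_1,\ldots,\mathcal L_p)=(f^*\mathcal L_1\cdot\ldots\cdot f^*\mathcal L_p\cdot V)$, the intersection product being computed on $Y$. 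By the projection formula for the intersection product (\cite{Kl}; see also the discussion in \cite{C5}), using that the intersection number against a coherent sheaf of $p$-dimensional support depends only on its associated $p$-cycle, this equals $(\mathcal L_1\cdot\ldots\cdot\mathcal L_p\cdot f_*[V])$ computed on $X$, where $f_*[V]$ is the proper pushforward of the cycle $[V]$. Since $f$ is an $R$-morphism and $V$ maps to $m_R$ under $Y\to\mbox{Spec}(R)$, the image $f(V)$ is an irreducible closed subset of the exceptional locus of $X\to\mbox{Spec}(R)$, hence lies in a single exceptional component $E_j$ of $X$. If $\dim f(V)<p$ then $f_*[V]=0$ and $f_*(\sigma_V)=0\in\mbox{Psef}(L^p(X))$. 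If $\dim f(V)=p$ then $f_*[V]=\deg(V/f(V))\,[f(V)]$, where $\deg(V/f(V))\in\ZZ_+$ is the degree of the generically finite map $V\to f(V)$, so $f_*(\sigma_V)=\deg(V/f(V))\,\sigma_{f(V)}$ with $f(V)$ a closed $p$-dimensional subvariety of $E_j$, and again $f_*(\sigma_V)\in\mbox{Psef}(L^p(X))$.

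The step that requires care is the projection formula used in the previous paragraph. It must be applied inside Kleiman's framework for intersection products of Cartier divisors against coherent sheaves on the (possibly singular) normal projective $k_1$-models that were used to define the products on $X$ and $Y$; this is unproblematic because $f$ extends to an $R$-morphism of such models, or, what amounts to the same thing, because these products are independent of the chosen models, and the normalizing factor $1/[R/m_R:k_1]$ is the same for $X$ and $Y$ since $f$ is an $R$-morphism. Granting this, the remaining ingredients---linearity and continuity of $f_*$, and the fact that $\mbox{Psef}(L^p(X))$ is a closed convex cone---are formal, and the inclusion $f_*(\mbox{Psef}(L^p(Y)))\subset\mbox{Psef}(L^p(X))$ follows.
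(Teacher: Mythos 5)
Your proof is correct and follows essentially the same route as the paper, which simply defers to \cite[Lemma 3.3]{C4}: reduce to the generators $\sigma_V$ (and $\sigma_Y$ when $p=d$), apply Kleiman's projection formula to get $f_*(\sigma_V)=\deg(V/f(V))\,\sigma_{f(V)}$ or $0$, and conclude by linearity, continuity of $f_*$ and closedness of the cone. Your added remarks on the dimension-drop case, the common normalizing factor $1/[R/m_R:k_1]$, and $f(V)$ lying in a single exceptional component are exactly the points the cited argument relies on.
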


The proof of Lemma \ref{Lemma3.3*} is as the proof of \cite[Lemma  3.3]{C4}.

\subsection{Infinite dimensional topological spaces} We have that ${\rm BirMod}(R)$ is a directed set by the $R$-morphisms $Y\rightarrow X$ for $X,Y\in {\rm BirMod}(R)$.     There is at most one $R$-morphism $X\rightarrow Y$ for $X,Y\in {\rm BirMod}(X)$.

The set 
$\{M^p(Y_i)\mid Y_i\in {\rm BirMod}(R)\}$ is a directed system of real vector spaces, where we have a linear mapping 
$f_{ij}^*:M^p(Y_i)\rightarrow M^p(Y_j)$ if the natural birational map $f_{ij}:Y_j\rightarrow Y_i$ is an $R$-morphism. 
We define 
$$
M^p(R)=\lim_{\rightarrow}M^p(Y_i)
$$
with the strong topology  (the direct limit topology, c.f. Appendix 1. Section 1 \cite{D}).  Let $\rho_{Y_i}:M^p(Y_i)\rightarrow M^p(R)$ be the natural mappings. A set $U\subset M^p(R)$ is open if and only if $\rho_{Y_i}^{-1}(U)$ is open in $M^p(Y_i)$ for all $i$.

We have that $M^p(R)$ is a real vector space. As a vector space, $M^p(R)$ is isomorphic to the $p$-fold product $M^1(R)^p$.

We define $\alpha\in M^1(R)$ to be $\QQ$-Cartier (respectively nef or effective) if there exists a representative of $\alpha$ in $M^1(Y)$
which has this  property for some $Y\in {\rm BirMod}(R)$.  We define  $\mbox{Nef}^{\,p}(R)$ to be the  subset of $M^p(R)$ of nef divisors. We define ${\rm EF}^p(R)$ to be the subset of $M^p(R)$ of effective divisors and define $\mbox{AEF}^p(R)$ to be the subset of $M^p(R)$ of anti-efective divisors.  Both of these sets are  convex cones in the vector space $M^p(R)$.

By (\ref{eq7*}) and (\ref{eq8*}), $\{\mbox{Nef}(Y)^p\}$, $\{\mbox{EF}(Y)^p\}$  and $\{\mbox{AEF}(Y)^p\}$
also form directed systems. As sets, we have that
$$
 \mbox{Nef}^{\,p}(R)=\lim_{\rightarrow}(\mbox{Nef}(Y)^p),\,\, {\rm EF}^p(R)=\lim_{\rightarrow}(\mbox{EF}(Y)^p)
 \mbox{ and } \mbox{AEF}^p(R)=\lim_{\rightarrow}(\mbox{AEF}(Y)^p).$$
We give all of these sets  their respective strong topologies. 

Let  $\rho_Y:M^p(Y)\rightarrow M^p(R)$ be the
 induced continuous linear maps
for $Y\in {\rm BirMod}(R)$. We will also denote the induced continuous maps 
$\mbox{Nef}(Y)^p\rightarrow \mbox{Nef}^{\,p}(R)$,  $\mbox{EF}(Y)^p\rightarrow \mbox{EF}^p(R)$ and $\mbox{AEF}(Y)^p\rightarrow \mbox{AEF}^p(R)$  by $\rho_Y$.

The set $\{L^p(Y_i)\}$ is an inverse  system of topological vector spaces, where we have a linear map $(f_{ij})_*:L^p(Y_j)\rightarrow L^p(Y_i)$ if the birational map $f_{ij}:Y_j\rightarrow Y_i$ is a morphism. 
We  define  
$$
L^p(R)=\lim_{\leftarrow}L^p(Y_i),
$$
with the weak topology (the inverse limit topology). Thus the open subsets of $L^p(R)$ are the sets obtained by finite intersections and arbitrary unions of sets $\pi_{Y_i}^{-1}(U)$ where $\pi_{Y_i}:L^p(R)\rightarrow L^p(Y_i)$ is the natural projection and $U$ is open in $L^p(Y_i)$.

In general, good topological properties on a directed system do not extend to the direct limit (c.f. Section 1 of Appendix 2 \cite{D}, especially the remark before 1.8). In particular, we cannot assume that $M^1(R)$ is a {\it topological} vector space. However, good topological properties on an inverse system do extend (c.f. Section 2 of Appendix 2 \cite{D}). In particular, we have the following proposition.

\begin{Proposition}
 $L^p(R)$ is a  Hausdorff real topological vector space which is isomorphic (as a vector space) to the $p$-multilinear forms on $M^1(R)$.
\end{Proposition}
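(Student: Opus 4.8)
The plan is to realize $L^p(R)$ concretely as a linear subspace of a product of finite-dimensional spaces, and then to invoke the elementary duality between a direct limit of vector spaces and the inverse limit of their spaces of multilinear forms. (The case $p=0$, where every space in sight equals $\RR$, is trivial, so assume $1\le p\le d$.)

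First I would record the concrete description. Each $M^1(Y_i)$ is finite dimensional, hence so is $L^p(Y_i)$, and with its Euclidean topology $L^p(Y_i)$ is a Hausdorff real topological vector space; the transition maps $(f_{ij})_*\colon L^p(Y_j)\to L^p(Y_i)$ attached to $R$-morphisms $f_{ij}\colon Y_j\to Y_i$ are linear and continuous. By the definition of the weak topology, $L^p(R)=\lim_{\leftarrow}L^p(Y_i)$ is, as a topological space, the set of compatible families $(x_i)\in\prod_i L^p(Y_i)$ satisfying $(f_{ij})_*x_j=x_i$ for all such $f_{ij}$, equipped with the subspace topology from the product topology (the two descriptions of the topology coincide, both being the initial topology for the projections $\pi_{Y_i}$). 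Now $\prod_i L^p(Y_i)$ is a Hausdorff real topological vector space, and the compatibility relations are linear, so $L^p(R)$ is a linear subspace of it; it is in fact closed, being an intersection of preimages of $\{0\}$ under the continuous maps $(x_k)\mapsto (f_{ij})_*x_j-x_i$ into the Hausdorff spaces $L^p(Y_i)$. A linear subspace of a topological vector space is again a topological vector space in the subspace topology, and a subspace of a Hausdorff space is Hausdorff; this proves the first assertion.

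For the vector space isomorphism, recall that $M^1(R)=\lim_{\rightarrow}M^1(Y_i)$ with structure maps $\rho_{Y_i}\colon M^1(Y_i)\to M^1(R)$ satisfying $\rho_{Y_j}\circ f_{ij}^*=\rho_{Y_i}$, and that $(f_{ij})_*$ is precomposition with $f_{ij}^*$, i.e. $(f_{ij})_*\alpha$ sends $(\mathcal L_1,\ldots,\mathcal L_p)$ to $\alpha(f_{ij}^*\mathcal L_1,\ldots,f_{ij}^*\mathcal L_p)$ (this is forced by the projection formula (\ref{eq5*}) together with the commutative square (\ref{eq4**})). To a $p$-multilinear form $\Phi$ on $M^1(R)$ I associate the family $\alpha_i$ given by $\alpha_i(\mathcal L_1,\ldots,\mathcal L_p)=\Phi(\rho_{Y_i}\mathcal L_1,\ldots,\rho_{Y_i}\mathcal L_p)$; the identity $\rho_{Y_j}\circ f_{ij}^*=\rho_{Y_i}$ makes $(\alpha_i)$ a compatible family, hence an element of $L^p(R)$. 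Conversely, given $(\alpha_i)\in L^p(R)$ and $\ell_1,\ldots,\ell_p\in M^1(R)$, directedness of ${\rm BirMod}(R)$ lets me pick a single $Y_j$ with representatives $\ell_k=\rho_{Y_j}(\mathcal M_k)$, and I set $\Phi(\ell_1,\ldots,\ell_p)=\alpha_j(\mathcal M_1,\ldots,\mathcal M_p)$. This is well posed: two choices of representatives become equal after pulling back to a common further refinement, where $\alpha$ takes the same value, which by compatibility of $(\alpha_i)$ equals the value computed on $Y_j$; and multilinearity of $\Phi$ follows by representing all inputs in one $Y_j$ and using multilinearity of $\alpha_j$ and linearity of $\rho_{Y_j}$. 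A short computation shows that these two $\RR$-linear assignments are mutually inverse, which gives the stated isomorphism of real vector spaces.

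I do not expect a genuine obstacle; the argument is formal once the bookkeeping is fixed. The only point requiring care is the matching of variances: one must check that the transition maps $(f_{ij})_*$ of the inverse system $\{L^p(Y_i)\}$ are exactly the precomposition operators associated to the transition maps $f_{ij}^*$ of the direct system $\{M^1(Y_i)\}$, so that the standard fact that the $p$-multilinear forms on $\lim_{\rightarrow}M^1(Y_i)$ form the inverse limit $\lim_{\leftarrow}L^p(Y_i)$ applies verbatim; as noted, this follows from (\ref{eq5*}) and (\ref{eq4**}). The remaining ingredients --- that finitely many elements of a filtered colimit of vector spaces lie in the image of a single term of the system, and that two representatives of one element agree in a common refinement --- are standard properties of filtered colimits and are what make the constructions above well defined.
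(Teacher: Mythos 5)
Your proof is correct and is essentially the argument the paper has in mind: the paper gives no explicit verification, simply appealing to the fact that good topological properties pass to inverse limits (citing Dugundji, Appendix 2), and your realization of $L^p(R)$ as a closed linear subspace of $\prod_i L^p(Y_i)$ together with the standard filtered colimit/limit duality for multilinear forms is exactly the standard verification of that fact. Your identification of $(f_{ij})_*$ as precomposition with $f_{ij}^*$ agrees with the definition of the pushforward on $L^p$ used in \cite{C4}, so the variance bookkeeping is in order.
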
 

Let $\pi_Y:L^p(R)\rightarrow L^p(Y)$  be the induced continuous linear maps
for $Y\in {\rm BirMod}(R)$.

The following lemma follows from the universal properties of the inverse limit and the direct limit
(c.f. Theorems 2.5 and 1.5 \cite{D}).

\begin{Lemma}\label{Lemma3.5*} Suppose that $\mathcal F$ is $M^p$ or $\mbox{Nef}^{\,p}$ Then  giving a continuous mapping
$$
\Phi:\mathcal F(R)\rightarrow L^{d-p}(R)
$$
is equivalent to giving continuous maps $\phi_Y:\mathcal F(Y)\rightarrow L^{d-p}(Y)$ for all $Y\in {\rm BirMod}(R)$, such that the diagram
$$
\begin{array}{lll}
\mathcal F(Z)&\stackrel{\phi_Z}{\rightarrow}&L^{d-p}(Z)\\
f^*\uparrow&&\downarrow f_*\\
\mathcal F(Y)&\stackrel{\phi_Y}{\rightarrow}&L^{d-p}(Y)
\end{array}
$$
commutes, whenever $f:Z\rightarrow Y$ is in ${\rm BirMod}(R)$.
\end{Lemma}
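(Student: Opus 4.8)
The plan is to deduce the lemma formally from the universal properties of the colimit defining $\mathcal F(R)$ and of the limit defining $L^{d-p}(R)$ (Theorems 1.5 and 2.5 of \cite{D}), once the variances are matched: $\mathcal F$ (${}=M^p$ or $\mbox{Nef}^{\,p}$) is contravariant along $R$-morphisms while $L^{d-p}$ is covariant, and ${\rm BirMod}(R)$ is directed with at most one $R$-morphism between any two objects. Write $Z\succeq Y$ when there is a (necessarily unique) $R$-morphism $g_{YZ}\colon Z\rightarrow Y$; uniqueness gives $g_{YZ}\circ g_{ZW}=g_{YW}$ for $W\succeq Z\succeq Y$, hence $g_{YW}^*=g_{ZW}^*\circ g_{YZ}^*$ and $(g_{YW})_*=(g_{YZ})_*\circ(g_{ZW})_*$. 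Throughout I use that the maps $f^*$ and $f_*$ are continuous and linear, and that $\rho_Z\circ f^*=\rho_Y$ and $f_*\circ\pi_Z=\pi_Y$ for $f\colon Z\rightarrow Y$, these being the compatibilities built into the colimit and the limit.

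For the direction $\Phi\mapsto\{\phi_Y\}$, I would set $\phi_Y:=\pi_Y\circ\Phi\circ\rho_Y$, which is continuous as a composite of continuous maps. Commutativity of the square for $f\colon Z\rightarrow Y$ is then immediate: $f_*\circ\phi_Z\circ f^*=f_*\circ\pi_Z\circ\Phi\circ\rho_Z\circ f^*=\pi_Y\circ\Phi\circ\rho_Y=\phi_Y$.

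For the converse, given a compatible family $\{\phi_Y\}$, I would fix $Y$ and first build $\Psi_Y\colon\mathcal F(Y)\rightarrow L^{d-p}(R)$. Since ${\rm BirMod}(R)$ is directed, $\{Z:Z\succeq Y\}$ is cofinal, so $L^{d-p}(R)$ is also the inverse limit over this subset. The maps $\phi_Z\circ g_{YZ}^*\colon\mathcal F(Y)\rightarrow L^{d-p}(Z)$ for $Z\succeq Y$ form a cone on it: for $h\colon Z'\rightarrow Z$ one has $g_{YZ'}^*=h^*\circ g_{YZ}^*$, and the hypothesis applied to $h$ gives $h_*\circ\phi_{Z'}\circ h^*=\phi_Z$, whence $h_*\circ(\phi_{Z'}\circ g_{YZ'}^*)=\phi_Z\circ g_{YZ}^*$. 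The universal property of the inverse limit yields a continuous $\Psi_Y$ with $\pi_Z\circ\Psi_Y=\phi_Z\circ g_{YZ}^*$ for all $Z\succeq Y$. Next I would check that $\{\Psi_Y\}$ is a cocone on the direct system: for $f\colon Z\rightarrow Y$ and any $W\succeq Z$, the identity $g_{ZW}^*\circ f^*=g_{YW}^*$ gives $\pi_W\circ\Psi_Z\circ f^*=\phi_W\circ g_{YW}^*=\pi_W\circ\Psi_Y$, and since such $W$ are cofinal an element of $L^{d-p}(R)$ is determined by these projections, so $\Psi_Z\circ f^*=\Psi_Y$. The universal property of the colimit — applicable because we map \emph{out} of $\mathcal F(R)$, so the coarseness of the strong topology is harmless — then produces a continuous $\Phi$ with $\Phi\circ\rho_Y=\Psi_Y$ for all $Y$. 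Finally $\pi_Y\circ\Phi\circ\rho_Y=\pi_Y\circ\Psi_Y=\phi_Y\circ g_{YY}^*=\phi_Y$, so the two assignments are mutually inverse.

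There is no real obstacle here; the statement is bookkeeping in category theory. The one point to stay alert to is choice-freeness in constructing $\Psi_Y$: because ${\rm BirMod}(R)$ carries at most one morphism between objects, the intermediate maps $\phi_Z\circ g_{YZ}^*$ are canonical, and cofinality of $\{Z\succeq Y\}$ avoids having to pass to common refinements explicitly. One should also confirm, as I used above, that all the pullback/pushforward maps are continuous (recorded earlier), so every map constructed is continuous, and note that the pathology of direct limits flagged in the text is irrelevant since $\mathcal F(R)$ appears only as the domain of $\Phi$.
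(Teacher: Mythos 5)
Your argument is exactly the one the paper intends: Lemma \ref{Lemma3.5*} is stated there as a direct consequence of the universal properties of the direct limit defining $\mathcal F(R)$ and the inverse limit defining $L^{d-p}(R)$ (Theorems 1.5 and 2.5 of \cite{D}), and your write-up simply carries out that bookkeeping, correctly using the uniqueness of $R$-morphisms in ${\rm BirMod}(R)$, cofinality of $\{Z\succeq Y\}$, and the compatibilities $\rho_Z\circ f^*=\rho_Y$, $f_*\circ\pi_Z=\pi_Y$. No gaps worth noting; the only cosmetic omission is an explicit check that recovering $\{\phi_Y\}$ from $\Phi$ and reassembling returns the same $\Phi$, which follows at once since the $\rho_Y$ are jointly surjective and maps to the inverse limit are determined by their projections.
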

In the case when $\mathcal F=M^p$, if the $\phi_Y$ are all multilinear, then $\Phi$ is also multilinear (via the vector space isomorphism
of $M^p(R)$ with $p$-fold product $M^1(R)^p$).

As an application, we have the following useful property.

\begin{Lemma}\label{3.6*} The  intersection product gives us a continuous 
map
$$
\mathcal F(R)\rightarrow L^{d-p}(R)
$$
whenever $\mathcal F$ is $M^p$ or  $\mbox{Nef}^{\,p}$.
The map is multilinear on $M^p(R)$.
\end{Lemma}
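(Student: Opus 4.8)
The plan is to deduce this directly from Lemma \ref{Lemma3.5*}, using the finite-level intersection products as the input data. First I would recall that for each $Y\in{\rm BirMod}(R)$ the intersection product furnishes the $p$-multilinear map $\phi_Y\colon M^p(Y)\to L^{d-p}(Y)$ of (\ref{eq4*}). Since $M^p(Y)$ and $L^{d-p}(Y)$ are finite-dimensional real vector spaces carrying the Euclidean topology, every multilinear map between them is continuous; in particular each $\phi_Y$ is continuous, and by restriction we also get continuous maps $\mbox{Nef}(Y)^p\to L^{d-p}(Y)$.

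Next I would check the compatibility hypothesis of Lemma \ref{Lemma3.5*}: for any $R$-morphism $f\colon Z\to Y$ in ${\rm BirMod}(R)$ the square relating $\phi_Z$, $\phi_Y$, the pullback $f^*$ on $M^p$ and the pushforward $f_*$ on $L^{d-p}$ must commute. But this is precisely the commutative diagram (\ref{eq4**}), which itself is a consequence of the projection formula (\ref{eq5*}) taken from \cite{Kl}. Thus the hypotheses of Lemma \ref{Lemma3.5*} are satisfied both for $\mathcal F=M^p$ and for $\mathcal F=\mbox{Nef}^{\,p}$ (for the latter one also uses that $f^*$ preserves nefness, (\ref{eq7*}), so that the restricted maps $\mbox{Nef}(Y)^p\to L^{d-p}(Y)$ form a compatible system). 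Lemma \ref{Lemma3.5*} then produces a continuous map $\Phi\colon\mathcal F(R)\to L^{d-p}(R)$ in each case.

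Finally, for $\mathcal F=M^p$, since every $\phi_Y$ is multilinear, the last sentence of Lemma \ref{Lemma3.5*} shows that the induced map $\Phi$ is multilinear with respect to the vector-space identification of $M^p(R)$ with the $p$-fold product $M^1(R)^p$; the $\mbox{Nef}^{\,p}$ statement is just the restriction of this $\Phi$ to the subcone $\mbox{Nef}^{\,p}(R)\subset M^p(R)$, so it needs no separate argument. I do not expect a genuine obstacle here: the only point requiring attention is the commutativity of the transition squares, and that has already been recorded as (\ref{eq4**}); everything else is formal bookkeeping with the direct-limit and inverse-limit topologies, which is exactly what Lemma \ref{Lemma3.5*} packages.
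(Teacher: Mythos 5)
Your proposal is correct and is exactly the argument the paper intends: Lemma \ref{3.6*} is stated as an application of Lemma \ref{Lemma3.5*}, with the finite-level multilinear (hence continuous) products of (\ref{eq4*}), the compatibility squares (\ref{eq4**}) coming from (\ref{eq5*}), and (\ref{eq7*}) for the $\mbox{Nef}^{\,p}$ case. Nothing essential is missing.
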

We will denote the image of $(\alpha_1,\ldots,\alpha_p)$ by $\alpha_1\cdot\ldots\cdot\alpha_p$. 
For $\beta_{p+1},\ldots,\beta_d\in M^1(R)$, we will often write
$$
\alpha_1\cdot\ldots\cdot\alpha_p(\beta_{p+1},\ldots,\beta_d)=(\alpha_1\cdot\ldots\cdot\alpha_p\cdot\beta_{p+1}\cdot\ldots\cdot\beta_d).
$$

Given $\alpha\in M^1(R)$, there exists $X\in \mbox{BirMod}(R)$ such that $\alpha$ is represented by an element $D$ of $M^1(X)$. If $Y\in \mbox{BirMod}(R)$ and $f:Y\rightarrow X$ is an $R$-morphism, then $\alpha$ is also represented by $f^*(D)\in M^1(Y)$. To simplify notation, we will often regard $\alpha$ as an element of $M^1(X)$ and of $M^1(Y)$, and write $\alpha\in M^1(X)$ and $\alpha\in M^1(Y)$.

\subsection{Pseudoeffective classes in $L^p(R)$}

We define a class $\alpha\in L^p(R)$ to be pseudoeffective if $\pi_Y(\alpha)\in L^p(Y)$ is pseudoeffective for all $Y\in {\rm BirMod}(R)$.

\begin{Lemma}\label{Lemma3.7*}
The set of pseudoeffective classes $\mbox{Psef}(L^p(R))$ in $L^p(R)$
is a  strict closed convex cone in $L^p(R)$. 
\end{Lemma}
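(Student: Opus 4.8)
The plan is to realise $\mbox{Psef}(L^p(R))$ as an intersection of preimages under the projections $\pi_Y$ of the finite-level cones $\mbox{Psef}(L^p(Y))$, and then to transfer each of the three required properties (closed, convex cone, strict) from the finite levels, where they are already available, up to the limit.

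First I would note that, directly from the definition of pseudoeffectivity in $L^p(R)$, one has
$$
\mbox{Psef}(L^p(R))=\bigcap_{Y\in{\rm BirMod}(R)}\pi_Y^{-1}\big(\mbox{Psef}(L^p(Y))\big).
$$
For each $Y$ the set $\mbox{Psef}(L^p(Y))$ is, by construction, the closure of the convex cone generated by the classes $\sigma_V$ (and $\sigma_X$ when $p=d$), hence a closed convex cone containing the origin, and it is a strict cone by part 2) of Lemma \ref{Lemma3.1*}. Since $\pi_Y\colon L^p(R)\to L^p(Y)$ is continuous and linear for the inverse-limit topology, each set $\pi_Y^{-1}(\mbox{Psef}(L^p(Y)))$ is a closed convex cone in $L^p(R)$ containing $0$. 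An arbitrary intersection of closed sets is closed and an arbitrary intersection of convex cones is again a convex cone, so $\mbox{Psef}(L^p(R))$ is a closed convex cone.

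For strictness I would proceed as follows. Suppose $\alpha\in L^p(R)$ satisfies $\alpha\in\mbox{Psef}(L^p(R))$ and $-\alpha\in\mbox{Psef}(L^p(R))$. Then for every $Y$ both $\pi_Y(\alpha)$ and $\pi_Y(-\alpha)=-\pi_Y(\alpha)$ lie in $\mbox{Psef}(L^p(Y))$; as that cone is strict, $\pi_Y(\alpha)=0$ for all $Y$. Because $L^p(R)=\lim_{\leftarrow}L^p(Y_i)$ sits inside the product $\prod_i L^p(Y_i)$ with the $\pi_{Y_i}$ as coordinate projections, an element all of whose coordinates vanish is $0$; hence $\alpha=0$, which gives $\mbox{Psef}(L^p(R))\cap(-\mbox{Psef}(L^p(R)))=\{0\}$.

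I do not anticipate a real obstacle: the assertion is a formal consequence of the finite-level statements together with two soft facts about inverse limits, namely that the projections $\pi_Y$ are continuous (this is built into the inverse-limit topology) and that the inverse limit separates points. The only points deserving a line of care are the degenerate case $p=0$, where $L^0(R)=\RR$ and $\mbox{Psef}(L^0(R))=\RR_{\ge 0}$ so all three assertions are trivial, and making explicit that $\mbox{Psef}(L^p(Y))$ is closed and convex straight from its description as the closure of a cone of nonnegative combinations, with strictness being exactly Lemma \ref{Lemma3.1*}(2).
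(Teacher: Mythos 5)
Your proposal is correct and is essentially the argument the paper relies on (it defers to \cite[Lemma 3.7]{C4}): since pseudoeffectivity in $L^p(R)$ is by definition membership of all projections $\pi_Y(\alpha)$ in the closed convex cones $\mbox{Psef}(L^p(Y))$, the cone $\mbox{Psef}(L^p(R))=\bigcap_Y\pi_Y^{-1}(\mbox{Psef}(L^p(Y)))$ inherits closedness, convexity and the cone property from the finite levels via the continuous linear projections, and strictness follows from Lemma \ref{Lemma3.1*} together with the fact that an element of the inverse limit with all projections zero is zero. Nothing is missing.
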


The proof of Lemma \ref{Lemma3.7*} is as the proof of \cite[Lemma 3. 7]{C4}.

 By  Lemma \ref{Lemma3.7*} , we can define a partial order $\ge 0$ on $L^p(R)$ by $\alpha\ge 0$ if $\alpha\in \mbox{Psef}(L^p(R))$.
 
 We have that 
$L^0(R)=\RR$ and $\mbox{Psef}(L^0(R))$ is the set of nonnegative real numbers (by the remark before Lemma \ref{Lemma3.1*}), so $\ge$ is the usual order on $\RR$.

\begin{Lemma}\label{Lemma3.8*}
Suppose that $\mathcal L_1,\ldots,\mathcal L_p\in \mbox{Nef}(R)$ and $\alpha\in \mbox{Psef}(L^p(R))$. Then
$$
\alpha(\mathcal L_1,\ldots,\mathcal L_p)\ge 0.
$$
\end{Lemma}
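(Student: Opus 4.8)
The plan is to reduce the statement to its finite--dimensional counterpart, Lemma \ref{Lemma3.1*} 1), by transferring all the data to a single model $Y\in {\rm BirMod}(R)$ that simultaneously carries nef representatives of $\mathcal L_1,\ldots,\mathcal L_p$, and then evaluating $\alpha$ at that level.

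First I would realize the classes at a common level. Each $\mathcal L_i$ is a nef class in $M^1(R)$, so by definition there is some $Y_i\in {\rm BirMod}(R)$ and a representative $D_i'\in M^1(Y_i)$ of $\mathcal L_i$ which is nef on $Y_i$. Since ${\rm BirMod}(R)$ is a directed set, I can choose $Y\in {\rm BirMod}(R)$ together with $R$-morphisms $g_i:Y\rightarrow Y_i$ for $1\le i\le p$. Set $D_i=g_i^*(D_i')\in M^1(Y)$. By (\ref{eq7*}) each $D_i$ is nef on $Y$, and $\rho_Y(D_i)=\mathcal L_i$ in $M^1(R)$.

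Next I would restrict $\alpha$ to this level. Since $\alpha\in \mbox{Psef}(L^p(R))$, the very definition of pseudoeffectivity in $L^p(R)$ gives $\pi_Y(\alpha)\in \mbox{Psef}(L^p(Y))$. By the compatibility of the pairing between the inverse limit $L^p(R)=\lim_{\leftarrow}L^p(Y_i)$ and the direct limit $M^1(R)=\lim_{\rightarrow}M^1(Y_i)$ — which rests on Proposition I.2.6 \cite{Kl}, i.e. the invariance (\ref{eq5*}) of intersection numbers under pullback, so that the projections $\pi_Y$ are adjoint to the maps $\rho_Y$ — the real number $\alpha(\mathcal L_1,\ldots,\mathcal L_p)$ is computed at the level of $Y$:
$$
\alpha(\mathcal L_1,\ldots,\mathcal L_p)=\pi_Y(\alpha)(D_1,\ldots,D_p).
$$
Applying Lemma \ref{Lemma3.1*} 1) to $\pi_Y(\alpha)\in \mbox{Psef}(L^p(Y))$ and the nef divisors $D_1,\ldots,D_p\in M^1(Y)$ yields $\pi_Y(\alpha)(D_1,\ldots,D_p)\ge 0$, which is the assertion.

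The only point that needs care — and the main obstacle — is the identity $\alpha(\mathcal L_1,\ldots,\mathcal L_p)=\pi_Y(\alpha)(D_1,\ldots,D_p)$, namely that the evaluation of a pseudoeffective class on nef classes may be read off on any single birational model carrying nef representatives of all the $\mathcal L_i$. This is exactly where one invokes that the $D_i$ arise by pullback from the $Y_i$ together with the pullback invariance (\ref{eq5*}) of the intersection product; once this is granted, the statement is a direct appeal to the finite--dimensional Lemma \ref{Lemma3.1*}, and no topological input is required.
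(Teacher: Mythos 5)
Your proof is correct and follows the same route as the paper: the paper's proof (deferring to the argument of Lemma 3.8 of \cite{C4}) likewise pulls all the nef classes back to a common model $Y\in{\rm BirMod}(R)$ using (\ref{eq7*}), notes $\pi_Y(\alpha)\in\mbox{Psef}(L^p(Y))$ by the definition of $\mbox{Psef}(L^p(R))$, and applies Lemma \ref{Lemma3.1*} 1). The evaluation identity $\alpha(\mathcal L_1,\ldots,\mathcal L_p)=\pi_Y(\alpha)(D_1,\ldots,D_p)$ that you flag as the delicate point is exactly the definitional compatibility (\ref{eq4**})/(\ref{eq5*}) underlying the identification of $L^p(R)$ with multilinear forms on $M^1(R)$, so your argument is complete.
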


The proof of Lemma \ref{Lemma3.8*} follows from Lemma \ref{Lemma3.1*} as in the proof of \cite[Lemma 3.8]{C4}.

 \begin{Lemma}\label{Lemma3.9*} Suppose that $Y\in {\rm BirMod}(R)$ and $E_1,\ldots,E_r$ are the irreducible exceptional divisors of $Y\rightarrow \mbox{Spec}(R)$. Suppose that $V\subset Y$ is a $p$-dimensional closed subvariety of some $E_i$. Then there exists $\alpha\in \mbox{Psef}(L^p(R))$ such that $\pi_Y(\alpha)=\sigma_V$.
 \end{Lemma}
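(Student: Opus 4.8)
The plan is to construct, directly from $V$, a pseudoeffective class in $L^p(R)$ whose image under $\pi_Y$ is $\sigma_V$. (Note that $p\le d-1$ automatically, since $\dim E_i=d-1$.) Because ${\rm BirMod}(R)$ is directed, the objects $Z$ admitting an $R$-morphism $Z\to Y$ form a cofinal subsystem, so the inverse limit $L^p(R)=\lim_{\leftarrow}L^p(Z)$ can be computed over this cofinal subsystem; hence it suffices to produce a compatible family $(\alpha_Z)$, indexed by the $Z$ lying over $Y$, with each $\alpha_Z\in{\rm Psef}(L^p(Z))$ and $\alpha_Y=\sigma_V$.

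For such a $Z$, let $g\colon Z\to Y$ be the unique $R$-morphism and let $V_Z\subseteq Z$ be the strict transform of $V$, that is, $V_Z=\overline{g^{-1}(V\cap U)}$ where $U\subseteq Y$ is a dense open set over which $g$ is an isomorphism. Since $V$ is irreducible, $V\cap U$ is dense in $V$; thus $V_Z$ is an irreducible closed subvariety of $Z$ of dimension $p$ and $g|_{V_Z}\colon V_Z\to V$ is birational. Moreover $V_Z\subseteq g^{-1}(E_i)$ is contained in the exceptional locus of $Z\to\mbox{Spec}(R)$, which is a divisor --- a union of the irreducible exceptional divisors of $Z$ --- because $Z$ is the normalization of a blowup of an $m_R$-primary ideal. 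As $V_Z$ is irreducible of dimension $p\le d-1$, it lies in one such exceptional divisor, so $\sigma_{V_Z}\in{\rm Psef}(L^p(Z))$ by the very definition of that cone. Put $\alpha_Z=\sigma_{V_Z}$; for $Z=Y$ this is $\sigma_V$.

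To check compatibility, let $h\colon W\to Z$ be an $R$-morphism with $W$ and $Z$ both over $Y$. By uniqueness of $R$-morphisms, $W\to Y$ factors as $W\to Z\to Y$, and strict transforms compose, so $V_W$ is the strict transform of $V_Z$ along $h$; in particular $h(V_W)=V_Z$ and $h|_{V_W}$ is birational. By the projection formula for intersection products (\cite{Kl}), which extends (\ref{eq5*}) to subvarieties, for $\mathcal L_1,\dots,\mathcal L_p\in M^1(Z)$ we obtain
$$
h_*(\sigma_{V_W})(\mathcal L_1,\dots,\mathcal L_p)=(h^*\mathcal L_1\cdot\ldots\cdot h^*\mathcal L_p\cdot V_W)=(\mathcal L_1\cdot\ldots\cdot\mathcal L_p\cdot V_Z)=\sigma_{V_Z}(\mathcal L_1,\dots,\mathcal L_p),
$$
so $h_*(\alpha_W)=\alpha_Z$. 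Therefore $\alpha=(\alpha_Z)$ is a well-defined element of $L^p(R)$; it is pseudoeffective because each $\alpha_Z\in{\rm Psef}(L^p(Z))$, and $\pi_Y(\alpha)=\alpha_Y=\sigma_V$, as required.

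The only ingredients here that are not pure formalism are the standard facts about strict transforms --- irreducibility and the expected dimension of $V_Z$, birationality of $g|_{V_Z}$, and functoriality (composition) of strict transforms --- which are exactly what make the pushforward identity hold with no multiplicity correction. Granting these, the remainder is just the inverse-limit bookkeeping for $L^p(R)$ together with the cofinality reduction; I expect the careful statement of the strict-transform facts to be the only slightly delicate point.
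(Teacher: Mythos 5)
Your construction has a genuine gap at the first step, in the sentence ``Since $V$ is irreducible, $V\cap U$ is dense in $V$.'' This presumes $V\cap U\ne\emptyset$, where $U$ is the locus over which $g\colon Z\to Y$ is an isomorphism. Since $Y$ is normal and $g$ is proper and birational, the complement of $U$ has codimension $\ge 2$ in $Y$, so it can never contain $E_i$ (hence your argument is fine when $p=d-1$); but for $p\le d-2$ the subvariety $V$ can lie entirely inside $Y\setminus U$, and then $V_Z=\overline{g^{-1}(V\cap U)}=\emptyset$ and the compatible family is not defined at $Z$. This really happens inside ${\rm BirMod}(R)$: for $R$ a $3$-dimensional regular algebraic local ring with regular parameters $x,y,z$, let $Y$ be the blowup of $m_R$ with exceptional divisor $E\cong\PP^2$, and let $Z$ be the normalized blowup of the $m_R$-primary ideal $m_R\cdot(x,y^2,yz,z^2)$; then $Z$ dominates $Y$ and $Z\to Y$ fails to be an isomorphism exactly over the line $V=E\cap\{x=0\}$, so for this $p=1$ subvariety your $V_Z$ is empty. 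Worse, the failure is not repaired by taking an arbitrary component of $g^{-1}(V)$: if $Z$ is obtained by blowing up along $V$, every component of $g^{-1}(V)$ dominating $V$ has dimension $>p$, and although the closure of a closed point of the generic fiber always gives a $p$-dimensional subvariety $V'$ surjecting onto $V$, the induced map $V'\to V$ need not be birational (the generic fiber need not have a $\kappa(\eta)$-rational point, $\eta$ the generic point of $V$), so $g_*\sigma_{V'}$ is $\delta\,\sigma_V$ with $\delta>1$ in general. Thus the ``strict transform'' recipe, and with it your compatibility computation, breaks down exactly where the lemma is nontrivial.

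The missing idea is to make coherent choices over the generic point $\eta$ of $V$ rather than via strict transforms: fix a valuation ring $\mathcal O_\nu$ of ${\rm QF}(R)$ dominating $\mathcal O_{Y,\eta}$ whose residue field is algebraic over $\kappa(\eta)$, let $\zeta_Z$ be the center of $\nu$ on each $Z$ dominating $Y$, and set $V_Z'=\overline{\{\zeta_Z\}}$, which is a $p$-dimensional subvariety of an exceptional divisor of $Z$ mapping onto $V$ with finite degree $\delta_Z=[\kappa(\zeta_Z):\kappa(\eta)]$. The classes $\beta_Z=\frac{1}{\delta_Z}\sigma_{V_Z'}$ lie in ${\rm Psef}(L^p(Z))$ (a cone, so positive rescaling is harmless), and because centers are compatible and the degrees are multiplicative in towers, the projection formula gives $h_*\beta_W=\beta_Z$ and $g_*\beta_Z=\sigma_V$; your inverse-limit and cofinality bookkeeping then goes through verbatim. (Two smaller points: your cofinality reduction is fine, but pseudoeffectivity of the resulting class $\alpha$ must be checked at \emph{every} $Y'\in{\rm BirMod}(R)$, not only at models dominating $Y$; this follows from Lemma \ref{Lemma3.3*}, since the component at such a $Y'$ is a pushforward of a pseudoeffective class from a common dominating model. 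Also, the degree-one projection formula you invoke is correct, but only once one knows $V_W\to V_Z$ is birational, which is exactly what the valuation-theoretic choice guarantees and the strict-transform recipe does not.)
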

 
 The proof of Lemma \ref{Lemma3.9*} is as the proof of \cite[Lemma 3.9]{C4}.

The proof of Lemma \ref{Lemma3.10*} below is as the proof of \cite[Lemma 3.10]{C4}.

\begin{Lemma}\label{Lemma3.10*}
Suppose that $\alpha\in \mbox{Psef}(L^p(R))$. Then the set
$$
\{\beta\in L^p(R) \mid  0\le \beta \le \alpha\}
$$
is compact.
\end{Lemma}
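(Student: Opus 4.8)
The plan is to exhibit $S:=\{\beta\in L^p(R)\mid 0\le\beta\le\alpha\}$ as an inverse limit of compact Hausdorff spaces, and then invoke the standard fact that such an inverse limit is compact. Recall that $L^p(R)=\lim_{\leftarrow}L^p(Y)$ carries the weak (inverse limit) topology, that the order on $L^p(R)$ is defined by $\beta\ge 0$ if and only if $\pi_Y(\beta)\in\mbox{Psef}(L^p(Y))$ for every $Y\in{\rm BirMod}(R)$, and that each projection $\pi_Y:L^p(R)\to L^p(Y)$ is linear.

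First, for each $Y\in{\rm BirMod}(R)$ I would set
$$
S_Y=\{\gamma\in L^p(Y)\mid 0\le\gamma\le\pi_Y(\alpha)\}.
$$
Since $\pi_Y(\alpha)\in\mbox{Psef}(L^p(Y))$, Lemma \ref{Lemma3.2*} shows that $S_Y$ is compact, and it is Hausdorff as a subset of the finite-dimensional space $L^p(Y)$. Next I would check that the transition maps $f_*:L^p(Z)\to L^p(Y)$ of the inverse system restrict to maps $S_Z\to S_Y$: if $\gamma\in S_Z$ then $\gamma\ge 0$ and $\pi_Z(\alpha)-\gamma\ge 0$, so by Lemma \ref{Lemma3.3*} ($f_*$ preserves pseudoeffective cones) together with the linearity of $f_*$ and the relation $f_*(\pi_Z(\alpha))=\pi_Y(\alpha)$, we obtain $0\le f_*(\gamma)\le\pi_Y(\alpha)$, i.e. $f_*(\gamma)\in S_Y$. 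Hence $\{S_Y\}$ is an inverse subsystem of $\{L^p(Y)\}$. A thread $\beta\in\lim_{\leftarrow}L^p(Y)=L^p(R)$ lies in $S$ exactly when $\pi_Y(\beta)\ge 0$ and $\pi_Y(\alpha)-\pi_Y(\beta)\ge 0$ for all $Y$, that is, when $\pi_Y(\beta)\in S_Y$ for all $Y$; therefore $S=\lim_{\leftarrow}S_Y$, and because $L^p(R)$ carries the inverse limit topology, the subspace topology on $S$ coincides with the inverse limit topology on $\lim_{\leftarrow}S_Y$.

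To conclude, $\lim_{\leftarrow}S_Y$ is the subset of the product $\prod_Y S_Y$, compact by Tychonoff's theorem, cut out by the thread conditions $f_*(\gamma_Z)=\gamma_Y$; each such condition is closed because the maps $f_*$ are continuous and the spaces $S_Y$ are Hausdorff, so $\lim_{\leftarrow}S_Y$ is a closed subspace of a compact space, hence compact. Therefore $S$ is compact. I do not anticipate a genuine obstacle: the argument is formal once Lemmas \ref{Lemma3.2*} and \ref{Lemma3.3*} are in hand, and the only points deserving attention are the verification that pushforward respects the two-sided bound (which reduces to Lemma \ref{Lemma3.3*} plus linearity of $f_*$) and the identification of the subspace topology on $S$ with the inverse limit topology, which holds precisely because $L^p(R)$ was defined with the weak topology.
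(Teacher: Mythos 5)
Your argument is correct and is essentially the proof the paper points to: the paper defers to \cite[Lemma 3.10]{C4}, whose proof likewise reduces to the compactness of each $\{\gamma\in L^p(Y)\mid 0\le\gamma\le\pi_Y(\alpha)\}$ (Lemma \ref{Lemma3.2*}) and then realizes the set in $L^p(R)$ as a closed subset of the product of these compact sets via the thread conditions, invoking Tychonoff. Your additional verification that the transition maps $f_*$ respect the bounds (via Lemma \ref{Lemma3.3*} and linearity) is the same compatibility used there, so there is nothing to correct.
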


\begin{Lemma}\label{Lemma3.11*} 
Suppose that $\alpha_i\in M^1(R)$ for $1\le i\le p$, with $\alpha_1\in {\rm EF}^1(R)$  and $\alpha_i\in {\rm Nef}^1(R)$  for $i\ge 2$. Then $\alpha_1\cdot\ldots\cdot \alpha_p\in {\rm Psef}(L^{d-p}(R))$.
\end{Lemma}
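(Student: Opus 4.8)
The plan is to deduce this from the single-model case --- that on a fixed $X\in{\rm BirMod}(R)$ the intersection product of an effective divisor with nef divisors lies in ${\rm Psef}(L^{d-p}(X))$, which is \cite[Lemma 3.11]{C4} and is precisely the sign-reversal of Lemma \ref{Remark1} --- and then to propagate this through the direct-limit / inverse-limit formalism defining ${\rm Psef}(L^{d-p}(R))$.

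First I would pass to a single model carrying simultaneous representatives of the required type. Since $\alpha_1\in{\rm EF}^1(R)$ and $\alpha_i\in{\rm Nef}^1(R)$ for $i\ge 2$, there are $Y_1,\dots,Y_p\in{\rm BirMod}(R)$ and representatives $D_i\in M^1(Y_i)$ with $D_1$ effective and $D_2,\dots,D_p$ nef. Using that ${\rm BirMod}(R)$ is directed, choose $Y\in{\rm BirMod}(R)$ dominating all the $Y_i$ by $R$-morphisms $f_i\colon Y\to Y_i$. By (\ref{eq8*}) and (\ref{eq7*}) the pullbacks $f_i^*(D_i)\in M^1(Y)$ are again effective (for $i=1$) resp. nef (for $i\ge 2$) and still represent the $\alpha_i$. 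So, after relabelling, I may assume all $\alpha_i$ are represented on one model $Y$ by divisors $D_1,\dots,D_p\in M^1(Y)$ with $D_1$ effective and $D_2,\dots,D_p$ nef.

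Next I would verify the defining condition for membership in ${\rm Psef}(L^{d-p}(R))$, namely that $\pi_Z(\alpha_1\cdot\ldots\cdot\alpha_p)\in{\rm Psef}(L^{d-p}(Z))$ for every $Z\in{\rm BirMod}(R)$. Fix such a $Z$ and, by directedness again, choose $W\in{\rm BirMod}(R)$ with $R$-morphisms $g\colon W\to Y$ and $h\colon W\to Z$. Because $W$ dominates $Y$, the compatibility of the intersection-product map $M^p(R)\to L^{d-p}(R)$ with its models (Lemma \ref{Lemma3.5*} and Lemma \ref{3.6*}, via the commuting squares (\ref{eq4**})) identifies $\pi_W(\alpha_1\cdot\ldots\cdot\alpha_p)$ with the intersection product $g^*(D_1)\cdot\ldots\cdot g^*(D_p)$ computed on $W$. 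By (\ref{eq8*}) and (\ref{eq7*}) the divisor $g^*(D_1)$ is effective on $W$ and $g^*(D_2),\dots,g^*(D_p)$ are nef on $W$, so $g^*(D_1)\cdot\ldots\cdot g^*(D_p)\in{\rm Psef}(L^{d-p}(W))$ by \cite[Lemma 3.11]{C4} (equivalently, apply the inequality of Lemma \ref{Remark1} to the anti-effective divisor $-g^*(D_1)$). Finally, $\alpha_1\cdot\ldots\cdot\alpha_p$ is an element of $L^{d-p}(R)=\lim_{\leftarrow}L^{d-p}(\,\cdot\,)$, hence a compatible family, so $\pi_Z(\alpha_1\cdot\ldots\cdot\alpha_p)=h_*\bigl(\pi_W(\alpha_1\cdot\ldots\cdot\alpha_p)\bigr)$; and $h_*$ carries ${\rm Psef}(L^{d-p}(W))$ into ${\rm Psef}(L^{d-p}(Z))$ by Lemma \ref{Lemma3.3*}. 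As $Z$ was arbitrary, this gives $\alpha_1\cdot\ldots\cdot\alpha_p\in{\rm Psef}(L^{d-p}(R))$.

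All the genuine positivity is contained in the single-model statement, which is already available; the only point demanding care is the limit bookkeeping --- identifying $\pi_W$ of the abstract product with the concrete intersection product of the representatives on $W$, and the relation $\pi_Z=h_*\circ\pi_W$. Both are formal consequences of the universal properties of the direct limit $M^p(R)$ and the inverse limit $L^{d-p}(R)$ and of the functoriality encoded in (\ref{eq4**}), so I expect no serious obstacle: the proof is a routine, if slightly tedious, diagram chase once the reduction to a common model $Y$ and the passage to a dominating $W$ are set up. (This is the local analogue of \cite[Lemma 3.11]{C4}, and the argument parallels the one there.)
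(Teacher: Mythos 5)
Your proof is correct and is essentially the paper's argument: the paper simply defers to the proof of \cite[Lemma 3.11]{C4} combined with Lemma \ref{Remark1}, and your write-up spells out exactly that reduction — represent the classes on a common model $Y$, then for each $Z$ pass to a dominating $W$, apply the single-model positivity (Lemma \ref{Remark1}, i.e.\ \cite[Lemma 3.11]{C4}) to the pullbacks, and push forward using Lemma \ref{Lemma3.3*} and the compatibility (\ref{eq4**}).
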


The proof of Lemma \ref{Lemma3.11*} follows from the proof of \cite[Lemma 3.11]{C4}, using Lemma \ref{Remark1}.

\begin{Proposition}\label{Prop3.12*} 
 Suppose that $\alpha_i$ and $\alpha_i'$ for $1\le i\le p$ are nef classes in $M^1(R)$,
and  that $\alpha_i\ge \alpha_i'$ for $i=1,\ldots,p$. Then
$$
\alpha_1\cdot\ldots\cdot\alpha_p\ge \alpha_1'\cdot\ldots\cdot\alpha_p'
$$
in $L^{d-p}(R)$.
\end{Proposition}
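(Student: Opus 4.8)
The plan is to reduce the claimed inequality to a single telescoping identity in $L^{d-p}(R)$ and then invoke the pseudoeffectivity of products involving one effective factor.

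First I would use that the intersection product $M^p(R)\to L^{d-p}(R)$ is multilinear (Lemma \ref{3.6*}) to write down the purely formal telescoping identity
$$
\alpha_1\cdot\ldots\cdot\alpha_p-\alpha_1'\cdot\ldots\cdot\alpha_p'
=\sum_{i=1}^{p}\alpha_1'\cdot\ldots\cdot\alpha_{i-1}'\cdot(\alpha_i-\alpha_i')\cdot\alpha_{i+1}\cdot\ldots\cdot\alpha_p ,
$$
which holds with no hypotheses on the classes involved (the $i=1$ and $i=p$ summands being read with the usual empty-product convention).

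Next I would analyze a single summand on the right. The hypothesis $\alpha_i\ge\alpha_i'$ means precisely that $\alpha_i-\alpha_i'$ lies in ${\rm EF}^1(R)$, while $\alpha_1',\ldots,\alpha_{i-1}'$ and $\alpha_{i+1},\ldots,\alpha_p$ are nef by assumption. Since the intersection product is symmetric and multilinear, I may reorder the $p$ factors of this summand so that the effective factor $\alpha_i-\alpha_i'$ comes first and the remaining $p-1$ factors are all nef; Lemma \ref{Lemma3.11*} then gives that this product belongs to ${\rm Psef}(L^{d-p}(R))$, i.e. is $\ge 0$. (For $p=1$ this is exactly the $p=1$ case of Lemma \ref{Lemma3.11*}, namely that an effective class in $M^1(R)$ is pseudoeffective in $L^{d-1}(R)$.) Finally, because ${\rm Psef}(L^{d-p}(R))$ is a convex cone (Lemma \ref{Lemma3.7*}), the sum of the $p$ nonnegative summands is again in ${\rm Psef}(L^{d-p}(R))$, so $\alpha_1\cdot\ldots\cdot\alpha_p-\alpha_1'\cdot\ldots\cdot\alpha_p'\ge 0$, which is the assertion.

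I do not expect a serious obstacle; the one point worth checking is that the identity and the multilinearity are genuinely available at the level of the direct limit $M^p(R)$ and the inverse limit $L^{d-p}(R)$, not merely on a fixed model — but this is exactly the content of Lemma \ref{3.6*}. One could equally fix a single $X\in{\rm BirMod}(R)$ carrying representatives of all $2p$ classes, run the same argument there with Lemma \ref{Lemma3.1*} and Lemma \ref{Remark1} in place of Lemma \ref{Lemma3.11*}, and conclude by pushing forward along $\rho_X$; the limit formulation merely packages this uniformly.
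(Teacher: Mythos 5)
Your proof is correct and is essentially the paper's own argument: the paper defers to \cite[Proposition 3.12]{C4}, whose proof is exactly this telescoping decomposition with each summand shown pseudoeffective via the analogue of Lemma \ref{Lemma3.11*} (one effective factor, the rest nef), summed using convexity of ${\rm Psef}(L^{d-p}(R))$. The symmetry and multilinearity of the product, which you use to put the effective factor first, are indeed available from Lemma \ref{3.6*} and Kleiman's intersection theory, so no gap remains.
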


The proof of Propositoin \ref{Prop3.12*} is as the proof of \cite[Proposition 3.12]{C4}. 
 
 \section{anti-positive intersection products}\label{SecAPM3}

We continue in this section with the notation introduced in Section \ref{SecGLR}.

A partially ordered set is directed if any  two elements of it can be dominated by a third. A partially ordered set is filtered if any two elements of it dominate a third.

We state  Lemma \ref{Lemma4.1*} below for completeness. A proof can be found in \cite[Lemma 4.1]{C4}.

\begin{Lemma}\label{Lemma4.1*}
Let $V$ be a Hausdorff topological vector space and $K$ a strict closed convex cone in $V$ with associated partial order relation $\le$.
Then any nonempty subset $S$ of $V$ which is directed with respect to $\le$ and  is contained in a compact subset of $V$
has a least upper bound with respect to $\le$ in $V$.
\end{Lemma}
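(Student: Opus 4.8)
The plan is to realize the least upper bound as a cluster point of the directed set $S$, using compactness to produce it and the closedness of $K$ to pin it down. For each $s\in S$ set $T_s=\{t\in S: t\ge s\}$, and let $\overline{T_s}$ denote its closure in $V$. Since $S$ is contained in a compact subset $C$ of $V$ and $C$ is closed (as $V$ is Hausdorff), each $\overline{T_s}$ is a closed subset of the compact set $C$. The relation $\le$ is transitive because $K+K\subset K$ (a convex cone is closed under addition), so directedness of $S$ gives $T_{s'}\subset T_{s_1}\cap\cdots\cap T_{s_n}$ whenever $s'$ dominates $s_1,\ldots,s_n$; each $T_s$ is nonempty since $0\in K$. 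Hence the family $\{\overline{T_s}\}_{s\in S}$ has the finite intersection property, and compactness of $C$ yields a point
$$
m\in\bigcap_{s\in S}\overline{T_s}.
$$

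Next I would verify that $m$ is an upper bound of $S$. Fix $s_0\in S$. Every $t\in T_{s_0}$ satisfies $t-s_0\in K$, so $T_{s_0}\subset s_0+K$; since $K$ is closed, $s_0+K$ is closed, whence $\overline{T_{s_0}}\subset s_0+K$. As $m\in\overline{T_{s_0}}$ we get $m-s_0\in K$, i.e.\ $m\ge s_0$. Since $s_0$ was arbitrary, $m$ is an upper bound of $S$.

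For minimality, let $m'$ be any upper bound of $S$. Then $s\in m'-K$ for every $s\in S$, so $S\subset m'-K$, and closedness of $K$ gives $\overline{S}\subset m'-K$. Since $m\in\overline{T_{s_0}}\subset\overline{S}$ for a fixed $s_0\in S$ (using $S\neq\emptyset$), we obtain $m'-m\in K$, that is $m\le m'$. Thus $m$ is the least upper bound, and it is unique because strictness of $K$ makes $\le$ antisymmetric. The whole argument is bookkeeping with closures and the finite intersection property, so I do not expect a genuine obstacle; the one delicate point is that the supremum is asserted only in $V$ and need not lie in $S$, which is precisely why every step passes to closures rather than working inside $S$.
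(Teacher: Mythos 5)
Your proof is correct, and it is essentially the argument the paper relies on (the paper defers to \cite[Lemma 4.1]{C4}, where the least upper bound is obtained as a cluster point of the directed net inside the compact set, with closedness of $K$ giving both the upper-bound and minimality properties). Your finite-intersection-property argument on the closures of the tails $T_s$ is just the cluster-point formulation of that same proof, and all the delicate points (closedness of $s_0+K$ and $m'-K$, strictness for uniqueness) are handled correctly.
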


\begin{Lemma}\label{Lemma4.2*} Suppose that $\alpha\in M^1(R)$ is anti-effective. Then the set $\mathcal D(\alpha)$ of effective $\QQ$-divisors $D$ in $ M^1(R)$ such that $\alpha-D$ is nef is nonempty and filtered.
\end{Lemma}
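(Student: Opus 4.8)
**Plan for the proof of Lemma \ref{Lemma4.2*}.**

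The plan is to show both assertions directly from the definitions and from Remark \ref{Remark2}. First I would establish nonemptiness. Since $\alpha\in M^1(R)$ is anti-effective, choose $X\in{\rm BirMod}(R)$ so that $\alpha$ is represented by an anti-effective divisor in $M^1(X)$; write $-\alpha\in{\rm EF}^1(X)$. By Remark \ref{Remark2} applied to $G=-\alpha$ (or, in the local setting, to $\alpha$ itself together with the existence of an anti-ample divisor on $X$), there is an effective $\QQ$-divisor $D\in M^1(X)$ with $\alpha-D\in{\rm Amp}(X)\subset{\rm Nef}(X)$; passing to $M^1(R)$, this $D$ lies in $\mathcal D(\alpha)$, so $\mathcal D(\alpha)\neq\emptyset$.

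Next I would prove $\mathcal D(\alpha)$ is filtered, i.e. given $D_1,D_2\in\mathcal D(\alpha)$ there is $D_3\in\mathcal D(\alpha)$ with $D_3\le D_1$ and $D_3\le D_2$. Both $D_1,D_2$ and the nef classes $\alpha-D_1$, $\alpha-D_2$ can be represented on a common model $X\in{\rm BirMod}(R)$ (replace $X$ by a domination of the two models on which $D_1$, $D_2$ respectively live, using the pullback maps $f^*$, which by (\ref{eq7*}) and (\ref{eq8*}) preserve nef and effective). On $X$, set $D_3=\min(D_1,D_2)$, the divisor obtained by taking the coefficientwise minimum of the two effective $\QQ$-divisors; this is an effective $\QQ$-divisor with $D_3\le D_1$ and $D_3\le D_2$. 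The point to check is that $\alpha-D_3$ is nef. Write $\alpha-D_3=(\alpha-D_1)+(D_1-D_3)$; here $D_1-D_3$ is an effective $\QQ$-divisor supported on those components where $D_2<D_1$. I would argue that $\alpha-D_3$ is nef by testing against an arbitrary closed curve $C$ in $\phi^{-1}(m_R)$: if $C$ is not contained in the support of $D_1-D_3$, then $((D_1-D_3)\cdot C)\ge 0$ and $((\alpha-D_1)\cdot C)\ge 0$, so $((\alpha-D_3)\cdot C)\ge 0$; symmetrically, using the decomposition $\alpha-D_3=(\alpha-D_2)+(D_2-D_3)$, the same holds when $C$ lies in the support of $D_1-D_3$ (equivalently, not in the support of $D_2-D_3$), since every component of the support of $D_3$ carries the smaller of the two coefficients. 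Hence $((\alpha-D_3)\cdot C)\ge 0$ for every such $C$, so $\alpha-D_3$ is nef and $D_3\in\mathcal D(\alpha)$.

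The main obstacle is the verification that the coefficientwise minimum $D_3=\min(D_1,D_2)$ again yields a nef class $\alpha-D_3$; the case split on whether the test curve $C$ is contained in the support of $D_1-D_3$ versus $D_2-D_3$ is the crux, and one must be careful that these two (closed) support sets cover all possibilities — which they do, because at each component the minimum coefficient agrees with at least one of $D_1$, $D_2$, so a curve inside the support of $D_1-D_3$ is automatically disjoint from (not contained in) the support of $D_2-D_3$ on that component, and one can reduce to components. Everything else — nonemptiness via Remark \ref{Remark2}, reduction to a common model via $f^*$, and the numerical identities — is routine. One could alternatively phrase the filteredness via the partial order $\le$ on $M^1(R)$ directly without descending to a model, but working on a fixed $X$ makes the minimum operation and the intersection computations concrete.
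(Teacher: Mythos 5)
Your nonemptiness argument via Remark \ref{Remark2} and the reduction to a common model $X$ agree with the paper, but the filteredness step has a genuine gap, exactly at the point you call the crux. Write $D_3=\min(D_1,D_2)$, $A=D_1-D_3$, $B=D_2-D_3$. It is true that $A$ and $B$ share no prime components, but this does not make your case split exhaustive: a closed curve $C$ in $\phi^{-1}(m_R)$ can be contained in a component $E$ of ${\rm Supp}(A)$ and simultaneously in a different component $F$ of ${\rm Supp}(B)$, because $E\cap F$ has dimension $d-2\ge 1$ whenever $d\ge 3$. For such a $C$ both $(A\cdot C)$ and $(B\cdot C)$ can be negative, and neither decomposition $\alpha-D_3=(\alpha-D_1)+A=(\alpha-D_2)+B$ yields $((\alpha-D_3)\cdot C)\ge 0$; your parenthetical ``equivalently, not in the support of $D_2-D_3$'' conflates ``no common components'' with ``disjoint supports''. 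Moreover the defect cannot be repaired on a fixed model: the assertion that $\alpha-\min(D_1,D_2)$ is again nef is a fixed-model Zariski-type statement that fails in dimension $\ge 3$. For instance, in the projective analogue one can take two distinct but numerically equivalent prime divisors $S,S'$ meeting along a curve on which $S$ is negative (e.g.\ the zero section and the graph of a nonzero section of $\mathcal O_{\FF_1}(C_0)$ in $\PP(\mathcal O\oplus\mathcal O(C_0))$ over the Hirzebruch surface); with $\alpha=S$, $D_1=S$, $D_2=S'$, both $\alpha-D_1=0$ and $\alpha-D_2\equiv 0$ are nef, yet $\min(D_1,D_2)=0$ and $\alpha$ is not nef, so on that model there is no common lower bound at all. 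Only for $d=2$ does your argument go through, since then ${\rm Supp}(A)\cap{\rm Supp}(B)$ is a finite set of points.

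This is precisely why the lemma is stated for $D\in M^1(R)$, the limit over all of ${\rm BirMod}(R)$, and why the paper's proof (the argument of \cite[Lemma 4.2]{C4}, with Remark \ref{Remark2} giving nonemptiness) produces the common lower bound on a higher model rather than on $X$. The standard device: choose $m$ so that $mD_1,mD_2$ are Cartier on $X$ and pass to the (normalized) blowup $\pi:X'\rightarrow X$ of the ideal sheaf $\mathcal O_X(-mD_1)+\mathcal O_X(-mD_2)$, which is dominated by an element of ${\rm BirMod}(R)$. On $X'$ this sheaf becomes invertible, say $\mathcal O_{X'}(-G)$ with $G$ effective Cartier, $G\le m\pi^*D_1$ and $G\le m\pi^*D_2$; since at every point one of the two local generators generates $\mathcal O_{X'}(-G)$, the residual divisors $m\pi^*D_1-G$ and $m\pi^*D_2-G$ have \emph{disjoint} supports. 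Hence every curve misses at least one of them, the two decompositions $\pi^*\alpha-\frac{1}{m}G=\pi^*(\alpha-D_i)+(\pi^*D_i-\frac{1}{m}G)$ show that $\alpha-\frac{1}{m}G$ is nef, and $D_3=\frac{1}{m}G\in M^1(R)$ is the required element of $\mathcal D(\alpha)$ below $D_1$ and $D_2$. This passage to a further blowup, which your final remark dismisses as a matter of convenience, is the essential idea missing from your proposal.
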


The proof of Lemma \ref{Lemma4.2*}, using Remark \ref{Remark2},  is as the proof of \cite[Lemma 4.2]{C4}.

The following proposition generalizes \cite[Proposition 4.3]{C4}.

\begin{Proposition}\label{Prop4.3*} Suppose that $\alpha_1,\ldots,\alpha_p\in M^1(R)$ are anti-effective. Let
$$
\begin{array}{lll}
S&=&\{(\alpha_1-D_1)\cdot\ldots\cdot (\alpha_p-D_p)\in L^{d-p}(R)\mbox{ such that }\\
&&D_1,\ldots,D_p\in M^1(R)\mbox{ are effective $\QQ$-divisors and $\alpha_i-D_i$ are nef for $1\le i\le p$}\}.
\end{array}
$$
Then 
\begin{enumerate}
\item[1)] $S$ is nonempty.
\item[2)] $S$ is a directed set with respect to the partial order $\le$ on $L^{d-p}(R)$.
\item[3)] $S$ has a (unique) least upper bound with respect to $\le$ in $L^{d-p}(R)$.
\end{enumerate}
\end{Proposition}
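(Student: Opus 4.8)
The plan is to establish the three assertions in turn, following the template of \cite[Proposition 4.3]{C4} and using the machinery assembled in Section~\ref{SecGLR}: the filtered set $\mathcal D(\alpha)$ of Lemma~\ref{Lemma4.2*}, the monotonicity of intersection products (Proposition~\ref{Prop3.12*}), the sign lemma~\ref{Lemma3.11*}, the compactness of order intervals (Lemma~\ref{Lemma3.10*}), and the least--upper--bound principle of Lemma~\ref{Lemma4.1*}.

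For 1), I would invoke Lemma~\ref{Lemma4.2*}: each set $\mathcal D(\alpha_i)$ of effective $\QQ$-divisors $D_i\in M^1(R)$ with $\alpha_i-D_i$ nef is nonempty, so picking one $D_i$ from each $\mathcal D(\alpha_i)$ and forming the intersection product (well defined by Lemma~\ref{3.6*}) exhibits an element of $S$. For 2), suppose two elements of $S$ come from tuples $(D_1,\dots,D_p)$ and $(D_1',\dots,D_p')$. Since $\mathcal D(\alpha_i)$ is filtered, for each $i$ there is $D_i''\in\mathcal D(\alpha_i)$ with $D_i''\le D_i$ and $D_i''\le D_i'$; then $\alpha_i-D_i''$ is nef and $\alpha_i-D_i''\ge\alpha_i-D_i$, $\alpha_i-D_i''\ge\alpha_i-D_i'$. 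Applying Proposition~\ref{Prop3.12*} to the nef classes $\alpha_i-D_i''$ and $\alpha_i-D_i$ (resp. $\alpha_i-D_i'$) gives $(\alpha_1-D_1'')\cdot\ldots\cdot(\alpha_p-D_p'')\ge(\alpha_1-D_1)\cdot\ldots\cdot(\alpha_p-D_p)$ and similarly for the primed tuple, so this element of $S$ dominates both, and $S$ is directed.

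The substance is in 3). The key preliminary observation is that $0$ is an upper bound for $S$ in $L^{d-p}(R)$: if $\gamma_i:=\alpha_i-D_i$ is nef then $\gamma_i$ is anti-effective (a nef class is anti-effective, by the inclusion ${\rm Nef}\subset{\rm AEF}$ on each model, cf. the Lemma preceding Remark~\ref{Remark2}, or directly via Lemma~\ref{Remark1}), so $-\gamma_1$ is effective and Lemma~\ref{Lemma3.11*} applied to $-\gamma_1,\gamma_2,\dots,\gamma_p$ yields $-(\gamma_1\cdot\ldots\cdot\gamma_p)\in{\rm Psef}(L^{d-p}(R))$, i.e. $\gamma_1\cdot\ldots\cdot\gamma_p\le 0$. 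Now I would fix one element $s_0\in S$ (available by 1)) and pass to $S_0=\{s\in S\mid s\ge s_0\}$, which is nonempty, directed, and cofinal in $S$, hence has the same set of upper bounds as $S$; it therefore suffices to produce a least upper bound of $S_0$. Since $s_0\le 0$, the class $-s_0$ lies in ${\rm Psef}(L^{d-p}(R))$, so by Lemma~\ref{Lemma3.10*} the set $\{\beta\mid 0\le\beta\le-s_0\}$ is compact; translating by $s_0$ shows the order interval $\{\beta\mid s_0\le\beta\le 0\}$ is compact, and it contains $S_0$. Then Lemma~\ref{Lemma4.1*}, applied to the Hausdorff topological vector space $L^{d-p}(R)$ equipped with the strict closed convex cone ${\rm Psef}(L^{d-p}(R))$ (Lemma~\ref{Lemma3.7*}), furnishes a least upper bound of $S_0$, which is the desired least upper bound of $S$; its uniqueness is automatic because $\le$ is a genuine partial order.

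The step I expect to be the main obstacle is the compactness input in 3): $S$ itself need not visibly lie inside a single compact order interval, since the divisors $D_i$ (and the models on which they are represented) vary freely. The device that resolves this is the combination of the ``bounded above by $0$'' observation --- which is particular to the present exceptional--divisor setting, where nef forces anti-effective --- with the passage to the cofinal tail $S_0$ bounded below by the fixed $s_0$, after which Lemma~\ref{Lemma3.10*} applies essentially verbatim. Everything else (parts 1) and 2), and the invocation of Lemma~\ref{Lemma4.1*}) is then a routine assembly of the cited results.
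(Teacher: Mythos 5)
Your proof is correct and follows essentially the same strategy as the paper: every element of $S$ is $\le 0$ (nef classes with exceptional support are anti-effective, so Lemma \ref{Remark1} or Lemma \ref{Lemma3.11*} applies), a cofinal directed subset of $S$ is trapped in a compact order interval via Lemma \ref{Lemma3.10*}, and Lemma \ref{Lemma4.1*} then produces the least upper bound, with parts 1) and 2) coming from Lemma \ref{Lemma4.2*} and Proposition \ref{Prop3.12*} exactly as in the paper. The only (harmless) divergence is at the lower end of the interval: the paper constructs an explicit effective $\QQ$-divisor $\omega$ with $-\omega$ ample on a common model and $\alpha_i-\omega$ nef, and uses Lemma \ref{Lemma4.2*} to push any element of $S$ into $Z=\{x\in L^{d-p}(R)\mid (\alpha_1-\omega)\cdot\ldots\cdot(\alpha_p-\omega)\le x\le 0\}$, whereas you fix an arbitrary $s_0\in S$ and use the directedness from 2) to make the tail $\{s\in S\mid s\ge s_0\}\subset\{x\mid s_0\le x\le 0\}$ cofinal; this works equally well since translation preserves compactness in the Hausdorff topological vector space $L^{d-p}(R)$.
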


\begin{proof} There exists $\phi:X\rightarrow \mbox{Spec}(R)$ in $\mbox{BirMod}(R)$ such that $\alpha_1,\ldots,\alpha_p\in M^1(X)$. Since $X$ is the blowup of an $m_R$-primary ideal, there exists an effective $\QQ$-divisor $\omega$ in $M^1(R)$ such that $-\omega$ is ample on $X$ and $\alpha_i-\omega$ is nef for all $i$. Suppose  $D_i\in M^1(R)$ are effective $\QQ$-divisors such that 
$\alpha_i-D_i$ are nef for all $i$. Lemma \ref{Lemma4.2*} implies there exist effective $\QQ$-divisors 
$D_i^*\in M^1(R)$ such that for all $i$, $\alpha_i-D_i$ are nef, $D_i^*\le D_i$, $D_i^*\le \omega$ and $\alpha_i-D_i^*$ are nef. Thus $\alpha_i-\omega\le \alpha_i-D_i^*\le 0$ and $\alpha_i-D_i\le \alpha_i-D_i^*$. Proposition \ref{Prop3.12*} implies
$$
(\alpha_1-\omega)\cdot(\alpha_2-\omega)\cdot\ldots\cdot(\alpha_p-\omega)
\le (\alpha_1-D_1^*)\cdot(\alpha_2-D_2^*)\cdot\ldots\cdot(\alpha_p-D_p^*)\le 0.
$$
Thus $\gamma\in L^{d-p}(R)$ is an upper bound for $S$ if and only if $\gamma$ is an upper bound for $S\cap Z$ where
$$
Z=\{x\in L^{d-p}(R)\mid (\alpha_1-\omega)\cdot\ldots\cdot(\alpha_p-\omega)\le x\le 0\}.
$$
The set $S\cap Z$ is nonempty since 
$(\alpha_1-\omega)\cdot\ldots\cdot(\alpha_p-\omega)\in S\cap Z$.
 The set $S\cap Z$ is directed since $S$ is and since whenever $\beta_1,\ldots,\beta_p\in M^1(R)$ are anti-effective and
nef, $\beta_1\cdots\ldots\cdot\beta_p\le 0$ (by Lemma \ref{Remark1}).  The set $Z$ is compact by Lemma \ref{Lemma3.10*}. Thus by Lemma \ref{Lemma4.1*}, $S\cap Z$ has a least upper bound with respect to $\le$ in $L^{d-p}(R)$. 
\end{proof}

The following definition is well defined by Proposition \ref{Prop4.3*}. Definition \ref{Def4.4*} gives a local version  of the definition  \cite[Definition 4.4]{C4} of the positive intersection product on a proper variety.

\begin{Definition}\label{Def4.4*} Suppose that $\alpha_1,\ldots,\alpha_p\in M^1(R)$ are anti-effective. Their anti-positive intersection product $\langle\alpha_1\cdot\ldots\cdot \alpha_p\rangle\in L^{d-p}(R)$ is defined to be the least upper bound of the set of classes $(\alpha_1-D_1)\cdot\ldots\cdot (\alpha_p-D_p)\in L^{d-p}(R)$ where $D_i\in M^1(R)$ are effective $\QQ$-Cartier divisors in $M^1(R)$ such that $\alpha_i-D_i$ are nef. 
\end{Definition}

The proof of the  following proposition is as the proof of Proposition 4.7 \cite{C4}.

\begin{Proposition}\label{Prop4.7*} The map $\mbox{AEF}^p(R)\rightarrow L^{d-p}(R)$ defined by 
$(\alpha_1,\ldots,\alpha_p)\mapsto \langle\alpha_1,\cdot,\ldots, \alpha_p\rangle$ is continuous.
\end{Proposition}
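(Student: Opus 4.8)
The plan is to mirror the proof of \cite[Proposition 4.7]{C4}, checking that each ingredient has an analogue in the present local setting. The first step is a purely topological reduction. Since ${\rm AEF}^p(R)=\lim_{\rightarrow}({\rm AEF}(X)^p)$ carries the direct limit topology and $L^{d-p}(R)=\lim_{\leftarrow}L^{d-p}(Y)$ carries the inverse limit topology, the assignment $(\alpha_1,\ldots,\alpha_p)\mapsto\langle\alpha_1\cdot\ldots\cdot\alpha_p\rangle$ is continuous if and only if, for every $X,Y\in {\rm BirMod}(R)$, the composite
\[
{\rm AEF}(X)^p\stackrel{\rho_X}{\longrightarrow}{\rm AEF}^p(R)\stackrel{\langle\cdot\rangle}{\longrightarrow}L^{d-p}(R)\stackrel{\pi_Y}{\longrightarrow}L^{d-p}(Y)
\]
is continuous; this is the mechanism of Lemma \ref{Lemma3.5*} and its proof. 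Here the source is a closed convex cone in the finite dimensional space $M^1(X)^p$ and the target $L^{d-p}(Y)$ is finite dimensional, so we are reduced to a statement about maps between finite dimensional spaces. The two structural facts we need are: (i) \emph{monotonicity}, namely if $\alpha_i\le\alpha_i'$ in $M^1(R)$ with all $\alpha_i,\alpha_i'$ anti-effective, then $\langle\alpha_1\cdot\ldots\cdot\alpha_p\rangle\le\langle\alpha_1'\cdot\ldots\cdot\alpha_p'\rangle$ in ${\rm Psef}(L^{d-p}(R))$ --- this is immediate because each generating class $(\alpha_1-D_1)\cdot\ldots\cdot(\alpha_p-D_p)$ of $\langle\alpha_1\cdot\ldots\cdot\alpha_p\rangle$ equals $(\alpha_1'-D_1')\cdot\ldots\cdot(\alpha_p'-D_p')$ with $D_i'=D_i+(\alpha_i'-\alpha_i)$ effective, hence is a generating class of $\langle\alpha_1'\cdot\ldots\cdot\alpha_p'\rangle$; and (ii) the compactness lemmas (Lemma \ref{Lemma3.2*}, Lemma \ref{Lemma3.10*} and Lemma \ref{Lemma4.1*}), together with the existence on each $X$ of an ample anti-effective Cartier divisor $-\omega$ whose coefficients are all negative (from the proof of the inclusion ${\rm Nef}(X)\subset{\rm AEF}(X)$, enlarged so that $\alpha_i^0-\frac{1}{2}\omega$ is ample for the finitely many classes of interest, using Remark \ref{Remark2}). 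By monotonicity and $\langle 0\cdot\ldots\cdot 0\rangle=0$ the image of any compact subset of ${\rm AEF}(X)^p$ lies in a fixed compact order interval of $L^{d-p}(R)$, so it suffices to prove: whenever $\underline\alpha^{(n)}=(\alpha_1^{(n)},\ldots,\alpha_p^{(n)})\to\underline\alpha^0$ in ${\rm AEF}(X)^p$ and $\langle\underline\alpha^{(n)}\rangle$ converges in $L^{d-p}(R)$, its limit $\ell$ equals $\langle\underline\alpha^0\rangle$.

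For the inequality $\ell\ge\langle\underline\alpha^0\rangle$ I would slide along the $-\omega$ direction. Put $\gamma_i^{(n)}=\alpha_i^0-\epsilon_n\omega$, with $\epsilon_n\downarrow 0$ chosen so that $\alpha_i^{(n)}-\alpha_i^0+\epsilon_n\omega$ has nonnegative coefficients for all $i$. Then each $\gamma_i^{(n)}$ is anti-effective, lies in $M^1(X)$, satisfies $\gamma_i^{(n)}\le\alpha_i^{(n)}$ and $\gamma_i^{(n)}\le\alpha_i^0$, and $\gamma_i^{(n)}\to\alpha_i^0$, so by monotonicity $\langle\gamma_1^{(n)}\cdot\ldots\cdot\gamma_p^{(n)}\rangle\le\langle\alpha_1^{(n)}\cdot\ldots\cdot\alpha_p^{(n)}\rangle$ and it is enough to see $\langle\gamma_1^{(n)}\cdot\ldots\cdot\gamma_p^{(n)}\rangle\to\langle\underline\alpha^0\rangle$. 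The net $\epsilon\mapsto\langle(\alpha_1^0-\epsilon\omega)\cdot\ldots\cdot(\alpha_p^0-\epsilon\omega)\rangle$ is increasing as $\epsilon\downarrow 0$, is bounded above by $\langle\underline\alpha^0\rangle$, and lies in a compact order interval, hence converges to its least upper bound by Lemma \ref{Lemma4.1*}; that supremum equals $\langle\underline\alpha^0\rangle$ because a generating class $(\alpha_1^0-D_1)\cdot\ldots\cdot(\alpha_p^0-D_p)$ of $\langle\underline\alpha^0\rangle$ can, after replacing $D_i$ by $D_i+\delta\omega$ for a small rational $\delta>0$ (which changes its value arbitrarily little and makes all coefficients of $D_i+\delta\omega$ positive), be realized for $0<\epsilon<\delta$ as a generating class of $\langle(\alpha_1^0-\epsilon\omega)\cdot\ldots\cdot(\alpha_p^0-\epsilon\omega)\rangle$ with datum $D_i+(\delta-\epsilon)\omega$. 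Passing to the limit in $n$ and using that ${\rm Psef}(L^{d-p}(R))$ is closed yields $\ell\ge\langle\underline\alpha^0\rangle$.

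The reverse inequality $\ell\le\langle\underline\alpha^0\rangle$ is the main obstacle, since one cannot approach $\underline\alpha^0$ from above inside ${\rm AEF}(X)$ when $\underline\alpha^0$ lies on the boundary of the cone. The way I would handle it, following \cite[Proposition 4.7]{C4}, is to extract a convergent family of near-optimal nef approximations. For each $n$ choose, using that $\langle\underline\alpha^{(n)}\rangle$ is the least upper bound of the directed set of Proposition \ref{Prop4.3*}, a generating class $\theta_1^{(n)}\cdot\ldots\cdot\theta_p^{(n)}$, with $\theta_i^{(n)}=\alpha_i^{(n)}-D_i^{(n)}$ nef and $D_i^{(n)}$ effective $\QQ$-Cartier, within distance $1/n$ of $\langle\underline\alpha^{(n)}\rangle$ in $L^{d-p}(Y)$; by Lemma \ref{Lemma4.2*}, exactly as in the proof of Proposition \ref{Prop4.3*}, we may take $D_i^{(n)}\le\omega$, so that the $\theta_i^{(n)}$ lie in the compact order interval $\{x:\alpha_i^{(n)}-\omega\le x\le 0\}$, whose union over $n$ is relatively compact. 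Passing to a subnet, $\theta_i^{(n)}\to\theta_i$ with $\theta_i$ nef and $\theta_i\le\alpha_i^0$; approximating $\alpha_i^0-\theta_i$ by effective $\QQ$-Cartier divisors $\tilde D_i$ (adding a small multiple of $\omega$ to stay effective, which keeps $\alpha_i^0-\tilde D_i$ ample) exhibits $\theta_1\cdot\ldots\cdot\theta_p=\lim\theta_1^{(n)}\cdot\ldots\cdot\theta_p^{(n)}=\ell$ as a limit of generating classes of $\langle\underline\alpha^0\rangle$, so $\ell\le\langle\underline\alpha^0\rangle$. Combining the two inequalities gives $\ell=\langle\underline\alpha^0\rangle$, which closes the graph and, by the reduction of the first paragraph, proves the continuity asserted in the proposition; the multilinearity inherited from the intersection product then identifies this map with the one of Definition \ref{Def4.4*}. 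I expect the delicate points to be precisely the descent of the approximating data $D_i^{(n)}$ to a fixed model (so that the extracted limit genuinely makes sense in a finite dimensional space) and the $\QQ$-Cartier approximation step, which is where the argument of \cite[Proposition 4.7]{C4} does its real work.
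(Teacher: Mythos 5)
Your overall strategy (reduce to finite--dimensional models, prove monotonicity in the $\alpha_i$, establish the limit along the $-\omega$ ray, and squeeze) is the right shape, and your argument that $\sup_{\epsilon>0}\langle(\alpha_1^0-\epsilon\omega)\cdot\ldots\cdot(\alpha_p^0-\epsilon\omega)\rangle=\langle\alpha_1^0\cdot\ldots\cdot\alpha_p^0\rangle$ is correct. But two steps are genuinely incomplete. First, your one--line proof of monotonicity (i) does not work as stated: Definition \ref{Def4.4*} requires the divisors $D_i$ to be effective $\QQ$-Cartier, and $D_i'=D_i+(\alpha_i'-\alpha_i)$ is only an effective $\RR$-divisor in $M^1(X)$, so it is not an admissible datum. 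Monotonicity is true, but it needs an approximation argument: e.g.\ use that ${\rm EF}(X)$ is a rational polyhedral cone to choose an effective $\QQ$-Cartier $G_i\le (\alpha_i'-\alpha_i)+D_i$ with small error, and add a small rational multiple of $\omega$ so that openness of the ample cone makes $\alpha_i'-G_i-\delta\omega$ nef; then let $\delta\to 0$ using Proposition \ref{Prop3.12*} and closedness of the pseudoeffective cone. Since both halves of your argument invoke monotonicity with non-$\QQ$-Cartier differences $(\alpha_i^{(n)}-\alpha_i^0)+\epsilon_n\omega$, this is not a cosmetic point.

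Second, and more seriously, the upper bound $\ell\le\langle\underline\alpha^0\rangle$ rests on extracting a convergent subnet of the near-optimal nef classes $\theta_i^{(n)}=\alpha_i^{(n)}-D_i^{(n)}$ from ``the compact order interval $\{x:\alpha_i^{(n)}-\omega\le x\le 0\}$''. No such compactness is available: the paper's compactness lemmas (Lemmas \ref{Lemma3.2*} and \ref{Lemma3.10*}) concern order intervals in the spaces $L^p$, not in $M^1(R)$, and the classes $D_i^{(n)}$, $\theta_i^{(n)}$ live on models $Y_n$ that may grow with $n$, so they do not sit in any fixed finite--dimensional $M^1(Y)$; order intervals in the direct limit $M^1(R)$ (which, as the paper notes, need not even be a topological vector space) are not compact. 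You flag this ``descent to a fixed model'' as the delicate point and defer it to \cite{C4}, but that is precisely the content one must supply, so as written the key inequality is unproven. The gap can be closed without any subnet extraction: since the $\alpha_i^{(n)}$ and $\alpha_i^0$ are anti-effective and $\omega$ has strictly positive coefficients, for every rational $\epsilon>0$ and $n\gg 0$ one has $\alpha_i^{(n)}\le (1-\epsilon)\alpha_i^0$ (the coordinates where $\alpha_i^0$ has coefficient $0$ are handled automatically by anti-effectivity) and $\alpha_i^{(n)}\ge \alpha_i^0-\delta_n\omega$ with $\delta_n\to 0$; monotonicity and homogeneity of the anti-positive product then give $(1-\epsilon)^p\langle\alpha_1^0\cdot\ldots\cdot\alpha_p^0\rangle$ and $\langle(\alpha_1^0-\delta_n\omega)\cdot\ldots\cdot(\alpha_p^0-\delta_n\omega)\rangle$ as two-sided bounds, and your $\omega$-ray limit plus normality of the closed strict cone ${\rm Psef}(L^{d-p}(Y))$ in each finite--dimensional $L^{d-p}(Y)$ yields the squeeze. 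With monotonicity properly proved and the upper bound obtained this way, the proposition follows.
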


\section{Mixed multiplicities and anti-positive intersection products}\label{SecAPM1}

We continue in this section with the notation of Sections \ref{SecGLR} and \ref{SecAPM3}.
In this section, suppose that $\alpha_1,\ldots \alpha_r\in  M^1(R)$ are effective Cartier divisors. For $n_1,\ldots,n_r\in \NN$, define
$$
F(n_1,\ldots,n_r)=\lim_{m\rightarrow\infty}\frac{\ell_R(R/I(mn_1\alpha_1)\cdots I(mn_r\alpha_r))}{m^d}.
$$
We have that $F(n_1,\ldots,n_r)$ is a homogeneous polynomial of degree $d$ by  \cite[Theorem 6.6]{CSS}.

We now describe a construction that we will use in this section.  Let $X\in \mbox{BirMod}(R)$ be such that 
$\alpha_1,\ldots,\alpha_r\in M^1(X)$. For $s\in \ZZ_+$, let 
\begin{equation}\label{eq13}
Y[s]\rightarrow X
\end{equation}
 be in $\mbox{BirMod}(X)$ and let $\pi_s:Y_s\rightarrow Y[s]$ be the normalization of the blowup of 
 $$
 I(s\alpha_1)\cdots I(s\alpha_r)\mathcal O_{Y[s]}.
 $$
Let $\psi_s:Y_s\rightarrow \mbox{Spec}(R)$ be the induced morphism. Define  effective Cartier divisors $F_{s,i}$ on $Y_s$ by 
$$
I(s\alpha_i)\mathcal O_{Y_s}=\mathcal O_{Y_s}(-F_{s,i})\subset \mathcal O_{Y_s}(\pi_s^*(-s\alpha_i)).
$$
Let $D_{s,i}=F_{s,i}-\pi_s^*(s\alpha_i)$, which we will write as $F_{s,i}-s\alpha_i$. Then $D_{s,i}$ is an effective Cartier divisor on $Y_s$ and $-\alpha_i-\frac{1}{s}D_{s,i}=-\frac{1}{s}F_{s,i}$ is anti-effective and nef. We have that 

\begin{equation}\label{eq12}
\begin{array}{l}
I(s\alpha_1)^{mn_1}\cdots I(s\alpha_r)^{mn_r}\subset I(mn_1F_{s,1})\cdots I(mn_r F_{s,r})\subset I(msn_1\alpha_1)\cdots I(msn_r\alpha_r)\\
\mbox{ for all }m,n_1,\ldots,n_r\in\NN.
\end{array}
\end{equation}
For $n_1,\ldots,n_r\in \NN$, define
$$
H_s(n_1,\ldots,n_r)=\lim_{m\rightarrow\infty}\frac{\ell_R(R/I(mn_1F_{s,1})\cdots I(mn_r F_{s,r}))}{s^dm^d}.
$$
We have that $H_s(n_1,\ldots,n_r)$ is a homogeneous polynomial of degree $d$ in $n_1,\ldots,n_r$ by Theorem  \cite[Theorem 6.6]{CSS}.

Expand  the polynomials 
$$
H_s(n_1,\ldots,n_r)=\sum b_{i_1,\ldots,i_r}(s)n_1^{i_1}\cdots n_r^{i_r}
$$
and
$$
F(n_1,\ldots,n_r)=\sum b_{i_1,\ldots,i_r}n_1^{i_1}\cdots n_r^{i_r}
$$
with $b_{i_1,\ldots,i_r}(s),b_{i_1,\ldots,i_r}\in \RR$.

\begin{Proposition}\label{Prop2} For all $n_1,\ldots,n_r\in\NN$,
$$
\lim_{s\rightarrow \infty}H_s(n_1,\ldots,n_r)=F(n_1,\ldots,n_r)
$$
and for all $i_1,\ldots,i_r$,
\begin{equation}\label{eq177}
\lim_{s\rightarrow \infty}b_{i_1,\ldots,i_r}(s)=b_{i_1,\ldots,i_r}.
\end{equation}
\end{Proposition}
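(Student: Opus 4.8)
We first reduce to the pointwise statement $\lim_{s\to\infty}H_s(n_1,\ldots,n_r)=F(n_1,\ldots,n_r)$ for each fixed $(n_1,\ldots,n_r)\in\NN^r$; since $H_s$ and $F$ are homogeneous of degree $d$ in $r$ variables by \cite[Theorem 6.6]{CSS}, their coefficients are recovered from the values of $H_s$ and $F$ on the finite set $\{0,1,\ldots,d\}^r$ by one fixed linear map, so pointwise convergence on $\NN^r$ forces (\ref{eq177}). One half of the pointwise statement is immediate from (\ref{eq12}): its right-hand inclusion gives $\ell_R(R/I(mn_1F_{s,1})\cdots I(mn_rF_{s,r}))\ge\ell_R(R/I(msn_1\alpha_1)\cdots I(msn_r\alpha_r))$ for all $m$, and dividing by $s^dm^d$ and letting $m\to\infty$ the left side tends to $H_s(n_1,\ldots,n_r)$ while the right side, in the variable $m'=sm$, is the limit defining $F$ along the subsequence $s\mid m'$, hence equals $F(n_1,\ldots,n_r)$. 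Thus $H_s(n_1,\ldots,n_r)\ge F(n_1,\ldots,n_r)$ for every $s$, and it remains to prove $\limsup_{s\to\infty}H_s(n_1,\ldots,n_r)\le F(n_1,\ldots,n_r)$.

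For the reverse inequality I would first identify $H_s$ with a multiplicity of ordinary ideals. Replacing $Y[s]$ by a model dominating it does not change $F_{s,i}$ (by the projection formula) and hence does not change $H_s$, so we may assume $Y_s$ dominates the normalized blowup of each $I(s\alpha_i)$; then $\mathcal O_{Y_s}(-F_{s,i})$ is generated by global sections, $I(mn_iF_{s,i})=\overline{I(s\alpha_i)^{mn_i}}$, and $\prod_iI(mn_iF_{s,i})=\prod_i\overline{I(s\alpha_i)^{mn_i}}\subset\overline{(\prod_iI(s\alpha_i)^{n_i})^m}$. Applying Theorem \ref{Theorem13} to the filtrations $\mathcal I'=\{(\prod_iI(s\alpha_i)^{n_i})^m\}_m\subset\mathcal I=\{\prod_iI(mn_iF_{s,i})\}_m$ then gives
$$
s^d\,d!\,H_s(n_1,\ldots,n_r)=e_R\Big(\prod_{i=1}^rI(s\alpha_i)^{n_i};R\Big).
$$
So the proposition becomes the Fujita-type approximation
$$
\lim_{s\to\infty}\frac1{s^d}\,e_R\Big(\prod_{i=1}^rI(s\alpha_i)^{n_i};R\Big)=e_R(\mathcal K;R),\qquad \mathcal K=\{\,I(mn_1\alpha_1)\cdots I(mn_r\alpha_r)\,\}_m,
$$
of which $\ge$ is the inequality just proved and $\le$ is still needed; the identity $d!\,F(n_1,\ldots,n_r)=e_R(\mathcal K;R)$ will follow.

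I would prove $\le$ by the Newton--Okounkov body method of Section 3. Fix a prime exceptional divisor $E$ of $X$ and a flag through a smooth point of $X$ on $E$ and on no other exceptional divisor, with associated rank-$d$ valuation $\nu$ and cutoff $\delta\gg0$ as there. Let $\Delta_\infty$ be the body of the filtration $\mathcal K$ built from the semigroup $\{(\nu(f),m):f\in I(mn_1\alpha_1)\cdots I(mn_r\alpha_r),\ {\rm ord}_E(f)\le m\delta\}$, so that $\tfrac1{d!}e_R(\mathcal K;R)=\mu({\rm Vol}\,\Delta(0)-{\rm Vol}\,\Delta_\infty)$ by the volume formula behind (\ref{eqAR10}) (valid for an arbitrary filtration of $m_R$-primary ideals by \cite{C3}), and for each $s$ let $\Delta_s$ be the body built from $\{(\nu(f),m):f\in(\prod_iI(s\alpha_i)^{n_i})^m,\ {\rm ord}_E(f)\le ms\delta\}$, so that $\tfrac1{d!s^d}e_R(\prod_iI(s\alpha_i)^{n_i};R)=\mu({\rm Vol}\,\Delta(0)-{\rm Vol}(\tfrac1s\Delta_s))$. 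The inclusions $(\prod_iI(s\alpha_i)^{n_i})^m\subset I(smn_1\alpha_1)\cdots I(smn_r\alpha_r)$ give $\tfrac1s\Delta_s\subset\Delta_\infty$ (recovering the easy inequality), while $I(s\alpha_i)^{s'/s}\subset I(s'\alpha_i)$ for $s\mid s'$ gives $\tfrac1s\Delta_s\subset\tfrac1{s'}\Delta_{s'}$; hence the compact convex sets $\tfrac1s\Delta_s$ increase inside $\Delta_\infty$ and ${\rm Vol}(\tfrac1s\Delta_s)\uparrow{\rm Vol}\big(\overline{\bigcup_s\tfrac1s\Delta_s}\big)$. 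The remaining step, which I expect to be the main obstacle, is that $\bigcup_s\tfrac1s\Delta_s$ is dense in $\Delta_\infty$: given $f\in I(mn_1\alpha_1)\cdots I(mn_r\alpha_r)$ with ${\rm ord}_E(f)\le m\delta$, one must approximate $\nu(f)/m$ by $\nu(g)/(ks)$ with $g\in(\prod_iI(s\alpha_i)^{n_i})^k$, which is done by writing $f$ as a sum of products of generators of the $I(mn_i\alpha_i)$, using Lemma \ref{LemmaAR1} (so that $\{I(m\alpha_i)\}$ is governed by the asymptotic orders $\gamma_{E_j}(\alpha_i)$ that also control $\tfrac1sF_{s,i}$) together with Rees's Izumi theorem, as in the proof of Proposition \ref{PropPos}, to bound the loss in the $\nu$-values; concretely this amounts to showing that the asymptotic base loci of the $I(s\alpha_i)$ contribute nothing to the volume in the limit. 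Granting the density, ${\rm Vol}(\tfrac1s\Delta_s)\to{\rm Vol}\,\Delta_\infty$, hence $\tfrac1{s^d}e_R(\prod_iI(s\alpha_i)^{n_i};R)\to e_R(\mathcal K;R)$, which with the previous paragraph gives $\lim_sH_s(n_1,\ldots,n_r)=F(n_1,\ldots,n_r)$ and, by the interpolation remark, (\ref{eq177}).
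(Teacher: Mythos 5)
Your first paragraph and your reduction are fine: the inequality $H_s(n_1,\ldots,n_r)\ge F(n_1,\ldots,n_r)$ does follow from the right-hand inclusion in (\ref{eq12}) since the limit defining $F$ exists, the coefficient statement (\ref{eq177}) does follow from pointwise convergence by a fixed linear interpolation (the paper uses \cite[Lemma 3.2]{CSS} for exactly this), and the identification $s^d\,d!\,H_s(n_1,\ldots,n_r)=e_R(\prod_i I(s\alpha_i)^{n_i};R)$ via $I(mn_iF_{s,i})=\overline{I(s\alpha_i)^{mn_i}}$ and Theorem \ref{Theorem13} is legitimate (here one uses that $Y_s$ dominates the normalized blowup of each $I(s\alpha_i)$ and that $R$ is normal). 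But the heart of the proposition is the reverse inequality, and there your proof stops: you yourself flag the density of $\bigcup_s \tfrac1s\Delta_s$ in $\Delta_\infty$ as ``the main obstacle'' and only sketch a plan. That density (equivalently, the Fujita-type statement $\lim_s s^{-d}e_R(\prod_i I(s\alpha_i)^{n_i};R)=e_R(\mathcal K;R)$) is precisely the nontrivial content, and it is not a formal consequence of the tools you name. Note in particular that the single-family Fujita approximation applied to $\mathcal K=\{\prod_i I(mn_i\alpha_i)\}$ only controls $K_s=\prod_i I(sn_i\alpha_i)$, whereas you need control of the smaller ideal $J_s=\prod_i I(s\alpha_i)^{n_i}\subset K_s$, for which $e_R(J_s;R)\ge e_R(K_s;R)$ goes the wrong way; what is required is a genuinely multigraded approximation, handling the $r$ filtrations simultaneously. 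Your sketched element-by-element argument (approximating $\nu(f)/m$ for $f$ a sum of products, using Lemma \ref{LemmaAR1} and Izumi's theorem as in Proposition \ref{PropPos}) is not convincing as stated: an element of $\prod_i I(mn_i\alpha_i)$ is a sum of products, its $\nu$-value need not be close to any value coming from $J_s^k$, and Lemma \ref{LemmaAR1} only governs the first component ${\rm ord}_{E}$ of the rank-$d$ valuation $\nu$, not the remaining components that determine the Okounkov body; such statements are proved at the level of semigroups and volumes, not element by element.

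The paper closes exactly this gap by quoting prior work rather than re-proving it: with $\mathcal I_s(j)$ the $s$-th truncated filtration of $\{I(i\alpha_j)\}$ from \cite[Definition 4.1]{CSS}, one has the squeeze $F_s(n_1,\ldots,n_r)=F_s(sn_1,\ldots,sn_r)/s^d\ge H_s(n_1,\ldots,n_r)\ge F(sn_1,\ldots,sn_r)/s^d=F(n_1,\ldots,n_r)$, where the upper bound uses that $I_s(1)_{smn_1}\cdots I_s(r)_{smn_r}$ agrees with $I(s\alpha_1)^{mn_1}\cdots I(s\alpha_r)^{mn_r}$ for $m$ large together with the left inclusion in (\ref{eq12}); then $\lim_{s\to\infty}F_s=F$ is \cite[Proposition 4.3]{CSS} (the multigraded truncation/approximation theorem), and \cite[Lemma 3.2]{CSS} gives (\ref{eq177}). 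So your reduction is compatible with the paper's route, and the cleanest repair of your argument is to replace the unproved density claim by an appeal to \cite[Proposition 4.3]{CSS} (or to reprove that proposition, which is essentially the multigraded Okounkov-body approximation you would otherwise have to establish from scratch). As it stands, the proposal has a genuine gap at its decisive step.
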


\begin{proof} For $s\in \ZZ_+$, let $\{I_s(j)_i\}$ be the $s$-th truncated filtration of 
$\{I(j)_i\}$ where $I(j)_i=I(i\alpha_j)$ is
defined in \cite[Definition 4.1]{CSS}. That is, $I_s(j)_i=I(i\alpha_j)$ if $i\le s$ and if $i>s$, then $I_s(j)_i=\sum I_s(j)_aI_s(j)_b$ where the sum is over all $a,b>0$ such that $a+b=i$.
Let 
$$
F_s(n_1,\ldots,n_r)=\lim_{m\rightarrow \infty}\frac{\ell_R(R/I_s(1)_{mn_1}\cdots I_s(r)_{mn_r})}{m^d}
$$
for $n_1,\ldots,n_r\in \NN$.  Now there exists $m(s)\in \ZZ_+$ such that 
$$
I_s(1)_{smn_1}\cdots I_s(r)_{smn_r}=I(s\alpha_1)^{mn_1}\cdots I(s\alpha_r)^{mn_r}
$$
for $m\ge m(s)$. By (\ref{eq12}), we have
$$
F_s(n_1,\ldots,n_r)=\frac{F_s(sn_1,\ldots,sn_r)}{s^d}\ge H_s(n_1,\ldots,n_r)\ge\frac{F(sn_1,\ldots,sn_r)}{s^d}=F(n_1,\ldots,n_r)
$$
for all $n_1,\ldots,n_r\in\NN$. By \cite[Proposition  4.3]{CSS}, for all $n_1,\ldots,n_r\in \ZZ_+$,
$$
\lim_{s\rightarrow\infty}F_s(n_1,\ldots,n_r)=F(n_1,\ldots,n_r).
$$
Thus for all $n_1,\ldots,n_r\in \ZZ_+$,
\begin{equation}\label{eq11}
\lim_{s\rightarrow
\infty}H_s(n_1,\ldots,n_s)=F(n_1,\ldots,n_r).
\end{equation}
By \cite[Lemma 3.2]{CSS} and (\ref{eq11}), we have that 
$$
\lim_{s\rightarrow \infty}b_{i_1,\ldots,i_r}(s)=b_{i_1,\ldots,i_r}
$$
for all $i_1,\ldots,i_r$. Thus
$$
\lim_{s\rightarrow\infty}H_s(n_1,\ldots,n_r)=F(n_1,\ldots,n_r)
$$
for all $n_1,\ldots,n_r\in\NN$.
\end{proof}

\begin{Theorem}\label{Theorem1}
The coefficients of $F(n_1,\ldots,n_r)$ are
 $$
b_{i_1,\ldots,i_r}=\frac{-1}{i_1!\cdots i_r!}\langle(-\alpha_1)^{i_1}\cdot\ldots\cdot (-\alpha_r)^{i_r}\rangle
$$
for all $i_1,\ldots,i_r$.
\end{Theorem}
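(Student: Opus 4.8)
The plan is to read off the coefficients $b_{i_1,\dots,i_r}(s)$ of the polynomials $H_s$ as honest intersection numbers on the normal model $Y_s$, pass to the limit $s\to\infty$ via Proposition \ref{Prop2}, and then recognize that limit as the anti-positive intersection product. First I would apply Lemma \ref{Lemma3} on $Y_s$: one checks $Y_s\in\mbox{BirMod}(R)$, and $\mathcal O_{Y_s}(-F_{s,j})=I(s\alpha_j)\mathcal O_{Y_s}$ is generated by the global sections $I(s\alpha_j)\subset R$, so Lemma \ref{Lemma3} gives $\lim_{m\to\infty}\ell_R(R/I(mn_1F_{s,1})\cdots I(mn_rF_{s,r}))/m^d=-\frac{1}{d!}((-n_1F_{s,1}-\cdots-n_rF_{s,r})^d)$, the intersection product being taken on $Y_s$. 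Writing $\beta_{s,j}:=-\tfrac1sF_{s,j}\in M^1(Y_s)\subset M^1(R)$ — an anti-effective, nef $\QQ$-Cartier class, since $F_{s,j}\ge s\alpha_j\ge 0$ and $\mathcal O_{Y_s}(-F_{s,j})$ is globally generated — dividing by $s^d$ and expanding by multilinearity and symmetry of the intersection product (the multinomial theorem) yields
$$
H_s(n_1,\dots,n_r)=-\frac1{d!}\Big(\big(n_1\beta_{s,1}+\cdots+n_r\beta_{s,r}\big)^d\Big)=-\sum_{i_1+\cdots+i_r=d}\frac{(\beta_{s,1}^{i_1}\cdot\ldots\cdot\beta_{s,r}^{i_r})}{i_1!\cdots i_r!}\,n_1^{i_1}\cdots n_r^{i_r},
$$
so that $b_{i_1,\dots,i_r}(s)=-\frac{1}{i_1!\cdots i_r!}(\beta_{s,1}^{i_1}\cdot\ldots\cdot\beta_{s,r}^{i_r})$.

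By Proposition \ref{Prop2} the limit $c_{i_1,\dots,i_r}:=\lim_{s\to\infty}(\beta_{s,1}^{i_1}\cdot\ldots\cdot\beta_{s,r}^{i_r})$ exists and equals $-i_1!\cdots i_r!\,b_{i_1,\dots,i_r}$, so everything reduces to proving $c_{i_1,\dots,i_r}=\langle(-\alpha_1)^{i_1}\cdot\ldots\cdot(-\alpha_r)^{i_r}\rangle$. One inequality is immediate: since $\beta_{s,j}=(-\alpha_j)-\tfrac1sD_{s,j}$ with $\tfrac1sD_{s,j}$ effective $\QQ$-Cartier in $M^1(R)$ and $\beta_{s,j}$ nef, each scalar $(\beta_{s,1}^{i_1}\cdot\ldots\cdot\beta_{s,r}^{i_r})$ belongs to the set whose least upper bound defines $\langle(-\alpha_1)^{i_1}\cdot\ldots\cdot(-\alpha_r)^{i_r}\rangle$ in Definition \ref{Def4.4*}, hence $c_{i_1,\dots,i_r}\le\langle(-\alpha_1)^{i_1}\cdot\ldots\cdot(-\alpha_r)^{i_r}\rangle$.

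The reverse inequality is the main obstacle, because a nef decomposition $(-\alpha_j)-D_j$ need not be dominated by any of the $\beta_{s,j}$ (nef is not semiample). To get around this I would fix effective $\QQ$-Cartier divisors $D_1,\dots,D_r\in M^1(R)$ with $-\alpha_j-D_j$ nef, pass to a single $Z\in\mbox{BirMod}(R)$ carrying all the $\alpha_j,D_j$ as Cartier divisors, choose an anti-effective ample Cartier divisor $W$ on $Z$, and pick $t\in\ZZ_+$ so that each $-t\alpha_j-tD_j+W$ is an ample Cartier divisor on $Z$ (it lies in the interior of the nef cone, being an anti-nef class plus an ample one). Then for $s\in t\ZZ_+$ with $s\gg 0$ the sheaf $\mathcal O_Z(-s\alpha_j-sD_j+\tfrac stW)$ is generated by global sections, whence $I(s\alpha_j+sD_j-\tfrac stW)\subset I(s\alpha_j)$; passing to a model dominating both $Y_s$ and $Z$, this forces $F_{s,j}\le s\alpha_j+sD_j-\tfrac stW$, i.e. $\beta_{s,j}\ge(-\alpha_j-D_j)+\tfrac1{st}W$ in the effective order, with all classes involved nef. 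Proposition \ref{Prop3.12*} then gives $(\beta_{s,1}^{i_1}\cdot\ldots\cdot\beta_{s,r}^{i_r})\ge\big((-\alpha_1-D_1+\tfrac1{st}W)^{i_1}\cdot\ldots\cdot(-\alpha_r-D_r+\tfrac1{st}W)^{i_r}\big)$, and letting $s\to\infty$ along $t\ZZ_+$, continuity of the intersection product (Lemma \ref{3.6*}) makes the right side tend to $((-\alpha_1-D_1)^{i_1}\cdot\ldots\cdot(-\alpha_r-D_r)^{i_r})$; hence $c_{i_1,\dots,i_r}\ge((-\alpha_1-D_1)^{i_1}\cdot\ldots\cdot(-\alpha_r-D_r)^{i_r})$. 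The same perturbation, with a common $t$, handles distinct $D_j^{(k)}$ for the repeated copies of each $-\alpha_j$, so taking the supremum over all admissible data gives $c_{i_1,\dots,i_r}\ge\langle(-\alpha_1)^{i_1}\cdot\ldots\cdot(-\alpha_r)^{i_r}\rangle$, hence equality, and therefore $b_{i_1,\dots,i_r}=-\frac1{i_1!\cdots i_r!}\langle(-\alpha_1)^{i_1}\cdot\ldots\cdot(-\alpha_r)^{i_r}\rangle$. The one point that will need care is the global-generation step for the perturbed, now ample, classes; this is of the same type already used in the proofs of Lemma \ref{Lemma1} and Proposition \ref{PropV1}.
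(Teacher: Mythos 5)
Your argument is correct, and it runs on the same engine as the paper's proof: Lemma \ref{Lemma3} applied on the normalized blowup to identify the coefficients of $H_s$ with intersection numbers of the nef, anti-effective classes $\beta_{s,j}=-\tfrac1sF_{s,j}$, Proposition \ref{Prop2} to pass to the limit in the coefficients, a divisor inequality forced by global generation of an ample perturbation, and Proposition \ref{Prop3.12*} together with Definition \ref{Def4.4*} to compare with the anti-positive product. The difference is organizational rather than conceptual. The paper fixes $\epsilon_s=2^{-s}$, chooses decompositions $D_i(s)$ that are $\epsilon_s$-optimal for the anti-positive product, perturbs them to ample classes using the continuity statement Proposition \ref{Prop4.7*}, and then chooses the models $Y[m_s]:=Y(s)$ in (\ref{eq13}) adaptively so that $-\alpha_i-D_i(s)\le -\tfrac{F_{m_s,i}}{m_s}\le-\alpha_i$; this sandwiches $b_{i_1,\ldots,i_r}(m_s)$ within $\epsilon_s$ of $\frac{-1}{i_1!\cdots i_r!}\langle(-\alpha_1)^{i_1}\cdot\ldots\cdot(-\alpha_r)^{i_r}\rangle$ and convergence is read off along the subsequence $\{m_s\}$. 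You instead keep $Y[s]$ arbitrary, observe that $c_{i_1,\ldots,i_r}=\lim_s(\beta_{s,1}^{i_1}\cdot\ldots\cdot\beta_{s,r}^{i_r})$ exists by Proposition \ref{Prop2}, and identify it with the least upper bound by a two-sided argument: the inequality $c\le\langle\cdots\rangle$ is immediate from Definition \ref{Def4.4*} because each term lies in the defining set (this also works with repeated factors, since a common $D_{s,j}$ for the copies of $-\alpha_j$ is allowed), and the reverse inequality comes from comparing with an arbitrary admissible decomposition perturbed by a small ample $\tfrac1{st}W$, where global generation of large multiples of the ample class yields $\beta_{s,j}\ge-\alpha_j-D_j+\tfrac1{st}W$ on a common model, after which Proposition \ref{Prop3.12*} and multilinearity finish the estimate. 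This buys a cleaner quantifier structure — no adaptive choice of $Y[s]$, no appeal to Proposition \ref{Prop4.7*} — at the cost of handling all decompositions rather than only near-optimal ones. The global-generation step you flag is exactly the mechanism the paper itself uses (very ampleness of $-m_s\alpha_i-m_sD_i(s)$ on $Y(s)$, and the analogous step in Proposition \ref{PropV1}), and the only remaining housekeeping is to take $t$ divisible enough and $s\in t\ZZ_+$ so that $sD_j$ and $\tfrac stW$ are integral Cartier divisors, and to note that the limit along this arithmetic progression is still $c$ since the full limit exists; so I see no gap.
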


\begin{proof} For $s\in \ZZ_+$, let $\epsilon_s=\frac{1}{2^s}$. There exist effective $\QQ$-Cartier divisors $D_1(s),\ldots, D_r(s)\in M^1(R)$ such that $-\alpha_1-D_1(s),\ldots,-\alpha_r-D_r(s)$ are nef and 
$((-\alpha_1-D_1(s))^{n_1}\cdot\ldots\cdot (-\alpha_r-D_r(s))^{n_r})$ is within $\epsilon_s$ of $\langle(-\alpha_1)^{n_1}\cdot\ldots\cdot(-\alpha_r)^{n_r}\rangle$ for  all $n_1,\ldots,n_r\in \ZZ_+$ with $n_1+\cdots+n_r=d$.
Let $Y(s)\rightarrow X\in \mbox{BirMod}(R)$ be such that $\alpha_1,\ldots,\alpha_r,D_1(s),\ldots,D_r(s)\in M^1(Y(s))$. Let $A_s$ be effective and anti-ample on $Y(s)$. Then by Proposition \ref{Prop4.7*}, for $t>0$ sufficiently small, each product 
$((-\alpha_1-D_1(s)-tA_s)^{n_1}\cdot\ldots\cdot (-\alpha_r-D_r(s)-tA_s)^{n_r})$ is within $\epsilon_s$ of $\langle(-\alpha_1)^{n_1}\cdots\ldots\cdot (-\alpha_r)^{n_r}\rangle$ for all $n_1,\ldots,n_r\in \ZZ_+$ with $n_1+\cdots+n_r=d$.
Replacing $D_i(s)$ with $D_i(s)+tA_s$ for such a small rational $t$, we may assume that $-\alpha_i-D_i(s)$ are ample for all $i$.

There exist $m_i\in \ZZ_+$ for $i\in \ZZ_+$ such that $m_1<m_2<\cdots$, the $m_s\alpha_i$ are effective Cartier divisors on $Y(s)$,   $m_sD_s(s)$ is an effective  Cartier divisor on $Y(s)$ and $\mathcal O_{Y(s)}(-m_s\alpha_i-m_sD_i(s))$ is very ample on $Y(s)$ for all $s$ and $1\le i\le r$.
In (\ref{eq13}), let $Y[m_s]=Y(s)$ for $s\in \ZZ_+$ and $Y[t]=X$ for $t\not\in \{m_1,m_2,\ldots\}$.

With the notation introduced after (\ref{eq13}), let $F_{m_s,i}$ be the Cartier divisor on $Y_{m_s}$ defined by $\mathcal O_{Y_{m_s}}(-F_{m_s,i})=I(m_s\alpha_i)\mathcal O_{Y_{m_s}}$. We  have that 
$$
I(m_s(\alpha_i+D_i(s))=\Gamma(Y(s),\mathcal O_{Y(s)}(-m_s\alpha_i-m_sD_i(s)))\subset
\Gamma(Y(s),\mathcal O_{Y(s)}(-m_s\alpha_i))=I(m_s\alpha_i).
$$
Since $-m_s\alpha_i-m_sD_i(s)$ is very ample on $Y(s)$,
$$
\mathcal O_{Y(s)}(-m_s\alpha_i-m_sD_i(s))=I(-m_s\alpha_i -m_sD_i(s))\mathcal O_{Y(s)}\subset I(m_s\alpha_i)\mathcal O_{Y(s)}.
$$
Thus 
$$
\mathcal O_{Y_{m_s}}(-m_s\alpha_i-m_sD_i(s))\subset I(m_s\alpha_i)\mathcal O_{Y_{m_s}}=\mathcal O_{Y_{m_s}}(-F_{m_s,i})\subset \mathcal O_{Y_{m_s}}(-m_s\alpha_i)
$$
for all $i,s$. Thus
$$
-\alpha_i-D_i(s)\le -\frac{F_{m_s,i}}{m_s}\le -\alpha_i.
$$
Now $\frac{-F_{m_i,s}}{m_s}$ is nef and 
$$
\frac{-F_{m_s,i}}{m_s}=-\alpha_i-E_{m_s,i}
$$
where $E_{m_s,i}$ is an effective $\QQ$-Cartier divisor. We have that
$$
\begin{array}{lll}
((-\alpha_1-D_1(s))^{n_1}\cdots\ldots\cdot (\alpha_r-D_r(s))^{n_r})&\le&
\left(\left(\frac{-F_{m_s,1}}{m_s}\right)^{n_1}\cdot\ldots\cdot\left(\frac{-F_{m_s,r}}{m_s}\right)^{n_r}\right)\\
&\le& \langle(-\alpha_1)^{n_1}\cdot\ldots\cdot (-\alpha_r)^{n_r}\rangle
\end{array}
$$
for all $s$ and $n_1,\ldots,n_r\in \NN$ with $n_1+\cdots+n_r=d$. The first inequality is  by Proposition \ref{Prop3.12*} and the second inequality is by Definition \ref{Def4.4*}.
Thus
\begin{equation}\label{eq14}
\left(\left(\frac{-F_{m_s,1}}{m_s}\right)^{n_1}\cdot\ldots\cdot \left(\frac{-F_{m_s,r}}{m_s}\right)^{n_r}\right)
\mbox{ is within $\epsilon_s$ of }\langle(-\alpha_1)^{n_1}\cdot\ldots\cdot (-\alpha_r)^{n_r}\rangle
\end{equation}
for all $n_1,\ldots,n_r\in \NN$ with $n_1+\cdots+n_r=d$.
\begin{equation}\label{eq15}
\begin{array}{l}
\mbox{Given $\epsilon>0$, for $s\gg 0$, the coefficients $b_{i_1,\ldots,i_r}(m_s)$ of $H_{m_s}(n_1,\ldots,n_r)$}\\
 \mbox{are within $\epsilon$ of the coefficients $b_{i_1,\ldots,i_r}$ of $F(n_1,\ldots,n_r)$}
 \end{array}
\end{equation}
by Proposition \ref{Prop2} and
\begin{equation}\label{eq16}
\begin{array}{l}
\frac{1}{m_s^d}((-F_{m_s,1}))^{i_1}\cdot\ldots\cdot (-F_{m_s,r})^{i_r})\mbox{ is within $\epsilon$ of }
\langle(-\alpha_1)^{i_1}\cdot\ldots\cdot(-\alpha_r)^{i_r}\rangle\\
\mbox{ for all }i_1,\ldots,i_r\in \NN\mbox{ with $i_1+\cdots+i_r=d$}
\end{array}
\end{equation}
 by (\ref{eq14}).  Now 
 \begin{equation}\label{eq17}
 \begin{array}{lll}
 H_{m_s}(n_1,\ldots,n_r)&=&\frac{1}{m_s^d}\left(\lim_{m\rightarrow\infty}\frac{\ell_R(R/I(mn_1F_{m_s,1})\cdots I(mn_rF_{m_s,r}))}{m^d}\right)\\
 &=&\frac{-1}{m_s^dd!}((-n_1F_{m_s,1}-\cdots-n_rF_{m_s,r})^d)
 \end{array}
 \end{equation}
 by Lemma \ref{Lemma3}, since $F_{m_s,1},\ldots, F_{m_s,r}$ are effective Cartier divisors and $\mathcal O_{Y_{m_s}}(-F_{m_s,i})$ are generated by global sections for all $i$. Then expanding the last line of (\ref{eq17}) by the multinomial theorem, we obtain
 $$
 b_{i_1,\ldots,i_r}(m_s)=\frac{-1}{m_s^di_1!\cdots i_r!}((-F_{m_s,1})^{i_1}\cdot \ldots \cdot (-F_{m_s,r})^{i_r})
 $$
 for all $i_1,\ldots, i_r\in \NN$ with $i_1+\cdots+i_r=d$. By (\ref{eq15}) and (\ref{eq16}), we have that
 $$
 b_{i_1,\ldots,i_r}=\frac{-1}{i_1!\cdots i_r!}\langle(-\alpha_1)^{i_1}\cdot\ldots\cdot (-\alpha_r)^{i_r}\rangle
 $$
 for all $i_1,\ldots,i_r$.

\end{proof}

The mixed mutiplicities $e_R(\mathcal I(1)^{[d_1]},\ldots, \mathcal I(r)^{[d_r]};R)$ of the filtrations $\mathcal I(1),\ldots,\mathcal I(r)$ of $m_R$-primary ideals are defined in \cite{CSS} from the coefficients $b_{d_1,\ldots,d_r}$   of $F(n_1,\ldots,n_r)$ by defining 
$$
b_{d_1,\ldots,d_r}=\frac{1}{d_1!\cdots d_r!}e_R(\mathcal I(1)^{[d_1]},\ldots, \mathcal I(r)^{[d_r]};R).
$$

The following theorem follows immediately from Theorem \ref{Theorem1}.

\begin{Theorem}\label{TheoremA} Let $R$ be a normal algebraic local ring, $\alpha_1,\ldots,\alpha_r\in M^1(R)$ be effective Cartier divisors and let $\mathcal I(j)$ be the filtration 
$\mathcal I(j)=\{I(n\alpha_j)\}$ for $1\le j\le r$.

Then the mixed multiplicities 
$$
e_R(\mathcal I(1)^{[d_1]},\ldots,\mathcal I(r)^{[d_r]};R)
=-\langle(-\alpha_1)^{d_1}\cdot\ldots\cdot (-\alpha_r)^{d_r}\rangle
$$
for $d_1,\ldots,d_r\in \NN$ with $d_1+\cdots+d_r=d$ 
are the negatives of the anti-positive intersection products of $-\alpha_1,\ldots,-\alpha_r$.
\end{Theorem}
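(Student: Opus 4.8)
The plan is to recognize that the polynomial $F(n_1,\ldots,n_r)$ of Section~\ref{SecAPM1} is literally the polynomial $G$ used in \eqref{M2}--\eqref{eqV6} to define the mixed multiplicities, and then to read off the coefficients from Theorem~\ref{Theorem1}. First I would check that we are in the right setting: since $R$ is a normal algebraic local ring it is excellent and analytically irreducible, so $\dim N(\hat R)<d$, and for each effective Cartier divisor $F$ with exceptional support the ideal $I(F)=\Gamma(X,\mathcal O_X(-F))$ is $m_R$-primary (as recorded just before Lemma~\ref{Lemma1}). Hence each $\mathcal I(j)=\{I(n\alpha_j)\}$ is a filtration of $R$ by $m_R$-primary ideals, and $F(n_1,\ldots,n_r)$ coincides with the function $P=G$ of \eqref{M2} taken with $M=R$ and these filtrations; by \cite[Theorem 6.6]{CSS} it is a homogeneous polynomial of degree $d$ in $n_1,\ldots,n_r$ with real coefficients.

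Next I would write down the two expansions of this single polynomial. On one side, the definition \eqref{eqV6} of mixed multiplicities (with $M=R$) gives
$$
F(n_1,\ldots,n_r)=\sum_{d_1+\cdots+d_r=d}\frac{1}{d_1!\cdots d_r!}e_R(\mathcal I(1)^{[d_1]},\ldots,\mathcal I(r)^{[d_r]};R)\,n_1^{d_1}\cdots n_r^{d_r},
$$
so that in the notation of Section~\ref{SecAPM1} one has $b_{d_1,\ldots,d_r}=\frac{1}{d_1!\cdots d_r!}e_R(\mathcal I(1)^{[d_1]},\ldots,\mathcal I(r)^{[d_r]};R)$. On the other side, Theorem~\ref{Theorem1} identifies these same coefficients as
$$
b_{d_1,\ldots,d_r}=\frac{-1}{d_1!\cdots d_r!}\langle(-\alpha_1)^{d_1}\cdot\ldots\cdot(-\alpha_r)^{d_r}\rangle .
$$

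Finally I would equate the two expressions for $b_{d_1,\ldots,d_r}$ and cancel the common nonzero factor $\frac{1}{d_1!\cdots d_r!}$, obtaining
$$
e_R(\mathcal I(1)^{[d_1]},\ldots,\mathcal I(r)^{[d_r]};R)=-\langle(-\alpha_1)^{d_1}\cdot\ldots\cdot(-\alpha_r)^{d_r}\rangle
$$
for all $d_1,\ldots,d_r\in\NN$ with $d_1+\cdots+d_r=d$. I do not expect any genuine obstacle here: the argument is pure bookkeeping, and the only point that deserves a sentence of justification is the identification of $F$ in Section~\ref{SecAPM1} with the defining polynomial $G$ of \eqref{M2}--\eqref{eqV6}, which follows at once from the standing hypotheses of the section (normal algebraic local ring, $\alpha_i$ effective Cartier divisors with exceptional support) placing the ideals $I(n\alpha_j)$ among the $m_R$-primary ideals to which the theory of \cite{CSS} applies.
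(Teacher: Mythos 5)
Your proposal is correct and is essentially the paper's own argument: the paper defines the coefficients $b_{d_1,\ldots,d_r}$ of $F$ to be $\frac{1}{d_1!\cdots d_r!}e_R(\mathcal I(1)^{[d_1]},\ldots,\mathcal I(r)^{[d_r]};R)$ via \eqref{eqV6} (citing \cite{CSS}) and then deduces Theorem \ref{TheoremA} immediately from the coefficient formula of Theorem \ref{Theorem1}, exactly as you do. Your added sentence verifying that the $I(n\alpha_j)$ are $m_R$-primary and that $\dim N(\hat R)<d$ (so \cite[Theorem 6.6]{CSS} applies) is a harmless and correct making-explicit of the paper's standing hypotheses.
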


From the case $r=1$ of Theorem \ref{TheoremA}, we obtain the statement that
$$
e_R(\mathcal I;R)=\langle(-\alpha)^d\rangle
$$
if $\alpha\in M^1(R)$ is an effective  Cartier divisor and $\mathcal I=\{I(m\alpha)\}$.



\begin{Theorem}\label{Theorem10} Suppose that $R$ is a $d$-dimensional  algebraic local domain, and $\mathcal I(j)=\{I(mD(j))\}$ are divisorial filtrations of $R$ for $1\le j\le r$. 
Then the mixed multiplicities 
$$
e_R(\mathcal I(1)^{[d_1]},\ldots,\mathcal I(r)^{[d_r]})
=\sum_{i=1}^t-[S/m_i:R/m_R]\langle(-D(1)_i)^{d_1}\cdot\ldots\cdot (-D(r)_i)^{d_r}\rangle
$$
for $d_1,\ldots, d_r\in \NN$ with $d_1+\cdots+d_r=d$.
\end{Theorem}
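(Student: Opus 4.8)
The plan is to deduce the result from the normal case already established in Theorem \ref{TheoremA}, by passing to the normalization $S$ of $R$ in the manner of Lemma \ref{LemmaR1}. First I would reduce to the situation of a single blowup: since each $\mathcal I(j)$ is a divisorial filtration it is cut out by an effective Cartier divisor on the normalization of the blowup of some $m_R$-primary ideal, and replacing these by the normalization $\phi\colon X\to\mbox{Spec}(R)$ of the blowup of the product of those $m_R$-primary ideals (and pulling the divisors back), the projection formula and normality show the filtrations are unchanged, so I may assume all $D(j)$ are effective Cartier divisors with exceptional support on a single such $X$. I then adopt the notation set up before Lemma \ref{LemmaR1}: $S$ is the normalization of $R$ with maximal ideals $m_1,\dots,m_t$, $\phi$ factors through $\mbox{Spec}(S)$, $\phi_i\colon X_i\to\mbox{Spec}(S_{m_i})$ carries the induced effective Cartier divisors $D(j)_i$ with exceptional support, and $J(mD(j)_i)=\Gamma(X_i,\mathcal O_{X_i}(-mD(j)_i))$. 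For $n_1,\dots,n_r\in\NN$ I set
$$
F(n_1,\ldots,n_r)=\lim_{m\to\infty}\frac{\ell_R(R/I(mn_1D(1))\cdots I(mn_rD(r)))}{m^d},
$$
which is a homogeneous polynomial of degree $d$ by \cite[Theorem 6.6]{CSS} (note $\hat R$ is reduced since $R$ is an excellent domain, so $\dim N(\hat R)<d$), and whose coefficient of $n_1^{d_1}\cdots n_r^{d_r}$ is $\frac{1}{d_1!\cdots d_r!}e_R(\mathcal I(1)^{[d_1]},\ldots,\mathcal I(r)^{[d_r]})$ by (\ref{eqV6}).

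Next I would verify that for each $i$ the data $(S_{m_i},X_i,\{D(j)_i\})$ is an instance of the normal setup of Section \ref{SecGLR}. Here $S$ is a finitely generated $R$-module, hence essentially of finite type over $k$, so $S_{m_i}$ is a normal algebraic local domain, of dimension $d$ because $S$ is integral over $R$; writing $X$ as the normalization of the blowup of an $m_R$-primary ideal $\mathcal J$, $X$ is also the normalization of the blowup of $\mathcal JS$ (normalization is insensitive to finite birational modifications), so $X_i$ is the normalization of the blowup of the $m_{S_{m_i}}$-primary ideal $\mathcal JS_{m_i}$, i.e.\ $X_i\in{\rm BirMod}(S_{m_i})$; and each $D(j)_i$, being the pullback of the effective Cartier divisor $D(j)$, is an effective Cartier divisor on $X_i$ with exceptional support, with $\{J(mD(j)_i)\}$ the divisorial filtration it defines on $S_{m_i}$. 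Hence Theorem \ref{TheoremA} applies over $S_{m_i}$, giving that the polynomial
$$
G_i(n_1,\ldots,n_r)=\lim_{m\to\infty}\frac{\ell_{S_{m_i}}(S_{m_i}/J(mn_1D(1)_i)\cdots J(mn_rD(r)_i))}{m^d}
$$
has coefficient of $n_1^{d_1}\cdots n_r^{d_r}$ equal to $\frac{-1}{d_1!\cdots d_r!}\langle(-D(1)_i)^{d_1}\cdot\ldots\cdot(-D(r)_i)^{d_r}\rangle$ (the degenerate case $D(j)_i=0$ being harmless, as then both the relevant filtration and anti-positive intersection product are trivial).

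Then I would assemble the two. By Lemma \ref{LemmaR1},
$$
F(n_1,\ldots,n_r)=\lim_{m\to\infty}\frac{\ell_R(S/J(mn_1D(1))\cdots J(mn_rD(r)))}{m^d},
$$
and, exactly as in the derivations of (\ref{eqR6}) and (\ref{eqR15}), the $S$-module $S/J(mn_1D(1))\cdots J(mn_rD(r))$ has support in $\{m_1,\dots,m_t\}$, localization commutes with products of ideals and with $\Gamma(X,-)$, and $\ell_R(N)=[S/m_i:R/m_R]\,\ell_{S_{m_i}}(N)$ for length-finite $S_{m_i}$-modules $N$, so
$$
\ell_R(S/J(mn_1D(1))\cdots J(mn_rD(r)))=\sum_{i=1}^t[S/m_i:R/m_R]\,\ell_{S_{m_i}}(S_{m_i}/J(mn_1D(1)_i)\cdots J(mn_rD(r)_i)).
$$
Dividing by $m^d$ and letting $m\to\infty$ gives $F=\sum_{i=1}^t[S/m_i:R/m_R]\,G_i$ as polynomials in $n_1,\dots,n_r$. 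Matching the coefficient of $n_1^{d_1}\cdots n_r^{d_r}$ on both sides, substituting the two coefficient formulas, and cancelling $\frac{1}{d_1!\cdots d_r!}$ then gives the asserted identity.

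The hard part will be the bookkeeping of the middle step: making sure the restricted data $(X_i,\{D(j)_i\})$ genuinely lies in the framework of Section \ref{SecGLR}---a normal algebraic local ring, an element of ${\rm BirMod}$, and effective Cartier divisors with exceptional support---so that Theorem \ref{TheoremA} is applicable fibrewise. The ingredients (compatibility of blowing up, normalization and localization; preservation of the Cartier property and of exceptional support under pullback; the length identity for $S_{m_i}$-modules viewed over $R$) are all standard, but they must be spelled out to license the fibrewise use of Theorem \ref{TheoremA}.
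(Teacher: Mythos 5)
Your proposal is correct and follows essentially the same route as the paper, which proves Theorem \ref{Theorem10} exactly by combining Lemma \ref{LemmaR1} and (\ref{eqR15}) to split the limit into the local contributions $\sum_{i=1}^t[S/m_i:R/m_R]\,G_i$ over the localizations of the normalization $S$, and then invoking Theorem \ref{TheoremA} for each normal local ring $S_{m_i}$. The only difference is that you spell out the verifications the paper leaves implicit (that $S_{m_i}$ is a normal algebraic local domain of dimension $d$, that $X_i\in{\rm BirMod}(S_{m_i})$, and that the $D(j)_i$ are effective Cartier divisors with exceptional support), which is a faithful expansion rather than a different argument.
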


\begin{proof} We use the notation introduced before the statement of Lemma \ref{LemmaR1}.
From Lemma \ref{LemmaR1} and (\ref{eqR15}), we have that
$$
\begin{array}{l}
\lim_{n\rightarrow\infty}\frac{\ell_R(R/I(nn_1D(1)\cdots I(nn_rD(r)))}{n^d}\\
=\sum_{i=1}^t[S/m_i:R/m_R]\left(\lim_{n\rightarrow\infty}\frac{\ell_{S_{m_i}}(S_{m_i}/J(nn_1D(1)_i)\cdots J(nn_rD(r)_i))}{n^d}\right).
\end{array}
$$
The theorem now follows from Theorem \ref{TheoremA}.
\end{proof}

The following theorem follows from Theorem \ref{TheoremA} and \cite[Theorem 1.2]{CSS}. It    shows that the  Minkowski inequalities hold for the absolute values of the anti-positive intersection products.

\begin{Theorem}\label{TheoremB}(Minkowski Inequalities)  Let assumptions be as in Theorem \ref{TheoremA}, with $r=2$. Then 
\begin{enumerate}
\item[1)] $(\langle(-\alpha_1)^i,(-\alpha_2)^{d-i}\rangle)^2\le \langle(-\alpha_1)^{i+1},(-\alpha_2)^{d-i-1}\rangle \langle(-\alpha_1)^{i-1},(-\alpha_2)^{d-i+1}\rangle$
for $1\le i\le d-1$.
\item[2)]  For $0\le i\le d$, 
$$
\langle(-\alpha_1)^i,(-\alpha_2)^{d-i}\rangle\langle(-\alpha_1)^{d-i},(-\alpha_2)^i\rangle
\le \langle(-\alpha_1)^d\rangle\langle(-\alpha_2)^d\rangle,
$$
\item[3)] For $0\le i\le d$, $(-\langle(-\alpha_1)^{d-i},(-\alpha_2)^i\rangle)^d\le (-\langle(-\alpha_1)^d\rangle)^{d-i}(-\langle(-\alpha_2)^d\rangle)^i$ and
\item[4)]  $(-\langle(-\alpha_1-\alpha_2)^d\rangle)^{\frac{1}{d}}\le (-\langle(-\alpha_1)^d\rangle)^{\frac{1}{d}}+(-\langle(-\alpha_2)^d\rangle)^{\frac{1}{d}}$.
\end{enumerate}
\end{Theorem}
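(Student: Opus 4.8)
The plan is to reduce everything to the Minkowski inequalities for filtrations (Theorem \ref{TheoremMI}, equivalently \cite[Theorem 1.2]{CSS}) applied to the specific divisorial filtrations $\mathcal I(j)=\{I(n\alpha_j)\}$, $j=1,2$, and then to translate the resulting inequalities on mixed multiplicities into inequalities on anti-positive intersection products via Theorem \ref{TheoremA}. First I would record that, since $\alpha_1,\alpha_2\in M^1(R)$ are effective Cartier divisors and $R$ is a normal algebraic (hence excellent, hence analytically irreducible) local domain, each $\mathcal I(j)$ is a divisorial filtration of $R$ by $m_R$-primary ideals, that $\mathcal I(1)\mathcal I(2)=\{I(n\alpha_1)I(n\alpha_2)\}$ is again a filtration of $R$ by $m_R$-primary ideals, and that all multiplicities and mixed multiplicities appearing below are strictly positive by Proposition \ref{PropPos}. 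By Theorem \ref{TheoremA} together with (\ref{eqX31}) we then have $e_R(\mathcal I(1)^{[i]},\mathcal I(2)^{[d-i]};R)=-\langle(-\alpha_1)^i,(-\alpha_2)^{d-i}\rangle$ for $0\le i\le d$, and in particular $e_R(\mathcal I(j);R)=-\langle(-\alpha_j)^d\rangle$ for $j=1,2$.

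Given these identities, inequalities 1), 2) and 3) follow by direct substitution into parts 1), 2) and 3) of Theorem \ref{TheoremMI} with $M=R$: in each of these the mixed multiplicities enter on both sides either in pairs (as products) or as $d$-th powers, so the sign $-1$ attached to each anti-positive intersection product cancels and $-\langle\,\cdot\,\rangle$ may be replaced by $\langle\,\cdot\,\rangle$ throughout without altering the inequality. Concretely, 1) is obtained by squaring both sides of the fundamental Minkowski inequality for the $\mathcal I(j)$, 2) by multiplying the two relevant mixed multiplicities, and 3) by raising the mixed multiplicity to the $d$-th power and using $e_R(\mathcal I(j);R)=-\langle(-\alpha_j)^d\rangle$.

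Inequality 4) is the only part requiring an extra observation, since $e_R(\mathcal I(1)\mathcal I(2);R)$ is not literally the multiplicity of a single divisorial filtration. I would set $\mathcal J=\{I(n(\alpha_1+\alpha_2))\}$; as $\alpha_1+\alpha_2$ is again an effective Cartier divisor in $M^1(R)$, Theorem \ref{TheoremA} with $r=1$ gives $e_R(\mathcal J;R)=-\langle(-\alpha_1-\alpha_2)^d\rangle$. Writing $\alpha_j=\sum_i a_i(j)E_i$ on a model $X$ with associated $m_R$-valuations $\mu_i$, the identity $\mu_i(fg)=\mu_i(f)+\mu_i(g)$ gives $I(n\alpha_1)I(n\alpha_2)\subset I(n(\alpha_1+\alpha_2))$ for all $n$, hence $\mathcal I(1)\mathcal I(2)\subset\mathcal J$ as filtrations, and therefore $e_R(\mathcal J;R)\le e_R(\mathcal I(1)\mathcal I(2);R)$ by monotonicity of colengths under inclusion of filtrations. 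Combining this with part 4) of Theorem \ref{TheoremMI}, namely $e_R(\mathcal I(1)\mathcal I(2);R)^{1/d}\le e_R(\mathcal I(1);R)^{1/d}+e_R(\mathcal I(2);R)^{1/d}$, and substituting the values from Theorem \ref{TheoremA}, yields $(-\langle(-\alpha_1-\alpha_2)^d\rangle)^{1/d}\le(-\langle(-\alpha_1)^d\rangle)^{1/d}+(-\langle(-\alpha_2)^d\rangle)^{1/d}$, which is 4). The whole argument is essentially bookkeeping once Theorems \ref{TheoremA} and \ref{TheoremMI} are available; the one place where care is needed is precisely this last step, where the tempting equality $e_R(\mathcal I(1)\mathcal I(2);R)=-\langle(-\alpha_1-\alpha_2)^d\rangle$ is unjustified (anti-positive intersection products are not multilinear), and only the one-sided bound coming from the inclusion $\mathcal I(1)\mathcal I(2)\subset\mathcal J$ is used.
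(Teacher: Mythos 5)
Your proposal is correct and follows exactly the route the paper intends: it deduces Theorem \ref{TheoremB} from the filtration Minkowski inequalities (Theorem \ref{TheoremMI}, i.e.\ \cite[Theorem 1.2]{CSS}) applied to $\mathcal I(1),\mathcal I(2)$, translated through the identification $e_R(\mathcal I(1)^{[i]},\mathcal I(2)^{[d-i]};R)=-\langle(-\alpha_1)^{i},(-\alpha_2)^{d-i}\rangle$ of Theorem \ref{TheoremA}. Your extra care in part 4) — using only the inclusion $\mathcal I(1)\mathcal I(2)\subset\{I(n(\alpha_1+\alpha_2))\}$ to get the one-sided bound, rather than an unjustified equality with $-\langle(-\alpha_1-\alpha_2)^d\rangle$ — is exactly the right reading of the paper's one-line derivation.
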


We mention a version of the Minkowski inequalities in terms of positive intersection numbers for pseudo effective divisors on a projective variety.

\begin{Theorem} (Minkowski Inequalities) Suppose that $X$ is a complete algebraic variety of dimension $d$ over a field $k$ and $\mathcal L_1$ and $\mathcal L_2$ are pseudo effective Cartier divisors on $X$. Then
\begin{enumerate}
\item[1)] $(\langle\mathcal L_1^i,\mathcal L_2^{d-i}\rangle)^2\ge \langle\mathcal L_1^{i+1},\mathcal L_2^{d-i-1}\rangle
\langle\mathcal L_1^{i-1},\mathcal L_2^{d-i+1}\rangle$ for $1\le i\le d-1.$
\item[2)]  $\langle\mathcal L_1^i,\mathcal L_2^{d-i}\rangle \langle\mathcal L_1^{d-i},\mathcal L_2^i>\rangle\ge
\langle\mathcal L_1^d\rangle\langle\mathcal L_2^d\rangle$ for $1\le i\le d-1$.
\item[3)] $(\langle\mathcal L_1^{d-i},\mathcal L_2^i\rangle)^d\ge (\langle\mathcal L_1^d\rangle)^{d-i}(\langle\mathcal L_2^d\rangle)^i$ for $0\le i\le d$.
\item[4)] $(\langle(\mathcal L_1\otimes\mathcal L_2)^d\rangle)^{\frac{1}{d}}\ge (\langle\mathcal L_1^d\rangle)^{\frac{1}{d}}+(\langle\mathcal L_2^d\rangle)^{\frac{1}{d}}$.
\end{enumerate}
\end{Theorem}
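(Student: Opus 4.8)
The plan is to reduce all four inequalities, exactly as in the classical theory and in the proof of Theorem \ref{TheoremB}, to the Khovanskii--Teissier inequality for nef classes on the birational modifications of $X$ that compute the positive intersection products.

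First I would reduce everything to inequality 1). Put $a_i=\langle\mathcal L_1^i,\mathcal L_2^{d-i}\rangle$ for $0\le i\le d$; each $a_i\ge 0$, being a supremum of intersection numbers of nef Cartier classes on complete varieties. Inequalities 2), 3) and 4) then follow from 1) by the purely formal manipulations of Teissier \cite{T1}, \cite{T2} recorded in \cite[Corollary 17.7.3]{HS}; inequality 4) uses in addition the superadditivity $\langle(\mathcal L_1\otimes\mathcal L_2)^d\rangle\ge\sum_{i=0}^d\binom{d}{i}\langle\mathcal L_1^i,\mathcal L_2^{d-i}\rangle$ of the positive intersection product from \cite{C4}, together with the log-concavity of $(a_i)$ given by 1). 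To sidestep dividing by a vanishing $a_i$ in these manipulations, I would first treat big $\mathcal L_1,\mathcal L_2$ (where all $a_i>0$; see the last paragraph) and then pass to arbitrary pseudoeffective $\mathcal L_1,\mathcal L_2$ by replacing $\mathcal L_j$ with $\mathcal L_j+\epsilon H$ for an ample $H$, letting $\epsilon\to0$ and invoking the continuity of the positive intersection product on the pseudoeffective cone \cite{C4}.

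The substance is inequality 1). The key step is a simultaneous Fujita-type approximation: given $\epsilon>0$, there are a birational modification $\pi\colon X'\to X$ and nef Cartier classes $P_1,P_2$ on $X'$ with $\pi^*\mathcal L_j-P_j$ effective for $j=1,2$ such that $0\le a_{i+1}-(P_1^{i+1}\cdot P_2^{d-i-1})<\epsilon$ and $0\le a_{i-1}-(P_1^{i-1}\cdot P_2^{d-i+1})<\epsilon$ simultaneously. I would obtain this from the directedness/filtered structure of the defining families in \cite{C4} (the projective counterparts of Lemma \ref{Lemma4.2*} and Proposition \ref{Prop4.3*}): the set of effective $\QQ$-divisors $D$ on modifications of $X$ with $\pi^*\mathcal L_j-D$ nef is filtered, so after passing to a common refinement finitely many admissible nef parts $(P_1,P_2)$ can be dominated by a single one, and monotonicity of intersection products of nef classes (the projective version of Proposition \ref{Prop3.12*}) promotes this to the numerical inequalities above. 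Now $P_1,P_2$ are nef on the complete variety $X'$, so the Khovanskii--Teissier inequality
$$
(P_1^i\cdot P_2^{d-i})^2\ \ge\ (P_1^{i+1}\cdot P_2^{d-i-1})(P_1^{i-1}\cdot P_2^{d-i+1})
$$
holds for $1\le i\le d-1$; this is classical for nef divisors and valid over an arbitrary field, being unchanged by base change to the algebraic closure and reducing there, via a Bertini argument, to the Hodge index theorem on surfaces (see \cite{Kl}, \cite{C4}). Since $0\le(P_1^i\cdot P_2^{d-i})\le a_i$, this gives
$$
a_i^2\ \ge\ (P_1^i\cdot P_2^{d-i})^2\ \ge\ (P_1^{i+1}\cdot P_2^{d-i-1})(P_1^{i-1}\cdot P_2^{d-i+1})\ \ge\ (a_{i+1}-\epsilon)(a_{i-1}-\epsilon),
$$
and letting $\epsilon\to0$ yields 1) (the assertion being trivial when $a_{i-1}$ or $a_{i+1}$ vanishes). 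Finally, when $\mathcal L_1,\mathcal L_2$ are big one may by Fujita approximation take $P_1,P_2$ big and nef, and then Khovanskii--Teissier gives $(P_1^i\cdot P_2^{d-i})\ge(P_1^d)^{i/d}(P_2^d)^{(d-i)/d}>0$, so $a_i>0$, as needed in the reduction above.

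I expect the main obstacle to be this simultaneous approximation: one must choose the nef parts $P_1\le\pi^*\mathcal L_1$ and $P_2\le\pi^*\mathcal L_2$ on one and the same modification so that several of the intersection numbers $(P_1^{j_1}\cdot P_2^{j_2})$ approach the corresponding positive intersection products at once. It rests precisely on the ``filtered'' property of the families of admissible decompositions (the analogue of Lemma \ref{Lemma4.2*} for $X$); granting that, the remaining ingredients---Khovanskii--Teissier for nef classes, and the formal passage from 1) to 2)--4)---are standard.
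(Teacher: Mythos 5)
Your argument is correct, but it is worth noting how it relates to the paper's own proof, which is essentially a citation: statements 1)--3) are deduced from the inequality of \cite[Theorem 6.6]{C4}, and 4) from 3) together with the superadditivity of the positive intersection product \cite[Lemma 4.13]{C4}. What you do is unfold the proof of the cited inequality rather than quote it: the directed (filtered) family of decompositions $\pi^*\mathcal L_j=P_j+D_j$ with $P_j$ nef and $D_j$ effective, the simultaneous approximation of finitely many positive products on a common modification, and the Khovanskii--Teissier inequality for nef classes there -- this is exactly the mechanism behind \cite{C4}, and it is the projective counterpart of Lemmas \ref{Lemma4.2*}, \ref{Lemma3.10*} and Propositions \ref{Prop3.12*}, \ref{Prop4.3*} in the local setting of this paper. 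Both you and the paper use superadditivity from \cite[Lemma 4.13]{C4} for 4), you via log-concavity of the sequence $a_i$ and the paper via 3); these are the same formal manipulation. Three small points to tighten: the passage from big to pseudoeffective classes is not really ``continuity on the pseudoeffective cone'' (positive products are in general only semicontinuous at the boundary) but rather the defining decreasing limit along $\mathcal L_j+\epsilon H$, which suffices since inequalities pass to such limits; Khovanskii--Teissier for nef classes on a \emph{complete}, possibly non-projective, variety over an arbitrary field requires a Chow's lemma reduction in addition to base change to $\bar k$, which is precisely what \cite{C4} supplies; and the formal deduction of 2)--3) from 1) needs the care about vanishing terms that you already address through the big case. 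None of these is a gap; your proposal is a sound, more self-contained version of the paper's citation-based proof.
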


\begin{proof} Statements 1) - 3)  follow from the inequality of Theorem 6.6 \cite{C4}. Statement 4) follows from 3) and \cite[Lemma 4.13]{C4}, which establishes the super additivity of the positive intersection  product.
\end{proof}

\section*{Appendix:  A proof of Theorem \ref{Theorem13} }\label{SecApp} In this appendix we give a proof of Theorem \ref{Theorem13}. We fix a potentially confusing index error in the proof in \cite{CSS}.

 Step 1). We first observe that if $I'\subset I$ are $m_R$-primary ideals and $\bigoplus_{n\ge 0}I^n$ is integral over $\bigoplus_{n\ge 0}(I')^n$, then, by \cite[Theorem 8.2.1, Corollary 1.2.5 and Proposition 11.2.1]{HS},   $e_R(I;R)=e_R(I';R)$.

Step 2).  Suppose $\mathcal I=\{I_i\}$ and $\mathcal I'=\{I'_i\}$ are Noetherian filtrations of $R$ by $m_R$-primary ideals and $\mathcal I'\subset \mathcal I$. Suppose $b\in \ZZ_+$. 
Define $\mathcal I^{(b)}=\{I^{(b)}_i\}$ where $I^{(b)}_i=I_{bi}$ and $(\mathcal I')^{(b)}=\{(I')^{(b)}_i\}$ where $(I')^{(b)}_i=(I')_{bi}$.
Then from \cite[Lemma 3.3]{CSS}  we deduce that
$$
e_R(\mathcal I;R)=e_R(\mathcal I';R)\mbox{ if and only if }e_R(\mathcal I^{(b)};R)=e_R((\mathcal I')^{(b)};R).
$$

Step 3). Suppose $\mathcal I'\subset \mathcal I$ are filtrations of $R$ by $m_R$-primary ideals. Suppose $a\in \ZZ_+$. Let $\mathcal I_a=\{I_{a,n}\}$ be the $a$-th truncated filtration of $\mathcal I$ defined in \cite[Definition 4.1]{CSS}. Then there exists $\overline a\in \ZZ$ such that every element of $\bigoplus_{n\ge 0}I_{a,n}$ (considered as a subring of $\bigoplus_{n\ge 0}I_n$) is integral over $\bigoplus_{n\ge 0}I'_{\overline a,n}$, where $\mathcal I'_{\overline a}=\{I'_{\overline a,i}\}$ is the $\overline a$-th truncated filtration of $\mathcal I'$  defined in \cite[Definition 4.1]{CSS}s.
 
 Define a Noetherian filtration $\mathcal A_a=\{A_{a,i}\}$ of $R$ by $m_R$-primary ideals 
 by 
 $$
 A_{a,i}=\sum_{\alpha+\beta=i}I_{a,\alpha}I'_{\overline a,\beta}.
 $$
 Recall that $I_{a,0}=I'_{\overline a,0}=R$. We restrict to $\alpha,\beta\ge 0$ in the sum.    Thus we have inclusions of graded rings $\bigoplus_{n\ge 0}I'_{\overline a,n}\subset \bigoplus_{n\ge 0}A_{a,n}$ and $\bigoplus_{n\ge 0}A_{a,n}$ is finite over $\bigoplus_{n\ge 0}I'_{\overline a,n}$. By Steps 2) and 1),
$$
e_R(\mathcal I'_{\overline a};R)=e_R(\mathcal A_a;R).
$$
By \cite[Proposition 4.3]{CSS},
$$
\lim_{a\rightarrow \infty}e_R(\mathcal I'_{\overline a};R)=e_R(\mathcal I';R)
$$
and thus 
$$
\lim_{a\rightarrow \infty}e_R(\mathcal A_a;R)=e_R(\mathcal I';R).
$$

Step 4) Let notation be as in the proof of \cite[Proposition 4.3]{CSS}, but taking $J_i=I_i$ and $J(a)_i=I_{a,i}$. Define 
$$
\begin{array}{lll}
\Gamma(\mathcal A_a)^{(t)}&=&\{(m_1,\ldots,m_d,i)\in \NN^{d+1}\mid
\dim_kA_{a,i}\cap K_{m_1\lambda_1+\cdots+m_d\lambda_d}/A_{a,i}\cap K^+_{m_1\lambda_1+\cdots+m_d\lambda_d}\ge t\\
&&\mbox{ and }m_1+\cdots+m_d\le \beta i\}.
\end{array}
$$
Now $\Gamma(a)^{(t)}\subset \Gamma(\mathcal A_a)^{(t)}\subset \Gamma^{(t)}$ for all $t$, so
$$
\Delta(\Gamma(a)^{(t)})\subset \Delta(\Gamma(\mathcal A_a)^{(t)})\subset \Delta(\Gamma^{(t)})
$$
for all $a$. By equation (14) \cite{CSS},
$$
\lim_{a\rightarrow \infty} {\rm Vol}(\Delta(\Gamma(a)^{(t)}))={\rm Vol}(\Delta(\Gamma^{(t)})),
$$
and so 
$$
\lim_{a\rightarrow \infty} {\rm Vol}(\Delta(\Gamma(\mathcal A_a)^{(t)}))={\rm Vol}(\Delta(\Gamma^{(t)})).
$$
Thus
$$
\lim_{a\rightarrow \infty}e_R(\mathcal A_a;R)=e_R(\mathcal I;R)
$$
by (12) of the proof of \cite[Proposition 4.3]{CSS}  applied to $\mathcal A_a$. 

Step 5). We have that $e_R(\mathcal I;R)=e_R(\mathcal I';R)$ by Steps 3) and 4). Now $e_R(\mathcal I;M)=e_R(\mathcal I';M)$ by \cite[Theorem 6.8]{CSS}(with $r=1$).

\end{document}